\documentclass[11pt]{article}
\usepackage{geometry}
\geometry{margin=1in}
\RequirePackage{amsthm,amsmath,amsfonts,amssymb}
\RequirePackage[numbers]{natbib}
\RequirePackage[colorlinks,citecolor=blue,urlcolor=blue]{hyperref}
\RequirePackage{graphicx}

\usepackage[shortlabels]{enumitem}
\usepackage{bbm,bm,nccmath}

\numberwithin{equation}{section}
\theoremstyle{plain}
\newtheorem{prop}{Proposition}[section]
\newtheorem{corollary}{Corollary}[section]
\newtheorem{theorem}{Theorem}[section]
\newtheorem*{theorem*}{Theorem}

\newtheorem{lemma}{Lemma}[section]

\theoremstyle{remark}
\newtheorem{defn}{Definition}[section]
\newtheorem{remark}{Remark}[section]
\newtheorem{example}{Example}[section]

\newenvironment{taggedassump}[1]
 {\assumption}
 {\endassumption}

\newcommand{\beq}{\begin{equation}}
\newcommand{\eeq}{\end{equation}}
\def\beqs#1\eeqs{%
    \begin{equation}\begin{split}%
    #1%
    \end{split}\end{equation}%
}
\def\beqsn#1\eeqsn{%
    \begin{equation*}\begin{split}%
    #1%
    \end{split}\end{equation*}%
}
\def\beqsj #1\eeqsj
{
    \begin{equation}
    \setlength{\jot}{10pt}
    \begin{split}
    #1
    \end{split}\end{equation}
}

\newcommand{\Wbar}{V_0}
\DeclareMathOperator*{\argmin}{argmin}

\newcommand{\les}{\lesssim}

\newcommand{\A}{{\bf A}}
\newcommand{\rtri}{r_3}

\newcommand{\mpi}{\bar m}
\newcommand{\fbar }{f_0}

\newcommand{\Sigpi}{\Sigma}

\renewcommand{\c}[1]{a_{#1}}
\newcommand{\Var}{{\mathrm{Var}}}

\renewcommand{\l}{\left}
\renewcommand{\r}{\right}
\renewcommand{\H}{{\bf H}}
\newcommand{\mhat}{m^*}

\newcommand{\op}{{\text{op}}}
\newcommand{\barW}{\Wbar  }
\renewcommand{\S}{{\bf S}}
\newcommand{\KL}[2]{\mathrm{KL}(\,#1\;||\;#2)}
\newcommand{\e}{\,\mathrm{exp}}

\newcommand{\U}{{\mathcal U}}
\newcommand{\la}{\langle}
\newcommand{\ra}{\rangle}
\newcommand{\lla}{\l\la}
\newcommand{\rra}{\r\ra}
\newcommand{\R}{\mathbb R}
\newcommand{\E}{\mathbb E\,}

\newcommand{\TV}{\mathrm{TV}}

\newcommand{\mvi}[1]{\hat m}
\newcommand{\Svi}[1]{\hat S}

\newcommand{\mvigen}{\hat m}
\newcommand{\Svigen}{\hat S}

\newcommand{\cP}{\mathcal{P}}

\DeclareRobustCommand\Vbind{V}
\newcommand{\klopt}[1]{{\renewcommand\Vbind{{#1}}\eqref{kl-opt-intro}}}

\begin{document}

\author{
 Anya Katsevich\\
  \texttt{akatsevi@mit.edu}
  \and
  Philippe Rigollet\\
  \texttt{rigollet@math.mit.edu}
}
\title{On the Approximation Accuracy of \\Gaussian Variational Inference}

\maketitle
\begin{abstract}
The main computational challenge in Bayesian inference is to compute integrals against a high-dimensional posterior distribution. In the past decades, variational inference (VI) has emerged as a tractable approximation to these integrals, and a viable alternative to the more established paradigm of Markov Chain Monte Carlo. However, little is known about the approximation accuracy of VI. In this work, we bound the TV error and the mean and covariance approximation error of Gaussian VI in terms of dimension and sample size. 
Our error analysis relies on a Hermite series expansion of the log posterior whose first terms are precisely cancelled out by the first  order optimality conditions associated to the Gaussian VI optimization problem.
\end{abstract}

\section{Introduction}

A central challenge in Bayesian inference is to compute integrals against a posterior distribution $\pi$ on $\R^d$. The classical approach is to \emph{sample} the posterior using Markov Chain Monte Carlo (MCMC), in which a Markov chain designed to converge to $\pi$ is simulated for sufficiently long time. However, MCMC can be expensive, and it is notoriously difficult to identify clear-cut stopping criteria for the algorithm~\cite{mcmc-convergence}. An alternative, often computationally cheaper, approach is \emph{variational inference} (VI)~\cite{blei2017variational}. The idea of VI is to find, among all measures in a certain parameterized family $\cP$, the closest measure to $\pi$. While various notions of proximity have been proposed since the introduction of VI~\cite{daudeldouc2021mixtureopt,daudeldoucportier2021alphadiv}, we employ here  KL divergence, which is by far the most common choice. Typically, the family $\cP$ is selected so that integrals against measures in the family are either readily available or else easily computable. In this work, we consider the family of Gaussian distributions. We define \beq\label{VI-Gauss}\hat\pi =\mathcal N(\hat m,\hat S)\in\argmin_{p\in\mathcal P_{\text{Gauss}}}\KL{p}{\pi},\eeq where $\mathcal P_{\text{Gauss}}$ denotes the family of non-degenerate Gaussian distributions on $\R^d$. With the Gaussian approximation $\hat\pi$ in hand, we can now approximate $\int gd\pi$ using $\int fd\hat\pi$. For certain $g$ (e.g. polynomials), the latter integral can be computed in closed form in terms of $\hat m$ and $\hat S$. Otherwise, Gaussian sampling can be employed, which is much cheaper than MCMC for the original measure. 


A key difference between MCMC and VI is that unbiased MCMC algorithms yield arbitrarily accurate samples from $\pi$ if they are run for long enough. On the other hand, the output of a perfect VI algorithm is $\hat\pi$, which is itself only an approximation to $\pi$. Therefore, a fundamental question in VI is to understand the quality of the approximation $\hat\pi\approx\pi$. In this work, we bound the approximation error $\int gd\pi - \int gd\hat\pi$ for a wide class of functions $g$, as well as several particular metrics of interest: the TV error $\TV(\pi,\hat\pi)$ and the mean and covariance errors $\|\hat m - \mpi\|$ and $\|\hat S - \Sigpi\|$, where $\bar m,\Sigma$ are the true mean and covariance of $\pi$, respectively.

Of course, we cannot expect an arbitrary, potentially multimodal $\pi$ to be well-approximated by a Gaussian distribution. In the setting of Bayesian inference, however, the Bernstein-von Mises theorem (BvM) guarantees that under certain regularity conditions, a posterior distribution converges to a Gaussian density in the limit of large sample size~\cite[Chapter 10]{van2000asymptotic}. To understand why this is the case, consider a generic posterior $\pi=\pi_n$ with density of the form 
\beq
\label{posterior}
\pi_n(\theta\mid x_{1:n})\propto \nu(\theta)\prod_{i=1}^n p_\theta(x_i)
\eeq 
Here, $\nu$ is the prior, $p_\theta$ is the model density, and $x_{1:n}=x_1,\dots, x_n$ are i.i.d observations. Provided $\nu$ and $p_\theta$ are everywhere positive, we can write $\pi_n$ as  \beq\label{logpost}\pi_n(\theta)\propto e^{-nv_n(\theta)},\qquad v_n(\theta) := -\frac1n\sum_{i=1}^n\log p_\theta(x_i) - \frac1n\log\nu(\theta).\eeq As $n$ increases, we expect that $v_n$ approaches $v_\infty$, the $n$-independent negative population log likelihood, a.k.a. the cross-entropy. Therefore, if $n$ is large and $v_\infty$ has a unique global minimum, then $\pi_n$ will place most of its mass in a neighborhood of this point. In other words, $\pi_n$ is effectively unimodal, and hence a Gaussian approximation is reasonable in this case.  This reasoning drives a second strategy to approximate posterior, whereby $\pi_n$ is a Gaussian centered at the global minimizer of $v_n$. While the accuracy of the Laplace approximation has been the subject of many works, including recent ones (see below), no comparable results exist for Gaussian VI. The main goal of this paper is to establish such approximation error bounds. 

 \paragraph*{Main contributions} In our above heuristic justification of a Gaussian approximation to $\pi=\pi_n$, we considered a sequence of functions $v_n$ converging to some $v_\infty$ as $n\to\infty$. However, our results do not require this setup. Instead, we simply consider a target measure of the form $\pi\propto e^{-nv}$ with $v\in C^4(\R^d)$, and we impose a requirement on the triple $(v,d,n)$. Namely, we assume $a_3(v)d/\sqrt n$ is smaller than a certain constant and $a_3(v)d/\sqrt n + a_4(v)d^2/ n\leq 1$, where $a_3(v)$ and $a_4(v)$ bound the size of the third and fourth derivatives of $v$ (see Assumption~\ref{assume:glob} for the precise definition). Our error bounds are in line with this condition on $(v,d,n)$, in the sense that the errors are small provided $a_3(v)d/\sqrt n + a_4(v)d^2/n$ is small. Up to this restriction, $v$ is free to depend on both $d$ and $n$.
 
Assuming also that $v$ has a unique global minimizer $\mhat$, that $\|\nabla^3v\|$ and $\|\nabla^4v\|$ grow at most polynomially away from $\mhat$, and that $v$ grows at least linearly away from $\mhat$, we show the following error bound for a wide class of functions $g$:
\beq\label{ferr}
\l|\int gd\pi - \int gd\hat\pi\r|\lesssim_{v}\sqrt{1\vee\Var_{\hat\pi}(g)} \begin{cases}
d/\sqrt n,\\
(d/\sqrt n)^2,\quad g\text{ even about }\mvi\pi,\\
(d/\sqrt n)^3,\quad g\text{ linear.}\end{cases}\eeq We have hidden the $v$-dependent factors $a_3(v),a_4(v)$ within the $\lesssim_v$ symbol for maximal clarity, though these factors may in principle affect the order of magnitude of the upper bound. See Theorem~\ref{thm:Vgen} for the precise statement of the result and observe that the mean a posteriori, i.e. Bayes estimator, is approximated at the fastest of these three rates. The second bound in~\eqref{ferr} is enabled by a leading order expansion of the error. Namely, we show that
\beq
\int gd\pi - \int gd\hat\pi = \int gQd\hat\pi+ \text{Rem}_g, \qquad |\text{Rem}_g|\lesssim_v\sqrt{1\vee\Var_{\hat\pi}(g)}\l(\frac{d}{\sqrt n}\r)^2,
\eeq
where $Q$ is a certain cubic polynomial which is odd about $\mvi\pi$. Thus if $g$ is even about $\mvi\pi$, then the leading order term $\int gQd\hat\pi$ vanishes. Similarly, the third bound in~\eqref{ferr} is enabled by a higher order expansion of the error. We note that having derived the leading order term not only provides valuable insight into the error, but also opens the door to \emph{correcting} the Gaussian measure $\hat\pi$ to get the more accurate approximation $(1+Q)\hat\pi$. However, we do not delve into this possibility here and postpone it for future research. The benefit of improving a Gaussian posterior approximation via a multiplicative correction was already studied in~\cite{katskew}, but for a different, \emph{Laplace} approximation to the posterior (defined in~\eqref{laplace}). 

This main result~\eqref{ferr} gives us a bound on $\int gd\pi-\int gd\hat\pi$ which is tailored to the observable $g$. We can also apply the result to derive error bounds for common Bayesian quantities of interest. For example, the following TV bound quantifies the error in using $\hat\pi$ to construct approximate credible sets for $\pi$:
\beq\label{TVintro}
\TV(\pi,\hat\pi)\lesssim_v\frac{d}{\sqrt n}.\eeq We also bound the mean and covariance approximation errors.  The posterior mean holds particular significance in Bayesian inference, since it is the Bayes-optimal estimator of the unknown parameter with respect to the squared error risk. The posterior covariance is another measure of the uncertainty in the posterior which can be used in addition to credible sets. To account for the vanishing variance (on the order of $n^{-1}$) of $\pi\propto e^{-nv}$, we rescale the below approximation errors appropriately:
\beq\label{meancovintro}
\sqrt n\|\hat m - \mpi\| \lesssim_v\l(\frac{d}{\sqrt n}\r)^{3},\qquad n\|\hat S - \Sigma\| \lesssim_v \l(\frac{d}{\sqrt n}\r)^2.\eeq

Together, the bounds~\eqref{ferr},~\eqref{TVintro}, and~\eqref{meancovintro} give us a comprehensive understanding of the error induced by using $\hat\pi$ as a proxy for $\pi$ in computing all of the relevant quantities of interest in Bayesian inference. In Section~\ref{sec:logreg}, we apply the general theory to formulate high-probability bounds on the VI approximation error for a posterior arising from logistic regression with Gaussian design. 

%

\paragraph*{Laplace approximation as benchmark for comparison}
As mentioned above, the Laplace method is another Gaussian approximation to $\pi$ that is widespread in practice for its computational simplicity. We use it as a benchmark to put the above error bounds into context. The Laplace approximation to $\pi\propto e^{-nv}$ is given by
\beq\label{laplace}\pi \approx \mathcal N(\mhat,\, (n\nabla^2v(\mhat))^{-1}),\eeq where $\mhat$ is the global minimizer of $v$. This approximation simply replaces $v$ by its second order Taylor expansion around $\mhat$. In recent work~\cite{katskew}, the first author shows that the TV error and covariance error of the Laplace approximation are of the same order as that of VI, in terms of both the $d$ and $n$ dependence. However, the VI mean error is two orders of magnitude smaller than that of Laplace: $(d/\sqrt n)^3$ for VI vs. $d/\sqrt n$ for Laplace. See Section~\ref{discuss} for a more detailed comparison of the approximation accuracy of the two methods. 

We emphasize that our aim in the present work is \emph{not} to make a value judgment about Gaussian VI in comparison to the Laplace approximation. Indeed, the accuracy of a method is not the only axis on which to judge its overall performance. An overall comparison of the two methods is a delicate undertaking that involves computational considerations and varies according to downstream statistical tasks. It is beyond the scope of the present work to produce a full comparison analysis but the approximation results should certainly be useful to consider in certain scenarios.

Instead, the main contribution of our work is to supply precise statements of Gaussian VI's approximation accuracy, an understanding of which has been virtually nonexistent, lagging far behind algorithmic developments in the literature. We compare VI to Laplace simply because the Laplace approximation is another popular approach to ease posterior computation via a Gaussian fit. 

Another way in which the Laplace approximation is relevant to VI is that the proof techniques developed by the first author in~\cite{katsTDD} to bound the Laplace approximation error also play a crucial role in the present work. Specifically, notice that the bounds~\eqref{ferr},~\eqref{TVintro}, and~\eqref{meancovintro} are all small provided $d^2\ll n$. This dependence on dimension is an improvement over the condition $d^3\ll n$ required in a long line of work on the BvM and the Laplace approximation accuracy~\cite{ghosal2000,spokoiny2013bernstein,lu2017bernstein, dehaene2019deterministic, helin2022non,spokoiny2023laplace}. Since then, the recent works~\cite{broderick-posterior} and~\cite{katsTDD} have developed new proof techniques which reduced the dependence to $d^2\ll n$, though only the latter work's proof techniques are flexible enough to derive leading order asymptotics in terms of $d/\sqrt n$.

 
We employ these same techniques in the present work. In addition, our proof is based on crucial new insights into why minimization of the KL objective~\eqref{VI-Gauss} leads to good approximation properties. We now briefly summarize these insights.
 \paragraph*{First-order optimality conditions and Hermite series expansion}
 Our analysis of the approximation accuracy of Gaussian VI rests on a remarkable interaction between first-order optimality conditions and a Hermite series expansion of the potential $v$. 

Hereafter, we replace $\theta$ by $x$ and let $V=nv$, $\pi\propto e^{-V}$. The focal point of this work are the first order optimality equations for the minimization~\eqref{VI-Gauss}: $$\nabla_{m,S}\KL{\mathcal N(m,S)}{\pi}\big\vert_{(m,S)=(\mvigen,\Svigen)}=0.$$ This is also equivalent to setting the Bures-Wasserstein gradient of $\KL{p}{\pi}$ to zero at $p=\mathcal N(\hat m,\hat S)$ as  in~\cite{gauss-VI}. Explicitly, we obtain that $(m,S)=(\mvigen,\Svigen)$ is a solution to
\beq\label{kl-opt-intro}
\E[\nabla V(m+S^{1/2} Z)]=0,\qquad \E[\nabla^2V(m+S^{1/2} Z)] = S^{-1},
\eeq
where $Z\sim\mathcal N(0, I_d)$ and $S^{1/2}$ is the positive definite symmetric square root of~$S$; see~\cite{gauss-VI} for this calculation. In some sense, the fact that $\hat\pi=\mathcal N(\mvigen, \Svigen)$ minimizes the KL divergence to $\pi$ does not explain why $\hat\pi$ approximates $\pi$ well. Rather, the true reason has to do with properties of solutions to the fixed point equations~\eqref{kl-opt-intro}. 

To see why, consider the function $\Wbar(x) =  V(\mvigen+\Svigen^{1/2}x)$. If $\pi\propto e^{-V}$ is close to the density of $\mathcal N(\mvigen, \Svigen)$, then $\rho\propto e^{-\Wbar}$ should be close to the density of $\mathcal N(0, I_d)$. In other words, we should have that $\Wbar(x)\approx\mathrm{const.}+\|x\|^2/2$. This is ensured by the first order optimality equations~\eqref{kl-opt-intro}. Indeed, note that~\eqref{kl-opt-intro} can be written in terms of $\Wbar$ as
\beq\label{kl-opt-intro-rescale}\E[\nabla\Wbar(Z)]=0,\qquad \E[\nabla^2\Wbar(Z)]=I_d.\eeq As we explain in Section~\ref{subsec:outline}, the equations~\eqref{kl-opt-intro-rescale} set the first and second order coefficients in the Hermite series expansion of $\Wbar$ to $0$ and $I_d$, respectively. As a result, $\Wbar(x) -\|x\|^2/2= \mathrm{const.}+r_3(x)$, where $r_3$ is a Hermite series containing only third and higher order Hermite polynomials. The accuracy of the Gaussian VI approximation for integrating linear and quadratic functions stems from the fact that the Hermite remainder $r_3$ is of order $r_3\sim 1/\sqrt n$, and the fact that $r_3$ is \emph{orthogonal to linear and quadratic functions with respect to the Gaussian measure}. See Section~\ref{subsec:outline} for a high-level summary of this Hermite series based error analysis.

 \paragraph*{Related Work}
The literature on VI can be roughly divided into statistical and algorithmic works. Works on the statistical side have focused on the contraction of variational posteriors around a ground truth parameter in the large $n$ (sample size) regime. (We use ``variational posterior" as an abbreviation for variational approximation to the posterior.) For example,~\cite{wang2019frequentist} prove an analogue of the Bernstein-von Mises theorem for the variational posterior,~\cite{zhang2020convergence} study the contraction rate of the variational posterior around the ground truth in a nonparametric setting, and~\cite{alquier2020concentration} study the contraction rate of variational approximations to tempered posteriors, in high dimensions. 

A key difference between these works and ours is that here, we determine how well the statistics of the variational posterior match those of the posterior itself, rather than those of a limiting ($n\to\infty$) distribution. 
We are only aware of one other work studying the problem of quantifying posterior approximation accuracy of VI. In \cite{han2019statistical}, the authors consider using the mean $\hat m$ of the mean-field variational posterior as a proxy for the maximum likelihood estimate in a Bayesian latent variable model. They show that $\sqrt n\|\hat m - \mathrm{MLE}\|\lesssim 1/n^{1/4}$.

On the algorithmic side of Gaussian VI, we refer the reader to the works~\cite{gauss-VI,diao2023forwardbackward} and references therein. These works complement our analysis in that they provides rigorous convergence guarantees for several algorithms that solves the optimization problem~\eqref{VI-Gauss}. See also the discussion in Section~\ref{discuss}.

Next, we mention a closely related approximation method called expectation propagation (EP). In Gaussian EP, one attempts to minimize the KL divergence $\KL{\pi}{p}$ over all Gaussian measures $p$. (Note that the order of $\pi$ and $p$ has been switched relative to~\eqref{VI-Gauss}.) Minimizing this objective amounts to matching the first and second moments of $p$ to those of $\pi$. It is not possible to minimize this objective directly without resorting to computing integrals against $\pi$. So instead, EP works by iteratively constructing hybrids between the Gaussian distribution and $\pi$, and matching the moments of the hybrid (which are easier to compute) to those of $\pi$. The works~\cite{dehaene2018expectation} and~\cite{dehaene2015bounding} studied the mean and covariance approximation accuracy of EP with respect to $n$. Interestingly enough, the authors showed EP has the exact same mean and covariance accuracy as shown here for traditional Gaussian VI: $\|\mvi\pi-\mpi\|\les n^{-2}$ and $\|\Svi\pi- \Sigma\|\les n^{-2}$ (compare to~\eqref{meancovintro}). Whether this is a coincidence or a result of a deeper connection between the two methods is an interesting avenue to explore.

Finally, we note that~\cite{weng2010bayesian} also uses Hermite polynomials to provide an approximation to the posterior. The approximation, given by an Edgeworth series, is effectively a perturbation of a Gaussian measure. The coefficients of the series depend on the cumulants of the posterior distribution. Thus writing down the series requires computing cumulants, which come from integrals against the posterior. Therefore, the expansion cannot itself be directly used as a tool to help approximate integrals against the posterior, which is the goal in our work.   



\subsection*{Organization of the paper}The rest of the paper is organized as follows. In Section~\ref{sec:main} we state our assumptions and main results on the Gaussian VI approximation accuracy. In Section~\ref{sec:logreg}, we apply the general theory to a logistic regression posterior. In Section~\ref{sec:proof}, we outline the proof of the main theorems. Finally in Section~\ref{sec:m-sig-exist}, we prove the existence and uniqueness of solutions to the first-order optimality conditions~\eqref{kl-opt-intro}. Omitted proofs can be found in the Appendix.
\subsection*{Notation} The notation $a\les b$ means there is constant $C>0$ such that $a\leq Cb$. Unless indicated otherwise, the suppressed constant is absolute. We let $\gamma$ denote the density of the standard Gaussian distribution $\mathcal N(0, I_d)$ in $d$ dimensions, and we write either $\int fd\gamma$ or $\E[f(Z)]$ or $\gamma(f)$ for the expectation of $f$ under $\gamma$. We write $Z$ to denote a standard multivariate Gaussian random variable $Z\sim\gamma$ in $\R^d$. We let $\Var_{\hat\pi}(f)=\int (f-\hat\pi(f))^2d\hat\pi$, and $\|f\|_2 = (\int f^2d\gamma)^{1/2}$. 

A tensor $T$ of order $k$ is an array $T=(T_{i_1i_2\dots i_k})_{i_1,\dots,i_k=1}^d$. For two order $k$ tensors $T$ and $S$ we let $\la T, S\ra$ be the entrywise inner product. We say $T$ is symmetric if $T_{i_1\dots i_k}= T_{j_1\dots j_k}$, for all permutations $j_1\dots j_k$ of $i_1\dots i_k$. 

Let $H$ be a symmetric positive definite matrix. For a vector $x\in\R^d$, we let $\|x\|_H$ denote $\|x\|_H = \sqrt{x^THx}$.  For an order $k\geq2$ symmetric tensor $T$, we define the $H$-weighted operator norm of $T$ to be
\beq\label{TH}\|T\|_H:=\sup_{\|u\|_H=1}\la T, u^{\otimes k}\ra .\eeq
By Theorem 2.1 of~\cite{symmtens}, for symmetric tensors, the definition~\eqref{TH} coincides with the standard definition of operator norm:
$$\sup_{\|u_1\|_H=\dots=\|u_k\|_H=1} \la T, u_1\otimes\dots\otimes u_k\ra = \|T\|_H=\sup_{\|u\|_H=1}\la T, u^{\otimes k}\ra.$$ 
When $H=I_d$, the norm $\|T\|_{I_d}$ is the regular operator norm, and in this case we omit the subscript. 

\section{Statement of Main Results}\label{sec:main}
In light of the centrality of the fixed point equations~\eqref{kl-opt-intro}, we begin the section by redefining $(\mvigen, \Svigen)$ as solutions to~\eqref{kl-opt-intro} rather than minimizers of the KL divergence objective~\eqref{VI-Gauss}. These definitions diverge only in the case that $V$ is not strongly convex. Indeed, if $V$ is strongly convex then $\KL{\cdot}{\pi}$ is strongly geodesically convex in the submanifold of Gaussian distributions; see, e.g., \cite{gauss-VI}. Therefore, in this case, $\KL{\cdot}{\pi}$ admits a unique minimizer $\hat \pi$ over this submanifold, corresponding to a unique solution $(\mvigen,\Svigen)\in\R^d\times\S^d_{++}$ to~\eqref{kl-opt-intro}. In general, however, there spurious solutions $(m,S)$ o the first order equations~\eqref{kl-opt-intro} that may have poor approximation properties. To see this, consider the equations in the following form, recalling that $v=V/n$:
\beq\label{kl-opt-intro-ii}
\E[\nabla v(m+S^{1/2} Z)]=0,\qquad S\,\E[\nabla^2v(m+S^{1/2} Z)]= \frac 1nI_d.
\eeq
Let $x\neq m_*$ be a critical point of $v$, that is, $\nabla v(x)=0$, and consider the pair $(m,S)=(x, 0)$. For this $(m,S)$ we have $$\E[\nabla v(m+S^{1/2} Z)]=\nabla v(x)=0, \qquad S\,\E[\nabla^2v(m+S^{1/2} Z)]=0\approx \frac1nI_d.$$ Thus $(x,0)$ is an approximate solution to~\eqref{kl-opt-intro-ii}, and by continuity, we expect that there is an exact solution nearby. In other words, to each critical point $x$ of $v$ is associated a solution $(m,S)\approx (x,0)$ of~\eqref{kl-opt-intro-ii}. The solution $(m,S)$ of~\eqref{kl-opt-intro-ii} which we are interested in, then, is the one near $(m_*,0)$. Lemma~\ref{lma:intro:exist:V} below formalizes this intuition; we show there is a unique solution $(m,S)$ to~\eqref{kl-opt-intro} in the set
\beqs\mathcal R_V= \bigg\{(m,S)\in\R^d\times &\S^{d}_{++} \; :\; S \preceq 2H_V^{-1},\, \| H_V^{1/2}S^{1/2}\|^2 + \| H_V^{1/2}(m-m_*)\|^2\leq 8\bigg\},\eeqs 
where $H_V=\nabla^2V(m_*)=n\nabla^2v(m_*)$. Note that due to the scaling of $H_V$ with $n$, the set $\mathcal R_V$ is a small neighborhood of $(m_*, 0)$. We call this unique solution $(m, S)$ in $\mathcal R_V$ the ``canonical" solution of~\eqref{kl-opt-intro}. We expect the Gaussian distribution corresponding to this canonical solution to be the minimizer of~\eqref{VI-Gauss}, although we have not proved this. Regardless of whether it is true, we will redefine $(\mvigen, \Svigen)$ to denote the canonical solution. Indeed, whether or not $\hat\pi=\mathcal N(\mvigen, \Svigen)$ actually minimizes the KL divergence or is only a local minimizer is immaterial for the purpose of approximating $\pi$. 

In the rest of this section, we state our assumptions on $v$, a lemma guaranteeing a canonical solution $\mvigen, \Svigen$ to~\eqref{kl-opt-intro}, and our main results on the approximation accuracy of Gaussian VI .

\subsection{Assumptions on the potential}\label{assumptions}
\begin{taggedassump}{1}[Regularity and unique strict minimum]\label{assume:1}
The potential $v\in C^4$, with unique global minimizer $x=\mhat $, and $H_v=\nabla^2v(\mhat )\succ 0$.
\end{taggedassump}
 \begin{taggedassump}{2}[Polynomial growth of $\|\nabla^kv\|_{H_v}$, $k=3,4$.]\label{assume:glob} There is some $q,a_3,a_4>0$ such that
\begin{align}
&\|\nabla^kv(x)\|_{H_v}\leq a_k\l(1\vee\sqrt {n/d}\|x-\mhat\|_{H_v}\r)^q\quad \forall x\in\R^d,\quad k=3,4,\label{globnabla}\\
&a_3\frac{d}{\sqrt n}\leq C(q),\qquad a_3\frac{d}{\sqrt n}+a_4\frac{d^2}{ n}\leq 1\label{cconds}
\end{align} for a certain constant $C(q)<1$ depending only on $q$.
\end{taggedassump} 
The $\sqrt{n/d}$ factor in~\eqref{globnabla} is there to ``zoom in'' on the $\sqrt{d/n}$ scale, which is the scale at which the measure $\pi\propto e^{-nv}$ concentrates. 
\begin{taggedassump}{3}[Lower bound on growth of $v$]\label{assume:c0} There exists $c_0>0$ such that 
\beq\label{c0cond}
c_0\sqrt {d/n}\|x-\mhat\|_{H_v} \leq v(x) - v(\mhat) \qquad \forall \, x\,:\, \|x-\mhat\|_{H_v}\geq \frac12\sqrt{d/n}.\eeq  \end{taggedassump}
See Section~\ref{discuss} for a discussion of the assumptions.
\subsection{Main Results}
We are now in a position to state our main results. In this section, we use the notation $\epsilon=d/\sqrt n$. First, we characterize the Gaussian VI parameters $(\mvi\pi, \Svi\pi)$:
\begin{lemma}\label{lma:intro:exist:V}
Let Assumptions~\ref{assume:1} and~\ref{assume:glob} be satisfied and let $H_V = \nabla^2V(m_*)=n\nabla^2v(m_*)$. Then there exists a unique $(m,S)=(\mvi\pi,\, \Svi\pi)$ in the set $$\mathcal R_V= \l\{(m,S)\in\R^d\times \S^{d}_{++} \; :\; S\preceq 2H_V^{-1},\; \| {H_V}^{1/2} S^{1/2}\|^2 + \|{H_V}^{1/2}(m-m_*)\|^2\leq 8\r\}$$ which solves 
$$\E[\nabla V(m+S^{1/2}Z)]=0,\qquad\E[\nabla^2V(m+S^{1/2}Z)]=S^{-1}.$$
Moreover, $\Svi\pi$ satisfies
 \beq\label{Svi-bds}\frac23H_V^{-1}\preceq \Svi\pi \preceq 2H_V^{-1}.\eeq
\end{lemma}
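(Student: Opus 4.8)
The plan is to recast the problem as a fixed-point existence/uniqueness question via Banach's fixed point theorem, working in the rescaled coordinates where the target is close to a standard Gaussian. First I would introduce $W(x) = V(m_* + H_V^{-1/2}x)$, so that $\nabla^2 W(0) = I_d$, and reparametrize a candidate solution $(m,S)$ by writing $m = m_* + H_V^{-1/2}\mu$ and $S = H_V^{-1/2}\Sigma H_V^{-1/2}$; then the region $\mathcal R_V$ becomes the clean set $\{(\mu,\Sigma) : \Sigma \preceq 2 I_d,\ \|\Sigma^{1/2}\|^2 + \|\mu\|^2 \le 8\}$, and the two optimality equations become $\E[\nabla W(\mu + \Sigma^{1/2}Z)] = 0$ and $\E[\nabla^2 W(\mu + \Sigma^{1/2}Z)] = \Sigma^{-1}$. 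The idea is to define a map $\Phi(\mu,\Sigma)$ whose fixed points are exactly the solutions: for the covariance equation, rearranging suggests the update $\Sigma \mapsto (\E[\nabla^2 W(\mu + \Sigma^{1/2}Z)])^{-1}$, and for the mean equation one can use a Newton-type update $\mu \mapsto \mu - (\E[\nabla^2 W(\cdots)])^{-1}\E[\nabla W(\mu + \Sigma^{1/2}Z)]$, or simply treat the coupled system by a single contraction argument.

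Next I would verify the two hypotheses of the contraction mapping theorem on an appropriate closed subset of $\mathcal R_V$ (in rescaled coordinates). For \emph{stability} ($\Phi$ maps the set into itself), the key is Taylor-expanding $\nabla W$ and $\nabla^2 W$ around $0$ and controlling the third- and fourth-order terms using Assumption~\ref{assume:glob}: the bound $\|\nabla^k v\|_{H_v} \le a_k(1 \vee \sqrt{n/d}\,\|x - \hat m\|_{H_v})^q$ translates, after rescaling by the $n$-dependent factors, into $\|\nabla^k W\| \lesssim a_k d^{(k-2)/2} n^{-(k-2)/2} (1 \vee \|x\|)^q = a_k \epsilon^{(k-2)/2} d^{-(k-2)/2}\cdots$ — more precisely the relevant small parameters are $a_3 d/\sqrt n$ and $a_4 d^2/n$, which by~\eqref{cconds} are controlled. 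Since $\E[\|Z\|^j]$ for fixed small $j$ is dimension-free and $\E[(1\vee\|Z\|)^{qj}] \lesssim_q (\text{const})^{qj}$ after the region is confined to the $\sqrt d$ scale — here I would use a truncation/concentration argument for $\|Z\|$ around $\sqrt d$ — the expectations $\E[\nabla^3 W(tx)[\cdot]]$ etc. stay of size $\lesssim_q a_3 \epsilon$, $a_4 \epsilon^2$. This gives $\E[\nabla^2 W(\mu + \Sigma^{1/2}Z)] = I_d + O_q(a_3\epsilon + a_4\epsilon^2)$, hence its inverse is within $O_q(a_3\epsilon + a_4\epsilon^2)$ of $I_d$, so stays well inside $\Sigma \preceq 2I_d$ and in fact yields the two-sided bound $\frac23 I_d \preceq \Sigma \preceq 2 I_d$ once $C(q)$ is chosen small enough; undoing the rescaling gives precisely~\eqref{Svi-bds}. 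For \emph{contraction}, I would differentiate $\Phi$ (or bound $\Phi(\mu_1,\Sigma_1) - \Phi(\mu_2,\Sigma_2)$ directly) and show the Lipschitz constant is $\lesssim_q a_3\epsilon + a_4\epsilon^2 < 1$, again because the derivative of $\E[\nabla^2 W(\mu + \Sigma^{1/2}Z)]$ with respect to $(\mu,\Sigma)$ only involves $\nabla^3 W$ and $\nabla^4 W$, which are small; the Gaussian integration-by-parts identity $\E[\nabla^2 W(\mu + \Sigma^{1/2}Z)] = \Sigma^{-1/2}\E[\nabla W(\mu + \Sigma^{1/2}Z) \otimes Z] + \text{(symmetrization)}$, or Stein's lemma, is a convenient tool to express these derivatives cleanly.

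I expect the main obstacle to be the bookkeeping of the rescaling together with dimension-dependence: one must track how the $H_v$-weighted polynomial growth bound~\eqref{globnabla}, with its built-in $\sqrt{n/d}$ zoom factor, interacts with the $H_V = n H_v$ rescaling and with the Gaussian expectation over $Z \in \R^d$ so that the net small parameters are exactly $a_3 d/\sqrt n$ and $a_4 d^2/n$ and not, say, an extra power of $d$. The truncation step — restricting attention to $\|Z\| \lesssim \sqrt d$ and bounding the contribution of the tail event $\{\|\Sigma^{1/2}Z\|_{H_v}$ large$\}$ using the polynomial-in-$\|Z\|$ integrand against Gaussian tails — is where the constant $C(q)$ and the growth exponent $q$ enter, and getting a clean statement requires care but no deep idea. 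Uniqueness then follows immediately from the contraction property on $\mathcal R_V$ (any two fixed points in the set must coincide). I would also double-check that the canonical solution lies in the slightly smaller ball needed for the argument, or alternatively run the contraction on all of $\mathcal R_V$ directly if the estimates permit, which they should given~\eqref{cconds}.
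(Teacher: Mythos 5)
Your proposal is correct in substance and follows the same basic strategy as the paper: rescale to $W(x)=V(m_*+H_V^{-1/2}x)$ so that $\nabla W(0)=0$, $\nabla^2W(0)=I_d$, observe that Assumption~\ref{assume:glob} makes $\nabla^3W,\nabla^4W$ of size $a_3/\sqrt n$, $a_4/n$ up to the polynomial factor in $\|x/\sqrt d\|$ (this is the content of Lemma~\ref{lma:Wprops}), and then run a quantitative fixed-point argument on the rescaled region, with uniqueness coming from the contraction property and \eqref{Svi-bds} from $\E[\nabla^2W]\approx I_d$. The organizational difference is that you propose a single joint Newton/contraction map on $(\mu,\Sigma)$, whereas the paper argues in two stages (Lemma~\ref{lma:m-sig-soln}): a quantitative implicit function theorem (Lemma~\ref{IVT}, needing $a_3 d/\sqrt n$ small) produces, for each square root $\sigma$ with $\|\sigma\|\le\sqrt2$, a unique $m(\sigma)$ solving the mean equation, and then the covariance equation is solved by showing $\sigma\mapsto F(\sigma)=\E[\nabla^2W(m(\sigma)+\sigma Z)]^{-1/2}$ is a contraction mapping $S_{0,\sqrt2}$ into $S_{\sqrt{2/3},\sqrt2}$ (Lemma~\ref{lma:contract}); your joint map would work too, since the exact cancellation $I-G^{-1}\partial_\mu\E[\nabla W]=0$ in the Newton update leaves only third-derivative terms, but it buys nothing essential over the two-stage route. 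Two points need care in your version. First, you should run the contraction in the square-root variable $\sigma=\Sigma^{1/2}$ (as the paper does), not in $\Sigma$: your update written as $\Sigma\mapsto(\E[\nabla^2W(\mu+\Sigma^{1/2}Z)])^{-1}$ requires a Lipschitz bound for $\Sigma\mapsto\Sigma^{1/2}$ on the region, which fails uniformly because $\mathcal R_V$ contains nearly singular $S$; the paper avoids this by taking $\sigma$ as the variable and using the perturbation bound for inverse square roots of matrices that are uniformly bounded below (Lemma~\ref{psd-sq-rt}), the lower bound coming from $G(\sigma)\succeq\frac12 I_d$ independently of $\sigma$. Second, no truncation of $\|Z\|$ is needed and $\E\|Z\|^j$ is of course not dimension-free; the built-in $\|x/\sqrt d\|^q$ normalization in \eqref{globnabla} means plain Gaussian moment bounds already give the $L^p(\mathcal N(m,\sigma\sigma^T))$ estimates of Lemma~\ref{lma:Wprops}, and the resulting smallness parameters ($a_3\sqrt{d/n}$ for the contraction step, $a_3 d/\sqrt n$ for the stability/IFT step, with constants depending on $q$) are exactly what \eqref{cconds} controls, the constant $C(q)$ there being chosen small enough to absorb the $q$-dependent prefactors.
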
 For the proof of this lemma, see Section~\ref{sec:m-sig-exist}. We now state our bounds on the error in using $\hat\pi=\mathcal N(\mvi\pi,\Svi\pi)$ to approximate expectations under the true measure $\pi$. The main theorem is the following.
\begin{theorem}\label{thm:Vgen}
Suppose $v$ satisfies Assumptions~\ref{assume:1},~\ref{assume:glob}, and~\ref{assume:c0}, and let $\mvi\pi,\Svi\pi$ be as in Lemma~\ref{lma:intro:exist:V}. Let $g$ be a function satisfying the following inequality for some $R_g>0$:
 \beq\label{gcond}|g(x)-\int gd\hat\pi|\leq \e\l(c_0\sqrt d\|x-\mvi\pi\|_{\Svi\pi^{-1}}/4\r),\quad\forall\, x\,:\, \|x-\mvi\pi\|_{\Svi\pi^{-1}}\geq R_g\sqrt d.\eeq 
 Then
\begin{align}
\l|\medint\int gd\pi - \medint\int gd\hat\pi\r|\les \l(1+\Var_{\hat\pi}(g)^{\frac12}\r)\l(a_3\epsilon + a_4\epsilon^2\r),\label{overallbd1V}
\end{align} If additionally, $g$ is even about $\mvi\pi$, then
\beq\label{overallbd3V}
\l|\medint\int gd\pi - \medint\int gd\hat\pi\r|\les  \l(1+\Var_{\hat\pi}(g)^{\frac12}\r)(a_3^2+  a_4)\epsilon^2.
\eeq Finally, if $g$ is linear, then
\beq\label{overallbd4V} \l|\medint\int gd\pi - \medint\int gd\hat\pi\r|\les \l(1+\Var_{\hat\pi}(g)^{\frac12}\r)\l(\l(a_3^3+ a_3a_4\r)\epsilon^3 +  a_4^2\epsilon^4\r).\eeq In all the above bounds, the suppressed constant is an increasing function of $q$, $c_0^{-1}$, and $R_g$.
\end{theorem}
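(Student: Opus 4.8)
The plan is to work in the rescaled coordinates $\Wbar(x) = V(\mvi\pi + \Svi\pi^{1/2}x)$, so that $\pi$ pulls back to $\rho \propto e^{-\Wbar}$ and $\hat\pi$ pulls back to the standard Gaussian $\gamma$; the claim becomes a bound on $|\rho(\tilde g) - \gamma(\tilde g)|$ where $\tilde g(x) = g(\mvi\pi + \Svi\pi^{1/2}x)$. By Lemma~\ref{lma:intro:exist:V} and the optimality conditions~\eqref{kl-opt-intro-rescale}, $\Wbar$ has the form $\Wbar(x) = \mathrm{const} + \tfrac12\|x\|^2 + r_3(x)$, where $r_3$ is the Hermite remainder whose expansion starts at degree $3$. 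The first step is to quantify the size of $r_3$ and its derivatives on the concentration region using Assumption~\ref{assume:glob}: since $\nabla^3\Wbar$ and $\nabla^4\Wbar$ scale like $a_3\epsilon$ and $a_4\epsilon^2$ respectively (after the $n$- and $S$-rescaling, using $\Svi\pi \asymp H_V^{-1}$ from~\eqref{Svi-bds}), a Taylor argument gives $r_3 = O(a_3\epsilon)$ pointwise on $\|x\| \lesssim \sqrt d$, with the growth controlled polynomially outside by~\eqref{globnabla}. The tail condition~\eqref{gcond} and the linear-growth Assumption~\ref{assume:c0} together ensure $\tilde g$ does not blow up faster than $e^{-\Wbar}$ decays, so all integrals are finite and the tail contributions are negligible relative to the stated error.

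The second step is the core expansion. Write $\rho(\tilde g)/\gamma(\tilde g \cdot e^{-r_3})$ suitably, i.e. expand $e^{-r_3} = 1 - r_3 + \tfrac12 r_3^2 - \dots$ inside both numerator and the normalizing denominator $\rho(1) = \gamma(e^{-r_3})$, and collect terms by order in $\epsilon$. After normalization, the zeroth-order term cancels and one obtains
\beq
\int gd\pi - \int gd\hat\pi = -\,\gamma\big(\tilde g\, r_3\big) + \gamma(\tilde g)\,\gamma(r_3) + \mathrm{Rem},
\eeq
i.e. $-\Cov_\gamma(\tilde g, r_3)$ plus a remainder of order $(1+\Var_{\hat\pi}(g)^{1/2})\epsilon^2$ coming from the quadratic-and-higher terms in the exponential expansion (each extra $r_3$ costs a factor $a_3\epsilon$, and the $\nabla^4$ piece contributes $a_4\epsilon^2$). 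Now decompose $r_3 = r_3^{(3)} + r_4$ into its degree-$3$ Hermite component plus the degree-$\geq 4$ tail; the key algebraic fact from~\eqref{kl-opt-intro-rescale} is that $r_3$ is $\gamma$-orthogonal to all polynomials of degree $\leq 2$. This gives~\eqref{overallbd1V}: $|\Cov_\gamma(\tilde g, r_3)| \leq \|\tilde g - \gamma(\tilde g)\|_2 \cdot \|r_3\|_2 \lesssim \Var_{\hat\pi}(g)^{1/2}(a_3\epsilon + a_4\epsilon^2)$, after checking (via~\eqref{gcond}) that the $L^2(\gamma)$ norm of $\tilde g$ restricted to the bulk is comparable to $\Var_{\hat\pi}(g)^{1/2}$.

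The refinements~\eqref{overallbd3V} and~\eqref{overallbd4V} exploit parity. The leading term $\Cov_\gamma(\tilde g, r_3)$ with $r_3$ replaced by its cubic part $r_3^{(3)}$ is an integral of $\tilde g$ against an \emph{odd} function; if $g$ is even about $\mvi\pi$ then $\tilde g$ is even, so this term vanishes, leaving only $\gamma(\tilde g \cdot r_4)$ (order $a_4\epsilon^2$) plus the quadratic term $\tfrac12\gamma(\tilde g\, r_3^2) - \tfrac12\gamma(\tilde g)\gamma(r_3^2)$ (order $a_3^2\epsilon^2$), which is~\eqref{overallbd3V}. For linear $g$, one must push the expansion one order further: compute the next term ($\tfrac12\Cov_\gamma(\tilde g, r_3^2)$ and the cross term with $r_4$), observe that the surviving cubic-squared contribution against a linear function again has a definite parity that kills the $\epsilon^2$ piece, and that a linear function is $\gamma$-orthogonal to $r_4$'s degree-$\geq 4$ part modulo lower-order pieces generated by Taylor remainders — leaving $\epsilon^3$ and $\epsilon^4$ terms. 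This is essentially the mechanism already sketched in the introduction; the main obstacle is the bookkeeping in the third step, namely carrying the expansion of $e^{-r_3}$ to high enough order while tracking, at each order, (i) the correct power of $a_3\epsilon$ vs. $a_4\epsilon^2$, (ii) the Hermite degree (to apply orthogonality), and (iii) the parity (to get the cancellations for even/linear $g$), all while controlling the tail integrals uniformly using~\eqref{gcond}, Assumption~\ref{assume:c0}, and the polynomial-growth bounds~\eqref{globnabla}. The dependence of the suppressed constant on $q$, $c_0^{-1}$, and $R_g$ enters precisely through these tail estimates.
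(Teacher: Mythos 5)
Your outline follows the paper's route: the same change of variables $T(x)=\Svi\pi^{-1/2}(x-\mvi\pi)$ sending $\hat\pi$ to $\gamma$, the same use of the first-order optimality conditions to write $\Wbar(x)=\mathrm{const}+\|x\|^2/2+r_3(x)$ with $r_3$ orthogonal to all polynomials of degree at most two, the same expansion of $e^{-r_3}$ in numerator and normalizing constant, and the same orthogonality/parity cancellations for even and linear $g$. (One small imprecision: for linear $f$ you say orthogonality to $r_4$ holds ``modulo lower-order pieces generated by Taylor remainders''; in fact $\int \fbar r_4\,d\gamma=0$ exactly, because $r_4$ is the \emph{Hermite} remainder of order four, hence orthogonal to all Hermite polynomials of degree $\leq 3$ — this exactness is what the paper uses.)

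The genuine gap is in the quantitative step you treat as bookkeeping, namely the dimension dependence. You justify the size of $r_3$ by a pointwise Taylor bound on the ball $\|x\|\lesssim\sqrt d$, but since $\|\nabla^3\Wbar\|\lesssim a_3/\sqrt n$ (the $d$ does not appear in the derivative bound), that argument only gives $|r_3(x)|\lesssim (a_3/\sqrt n)\|x\|^3\sim a_3\epsilon\sqrt d$ at the edge of that ball, and feeding such a pointwise bound into the Cauchy--Schwarz step yields $\|r_3\|_2\lesssim a_3 d^{3/2}/\sqrt n$, i.e.\ rates off by powers of $\sqrt d$ from the theorem (which is precisely the improvement over the $d^3\ll n$ literature). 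The paper obtains $\|p_3\|_p\lesssim a_3\epsilon$ only through the Hermite-specific inequality $\|\langle T,\H_3\rangle\|_{2k}\lesssim\|\langle T,\H_3\rangle\|_{2}\lesssim d\|T\|$, controls $\|r_4\|_p\lesssim a_4\epsilon^2$ via the explicit multidimensional integral formula for the Hermite remainder (Proposition~\ref{prop:intro:hermite-tail-ddim}) combined with the representation $\H_k(x)=\E[(x+iZ)^{\otimes k}]$ so that only operator norms of $\nabla^4\Wbar$ enter, and bounds the bulk factor $\|e^{-r_3}\mathbbm{1}_{\U(R)}\|_4$ by a log-Sobolev argument before invoking~\eqref{gcond} and Assumption~\ref{assume:c0} for the tails. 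Without these ingredients your expansion skeleton is sound but does not deliver the stated bounds in $\epsilon=d/\sqrt n$.
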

The improved bound~\eqref{overallbd3V} actually holds for a somewhat wider class of functions $g$; see Remark~\ref{rem:moreg}. For a discussion of the condition~\eqref{gcond}, see Section~\ref{discuss}. \begin{theorem}[Leading order term in VI approximation error]\label{thm:corr}
Suppose $v$ satisfies Assumptions~\ref{assume:1},~\ref{assume:glob}, and~\ref{assume:c0}. Define the function
\beq\label{Qdef}Q(x) =\lla  \E_{X\sim\hat\pi}[\nabla^3V(X)], \;\;\frac12(x-\mvi\pi)\otimes\Svi\pi-\frac16(x-\mvi\pi)^{\otimes3}\rra.\eeq If $g$ satisfies~\eqref{gcond}, then
\beq\label{overallbd2V}
\l|\int gd\pi -\int gd\hat\pi - \int gQd\hat\pi\r|\les \l(1+\Var_{\hat\pi}(g)^{\frac12}\r)( \c3^2+  \c4)\epsilon^2.
\eeq  The suppressed constant is an increasing function of $q$, $c_0^{-1}$, and $R_g$.
\end{theorem}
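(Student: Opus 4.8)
The goal is to extract the leading-order term $\int gQ\,d\hat\pi$ from the error $\int g\,d\pi-\int g\,d\hat\pi$ and bound the remainder by $\epsilon^2$. The plan is to work in the rescaled coordinates, writing $\barW(x)=V(\mvi\pi+\Svi\pi^{1/2}x)$ and $\rho\propto e^{-\barW}$, so that (after pushing $g$ through the affine map) we must compare $\rho$ to $\gamma$. By Lemma~\ref{lma:intro:exist:V} and the first-order optimality conditions in the form~\eqref{kl-opt-intro-rescale}, we have $\E[\nabla\barW(Z)]=0$ and $\E[\nabla^2\barW(Z)]=I_d$, which — as emphasized in the introduction — means the Hermite expansion of $\barW(x)-\|x\|^2/2$ begins at third order: $\barW(x)=\mathrm{const.}+\|x\|^2/2+\rtri(x)$, where $\rtri$ is a Hermite series of degree $\geq 3$. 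I would then write
\beqsn
\frac{\int g\,d\pi}{\int d\pi}-\frac{\int g\,d\hat\pi}{\int d\hat\pi}
=\frac{\gamma(\tilde g\,e^{-\rtri})}{\gamma(e^{-\rtri})}-\gamma(\tilde g),
\eeqsn
where $\tilde g=g\circ(\mvi\pi+\Svi\pi^{1/2}\cdot)$, and expand $e^{-\rtri}=1-\rtri+\tfrac12\rtri^2-\cdots$. Keeping the linear term $-\rtri$ gives the leading correction, and one checks that $Q$ (transported to rescaled coordinates) is exactly $-\rtri^{(3)}$, the degree-$3$ Hermite component of $\rtri$, i.e. the cubic form built from $\E_{\hat\pi}[\nabla^3 V]=\nabla^3\barW$ evaluated appropriately — the two Hermite polynomials $x\otimes I_d$ and $x^{\otimes 3}-$ (lower order) assemble into the expression in~\eqref{Qdef}. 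The orthogonality of higher Hermite components to the correction is what makes the leading term clean.

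The quantitative heart of the argument is to control the sizes of $\rtri$, $\rtri^{(3)}$ (its cubic part), and $\rtri-\rtri^{(3)}$ (the degree-$\geq 4$ part) in the relevant weighted norms. Here I would reuse the machinery from~\cite{katsTDD} / Lemma~\ref{lma:intro:exist:V}: the bound $\tfrac23 H_V^{-1}\preceq\Svi\pi\preceq 2H_V^{-1}$ together with Assumption~\ref{assume:glob} (polynomial growth of $\|\nabla^3 v\|_{H_v},\|\nabla^4 v\|_{H_v}$) translates into $\rtri\sim a_3\epsilon$ pointwise-ish (more precisely in the $L^2(\gamma)$ sense against polynomially-growing test functions) with the degree-$\geq 4$ remainder of order $a_4\epsilon^2+a_3^2\epsilon^2$. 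Then the error expansion splits as: (i) the term $-\gamma(\tilde g\,(\rtri-\rtri^{(3)}))$, which is $O((a_4+a_3^2)\epsilon^2)$ times $\sqrt{1\vee\Var_{\hat\pi}(g)}$; (ii) the quadratic-and-higher terms $\tfrac12\gamma(\tilde g\,\rtri^2)-\cdots$, of order $a_3^2\epsilon^2$; (iii) the normalization correction coming from $\gamma(e^{-\rtri})=1+O(a_3^2\epsilon^2)$ in the denominator, which contributes at the same order. Assumption~\ref{assume:c0} and condition~\eqref{gcond} are used exactly as in Theorem~\ref{thm:Vgen} to tame the tail contribution of $\tilde g$ outside a ball of radius $R_g\sqrt d$, ensuring all the $\gamma$-integrals against $\tilde g$ and powers of $\rtri$ actually converge and are dominated by $\sqrt{1\vee\Var_{\hat\pi}(g)}$ up to $q,c_0^{-1},R_g$-dependent constants.

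Concretely the steps, in order, are: (1) rescale to $\barW,\rho,\gamma$ and record $\E[\nabla\barW(Z)]=0,\ \E[\nabla^2\barW(Z)]=I_d$; (2) write $\barW-\|x\|^2/2=\mathrm{const.}+\rtri$ and identify $\rtri^{(3)}$ with (the transport of) $-Q$; (3) split $\R^d$ into the bulk $\{\|x\|\les \sqrt d\}$ and the tail, handling the tail via Assumption~\ref{assume:c0} and~\eqref{gcond} as a negligible contribution; (4) on the bulk, Taylor-expand $e^{-\rtri}=1-\rtri+O(\rtri^2)$ and expand the ratio, collecting $\gamma(\tilde g) - \gamma(\tilde g) - \gamma(\tilde g\,\rtri^{(3)})$ plus a remainder; (5) bound the remainder's pieces — the $\rtri-\rtri^{(3)}$ linear piece, the $\rtri^2$ piece, and the denominator correction — each by $(a_3^2+a_4)\epsilon^2\cdot\sqrt{1\vee\Var_{\hat\pi}(g)}$ using Lemma~\ref{lma:intro:exist:V}, Assumption~\ref{assume:glob}, and Hermite/hypercontractive estimates on $\|\rtri\|$. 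The main obstacle I anticipate is step (5): getting the remainder down to $\epsilon^2$ (rather than a naive $\epsilon^3$-plus-$\epsilon^2$ that is still $O(\epsilon^2)$ but with the wrong constant structure) requires carefully separating the degree-$3$ Hermite part of $\rtri$ — which is $O(a_3\epsilon)$ and only enters quadratically or through the normalization — from the degree-$\geq 4$ part, which is already $O(a_4\epsilon^2)$ and may enter linearly; conflating them would lose the result. This separation, and the bookkeeping of which $a_3$ vs. $a_4$ factors attach to which order of $\epsilon$, is exactly the "remarkable interaction between first-order optimality and Hermite expansion" the paper advertises, and it is where the cited techniques of~\cite{katsTDD} do the real work.
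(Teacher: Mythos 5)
Your proposal is correct and follows essentially the same route as the paper: rescale so the first-order optimality conditions kill the first- and second-order Hermite coefficients of $\Wbar$, write $d\rho\propto e^{-\rtri}d\gamma$, expand the exponential and the normalizing ratio, identify the cubic Hermite part $p_3$ (which transports to $-Q$) as the leading term, and bound the degree-$\geq4$ remainder, the quadratic term, and the normalization correction at order $(a_3^2+a_4)\epsilon^2$ using the Hermite-remainder and tail estimates (Assumption~\ref{assume:c0} plus~\eqref{gcond}) in the style of~\cite{katsTDD}. This matches the paper's decomposition via $L(f)$, $E(f)$, $E$ and Lemmas~\ref{lma:LmEm}--\ref{leading-leading}, with the quantitative work you defer to the cited techniques corresponding to the paper's explicit multivariate Hermite remainder formula and tensor-norm bounds.
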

See Section~\ref{discuss} for further insight into the leading order term $\int gQd\hat\pi$.
\begin{remark}\label{rem:moreg}
Note that the bound~\eqref{overallbd3V} from Theorem~\ref{thm:Vgen} is an immediate corollary of Theorem~\ref{thm:corr}, since $\int gQd\hat\pi=0$ if $g$ is even about $\mvi\pi$.
\end{remark}
Next, we bound the TV error. Let $\mathcal B(\R^d)$ be the set of all Borel sets of $\R^d$, and $\mathcal S_{\mvi\pi}(\R^d)$ be the set of Borel sets symmetric about $\mvi\pi$.    
\begin{corollary}[TV error]\label{corr:TV}Suppose $v$ satisfies Assumptions~\ref{assume:1},~\ref{assume:glob}, and~\ref{assume:c0}. Then
\beqs
\sup_{A\in \mathcal B(\R^d)}\l|\pi(A)-\hat\pi(A)\r|&\lesssim  a_3\epsilon +  a_4\epsilon^2,\\
\sup_{A\in \mathcal S_{\mvi\pi}(\R^d)}\l|\pi(A)-\hat\pi(A)\r|&\lesssim \l( a_3^2+ a_4\r)\epsilon^2.
\eeqs The suppressed constant is an increasing function of $q$ and $c_0^{-1}$.
\end{corollary}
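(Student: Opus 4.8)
The plan is to derive both TV bounds from Theorem~\ref{thm:Vgen} by choosing $g$ to be indicator functions of Borel sets and checking that such $g$ satisfy the growth condition~\eqref{gcond}. First recall that for any two probability measures, $\TV(\pi,\hat\pi)=\sup_{A\in\mathcal B(\R^d)}|\pi(A)-\hat\pi(A)| = \sup_A |\int \mathbbm{1}_A\, d\pi - \int \mathbbm{1}_A\, d\hat\pi|$. So I would fix $A\in\mathcal B(\R^d)$ and apply~\eqref{overallbd1V} with $g=\mathbbm{1}_A$. The key observations are: (i) $|g(x)-\int g\, d\hat\pi|\leq 1$ pointwise, so the condition~\eqref{gcond} holds with, say, $R_g=1$ (the right-hand side $\exp(c_0\sqrt d\|x-\mvi\pi\|_{\Svi\pi^{-1}}/4)$ is $\geq e^{c_0 d/4}\geq 1$ on the relevant region, since $\|x-\mvi\pi\|_{\Svi\pi^{-1}}\geq R_g\sqrt d = \sqrt d$); and (ii) $\Var_{\hat\pi}(\mathbbm{1}_A)=\hat\pi(A)(1-\hat\pi(A))\leq \tfrac14$, so the prefactor $1+\Var_{\hat\pi}(g)^{1/2}$ is bounded by an absolute constant. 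Plugging into~\eqref{overallbd1V} gives $|\pi(A)-\hat\pi(A)|\lesssim a_3\epsilon + a_4\epsilon^2$ with a constant that is an increasing function of $q$ and $c_0^{-1}$ (the $R_g$-dependence is now absorbed since $R_g=1$ is absolute), and taking the supremum over $A$ yields the first bound.

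For the second, sharper bound, I would restrict to $A\in\mathcal S_{\mvi\pi}(\R^d)$, i.e., Borel sets symmetric about $\mvi\pi$, meaning $x\in A\iff 2\mvi\pi - x\in A$. Then $g=\mathbbm{1}_A$ is an even function about $\mvi\pi$: $g(\mvi\pi + y)=g(\mvi\pi - y)$ for all $y$. Hence the hypothesis of~\eqref{overallbd3V} is met (the condition~\eqref{gcond} is verified exactly as above), and $|\pi(A)-\hat\pi(A)|\lesssim (a_3^2+a_4)\epsilon^2$ with constant increasing in $q,c_0^{-1}$; taking the supremum over $A\in\mathcal S_{\mvi\pi}(\R^d)$ completes the proof.

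I do not anticipate a genuine obstacle here, as this is essentially a direct specialization of Theorem~\ref{thm:Vgen} to bounded, $\{0,1\}$-valued test functions. The one point requiring a little care is the verification of~\eqref{gcond} for indicators: one must confirm that the uniform bound $|g - \hat\pi(g)|\leq 1$ genuinely implies the exponential-growth condition with an absolute choice of $R_g$, so that no $R_g$-dependence leaks into the final constant; this hinges on $\exp(c_0\sqrt d\,\|x-\mvi\pi\|_{\Svi\pi^{-1}}/4)\geq 1$ whenever $\|x-\mvi\pi\|_{\Svi\pi^{-1}}\geq \sqrt d$, which is immediate since the exponent is nonnegative there. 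The only other thing to note is that the observation $\mathcal S_{\mvi\pi}(\R^d)\ni A\Rightarrow \mathbbm{1}_A$ even about $\mvi\pi$ is exactly the bridge to the even-function case of the main theorem, so the two parts of the corollary are parallel.
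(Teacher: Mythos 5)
Your proposal is correct and matches the paper's proof essentially verbatim: the paper also sets $g=\mathbbm{1}_A$, notes $|g-\hat\pi(g)|\leq 1$ so that~\eqref{gcond} holds with an absolute $R_g$ (the paper takes $R_g=0$) and $\Var_{\hat\pi}(g)\leq 1$, and then applies~\eqref{overallbd1V} and~\eqref{overallbd3V}, with the symmetric-set case handled exactly by your observation that $\mathbbm{1}_A$ is even about $\mvi\pi$ for $A\in\mathcal S_{\mvi\pi}(\R^d)$.
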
  
\begin{corollary}[Mean and covariance error]\label{corr:meancov}Let $\mpi = \int xd\pi(x)$ be the mean of $\pi$, and $\Sigpi = \int (x-\mpi)(x-\mpi)^Td\pi(x)$ be the covariance of $\pi$. Suppose $v$ satisfies Assumptions~\ref{assume:1},~\ref{assume:glob}, and~\ref{assume:c0}. Then
\beqs
\|\Svi\pi^{-1/2}(\mpi - \mvi\pi)\| &\les \l(a_3^3+ a_3a_4\r)\epsilon^3 +  a_4^2\epsilon^4,\\
\|\Svi\pi^{-1/2}(\Sigpi -\Svi\pi)\Svi\pi^{-1/2}\|&\les \l( a_3^2+ a_4\r)\epsilon^2.
\eeqs
The suppressed constant is an increasing function of $q$ and $c_0^{-1}$.
\end{corollary}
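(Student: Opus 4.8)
The plan is to obtain both the mean and the covariance error by applying Theorem~\ref{thm:Vgen} (and, for the sharper mean bound, Theorem~\ref{thm:corr}) to carefully chosen test functions $g$, after first reducing to the rescaled coordinates in which $\hat\pi=\mathcal N(\mvi\pi,\Svi\pi)$ becomes the standard Gaussian. Concretely, write $x = \mvi\pi + \Svi\pi^{1/2}y$. For the covariance error, fix a unit vector $u\in\R^d$ and take $g(x) = \langle \Svi\pi^{-1/2}(x-\mvi\pi), u\rangle^2 = \langle y,u\rangle^2$. This $g$ is a quadratic, hence even about $\mvi\pi$, so the improved bound~\eqref{overallbd3V} applies. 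One computes $\int g\,d\hat\pi = \|u\|^2 = 1$ and $\int g\,d\pi = u^T\Svi\pi^{-1/2}\big(\int (x-\mvi\pi)(x-\mvi\pi)^T d\pi\big)\Svi\pi^{-1/2}u$; after subtracting off the contribution of the mean discrepancy $\mpi-\mvi\pi$ (which is negligible by the mean bound, proved first or in parallel), the left-hand side of~\eqref{overallbd3V} becomes $|u^T\Svi\pi^{-1/2}(\Sigpi-\Svi\pi)\Svi\pi^{-1/2}u|$ up to lower-order terms. Since $\Var_{\hat\pi}(g)$ for $g=\langle y,u\rangle^2$ is an absolute constant ($=2$), the factor $1+\Var_{\hat\pi}(g)^{1/2}$ is absorbed into the suppressed constant. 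Taking the supremum over $u$ (using that the quantity is a quadratic form in a symmetric matrix) yields $\|\Svi\pi^{-1/2}(\Sigpi-\Svi\pi)\Svi\pi^{-1/2}\|\lesssim (a_3^2+a_4)\epsilon^2$.

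For the mean error, fix a unit vector $u$ and take the linear function $g(x)=\langle \Svi\pi^{-1/2}(x-\mvi\pi), u\rangle$. Then $\int g\,d\hat\pi = 0$, $\int g\,d\pi = u^T\Svi\pi^{-1/2}(\mpi - \mvi\pi)$, and $\Var_{\hat\pi}(g)=\|u\|^2=1$. Linear $g$ falls under~\eqref{overallbd4V}, giving $|u^T\Svi\pi^{-1/2}(\mpi-\mvi\pi)|\lesssim (a_3^3+a_3a_4)\epsilon^3 + a_4^2\epsilon^4$; taking the supremum over unit $u$ gives the claimed bound on $\|\Svi\pi^{-1/2}(\mpi-\mvi\pi)\|$. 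Before any of this, I must check that the chosen $g$'s actually satisfy the growth condition~\eqref{gcond}: linear and quadratic functions grow polynomially, hence are dominated by $\exp(c_0\sqrt d\,\|x-\mvi\pi\|_{\Svi\pi^{-1}}/4)$ for $\|x-\mvi\pi\|_{\Svi\pi^{-1}}\geq R_g\sqrt d$ with $R_g$ an absolute constant (depending only on $c_0,q$), so~\eqref{gcond} holds and the dependence of the suppressed constants on $R_g$ collapses into dependence on $q,c_0^{-1}$ as claimed.

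The one genuinely nontrivial point — and the main obstacle — is the \emph{coupling between the mean and covariance errors}: when I expand $\int g\,d\pi$ for the quadratic test function in terms of $\Sigpi$, the natural object that appears is $\int (x-\mvi\pi)(x-\mvi\pi)^T d\pi$, which differs from $\Sigpi = \int(x-\mpi)(x-\mpi)^T d\pi$ by the rank-one term $(\mpi-\mvi\pi)(\mpi-\mvi\pi)^T$. I would handle this by first establishing the mean bound, then noting that $\|\Svi\pi^{-1/2}(\mpi-\mvi\pi)(\mpi-\mvi\pi)^T\Svi\pi^{-1/2}\| = \|\Svi\pi^{-1/2}(\mpi-\mvi\pi)\|^2 \lesssim (a_3^3\epsilon^3+\cdots)^2$, which under Assumption~\ref{assume:glob} (so that $a_3\epsilon, a_4\epsilon^2\lesssim 1$) is of strictly higher order than the target $(a_3^2+a_4)\epsilon^2$ and is therefore harmlessly absorbed. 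A secondary bookkeeping issue is confirming that $\Var_{\hat\pi}(g)$ is $O(1)$ for these specific $g$ and does not secretly carry hidden $d$-dependence; this is immediate from the exact Gaussian moment computations above. Everything else is a direct substitution into the already-proved Theorems~\ref{thm:Vgen} and~\ref{thm:corr}.
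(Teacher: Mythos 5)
Your proposal is correct and follows essentially the same route as the paper: the paper also applies the linear-case bound \eqref{overallbd4V} to $g_u(x)=u^T\Svi\pi^{-1/2}(x-\mvi\pi)$ for the mean, and the even-case bound \eqref{overallbd3V} to $g_u^2$ for the covariance, handling the mean--covariance coupling by adding the term $\|\Svi\pi^{-1/2}(\mpi-\mvi\pi)\|^2$, which is absorbed exactly as you describe. The only cosmetic difference is that the paper verifies the growth condition \eqref{gcond} by explicitly invoking Lemma~\ref{lma:gsuff} (with $\alpha=1/2$ for $g_u$ and $\alpha=1/4$ for $g_u^2$), which is the precise form of your "polynomial growth is dominated by the exponential" remark.
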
 
\begin{remark}
Recall from~\eqref{Svi-bds} that $\Svi\pi$ is equivalent to $H^{-1}$ with respect to the positive definite matrix ordering. Therefore, given a lower bound $n\alpha\preceq H$, we can translate the bounds in Corollary~\ref{corr:meancov} into analogous, unweighted bounds:
\beqsn
\|\mpi-\mvi\pi\|&\les \frac{1}{\sqrt{n\alpha}}\l[\l( \c3^3+ \c3 \c4\r)\frac{d^3}{n\sqrt n} +  \c4^2\frac{d^4}{n^2}\r],\\
\|\Sigpi-\Svi\pi\|&\les \frac{1}{n\alpha}\l[\l( \c3^2+ \c4\r)\frac{d^2}{n}\r]
\eeqsn
\end{remark}
\subsection{Discussion}\label{discuss}
\paragraph*{Assumptions on $v$ and $g$}Recall that in Bayesian inference, $v$ is the normalized negative log posterior, taking the form~\eqref{logpost}, where the $x_i$ are i.i.d. In typical situations, we expect that when $n$ is large, the prior has negligible effect and $v(\theta)\approx-\E\log p_\theta(X)$, in which case $v$ depends only on $d$. However, our results do not only apply to this typical regime. The function $v$ is allowed to depend on both $n$  and $d$, so that the constants from the assumptions --- $a_3,a_4,q,c_0$ --- may also depend on both $n$ and $d$. Our results hold as long as these quantities and $d$ and $n$ are constrained by~\eqref{cconds}. However, for the logistic regression example, we will see that $a_3,a_4,q, c_0$ can all be chosen to be absolute constants. 


When $v$ is convex, Assumption~\ref{assume:c0} automatically follows from~\eqref{cconds}. Indeed, we have the following lemma.
\begin{lemma}\label{lma:c0convex}
Let $v$ be convex and suppose $a_3(1+(1/2)^q)\sqrt{d/n}\leq 3$. Then $v$ satisfies Assumption~\ref{assume:c0} with $c_0=1/8$.
\end{lemma}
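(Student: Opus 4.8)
The plan is to prove Lemma~\ref{lma:c0convex} by a direct convexity argument that reduces the global lower bound on $v(x)-v(\mhat)$ to control of the gradient of $v$ at a single intermediate point, which in turn is controlled by the polynomial growth Assumption~\ref{assume:glob} on $\nabla^3 v$ together with the first-order condition $\nabla v(\mhat)=0$.

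\medskip
\noindent\textbf{Step 1: Set up the convexity inequality.} Fix $x$ with $\|x-\mhat\|_{H_v}\geq \tfrac12\sqrt{d/n}$, and let $r=\|x-\mhat\|_{H_v}$. Consider the point $y$ on the segment from $\mhat$ to $x$ at $H_v$-distance exactly $\tfrac12\sqrt{d/n}$ from $\mhat$, i.e. $y = \mhat + t(x-\mhat)$ with $t = \tfrac12\sqrt{d/n}\,/\,r \in (0,1]$. By convexity of $v$ along this segment, $v(x) - v(y) \geq \langle \nabla v(y), x - y\rangle$, and similarly $v(y)\geq v(\mhat)$. More usefully, convexity gives $v(x)-v(\mhat) \geq \langle \nabla v(y), x-\mhat\rangle$ when $\nabla v(y)$ points "outward"; the clean way is to write, for the one-dimensional convex function $\phi(s) = v(\mhat + s(x-\mhat))$ on $[0,1]$, that $\phi(1)-\phi(0) \geq \phi'(t)\cdot(1-t) + (\phi(t)-\phi(0))$, and $\phi(t)-\phi(0)\geq 0$, so $v(x)-v(\mhat) \geq (1-t)\,\langle \nabla v(y), x-\mhat\rangle$. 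I would instead more simply lower bound $v(x)-v(\mhat)$ by the increment of $\phi$ over $[t,1]$ and use $\phi' $ monotone nondecreasing.

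\medskip
\noindent\textbf{Step 2: Lower bound the directional derivative at $y$.} Let $u = (x-\mhat)/r$ so $\|u\|_{H_v}=1$. We need a lower bound on $\langle \nabla v(y), u\rangle$. Since $\nabla v(\mhat)=0$, Taylor expansion gives $\langle \nabla v(y),u\rangle = \langle \nabla^2 v(\mhat)(y-\mhat), u\rangle + \tfrac12\langle \nabla^3 v(\xi), (y-\mhat)^{\otimes 2}\otimes u\rangle$ for some $\xi$ on the segment. The first term equals $\tfrac12\sqrt{d/n}\,\langle H_v u, u\rangle = \tfrac12\sqrt{d/n}\,\|u\|_{H_v}^2 = \tfrac12\sqrt{d/n}$. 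The second (error) term is bounded in absolute value by $\tfrac12\|\nabla^3 v(\xi)\|_{H_v}\,\|y-\mhat\|_{H_v}^2$, and by~\eqref{globnabla} with $k=3$, since $\|y-\mhat\|_{H_v}=\tfrac12\sqrt{d/n}\leq \sqrt{d/n}$ means $\sqrt{n/d}\|y-\mhat\|_{H_v}=\tfrac12\leq 1$, we get $\|\nabla^3 v(\xi)\|_{H_v}\leq a_3(1\vee \tfrac12)^q = a_3$. Hence the error term is at most $\tfrac12 a_3 \cdot \tfrac14 (d/n) = \tfrac18 a_3 d/n = \tfrac18 a_3\sqrt{d/n}\cdot\sqrt{d/n}$. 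Using the hypothesis $a_3(1+(1/2)^q)\sqrt{d/n}\leq 3$, in particular $a_3\sqrt{d/n}\leq 3$, so the error is at most $\tfrac{3}{8}\sqrt{d/n}$ --- this is a bit large; I would instead keep the factor and use $\tfrac12\langle\nabla^3 v(\xi),(y-\mhat)^{\otimes2}\otimes u\rangle \geq -\tfrac12 a_3 (\tfrac12\sqrt{d/n})^2$ and then track constants carefully so that $\langle\nabla v(y),u\rangle \geq \tfrac12\sqrt{d/n} - \tfrac18 a_3 (d/n) \geq \tfrac14\sqrt{d/n}$, which requires $a_3\sqrt{d/n}\leq 2$, implied by the hypothesis. (The factor $(1+(1/2)^q)$ in the hypothesis suggests the authors bound $\nabla^3 v$ along the whole segment from $\mhat$ to $y$, where the argument ranges over $H_v$-distances up to $\tfrac12\sqrt{d/n}$, giving the extra $(1/2)^q$; I would mirror that.)

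\medskip
\noindent\textbf{Step 3: Integrate and conclude.} By monotonicity of $\phi'$ on $[t,1]$ (convexity) and $\phi'(t) = r\,\langle\nabla v(y),u\rangle \geq \tfrac14 r\sqrt{d/n}$, we get $v(x)-v(\mhat) = \phi(1)-\phi(0)\geq \phi(1)-\phi(t) \geq (1-t)\phi'(t) \geq (1-t)\cdot\tfrac14 r\sqrt{d/n}$. Since $t = \tfrac12\sqrt{d/n}/r \leq 1$ and $r\geq \tfrac12\sqrt{d/n}$, one has $(1-t)r = r - \tfrac12\sqrt{d/n} \geq \tfrac12 r$ whenever $r \geq \sqrt{d/n}$; for $\tfrac12\sqrt{d/n}\leq r\leq \sqrt{d/n}$ one checks $(1-t)r \geq$ a constant times $r$ directly (here $t\in[1/2,1]$, and $(1-t)r = r-\tfrac12\sqrt{d/n}$, which can be small, so I would instead bound $v(x)-v(\mhat)\geq \phi(t)-\phi(0)\geq \tfrac12 t\,\phi'(t/2)\cdot\ldots$ --- the robust fix is to note that on $r\in[\tfrac12\sqrt{d/n},\sqrt{d/n}]$, $v(x)-v(\mhat)\geq \phi(t)-\phi(0)$ and repeat the Taylor argument, or simply use that $v(x)-v(\mhat)\geq \tfrac18\sqrt{d/n}\cdot\|x-\mhat\|_{H_v}$ holds with room to spare on this bounded annulus). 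Combining the two ranges yields $v(x)-v(\mhat)\geq \tfrac18\sqrt{d/n}\,\|x-\mhat\|_{H_v}$ for all $\|x-\mhat\|_{H_v}\geq \tfrac12\sqrt{d/n}$, i.e. Assumption~\ref{assume:c0} with $c_0 = 1/8$.

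\medskip
\noindent\textbf{Main obstacle.} The only real subtlety is bookkeeping the constants so that $c_0 = 1/8$ comes out exactly under the stated hypothesis $a_3(1+(1/2)^q)\sqrt{d/n}\leq 3$, and handling the transitional annulus $\tfrac12\sqrt{d/n}\leq \|x-\mhat\|_{H_v}\leq \sqrt{d/n}$ where the crude "$(1-t)r\geq \tfrac12 r$" estimate degrades; on that compact range a direct second-order Taylor bound on $v(x)-v(\mhat)$ itself (not its derivative) is cleaner. Everything else is routine one-dimensional convex calculus plus the derivative bound~\eqref{globnabla}.
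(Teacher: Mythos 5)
Your overall strategy (work along the ray from $\mhat$ to $x$, control $\nabla^3 v$ on the inner segment of length $\tfrac12\sqrt{d/n}$, and finish with one-dimensional convexity) is in the right spirit, but as written the argument does not reach the stated constant $c_0=1/8$, and the place where it fails is exactly the constant bookkeeping you flagged as the "only real subtlety." Concretely: (i) your claim that the hypothesis implies $a_3\sqrt{d/n}\le 2$ is false --- $a_3(1+(1/2)^q)\sqrt{d/n}\le 3$ only gives $a_3\sqrt{d/n}\le 3/(1+(1/2)^q)$, which approaches $3$ as $q$ grows; with the honest bound the gradient expansion in Step 2 yields only $\langle\nabla v(y),u\rangle\ge \tfrac12\sqrt{d/n}-\tfrac38\sqrt{d/n}=\tfrac18\sqrt{d/n}$, not $\tfrac14\sqrt{d/n}$. (ii) Step 3 throws away $\phi(t)-\phi(0)\ge 0$ and keeps only $(1-t)\phi'(t)$; since $(1-t)r=r-\tfrac12\sqrt{d/n}$, this vanishes on the annulus $\tfrac12\sqrt{d/n}\le r\le\sqrt{d/n}$ (which you leave as a sketch), and even on $r\ge\sqrt{d/n}$, where $(1-t)r\ge\tfrac12 r$, the corrected derivative constant gives only $v(x)-v(\mhat)\ge\tfrac1{16}\sqrt{d/n}\,r$, i.e.\ $c_0=1/16$. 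So neither region delivers $1/8$.

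The paper's proof avoids the derivative entirely and is shorter: Taylor-expand the \emph{value} at the radius $\|x-\mhat\|_{H_v}=\tfrac12\sqrt{d/n}$, where $\|\nabla^3 v(\xi)\|_{H_v}\le a_3(1+(1/2)^q)$, so that $v(x)-v(\mhat)\ge\tfrac12\|x-\mhat\|_{H_v}^2\bigl(1-\tfrac{a_3}{6}(1+(1/2)^q)\sqrt{d/n}\bigr)\ge\tfrac14\|x-\mhat\|_{H_v}^2=\tfrac18\sqrt{d/n}\,\|x-\mhat\|_{H_v}$; then, because $v$ is convex, the radial difference quotient $r\mapsto\bigl(v(\mhat+ru)-v(\mhat)\bigr)/r$ is non-decreasing along every ray, which transports this bound to all $\|y-\mhat\|_{H_v}\ge\tfrac12\sqrt{d/n}$ with no case split. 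If you want to keep your derivative-based route, it can be repaired by not discarding the increment over $[0,t]$: the same value expansion gives $\phi(t)-\phi(0)\ge\tfrac14\cdot\tfrac14\tfrac dn=\tfrac1{16}\tfrac dn$, and adding $(1-t)\phi'(t)\ge\tfrac18\sqrt{d/n}\,\bigl(r-\tfrac12\sqrt{d/n}\bigr)$ gives exactly $\tfrac18\sqrt{d/n}\,r$ for all $r\ge\tfrac12\sqrt{d/n}$ --- but note this patch needs the value expansion anyway, at which point the monotone-difference-quotient argument is the more economical path.
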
 See Appendix~\ref{app:sec:main} for the proof. If $v$ is not convex, note that the condition~\eqref{c0cond} is a nontrivial requirement only for larger $x$. To see this, fix the largest possible $r$ such that $a_3r(1+r^q)\sqrt{d/n}\leq 3/2$. Then a straightforward Taylor expansion shows that
\beqsn
v(x)-v(\mhat)\geq \frac14\|x-\mhat\|_{H_v}^2\geq \frac18\sqrt{\frac dn}\|x-\mhat\|_{H_v}\eeqsn for all $x$ such that $(1/2)\sqrt{d/n}\leq\|x-\mhat\|_{H_v}\leq r\sqrt{d/n}$. Therefore,~\eqref{c0cond} need only be checked for $\|x-\mhat\|_{H_v}\geq r\sqrt{d/n}$.\\
 
Next, let us discuss the condition~\eqref{gcond} on the function $g$ in Theorem~\ref{thm:Vgen}. The condition is easiest to check when $g$ is of the form $g(x)=f(\Svi\pi^{-1/2}(x-\mvi\pi))$. In this case, we have the following sufficient condition.
\begin{lemma}\label{lma:gsuff}Let $g(x)=f(\Svi\pi^{-1/2}(x-\mvi\pi))$ and suppose $f$ satisfies 
\beq\label{fcondeasy}|f(y)-f(0)|\leq e^{C_f\sqrt d\|y\|^\alpha}\quad\forall y\in\R^d,\eeq for some $0\leq\alpha<1$ and $C_f>0$. Then for some function $R_g=R_g(C_f,\alpha,c_0^{-1})$ which is increasing in each of the variables, the condition~\eqref{gcond} is satisfied.
\end{lemma}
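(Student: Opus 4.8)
The goal is to deduce, from the simple sub-polynomial growth condition~\eqref{fcondeasy} on $f$, the condition~\eqref{gcond} on $g(x)=f(\Svi\pi^{-1/2}(x-\mvi\pi))$. The plan is a direct change of variables followed by an elementary comparison of exponents.

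First I would substitute $y = \Svi\pi^{-1/2}(x-\mvi\pi)$, so that $g(x) - g(\mvi\pi) = f(y) - f(0)$ and, crucially, $\|x-\mvi\pi\|_{\Svi\pi^{-1}}^2 = (x-\mvi\pi)^T\Svi\pi^{-1}(x-\mvi\pi) = \|y\|^2$, i.e. $\|x-\mvi\pi\|_{\Svi\pi^{-1}} = \|y\|$. Next I would need to pass from $|g(x) - g(\mvi\pi)|$ to $|g(x) - \int g\,d\hat\pi|$; since $\int g\,d\hat\pi = \E_{Y\sim\gamma}[f(Y)]$, the bound~\eqref{fcondeasy} gives $|\int g\,d\hat\pi - g(\mvi\pi)| = |\E[f(Y)-f(0)]| \le \E[e^{C_f\sqrt d\|Y\|^\alpha}]$, which is a finite ($d$-dependent) constant — by a standard Gaussian concentration / moment bound, $\E[e^{C_f\sqrt d\|Y\|^\alpha}]$ is controlled since $\alpha<1$ makes the exponent grow slower than $\|Y\|^2$. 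So there is a constant $K = K(C_f,\alpha,d)$ — in fact one can take $K$ of the form $e^{c(C_f,\alpha) d}$ — with $|g(x) - \int g\,d\hat\pi| \le e^{C_f\sqrt d\|y\|^\alpha} + K \le 2\max(e^{C_f\sqrt d\|y\|^\alpha}, K)$.

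Then the remaining task is purely to show that for $\|y\| = \|x-\mvi\pi\|_{\Svi\pi^{-1}} \ge R_g\sqrt d$ with $R_g$ chosen large enough (depending on $C_f,\alpha,c_0^{-1}$), one has $2\max(e^{C_f\sqrt d\|y\|^\alpha}, K) \le \exp(c_0\sqrt d\,\|y\|/4)$. Taking logarithms, this is the pair of inequalities $\log 2 + C_f\sqrt d\,\|y\|^\alpha \le \tfrac{c_0}{4}\sqrt d\,\|y\|$ and $\log 2 + \log K \le \tfrac{c_0}{4}\sqrt d\,\|y\|$. Since $\log K \lesssim_{C_f,\alpha} d$ and $\|y\| \ge R_g\sqrt d$, the right-hand side is $\gtrsim c_0 R_g d$, so the second inequality holds once $R_g \gtrsim c_0^{-1}\cdot(\text{const}(C_f,\alpha))$. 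For the first, divide by $\sqrt d\,\|y\|$: we need $\tfrac{\log 2}{\sqrt d\,\|y\|} + C_f\|y\|^{\alpha-1} \le \tfrac{c_0}{4}$; since $\alpha-1<0$ and $\|y\|\ge R_g\sqrt d \ge R_g$, both terms are small once $R_g$ is large enough in terms of $C_f$, $\alpha$, and $c_0^{-1}$ — explicitly $R_g^{1-\alpha} \gtrsim C_f/c_0$ suffices for the second term, and $R_g \gtrsim 1/c_0$ for the first. Choosing $R_g$ to be the maximum of these thresholds gives an $R_g$ that is increasing in $C_f$, in $c_0^{-1}$, and (because $1-\alpha$ decreases as $\alpha\uparrow 1$) in $\alpha$, as claimed.

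The main thing to get right — the only step with any subtlety — is the bound on $\int g\,d\hat\pi$, i.e. verifying $\E_{Y\sim\gamma}[e^{C_f\sqrt d\|Y\|^\alpha}] \le e^{O_{C_f,\alpha}(d)}$; everything else is the elementary exponent-comparison above. This Gaussian integral bound follows from, e.g., writing $C_f\sqrt d\,\|Y\|^\alpha \le \tfrac14\|Y\|^2 + C'(C_f,\alpha) d$ by Young's inequality applied to the conjugate exponents $2/\alpha$ and $2/(2-\alpha)$ (here one uses $\alpha<1<2$), so that $\E[e^{C_f\sqrt d\|Y\|^\alpha}] \le e^{C'd}\E[e^{\|Y\|^2/4}] = e^{C'd}\cdot 2^{d/2}$, which is of the desired form. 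I would relegate the routine constant-chasing in the exponent comparison to a short paragraph and record only the qualitative dependence of $R_g$ on $(C_f,\alpha,c_0^{-1})$.
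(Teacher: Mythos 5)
Your proposal is correct and follows essentially the same route as the paper: reduce via the change of variables to showing $|f(y)-\gamma(f)|\le e^{c_0\sqrt d\,\|y\|/4}$ for $\|y\|\ge R_g\sqrt d$, bound the Gaussian expectation of $|f|$ by $e^{O_{C_f,\alpha}(d)}$, and conclude with the elementary exponent comparison giving thresholds of the form $c_0^{-1}\,\mathrm{const}(C_f,\alpha)$ and $(C_f/c_0)^{1/(1-\alpha)}$. The only differences are cosmetic: the paper bounds $\gamma(|f|)$ by splitting into a ball of radius $O(C_f\sqrt d)$ plus a Gaussian tail rather than by Young's inequality, and it places a $1\vee(\cdot)$ inside the bracket before raising to the power $1/(1-\alpha)$ so that monotonicity in $\alpha$ holds even when the base is below $1$ --- a small detail you should incorporate into your final choice of $R_g$.
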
 See Appendix~\ref{supp:aux} for the proof.

\paragraph*{Leading order term: VI vs Laplace} We can gain insight into the difference between the Laplace and VI approximations by considering the respective leading order terms (LOTs) of the difference $\int gd\pi - \int gd\hat\pi$, where $\hat\pi$ is either the Laplace or VI Gaussian approximation. To make this comparison, we first make note of an alternative representation of the VI leading order term $\int gQd\hat\pi$. Let \beq\label{WbarVI}\Wbar(x)=V(\mvi\pi+\Svi\pi^{1/2}x),\qquad f(x)=g(\mvi\pi+\Svi\pi^{1/2}x),\eeq and let $\H_3$ be the tensor of third order Hermite polynomials. See Appendix~\ref{hermite-tail} for a primer on Hermite polynomials in dimension $d\geq1$. Let $\A_3(f)=\E[f(Z)\H_3(Z)]$ and $\A_3(\Wbar)=\E[\Wbar(Z)\H_3(Z)]=\E[\nabla^3\Wbar(Z)]$, where the second representation is by Gaussian integration by parts. (We don't write $\A_3(f)$ this way because $f$ need not be $C^3$.) It can be shown (see Theorem~\ref{thm:W} and Remark~\ref{rk:A3fV}) that 
$$\int gQd\hat\pi = -\frac{1}{3!}\lla \A_3(\Wbar),\;\;\A_3(f)\rra = -\frac{1}{6}\lla\E\l[\nabla^3\Wbar(Z)\r], \E\l[f(Z)\H_3(Z)\r]\rra.$$

In contrast, it is shown in~\cite{katskew} that the leading order term of the difference $\int gd\pi - \int gd\hat\pi$ for the Laplace approximation $\hat\pi=\mathcal N(\mhat, H_V^{-1})$ takes the form $ -\frac{1}{6}\la\nabla^3\Wbar(0),\E\l[f(Z)Z^{\otimes3}\r]\ra$ where now $\Wbar,f$ are defined as \beq\label{WbarLap}\Wbar(x)=V(\mhat+H_V^{-1/2}x),\qquad f(x)=g(\mhat+H_V^{-1/2}x).\eeq To summarize, we have the following errors, informally:
\begin{align*}
\int gd\pi - \int gd\hat\pi\; &=  -\frac{1}{6}\lla\E\l[\nabla^3\Wbar(Z)\r],\; \E\l[f(Z)\H_3(Z)\r]\rra + \mathcal O\l(\frac{d^2}{n}\r)\tag{VI}\\
\int gd\pi - \int gd\hat\pi\; &=  -\frac{1}{6}\lla\nabla^3\Wbar(0),\;\E\l[f(Z)Z^{\otimes3}\r]\rra+ \mathcal O\l(\frac{d^2}{n}\r)\tag{Laplace}\\
\end{align*}
Here, $\Wbar, f$ are given by~\eqref{WbarVI} in the first line, and by~\eqref{WbarLap} in the second line. The remainder term also depends on $v$ and $g$, but we have written $\mathcal O(d^2/n)$ for brevity. We now compare the two approximation errors for various functions $f$. The most natural way to do so is to compare the two LOTs for the same function $f$, keeping in mind that this $f$ corresponds to two different functions $g$ for the two methods. 

Both of the two LOTs can be shown to have order $d/\sqrt n$, generically. In particular, if $f$ is an indicator, then neither LOT vanishes. This explains why $\TV(\pi,\hat\pi)\les d/\sqrt n$ for both VI and Laplace. Both the VI and Laplace LOT vanish when $f$ is even. In particular, this explains why the Laplace and VI covariance errors are both bounded by $d^2/n$. 

Finally, the key difference between Laplace and VI is that for functions $f$ which are orthogonal to all third order Hermite polynomials but which are \emph{not} even, the VI LOT vanishes while the Laplace LOT does not. Probably the most important example of such a function is $f(x)=x$. This explains VI's superior mean approximation accuracy. (The fact that the error is $\mathcal O((d/\sqrt n)^3)$ rather than $\mathcal O((d/\sqrt n)^2)$ stems from a cancellation in the next order term.) Another example of such an $f$ is a fifth order Hermite polynomial.

We remark that having the LOT allows one to prove lower bounds in addition to upper bounds. In~\cite{katskew}, we showed that for a posterior stemming from logistic regression, the Laplace mean approximation error is \emph{lower}-bounded by $d/\sqrt n$, giving definitive proof that the VI mean approximation error is better in general. See Section 2.2 of~\cite{katskew} for the precise statements of the Laplace error bounds summarized above. 

\paragraph*{Solution to first-order optimality conditions~\eqref{kl-opt-intro} vs solution to~\eqref{VI-Gauss}}Recall that in our results, $(\mvi\pi,\Svi\pi)$ is a solution to~\eqref{kl-opt-intro} for which $\mvi\pi$ is in a neighborhood of $\mhat$, the mode of the posterior $\pi$. In the case that the posterior is multimodal, $(\mvi\pi,\Svi\pi)$ may not be the global minimizer of the KL objective~\eqref{VI-Gauss}. This may occur if the posterior landscape has a single, ``correct" dominant mode as well as a ``false" second mode with a larger density value, but which is nevertheless a low probability region. Such a low probability mode could arise due to outliers in the data, for example. In this setting, the solution to~\eqref{kl-opt-intro} would lie near the false mode, while the solution to~\eqref{VI-Gauss} would lie near the correct mode. It would therefore be useful to study the global minimizer of the KL objective in this regime, rather than the critical point near the highest mode, as we do in the present work. Moreover, this is potentially a case in which VI may be preferable over the Laplace approximation. Intuitively, we expect that VI is stable to low probability perturbations, while Laplace is not.

We leave the question of VI's performance under this kind of multimodality to future work.
Note that our assumptions do allow for multimodality, as long as the negative log posterior grows linearly away from the minimum. Our results show that in this case, the solution to~\eqref{kl-opt-intro} near the mode is a good approximation to the posterior, whether or not this solution coincides with the global minimizer of the KL objective.


\paragraph*{Computation}
Although the computational side of Gaussian VI is not our focus, for the sake of completeness we summarize the known computational guarantees on Gaussian VI algorithms. The typical algorithm iteratively updates the mean and covariance matrix according to some optimization procedure. For example,~\cite{gauss-VI} studies the discretization of an ODE evolving $(m_t,\Sigma_t)$, which corresponds to a Wasserstein gradient flow on $\KL{\cdot}{\pi}$ constrained to the submanifold of normal distributions. The authors show that if $V$ is strongly convex, then the gradient flow (as well as its discretization) converges exponentially fast in Wasserstein distance to $\hat \pi$ of~\eqref{VI-Gauss}, in this case the unique minimizer of $\KL{\cdot}{\pi}$ over Gaussians. In the nonconvex case,~\cite{diao2023forwardbackward} obtains a stationarity point guarantee for a ``forward-backward" algorithm, involving a gradient step and a proximal step. The update step in~\cite{gauss-VI} involves inverting a $d\times d$ matrix, while that of~\cite{diao2023forwardbackward} involves computing the square root of a $d\times d$ matrix. The update step of both algorithms also involve computing a Gaussian expectation or else drawing a single Gaussian sample, in which case the algorithm is stochastic. 

A cheaper algorithm whose complexity scales linearly with dimension is \emph{mean-field} Gaussian VI, in which the objective~\eqref{VI-Gauss} is minimized only over Gaussians with diagonal covariance matrices. See~\cite{challis13GKL} for a discussion of the computational complexity of a whole range of Gaussian VI variants, from diagonal to full covariance matrix. However, the accuracy of Gaussian mean-field VI is not currently known.


\section{Logistic regression with Gaussian design}\label{sec:logreg}
In logistic regression, we generate $n$ feature vectors $X_i\in\R^d$ and observe their corresponding labels $Y_i\in\{0,1\}$. The distribution of the labels given the features is modeled as 
\beq\label{bernp}p(Y_i\mid X_i, \theta) = \exp\l(Y_iX_i^T\theta-\psi(X_i^T\theta)\r),\eeq where $\psi(t)=\log(1+e^t)$ and $\theta$ is the unknown coefficient vector. Note that~\eqref{bernp} is just the probability distribution for $\mathrm{Bern}(\sigma(X_i^T\theta))$, where $\sigma(t)=\psi'(t)=(1+e^{-t})^{-1}$ is the sigmoid.  We assume the model is well-specified, so that there is a true $\theta_0\in\R^d$ such that 
$$Y_i\sim\mathrm{Bern}(\sigma(X_i^T\theta_0)).$$ We assume the features are generated from a standard Gaussian distribution: 
$$X_i\stackrel{\mathrm{i.i.d.}}{\sim}\mathcal N(0, I_d),\quad i=1,\dots,n.$$ We consider a Gaussian prior $\theta\sim\mathcal N(0, \Sigma)$ on the coefficient vector $\theta$. The posterior distribution of $\theta$ given the labels $Y_i$ is then $\pi(\theta)\propto e^{-nv(\theta)}$, where
\beqs\label{LL}
v(\theta) &= \ell(\theta)+ \frac1{2n}\theta^T\Sigma^{-1}\theta,\\
\ell(\theta)&:=-\frac1n\sum_{i=1}^n\log p(Y_i\mid X_i, \theta) = -\theta^T\l[\frac1n\sum_{i=1}^nY_iX_i\r]+\frac1n\sum_{i=1}^n\psi(X_i^T\theta) 
\eeqs  Here, $\ell$ is the negative normalized log likelihood, and $\theta^T\Sigma^{-1}\theta/2n$ is the contribution from the log prior. Formally, $\Sigma^{-1}=0$ corresponds to the case of a flat prior. Note that the distribution of the $X_i$ does not enter into the posterior. 

\subsection{Checking the assumptions}\label{sec:assump:lr} To verify Assumption~\ref{assume:1}, we need to prove there is a unique global minimizer $\theta^*$ of $v$. 
To do so, we first show there is a unique global minimizer $\theta^*_\ell$ of $\ell$. It is straightforward to check that $\ell$ is strictly convex provided the features $X_i$ span $\R^d$, and this occurs with probability 1. Therefore, if $\nabla\ell(\theta^*_\ell)=0$ for some $\theta^*_\ell$, then this point must be the unique global minimizer of $\ell$. 
To show $\ell$ has a critical point, we will use the following lemma with $f=\ell$ and $\theta=\theta_0$.
\begin{lemma}[Corollary F.6 in~\cite{katsBVM} with $A=I_d$] \label{IVT-LL}
Let $f\in C^2(\R^d)$. Suppose there is a point $\theta$ such that for some $\lambda,s>0$, the following holds:
\beqs\label{freq}
&\nabla^2f(\theta)\succeq\lambda I_d,\\
&\|\nabla f(\theta)\|\leq 2\lambda s,\\
&\|\nabla^2f(\theta') - \nabla^2f(\theta)\|\leq \frac\lambda4\quad\forall \|\theta'-\theta\|\leq s.
\eeqs Then there exists $\theta_{f}^*$ such that $\nabla f(\theta_{f}^*)=0$ and $\|\theta_{f}^*-\theta\|\leq s$.
\end{lemma}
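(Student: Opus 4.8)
The plan is to read this as a \emph{quantitative inverse function theorem}: the gradient $\nabla f$ is small at $\theta$ and the Hessian is bounded below and nearly constant on the ball $\bar B:=\bar B(\theta,s)=\{\theta':\|\theta'-\theta\|\le s\}$, so $\nabla f$ must vanish somewhere in $\bar B$. The first step is to propagate the Hessian control from the single point $\theta$ to all of $\bar B$: from $\nabla^2 f(\theta)\succeq\lambda I_d$ and $\|\nabla^2 f(\theta')-\nabla^2 f(\theta)\|\le\lambda/4$ for $\theta'\in\bar B$ one gets $\nabla^2 f(\theta')\succeq\tfrac{3\lambda}{4}I_d$ on $\bar B$; in particular $f$ is $\tfrac{3\lambda}{4}$-strongly convex on $\bar B$ (which will deliver uniqueness of the critical point at the end), and $\nabla^2 f(\theta)$ is invertible with $\|\nabla^2 f(\theta)^{-1}\|\le 1/\lambda$.

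With this in hand I would run a contraction-mapping argument for the modified Newton map $T(\theta'):=\theta'-\nabla^2 f(\theta)^{-1}\nabla f(\theta')$. Using the integral mean value identity $\nabla f(\theta')-\nabla f(\theta'')=\int_0^1\nabla^2 f(\theta''+t(\theta'-\theta''))(\theta'-\theta'')\,dt$ and inserting $\nabla^2 f(\theta)$, one finds $T(\theta')-T(\theta'')=\nabla^2 f(\theta)^{-1}\int_0^1\bigl[\nabla^2 f(\theta)-\nabla^2 f(\theta''+t(\theta'-\theta''))\bigr](\theta'-\theta'')\,dt$, so $\|T(\theta')-T(\theta'')\|\le\tfrac14\|\theta'-\theta''\|$ on $\bar B$; and the same expansion around $\theta$ gives $T(\theta')-\theta=-\nabla^2 f(\theta)^{-1}\nabla f(\theta)+\nabla^2 f(\theta)^{-1}\int_0^1\bigl[\nabla^2 f(\theta)-\nabla^2 f(\theta+t(\theta'-\theta))\bigr](\theta'-\theta)\,dt$, whence $\|T(\theta')-\theta\|\le\lambda^{-1}\|\nabla f(\theta)\|+\tfrac14\|\theta'-\theta\|$. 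The second hypothesis, which bounds $\|\nabla f(\theta)\|$ by a sufficiently small multiple of $\lambda s$, is exactly what is needed to conclude $\|T(\theta')-\theta\|\le s$, i.e. that $T$ maps $\bar B$ into itself. Banach's fixed-point theorem then yields a unique $\theta_f^*\in\bar B$ with $T(\theta_f^*)=\theta_f^*$, i.e. $\nabla^2 f(\theta)^{-1}\nabla f(\theta_f^*)=0$, hence $\nabla f(\theta_f^*)=0$ and $\|\theta_f^*-\theta\|\le s$; strong convexity on $\bar B$ rules out any other critical point there.

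As an alternative — and the one that explains the name ``IVT'' — I would instead check that the vector field $\nabla f$ points outward along the sphere: for $\|\theta'-\theta\|=s$, $\nabla f(\theta')\cdot(\theta'-\theta)=\nabla f(\theta)\cdot(\theta'-\theta)+\int_0^1(\theta'-\theta)^T\nabla^2 f(\theta+t(\theta'-\theta))(\theta'-\theta)\,dt\ge-\|\nabla f(\theta)\|\,s+\tfrac{3\lambda}{4}s^2$, which the gradient-smallness hypothesis makes $\ge 0$ (indeed positive). Then $\nabla f$ is homotopic on $\bar B$, through maps that do not vanish on $\partial B(\theta,s)$, to the translation $\theta'\mapsto\theta'-\theta$ (whose degree is $1$), so $\nabla f$ must have a zero in $B(\theta,s)$ — equivalently, apply Brouwer to $\theta'\mapsto\theta'-\varepsilon\nabla f(\theta')$ for small $\varepsilon$, which the outward-pointing property makes a self-map of $\bar B$.

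The one genuinely delicate point is the numerical bookkeeping in the self-map step (equivalently, in the outward-pointing inequality): one needs $\|\nabla f(\theta)\|$ to be a small enough multiple of $\lambda s$ so that $\lambda^{-1}\|\nabla f(\theta)\|+\tfrac14 s\le s$ (resp.\ $-\|\nabla f(\theta)\|s+\tfrac34\lambda s^2>0$) actually closes. Calibrating these constants — and, if necessary, shrinking to a suitable sub-ball $\bar B(\theta,\rho)$ with $\rho\le s$ — is precisely what Corollary~F.6 of~\cite{katsBVM} records; with the constants as quoted one appeals to that corollary directly. Everything else (the operator-norm bound on $\nabla^2 f(\theta)^{-1}$, the integral form of Taylor's theorem, Banach's theorem, and deducing uniqueness from strong convexity) is routine.
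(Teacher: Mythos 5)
You are proving a statement the paper itself never proves: Lemma~\ref{IVT-LL} is imported verbatim as ``Corollary F.6 of~\cite{katsBVM} with $A=I_d$'', so there is no in-paper argument to compare against. Your machinery is the right one for this kind of quantitative statement: the damped Newton map $T(\theta')=\theta'-\nabla^2 f(\theta)^{-1}\nabla f(\theta')$ on the closed ball (or the outward-pointing/degree variant), and your two estimates --- the $\tfrac14$-Lipschitz bound and $\|T(\theta')-\theta\|\le\lambda^{-1}\|\nabla f(\theta)\|+\tfrac14\|\theta'-\theta\|$ --- are correct.

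However, the step you yourself call ``delicate bookkeeping'' is a genuine gap, not bookkeeping: with the constants as printed the argument cannot close, because the statement as printed is false. The self-map condition needs $\lambda^{-1}\|\nabla f(\theta)\|+\tfrac14 s\le s$, i.e. $\|\nabla f(\theta)\|\le\tfrac34\lambda s$, and the outward-pointing inequality needs exactly the same bound; the hypothesis only supplies $2\lambda s$. This cannot be rescued by ``shrinking to a suitable sub-ball'' --- replacing $s$ by $\rho\le s$ makes the requirement $\|\nabla f(\theta)\|\le\tfrac34\lambda\rho$ strictly harder --- and no argument of any kind can work, as a one-dimensional example shows: $f(x)=2x+x^2/2$, $\theta=0$, $\lambda=s=1$ satisfies all three hypotheses (the Hessian is exactly constant, so the perturbation condition is trivial), yet the only critical point is $x=-2$, outside the radius-$s$ ball. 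So either the gradient hypothesis should be strengthened (e.g.\ to $\|\nabla f(\theta)\|\le\tfrac34\lambda s$ or $\tfrac12\lambda s$), or the conclusion should be weakened to $\|\theta_f^*-\theta\|\le 4s$ with the Hessian condition imposed on the radius-$4s$ ball; presumably the original Corollary F.6 is calibrated in one of these ways and the restatement here carries a typo (harmless downstream, since in Corollary~\ref{corr:log} the radius only enters up to constants). Your final fallback --- ``appeal to that corollary directly'' --- coincides with what the paper does, but it is a citation, not a proof; as a self-contained argument your proposal establishes the lemma only once the constants are repaired as above, after which both your contraction and your degree arguments go through verbatim.
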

Of course, $\ell$ is a random function, so~\eqref{freq} cannot hold deterministically for $f=\ell$. Instead, we bound from below the probability that~\eqref{freq} holds.
\begin{lemma}\label{lma:grad}Suppose $d/n\leq 1/10$ and $n\geq13$. Then the event 
$$E_1=\{\|\nabla\ell(\theta_0)\|\leq 8(d/\sqrt n)^{1/2}\}$$ has probability at least $1-e^{-n/4}-n^{-d/4}$ under the ground truth distribution: $X_i\sim\mathcal N(0, I_d), Y_i\mid X_i\sim\mathrm{Bern}((\sigma(X_i^T\theta_0))$, and $(X_i,Y_i)$, $i=1,\dots,n$ are i.i.d. 
\end{lemma}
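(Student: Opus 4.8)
The plan is to condition on the Gaussian design $X_{1:n}$ and separate the two sources of randomness, so that the failure of $E_1$ splits into a ``bad design'' part (contributing $e^{-n/4}$) and a conditional ``bad label'' part (contributing $n^{-d/4}$). First I would write the gradient explicitly: since $\psi'=\sigma$, differentiating~\eqref{LL} gives $\nabla\ell(\theta)=\frac1n\sum_{i=1}^n(\sigma(X_i^T\theta)-Y_i)X_i$, hence $\nabla\ell(\theta_0)=-\frac1n\sum_{i=1}^n\xi_iX_i$ with $\xi_i:=Y_i-\sigma(X_i^T\theta_0)$. Conditionally on $X_{1:n}$ the $\xi_i$ are independent, mean zero (because $Y_i\mid X_i\sim\mathrm{Bern}(\sigma(X_i^T\theta_0))$), and lie in an interval of length one, so Hoeffding's lemma yields $\E[e^{s\xi_i}\mid X_i]\le e^{s^2/8}$.

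The core of the argument is a conditional sub-Gaussian bound combined with a net argument over the sphere. For a fixed unit vector $u$, the identity for $\nabla\ell(\theta_0)$ and the $\xi_i$-MGF bound give
\beq
\E\big[e^{\lambda\, u^T\nabla\ell(\theta_0)}\mid X_{1:n}\big]\le\exp\!\Big(\tfrac{\lambda^2}{8n^2}\,u^TMu\Big),\qquad M:=\sum_{i=1}^nX_iX_i^T ,
\eeq
so $u^T\nabla\ell(\theta_0)$ is conditionally sub-Gaussian with variance proxy at most $\|M\|/(4n^2)$. Let $\mathcal N$ be a $1/2$-net of the unit sphere, $|\mathcal N|\le 5^d$; then $\|\nabla\ell(\theta_0)\|\le 2\max_{u\in\mathcal N}u^T\nabla\ell(\theta_0)$, and a union bound shows that on the event $\{\|M\|\le 5n\}$,
\beq
P\big(\|\nabla\ell(\theta_0)\|>8(d/\sqrt n)^{1/2}\mid X_{1:n}\big)\le 5^d\exp\!\Big(-\frac{2n^2\cdot 16\, d/\sqrt n}{\|M\|}\Big)\le 5^d e^{-\frac{32}{5}\sqrt n\, d}.
\eeq
A short computation gives $5^d e^{-\frac{32}{5}\sqrt n\, d}\le n^{-d/4}$ for $n\ge 13$ (indeed $\tfrac{32}{5}\sqrt n\ge \ln 5+\tfrac14\ln n$ with considerable room to spare).

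Finally I would control $\|M\|$: for $X_i\iid\mathcal N(0,I_d)$, the standard operator-norm deviation bound gives $P\big(\|\tfrac1n M\|\ge(1+\sqrt{d/n}+t)^2\big)\le e^{-nt^2/2}$, and taking $t=1/\sqrt2$ together with $d/n\le1/10$ yields $(1+\sqrt{1/10}+1/\sqrt2)^2<5$, so $P(\|M\|>5n)\le e^{-n/4}$. Combining,
\beq
P(E_1^c)\le P(\|M\|>5n)+\E\big[\mathbf 1\{\|M\|\le5n\}\,P(E_1^c\mid X_{1:n})\big]\le e^{-n/4}+n^{-d/4},
\eeq
which is the claim. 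There is no real conceptual obstacle; the only thing that takes care is the bookkeeping of the explicit constants — the radius $5n$ for $\|M\|$, the net cardinality, the exponent $\tfrac{32}{5}\sqrt n\, d$ — so that everything closes under $d/n\le1/10$ and $n\ge13$ and is compatible with the constant $8$ in the definition of $E_1$. One could alternatively replace the net step by a vector Bernstein inequality if a cleaner statement is preferred.
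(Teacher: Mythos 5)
Your proof is correct, and it shares the paper's skeleton: write $\nabla\ell(\theta_0)=-\frac1n\sum_i(Y_i-\sigma(X_i^T\theta_0))X_i$, reduce $\|\nabla\ell(\theta_0)\|$ to a $1/2$-net of the sphere with $5^d$ points, and apply a Hoeffding-type bound to the labels conditionally on the design. The difference lies in how the design randomness is handled. The paper never introduces the Gram matrix: for each fixed direction $u$ it uses Hoeffding conditionally on $X_{1:n}$, observes that $\sum_i(u^TX_i)^2$ has the law of $\|Z\|^2$ with $Z\sim\mathcal N(0,I_n)$, and splits the resulting expectation $\E[\exp(-2s^2nd/\|Z\|^2)]\le \mathbb P(\|Z\|\ge2\sqrt n)+e^{-s^2d/2}$, finally choosing $s=\log n$ so that $2s\sqrt{d/n}\le 8(d/\sqrt n)^{1/2}$ and the two terms become $e^{-n/4}$ and $n^{-d/4}$. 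You instead extract a single global design event $\{\|M\|\le 5n\}$ via the Davidson--Szarek-type operator-norm deviation bound for Gaussian matrices, and then run the conditional sub-Gaussian tail directly at the target threshold $8(d/\sqrt n)^{1/2}$. Both decompositions close under $d/n\le1/10$, $n\ge13$ and give exactly $e^{-n/4}+n^{-d/4}$; your route needs only a per-direction chi-square-free argument at the cost of invoking the (standard, external) operator-norm inequality, and it actually produces a much stronger label-tail term $5^de^{-\frac{32}{5}\sqrt n\,d}$ that you then loosen to $n^{-d/4}$, whereas the paper's tuning $s=\log n$ lands on that rate directly. Either argument is a valid proof of the lemma as stated.
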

The proof of the lemma is similar to that of Lemma 4.2 in~\cite{katsBVM}, but we include the proof in Appendix~\ref{app:log} for the sake of completeness.
\begin{lemma}[Adapted from Lemma 7, Chapter 3,~\cite{pragyathesis}]\label{lma:hess}
Suppose $d<n$. Then the event
\beq\label{E2pdef}
E_2=\l\{\inf_{\|\theta\|\leq c}\lambda_{\min}\l(\nabla^2\ell(\theta)\r)\geq \tau(c)\quad\forall c\geq0\r\}\eeq has probability at least $1-4e^{-Cn}$ for an absolute constant $C$. Here, $\tau:[0,\infty)\to(0,\infty)$ is a universal non-increasing function. \end{lemma}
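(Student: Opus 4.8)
The plan is to work from the explicit Hessian
\[
\nabla^2\ell(\theta)=\frac1n\sum_{i=1}^n\psi''(X_i^T\theta)\,X_iX_i^T,\qquad \psi''(t)=\sigma(t)(1-\sigma(t))=\frac{1}{4\cosh^2(t/2)},
\]
and to reduce the whole statement to one uniform random-matrix estimate. Note first that it suffices to produce, on a single event of probability $\ge 1-4e^{-Cn}$, a universal non-increasing $\tau:[0,\infty)\to(0,\infty)$ with $\lambda_{\min}(\nabla^2\ell(\theta))\ge\tau(\|\theta\|)$ for \emph{every} $\theta\in\R^d$; the assertion about $\inf_{\|\theta\|\le c}$, simultaneously for all $c\ge0$, is then automatic because $\tau$ is non-increasing. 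To get such a pointwise-in-$\theta$ bound I would truncate $\psi''$: since $\psi''$ is even and unimodal, $\psi''(t)\ge\psi''(1\vee\|\theta\|)$ on the slab $|t|\le 1\vee\|\theta\|$, so, writing $v=\theta/\|\theta\|$ and using $(1\vee\|\theta\|)/\|\theta\|\ge1$,
\[
\lambda_{\min}\!\big(\nabla^2\ell(\theta)\big)\;\ge\;\psi''(1\vee\|\theta\|)\,\inf_{u,v\in S^{d-1}}\frac1n\sum_{i=1}^n\mathbbm{1}\{|X_i^Tv|\le 1\}\,(X_i^Tu)^2 .
\]
It then remains only to bound the last infimum below by a universal constant $\kappa>0$ with the stated probability; then $\tau(c):=\kappa\,\psi''(1\vee c)$ works, and $\psi''(1\vee c)\asymp e^{-c}$ for large $c$ accounts for the (unavoidable) decay of $\tau$.

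For this uniform quadratic-form estimate I would replace $(X_i^Tu)^2$ by the bounded surrogate $h_K(X_i^Tu):=(X_i^Tu)^2\wedge K$, reducing matters to the uniformly bounded class $\mathcal G=\{\,x\mapsto\mathbbm{1}\{|x^Tv|\le1\}\,h_K(x^Tu):u,v\in S^{d-1}\,\}$. A direct first-moment computation — decomposing $u=\alpha v+\sqrt{1-\alpha^2}\,w$ with $w\perp v$ and using that $X^Tv$, $X^Tw$ are independent standard Gaussians — gives, for all $u,v$,
\[
\E\big[\mathbbm{1}\{|X^Tv|\le1\}(X^Tu)^2\big]=\alpha^2\,\E[A^2\mathbbm{1}_{|A|\le1}]+(1-\alpha^2)\,\mathbb P(|A|\le1)\ \ge\ \kappa_0:=\min\big\{\E[A^2\mathbbm{1}_{|A|\le1}],\,\mathbb P(|A|\le1)\big\}>0,
\]
and choosing the absolute constant $K$ so that $\E[G^2\mathbbm{1}_{G^2>K}]\le\kappa_0/2$ for $G\sim\mathcal N(0,1)$ yields $\E f\ge\kappa_0/2$ for every $f\in\mathcal G$. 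Finally I would upgrade this to a uniform lower bound on the empirical averages $\tfrac1n\sum_i f(X_i)$: the class $\mathcal G$ is VC-subgraph of dimension $O(d)$ — the slabs $\{|x^Tv|\le1\}$ and the sets $\{(x^Tu)^2>s\}$ are controlled by $O(d)$-dimensional families of half-spaces — so a standard uniform law of large numbers gives $\E\sup_{f\in\mathcal G}|\tfrac1n\sum_i f(X_i)-\E f|\les K\sqrt{d/n}$, and a bounded-differences/Talagrand concentration inequality promotes it to $\sup_{f\in\mathcal G}|\tfrac1n\sum_i f(X_i)-\E f|\le C_1K(\sqrt{d/n}+\sqrt{t/n})$ with probability $\ge1-e^{-t}$. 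Taking $t=c_2n$ and using that $d/n$ is below an absolute constant makes the right side $\le\kappa_0/4$, so $\tfrac1n\sum_i f(X_i)\ge\kappa_0/4=:\kappa$ for all $f\in\mathcal G$ on an event of the form $1-4e^{-Cn}$, which closes the proof.

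The step I expect to be the genuine obstacle is the uniformity over the continuum of directions $v$ — equivalently, over all $\theta$ of every norm at once. A crude $\epsilon$-net of $S^{d-1}$ does work, but the relevant Lipschitz constants scale polynomially in $d$, so the net carries $d^{O(d)}$ points and one is pushed into requiring $d\log d\ll n$; routing the argument through a VC-type uniform law of large numbers, as above, removes the logarithm and needs only $d/n$ below an absolute constant, which holds comfortably here since Assumption~\ref{assume:glob} forces $d^2\le n$. A secondary but genuinely necessary point is the uniformity in the \emph{angle} between $u$ and $v$ in the first-moment bound: this is exactly why the slab width must be taken to be $1\vee\|\theta\|$ (so that the constraint always retains a standard-Gaussian's worth of mass in the $v$-direction), and it is the accompanying factor $\psi''(1\vee\|\theta\|)$ that forces $\tau$ to decay at large radii.
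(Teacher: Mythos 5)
The paper itself contains no proof of this lemma: it is imported, after a $\sqrt n$ rescaling of $\ell$, from Lemma 7, Chapter 3 of~\cite{pragyathesis}, so there is no internal argument to compare line by line. Your proposal is therefore a genuinely different, self-contained route, and its steps check out: the reduction to a pointwise-in-$\theta$ bound $\lambda_{\min}(\nabla^2\ell(\theta))\ge\tau(\|\theta\|)$ with $\tau$ non-increasing does yield the simultaneous statement over all $c$ in~\eqref{E2pdef}; the truncation $\psi''(X_i^T\theta)\ge\psi''(1\vee\|\theta\|)$ on the slab $\{|X_i^Tv|\le1\}$, $v=\theta/\|\theta\|$, is valid because $(1\vee\|\theta\|)/\|\theta\|\ge1$; the first-moment computation via $u=\alpha v+\sqrt{1-\alpha^2}\,w$ gives the universal constant $\kappa_0=\min\{\E[A^2\mathbbm{1}_{|A|\le1}],\,\mathbb{P}(|A|\le1)\}>0$, and truncating $(X^Tu)^2$ at an absolute level $K$ only costs $\kappa_0/2$; and the subgraph class of $x\mapsto\mathbbm{1}\{|x^Tv|\le1\}((x^Tu)^2\wedge K)$ is a fixed Boolean combination of finitely many classes each of VC dimension $O(d)$, so the symmetrization/Dudley bound $\les K\sqrt{d/n}$ holds without logarithmic factors, and McDiarmid upgrades it to an event of probability $1-e^{-cn}\ge1-4e^{-Cn}$. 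What the paper's citation buys is brevity; what your argument buys is an explicit, checkable constant structure ($\tau(c)=\kappa\,\psi''(1\vee c)$, so $\tau$ decays like $e^{-c}$) and independence from the thesis's asymptotic framework.

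One discrepancy deserves flagging: the lemma as stated assumes only $d<n$, whereas your argument needs $d/n$ below a small absolute constant so that $C_1K\sqrt{d/n}\le\kappa_0/4$. This is not a defect of your proof so much as a gloss in the adaptation: since $\psi''\le1/4$, one has $\lambda_{\min}(\nabla^2\ell(0))\le\tfrac14\lambda_{\min}\bigl(\tfrac1n\sum_iX_iX_i^T\bigr)$, which degenerates as $d/n\uparrow1$, so a \emph{universal} positive $\tau$ with probability $1-4e^{-Cn}$ cannot hold under $d<n$ alone; the thesis result lives in a regime where $d/n$ is bounded away from $1$, and in this paper the lemma is only invoked under Assumption~\ref{assume:glob} and the hypotheses of Corollary~\ref{corr:log}, where $d/\sqrt n$ is small and your restriction is automatic. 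It would be worth stating explicitly in your write-up that you prove the lemma under $d\le c_0n$ for a small absolute $c_0$, which is all the paper uses.
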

We note that the function $\ell$ actually considered in Lemma 7 in~\cite{pragyathesis} relates to our $\ell$ by a $\sqrt n$ rescaling. We have translated the statement of the result in terms of our function $\ell$.
\begin{lemma}\label{lma:34deriv}
Suppose $d\leq n$. Then there exist absolute constants $C_3,C_4, C$ such that the event
\beq
E_3=\l\{\sup_{\theta\in\R^d}\|\nabla^k\ell(\theta)\|\leq C_k, \quad k=3,4\r\}\eeq has probability at least $1-\e(-C(nd)^{1/9})$.
\end{lemma}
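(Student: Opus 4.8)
# Proof Proposal for Lemma 4.5 (bound on third and fourth derivatives of $\ell$ in logistic regression)

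The plan is to reduce the operator-norm bound on $\nabla^k\ell(\theta)$, $k=3,4$, to a uniform (in $\theta$) bound on a certain empirical tensor built from the features $X_i$. Differentiating the expression for $\ell$ in~\eqref{LL} three and four times, the prior term $\theta^T\Sigma^{-1}\theta/2n$ drops out, and we get
\beqsn
\nabla^3\ell(\theta) = \frac1n\sum_{i=1}^n \psi'''(X_i^T\theta)\, X_i^{\otimes 3},\qquad \nabla^4\ell(\theta) = \frac1n\sum_{i=1}^n \psi''''(X_i^T\theta)\, X_i^{\otimes 4}.
\eeqsn
Since $\psi(t)=\log(1+e^t)$ is the logistic log-partition function, all its derivatives of order $\geq 2$ are uniformly bounded: $|\psi''|\leq 1/4$, and $|\psi'''|, |\psi''''|$ are bounded by absolute constants $b_3, b_4$ (they decay exponentially in $|t|$). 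Therefore $\|\nabla^k\ell(\theta)\| \leq b_k \sup_{\|u\|=1}\frac1n\sum_{i=1}^n |X_i^Tu|^k$ for $k=3,4$, uniformly in $\theta$. So the entire statement reduces to the data-only event
\beqsn
\Big\{\ \sup_{\|u\|=1}\frac1n\sum_{i=1}^n |X_i^Tu|^k \leq C_k',\quad k=3,4\ \Big\}.
\eeqsn

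The core task is then a uniform bound on the empirical $k$-th absolute moment of the one-dimensional marginals of i.i.d.\ $\mathcal N(0,I_d)$ samples. For a fixed unit vector $u$, $X_i^Tu \sim \mathcal N(0,1)$, so $|X_i^Tu|^k$ is sub-Weibull (sub-exponential for $k=3$, slightly heavier for $k=4$) with expectation $\E|g|^k$ an absolute constant. A Bernstein-type concentration inequality for sums of independent sub-Weibull random variables gives, for fixed $u$, that $\frac1n\sum_i |X_i^Tu|^k$ exceeds $2\E|g|^k$ with probability at most $\exp(-c n^{\beta})$ for some exponent $\beta=\beta(k)\in(0,1]$ (the exponent is degraded below $1$ because $|g|^k$ only has a sub-Weibull, not sub-exponential, tail when $k\geq 3$; this is the origin of the $n^{1/9}$-type exponent in the statement). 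Then I would take a union bound over an $\varepsilon$-net $\mathcal N_\varepsilon$ of the unit sphere, with $|\mathcal N_\varepsilon|\leq (3/\varepsilon)^d$, and a standard net-to-sup argument (using that $u\mapsto \frac1n\sum_i |X_i^Tu|^k$ is Lipschitz on the sphere with a data-dependent constant, itself controlled on the same good event) to pass from the net to the supremum. Choosing $\varepsilon$ a small absolute constant, the union bound contributes a factor $e^{Cd}$, so the failure probability is $\lesssim \exp(Cd - cn^{\beta})$; since $d\leq n$, this is $\lesssim \exp(-c'(nd)^{\beta'})$ for an appropriate $\beta'$, matching the claimed $\exp(-C(nd)^{1/9})$ after bookkeeping of constants.

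The main obstacle is the concentration step for $k=4$: $|g|^4$ has tails like $e^{-c t^{1/2}}$, so it is genuinely heavier-tailed than sub-exponential, and the naive Bernstein bound does not apply directly. One must either invoke a sub-Weibull Bernstein inequality (e.g.\ via the Orlicz norm $\|\cdot\|_{\psi_{1/2}}$ and a truncation argument, splitting each $|X_i^Tu|^4$ into a bounded part and a tail part and applying Bernstein to the former and a crude union bound to the latter), or control moments directly and use a Markov/Chernoff argument optimized in the free parameter. This truncation level, together with the net cardinality $e^{Cd}$, is what forces the fractional exponent; tracking how the truncation radius, the net resolution, and the sub-Weibull parameter combine is the delicate part, but it is routine given standard high-dimensional probability tools. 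An alternative, cleaner route for the net-to-sup step is to note $\sup_{\|u\|=1}\frac1n\sum_i|X_i^Tu|^k \leq \big(\sup_{\|u\|=1}\frac1n\sum_i(X_i^Tu)^2\big)^{k/2}\cdot \max_i\|X_i\|^{k-2}/\dots$ — but this is lossy; it is better to bound $\sup_u \frac1n\sum_i |X_i^Tu|^k$ directly, controlling $\max_i \|X_i\|$ (which is $\lesssim \sqrt d$ with probability $1-ne^{-cd}$) to get the Lipschitz constant on the sphere. Either way, the result is a purely probabilistic estimate on Gaussian data, entirely decoupled from $\theta$.
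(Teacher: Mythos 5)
Your first step coincides with the paper's: differentiate~\eqref{LL}, note the prior term drops out, use $\|\psi^{(k)}\|_\infty\lesssim 1$, and reduce everything to the data-only event $\sup_{\|u\|=1}\frac1n\sum_{i=1}^n|u^TX_i|^4\leq C$ (the paper folds $k=3$ into $k=4$ via $|t|^3\leq 1+t^4$, which is immaterial). The divergence is in how that uniform empirical moment bound is obtained. The paper does not prove it: it quotes Theorem 4.2 of Adamczak, Litvak, Pajor and Tomczak-Jaegermann (reproduced as Theorem~\ref{adam}), which gives $\sup_{\|u\|=1}\frac1n\sum_i|u^TX_i|^4\leq 4$ with probability $1-e^{-C't\sqrt d}$ whenever $Ct^8[\log(2t^2)]^6\leq n/d^2$, and then simply chooses $t=c(n/d^2)^{1/9}$; the exponent $1/9$ comes from this choice of $t$, not from the sub-Weibull tail of $|g|^4$ as you suggest.

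The gap is that the probabilistic core you label ``routine given standard high-dimensional probability tools'' is exactly the content of that cited theorem, and your proposed net-plus-truncation route does not close in the regime the lemma is actually used in. Concretely: to control the unbounded (tail) part uniformly over the sphere you propose conditioning on $\max_i\|X_i\|\lesssim\sqrt d$, which forces the truncation level $R\gtrsim\sqrt d$; but for the Bernstein bound on the truncated part to survive a union bound over an $e^{Cd}$-size net at a constant deviation level you need $n/R^4\gtrsim d$, i.e.\ $R\lesssim (n/d)^{1/4}$. These are compatible only when $n\gtrsim d^3$, whereas the paper's whole point (and the regime where Corollary~\ref{corr:log} is applied, cf.~\eqref{cconds}) is $n\asymp d^2$ up to constants; in the window $d^2\lesssim n\ll d^3$ your argument as sketched fails, and handling it requires the genuinely more involved partitioning/chaining argument of the cited work. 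Two further bookkeeping problems: the event $\max_i\|X_i\|\lesssim\sqrt d$ itself fails with probability of order $ne^{-cd}$, which for small or moderate $d$ (say $d\lesssim\log n$) is far larger than the claimed $\e(-C(nd)^{1/9})$; and the fixed-$u$ sub-Weibull bound $\e(-cn^{1/2})$ against the $e^{Cd}$ net is borderline when $d\asymp\sqrt n$, so the constants do not automatically work out as your final sentence assumes. Either invoke the Adamczak et al.\ theorem as the paper does, or expect to reproduce a substantial part of its proof.
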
See Appendix~\ref{app:log} for the proof, which follows almost immediately by an application of Theorem 4.2 in~\cite{adamczak2010quantitative}. We can now apply Lemma~\ref{IVT-LL} to prove $\ell$ has a unique global minimizer $\theta^*_\ell$. This then also implies $v$ has a unique global minimizer nearby, by another application of Lemma~\ref{IVT-LL}. Lemmas~\ref{lma:grad},~\ref{lma:hess},~\ref{lma:34deriv} also give us the necessary ingredients to check the other assumptions.
\begin{corollary}\label{corr:log}
Suppose $\|\theta_0\|\leq C$, $\|\Sigma^{-1}\|\leq C$ and that $d/\sqrt n$ and $n^{-1}$ are smaller than a sufficiently small absolute constant. Then with probability at least $1-\e(-C(nd)^{1/9})-n^{-d/4}-5e^{-Cn}$, the following statements all hold: 
\begin{enumerate}
\itemsep=5pt 
\item The functions $\ell$ and $v$ have unique global minimizers $\theta^*_\ell$ and $\theta^*$, respectively, with $\|\theta^*_\ell-\theta_0\|\les\sqrt{d/n}$ and $\|\theta^* - \theta^*_\ell\|\les n^{-1}$. In particular, Assumption~\ref{assume:1} is satisfied.
\item We have $\|\theta^*\|\leq C$ and $\lambda_{\min}(H_v)=\lambda_{\min}(\nabla^2v(\theta^*))\geq \tau(C)$.
\item We have $\sup_{\theta\in\R^d}\|\nabla^kv(\theta)\|_{H_v}\leq C$ for $k=3,4$. In particular,~\eqref{globnabla} of Assumption~\ref{assume:glob} is satisfied with $q=0$ and $a_3=a_4=C$. Furthermore,~\eqref{cconds} of Assumption~\ref{assume:glob} is also satisfied provided $d/\sqrt n$ is smaller than some fixed deterministic absolute constant.
\item Assumption~\ref{assume:c0} is satisfied with $c_0=1/8$.
\end{enumerate}
\end{corollary}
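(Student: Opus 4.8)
The plan is to assemble the corollary from the three high-probability events $E_1, E_2, E_3$ of Lemmas~\ref{lma:grad}, \ref{lma:hess}, and~\ref{lma:34deriv}, working deterministically on their intersection. A union bound gives the stated probability $1-\e(-C(nd)^{1/9})-n^{-d/4}-5e^{-Cn}$ (combining the $4e^{-Cn}$ from $E_2$ with an $e^{-n/4}$ from $E_1$, and absorbing constants). So from here on I would condition on $E_1\cap E_2\cap E_3$ and on the assumptions $\|\theta_0\|\le C$, $\|\Sigma^{-1}\|\le C$, and $d/\sqrt n$, $n^{-1}$ sufficiently small.

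\emph{Step 1 (item 1): existence and location of $\theta_\ell^*$.} I would apply Lemma~\ref{IVT-LL} with $f=\ell$ and $\theta=\theta_0$. On $E_2$, $\nabla^2\ell(\theta_0)\succeq\tau(C+1)I_d=:\lambda I_d$ (using $\|\theta_0\|\le C$). On $E_1$, $\|\nabla\ell(\theta_0)\|\le 8(d/\sqrt n)^{1/2}$, so I can take $s\asymp (d/\sqrt n)^{1/2}/\lambda$; since $\nabla^3\ell$ is bounded by $C_3$ on $E_3$, the Hessian is $C_3 s$-Lipschitz, and for $d/\sqrt n$ small enough $C_3 s\le\lambda/4$, verifying the third hypothesis of Lemma~\ref{IVT-LL}. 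This yields $\theta_\ell^*$ with $\nabla\ell(\theta_\ell^*)=0$ and $\|\theta_\ell^*-\theta_0\|\lesssim (d/\sqrt n)^{1/2}$. To sharpen this to the claimed $\|\theta_\ell^*-\theta_0\|\lesssim\sqrt{d/n}$, I would redo the argument more carefully — expanding $0=\nabla\ell(\theta_\ell^*)=\nabla\ell(\theta_0)+\nabla^2\ell(\tilde\theta)(\theta_\ell^*-\theta_0)$ and using $\|\nabla\ell(\theta_0)\|\lesssim\sqrt{d/n}$, which is the genuine bound from the proof of Lemma~\ref{lma:grad} (the $8(d/\sqrt n)^{1/2}$ form is a convenient weakening; a concentration estimate gives $\|\nabla\ell(\theta_0)\|\lesssim\sqrt{d/n}$ with the same probability) — together with the lower bound on $\lambda_{\min}(\nabla^2\ell)$ near $\theta_0$ from $E_2$. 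Strict convexity of $\ell$ (features span $\R^d$, which holds a.s.\ and in particular on $E_2$) makes $\theta_\ell^*$ the unique global minimizer. Then I apply Lemma~\ref{IVT-LL} once more with $f=v=\ell+\tfrac1{2n}\theta^T\Sigma^{-1}\theta$ and $\theta=\theta_\ell^*$: here $\nabla v(\theta_\ell^*)=\tfrac1n\Sigma^{-1}\theta_\ell^*$, which has norm $\lesssim n^{-1}$ since $\|\Sigma^{-1}\|\le C$ and $\|\theta_\ell^*\|\lesssim C$; the Hessian lower bound and Lipschitz bound transfer from $\ell$ up to an $O(1/n)$ perturbation. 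This gives $\theta^*$ with $\nabla v(\theta^*)=0$ and $\|\theta^*-\theta_\ell^*\|\lesssim n^{-1}$, and $v$ is strictly convex so $\theta^*$ is the unique global minimizer. Assumption~\ref{assume:1} follows once we check $H_v=\nabla^2 v(\theta^*)\succ0$, which is item 2.

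\emph{Step 2 (item 2): bounds on $\theta^*$ and $H_v$.} From $\|\theta^*-\theta_\ell^*\|\lesssim n^{-1}$ and $\|\theta_\ell^*-\theta_0\|\lesssim\sqrt{d/n}$ and $\|\theta_0\|\le C$, the triangle inequality gives $\|\theta^*\|\le C$ (for a slightly enlarged absolute constant, valid once $d/\sqrt n, n^{-1}$ are small). For the Hessian: on $E_2$, $\nabla^2\ell(\theta^*)\succeq\tau(\|\theta^*\|)\succeq\tau(C)I_d$ since $\tau$ is non-increasing; adding $\tfrac1n\Sigma^{-1}\succeq0$ keeps $\lambda_{\min}(H_v)\ge\tau(C)$. (Strictly I should state the constant in item 2 as $\tau$ evaluated at the enlarged $C$; I will just call it $C$ throughout as the paper does.)

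\emph{Step 3 (items 3 and 4): growth of derivatives and the lower bound on $v$.} On $E_3$, $\sup_\theta\|\nabla^k\ell(\theta)\|\le C_k$ for $k=3,4$, and since $v$ and $\ell$ differ by a quadratic, $\nabla^k v=\nabla^k\ell$ for $k\ge3$, so $\sup_\theta\|\nabla^k v(\theta)\|\le C_k$. To convert the unweighted operator norm to the $H_v$-weighted one used in~\eqref{globnabla}, note $\|T\|_{H_v}=\sup_{\|u\|_{H_v}=1}\la T,u^{\otimes k}\ra\le\|T\|_{\op}\cdot(\lambda_{\min}(H_v))^{-k/2}\le C_k\tau(C)^{-k/2}=:C$, using item 2. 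Thus~\eqref{globnabla} holds with $q=0$ and $a_3=a_4=C$, and~\eqref{cconds} reads $a_3 d/\sqrt n\le C(0)$ and $a_3 d/\sqrt n+a_4 d^2/n\le1$, both of which hold once $d/\sqrt n$ is below a fixed absolute threshold (since $d^2/n\le(d/\sqrt n)^2$). Finally, item 4: $v=\ell+\tfrac1{2n}\theta^T\Sigma^{-1}\theta$ is a sum of two convex functions ($\ell$ convex on $E_2$, the prior term convex since $\Sigma^{-1}\succeq0$), hence convex; and the condition $a_3(1+(1/2)^q)\sqrt{d/n}=\tfrac32 C\sqrt{d/n}\le3$ of Lemma~\ref{lma:c0convex} holds for $d/n$ small, so Lemma~\ref{lma:c0convex} gives Assumption~\ref{assume:c0} with $c_0=1/8$.

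\emph{Main obstacle.} The routine parts are the derivative bounds and the convexity/growth arguments; the one place requiring care is Step~1, specifically getting the sharp $\|\theta_\ell^*-\theta_0\|\lesssim\sqrt{d/n}$ (rather than the weaker $(d/\sqrt n)^{1/2}$ that the stated form of Lemma~\ref{lma:grad} literally gives) while keeping the hypotheses of Lemma~\ref{IVT-LL} satisfied — this forces me to extract the genuine $\sqrt{d/n}$-order gradient bound from the concentration argument behind Lemma~\ref{lma:grad}, and then to be careful that the radius $s$ in the second (for $v$) application of Lemma~\ref{IVT-LL} can be taken as small as $\asymp n^{-1}/\lambda$, which in turn needs the Hessian Lipschitz bound from $E_3$ to certify the neighborhood condition at that scale. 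Everything else is bookkeeping with absolute constants and union bounds.
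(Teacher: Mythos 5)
Your proposal follows the paper's own proof essentially step for step: work on $E_1\cap E_2\cap E_3$ with a union bound, apply Lemma~\ref{IVT-LL} first to $\ell$ at $\theta_0$ (with $\lambda=\tau(\|\theta_0\|)$ from $E_2$, the gradient bound from $E_1$, and the $C_3$-Lipschitz Hessian from $E_3$) and then to $v$ at $\theta^*_\ell$ using $\nabla v(\theta^*_\ell)=n^{-1}\Sigma^{-1}\theta^*_\ell$; get item 2 by the triangle inequality and monotonicity of $\tau$; get item 3 from $\nabla^k v=\nabla^k\ell$ for $k\ge 3$ and $\|T\|_{H_v}\le\lambda_{\min}(H_v)^{-k/2}\|T\|$; and get item 4 from convexity of $v$ via Lemma~\ref{lma:c0convex}. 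All of that matches the paper and is correct.

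The one place you deviate is the attempted sharpening of item 1 to the literal rate $\|\theta^*_\ell-\theta_0\|\lesssim\sqrt{d/n}$, and there your key sub-claim is not right as stated: it is not true that ``a concentration estimate gives $\|\nabla\ell(\theta_0)\|\lesssim\sqrt{d/n}$ with the same probability.'' In the proof of Lemma~\ref{lma:grad} the tail bound is $5^d\bigl(e^{-n/2}+e^{-s^2d/2}\bigr)$ for the event $\{\|\nabla\ell(\theta_0)\|\ge 2s\sqrt{d/n}\}$; to reach the advertised probability $1-e^{-n/4}-n^{-d/4}$ one must take $s=\log n$, which yields $\|\nabla\ell(\theta_0)\|\lesssim\log n\,\sqrt{d/n}$, while taking $s$ an absolute constant (to kill the $\log n$) degrades the failure probability to order $e^{-cd}+e^{-cn}$, which does not match the $n^{-d/4}$ term in the corollary's statement. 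So your expansion argument (which, incidentally, should use the integral form $\int_0^1\nabla^2\ell(\theta_0+t(\theta^*_\ell-\theta_0))\,dt$ rather than a single mean-value point) would deliver $\log n\sqrt{d/n}$ at the stated probability, or $\sqrt{d/n}$ at a different probability, not both. For what it is worth, the paper's own proof does not establish the $\sqrt{d/n}$ rate either --- it stops at $\|\theta^*_\ell-\theta_0\|\le C(d/\sqrt n)^{1/2}$, exactly what your first pass gives --- and only the smallness of $\|\theta^*_\ell-\theta_0\|$ is used in items 2--4 and in Theorem~\ref{thm:log}, so this discrepancy is harmless downstream; but you should not assert the log-free bound at the stated probability.
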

See Appendix~\ref{app:log} for the proof.

\subsection{Application of results in Section~\ref{sec:main} to logistic regression}
We first compute the function $Q$ in the leading order term. Now, one can check that
$$\nabla^3v(\theta)=\frac1n\sum_{i=1}^n\psi'''(\theta^TX_i)X_i^{\otimes 3},$$ and recall that $V=nv$. Therefore, 
$$\E_{\theta\sim\hat\pi}[\nabla^3V(\theta)] = \sum_{i=1}^nb_i(\theta)X_i^{\otimes3},$$ where \beq\label{bidef} b_i(\theta) =\E_{\theta\sim\hat\pi}\l[\psi'''(\theta^TX_i)\r].\eeq Using the definition~\eqref{Qdef} of $Q$ and the above expression for $\E_{\theta\sim\hat\pi}[\nabla^3V(\theta)]$, we get
\beqs\label{Qlog}
Q(\theta)
= \sum_{i=1}^nb_i(\theta)\l[\frac12X_i^T(\theta-\mvi\pi)X_i^T\Svi\pi X_i -\frac16(X_i^T(\theta-\mvi\pi))^3\r]
\eeqs
We now summarize the results from Section~\ref{sec:main} applied to logistic regression. We assume the set-up from the beginning of the section: well-specified model, i.i.d. Gaussian design, bounded ground truth $\|\theta_0\|\leq C$, and Gaussian or flat prior $\mathcal N(0,\Sigma)$, with $\|\Sigma^{-1}\|\leq C$. In this setting, Corollary~\ref{corr:log} shows that Assumptions~\ref{assume:1},~\ref{assume:glob},~\ref{assume:c0} are satisfied, and that $q=0,a_3=a_4=C$.
\begin{theorem}\label{thm:log}Suppose $d/\sqrt n$ and $n^{-1}$ are smaller than certain absolute constants. On an event of probability at least $1-\e(-C(nd)^{1/9})-5e^{-Cn}-n^{-d/4}$, the following results hold.

There exists a unique solution $\mvi\pi,\Svi\pi$ to~\eqref{kl-opt-intro} in the region $\mathcal R_V$. Furthermore, if $g$ satisfies~\eqref{gcond} with $c_0=1/8$, then
\beqs
 \l|\medint\int gd\pi - \medint\int gd\hat\pi\r|&\les_{R_g} (1+\Var_{\hat\pi}(g)^{\frac12})\frac{d}{\sqrt n},\\
\l|\medint\int gd\pi - \medint\int gd\hat\pi - \int gQd\hat\pi\r|&\les_{R_g}  (1+\Var_{\hat\pi}(g)^{\frac12})\l(\frac{d}{\sqrt n}\r)^2,
\eeqs where $Q$ is defined in~\eqref{Qlog} and~\eqref{bidef}. If $g$ is even about $\mvi\pi$, then
\beq
 \l|\medint\int gd\pi - \medint\int gd\hat\pi\r|\les_{R_g} (1+\Var_{\hat\pi}(g)^{\frac12})\l(\frac{d}{\sqrt n}\r)^2.
\eeq If $g$ is linear, then
\beq\l|\medint\int gd\pi - \medint\int gd\hat\pi\r|\les_{R_g}(1+\Var_{\hat\pi}(g)^{\frac12})\l(\frac{d}{\sqrt n}\r)^3.\eeq
Next, we have
\beqs\label{TVlog}
\sup_{A\in \mathcal B(\R^d)}\l|\pi(A)-\hat\pi(A)\r|&\lesssim \frac{d}{\sqrt n},\\
\sup_{A\in \mathcal B_{s,\mvi\pi}(\R^d)}\l|\pi(A)-\hat\pi(A)\r|&\lesssim \l(\frac{d}{\sqrt n}\r)^2.
\eeqs Finally, let $\bar\theta = \int \theta d\pi(\theta)$ be the mean of $\pi$ and $\Sigpi = \int (\theta-\bar\theta)(\theta-\bar\theta)^Td\pi(\theta)$ be the covariance of $\pi$. Then
\beqs\label{meancovlog}
\sqrt n\|\mpi - \mvi\pi\| &\les \l(\frac{d}{\sqrt n}\r)^3,\\
n\|\Sigpi-\Svi\pi\|&\les \l(\frac{d}{\sqrt n}\r)^2
\eeqs In~\eqref{TVlog} and~\eqref{meancovlog}, all suppressed constants are absolute.
\end{theorem}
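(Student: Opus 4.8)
The plan is to obtain Theorem~\ref{thm:log} as a direct specialization of the general results of Section~\ref{sec:main}, with all hypotheses already verified by Corollary~\ref{corr:log}. First I would condition on the event of probability at least $1-\e(-C(nd)^{1/9})-n^{-d/4}-5e^{-Cn}$ on which Corollary~\ref{corr:log} holds. On this event Assumptions~\ref{assume:1},~\ref{assume:glob},~\ref{assume:c0} are in force with $q=0$, $a_3=a_4=C$ an absolute constant, and $c_0=1/8$; in particular existence and uniqueness of $(\mvi\pi,\Svi\pi)$ in $\mathcal R_V$ follows immediately from Lemma~\ref{lma:intro:exist:V}. The probabilistic content of the theorem is thus entirely packaged in Corollary~\ref{corr:log}, which in turn rests on the concentration estimates of Lemmas~\ref{lma:grad}--\ref{lma:34deriv}.

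Next, I would substitute $a_3=a_4\lesssim 1$ and $\epsilon=d/\sqrt n\le 1$ into the bounds of Theorem~\ref{thm:Vgen}: $a_3\epsilon+a_4\epsilon^2\lesssim\epsilon$, $(a_3^2+a_4)\epsilon^2\lesssim\epsilon^2$, and $(a_3^3+a_3a_4)\epsilon^3+a_4^2\epsilon^4\lesssim\epsilon^3$, which yields the three displayed $g$-dependent inequalities. Since the suppressed constant in Theorem~\ref{thm:Vgen} is increasing in $q$, $c_0^{-1}$, $R_g$, and here $q,c_0^{-1}$ are absolute, the resulting constant depends only on $R_g$, whence the $\les_{R_g}$ notation. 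For the leading-order-term bound I would apply Theorem~\ref{thm:corr} with the same substitution and identify the general $Q$ of~\eqref{Qdef} with~\eqref{Qlog}, using only the elementary identity $\nabla^3 v(\theta)=\frac1n\sum_i\psi'''(\theta^TX_i)X_i^{\otimes3}$ recorded in Section~\ref{sec:logreg}, together with $V=nv$ and the definition~\eqref{bidef} of $b_i$. The two TV bounds follow the same way from Corollary~\ref{corr:TV}, whose constant depends only on $q$ and $c_0^{-1}$ and is therefore absolute.

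Finally, for the mean and covariance errors I would convert the $\Svi\pi$-weighted bounds of Corollary~\ref{corr:meancov} into Euclidean ones. Corollary~\ref{corr:log}(2) gives $\lambda_{\min}(H_v)\ge\tau(C)$, so $H_V=nH_v\succeq n\tau(C)I_d$, and then~\eqref{Svi-bds} of Lemma~\ref{lma:intro:exist:V} gives $\Svi\pi\preceq 2H_V^{-1}\preceq \frac{2}{n\tau(C)}I_d$, i.e.\ $\|\Svi\pi^{1/2}\|\lesssim n^{-1/2}$. Writing $\mpi-\mvi\pi=\Svi\pi^{1/2}\bigl(\Svi\pi^{-1/2}(\mpi-\mvi\pi)\bigr)$ and $\Sigpi-\Svi\pi=\Svi\pi^{1/2}\bigl(\Svi\pi^{-1/2}(\Sigpi-\Svi\pi)\Svi\pi^{-1/2}\bigr)\Svi\pi^{1/2}$, and using $\epsilon\le1$ to absorb $\epsilon^4$ into $\epsilon^3$, I obtain $\sqrt n\|\mpi-\mvi\pi\|\lesssim\epsilon^3$ and $n\|\Sigpi-\Svi\pi\|\lesssim\epsilon^2$ with absolute constants. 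There is no genuinely hard step here: the only points requiring care are keeping track of which suppressed constants are absolute versus $R_g$-dependent, and carrying out this last weighted-to-unweighted conversion via the lower bound on $H_v$.
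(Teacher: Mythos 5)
Your proposal is correct and follows essentially the same route as the paper, which simply invokes Corollary~\ref{corr:log} together with Theorem~\ref{thm:Vgen}, Theorem~\ref{thm:corr}, Corollary~\ref{corr:TV}, and Corollary~\ref{corr:meancov} (the weighted-to-unweighted conversion you spell out is exactly the remark following Corollary~\ref{corr:meancov}, using $\lambda_{\min}(H_v)\geq\tau(C)$). Your write-up merely makes explicit the bookkeeping the paper leaves implicit.
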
 The proof follows immediately from Corollary~\ref{corr:log} and the results in Section~\ref{sec:main}.

\subsection{Numerical Simulation}We confirm some of the theoretical results ---specifically, the bounds~\eqref{meancovlog} --- in a numerical simulation. The numerical results are displayed in Figure~\ref{fig:demo}. We take $d=2$ and $n=100,200,\dots, 1000$, and a flat prior on $\theta$. For each $n$, we draw ten sets of covariates $x_i, i=1,\dots, n$ from $\mathcal N(0, I_d)$, yielding ten posterior distributions $\pi_n(\cdot\mid x_{1:n})$. We then compute the Laplace and VI mean and covariance approximation errors for each $n$ and each of the ten posteriors at a given $n$. The solid lines in Figure~\ref{fig:demo} depict the average approximation errors over the ten distributions at each $n$. The shaded regions depict the spread of the middle six out of ten approximation errors. See Appendix~\ref{app:log} for details about the simulation.

  \begin{figure}[t]
 \centering
 \includegraphics[width=0.49\textwidth]{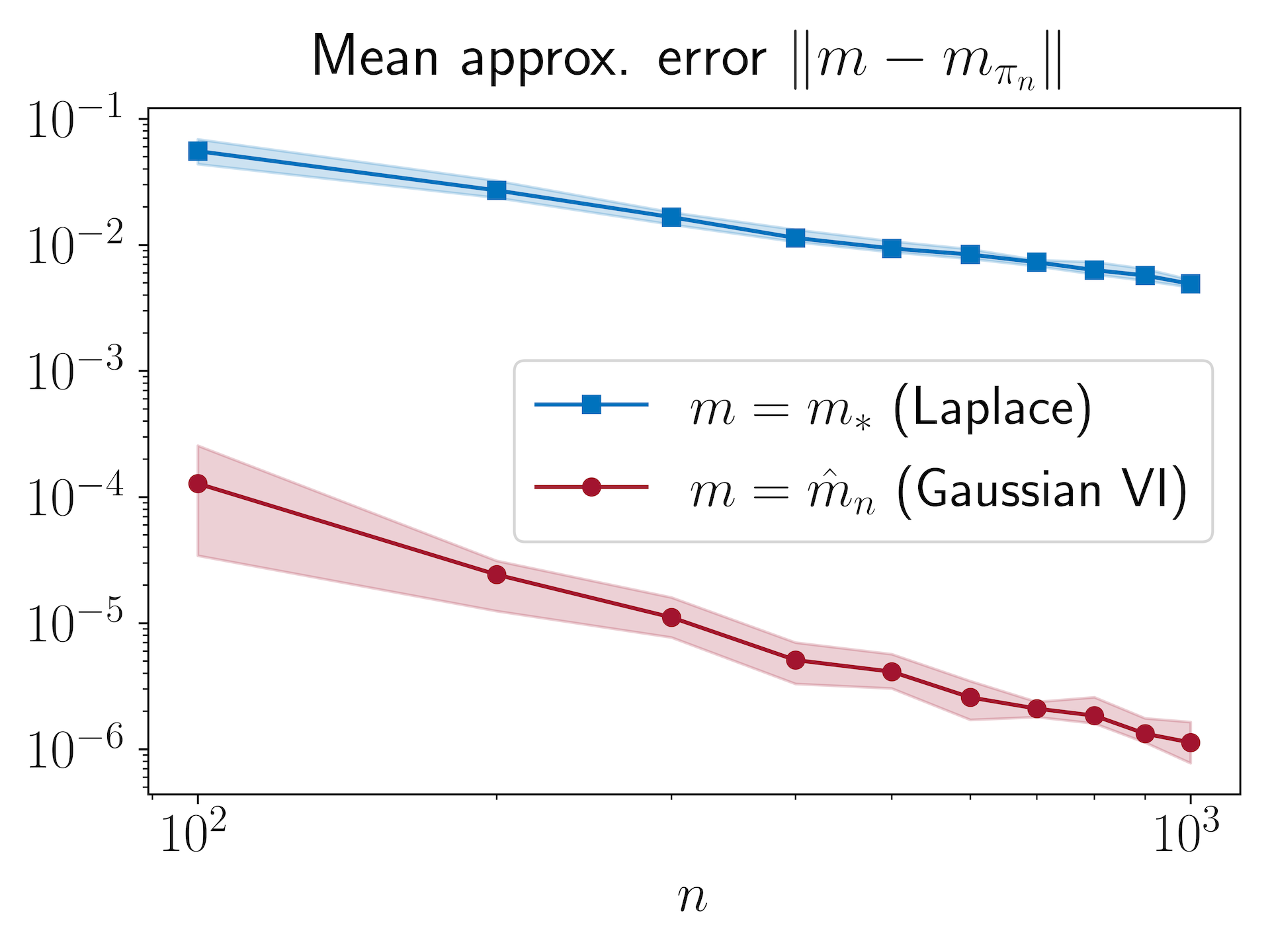}
 \includegraphics[width=0.49\textwidth]{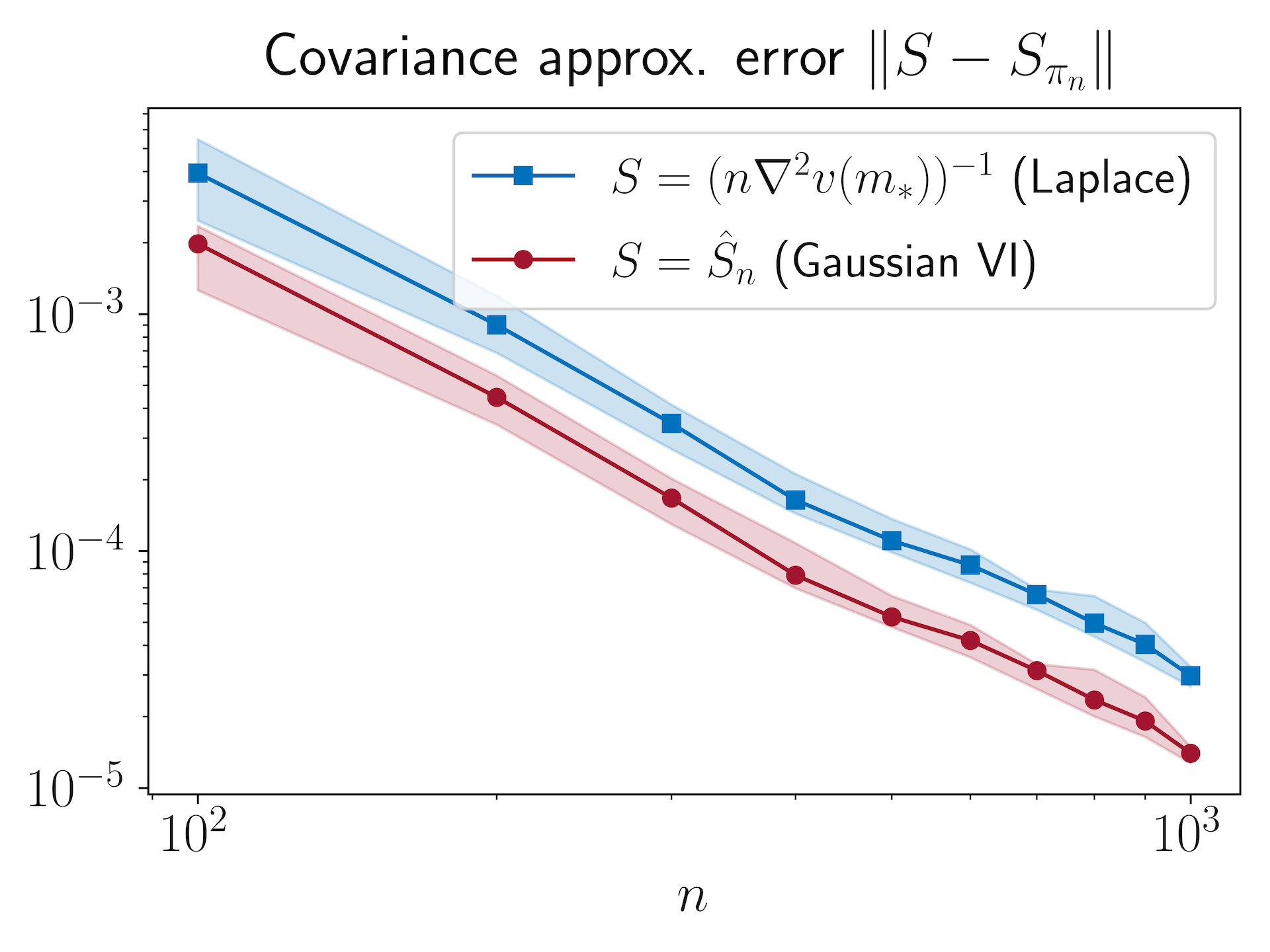}
 \vspace{-0.3cm}
 \caption{Gaussian VI yields a more accurate mean estimate than does Laplace, while the two covariance estimates are on the same order. Here, $\pi_n$ is the likelihood of logistic regression given $n$ observations in dimension $d=2$. For the left-hand plot, the slopes of the best-fit lines are $-1.04$ for the Laplace approximation and $-2.02$ for Gaussian VI. For covariance: the slopes of the best-fit lines are -2.09 for Laplace, -2.12 for VI.}
 \label{fig:demo}
 \end{figure}
 
In the left panel of Figure~\ref{fig:demo} depicting the mean error, the slopes of the best fit lines are $-1.04$ and $-2.02$ for Laplace and Gaussian VI, respectively. For the covariance error in the righthand panel, the slopes of the best fit lines are $-2.09$ and $-2.12$ for Laplace and Gaussian VI. This confirms that our bounds on VI, as well as the Laplace bounds, are tight in their $n$ dependence.

\section{Proof of Theorems~\ref{thm:Vgen} and~\ref{thm:corr}}\label{sec:proof} In this section we explain the main steps in the proof of Theorems~\ref{thm:Vgen} and~\ref{thm:corr}. We take Lemma~\ref{lma:intro:exist:V} as given; its proof is outlined in Section~\ref{sec:m-sig-exist}.
\subsection{Reduction to comparison with a standard Gaussian}Recall that $\hat\pi=\mathcal N(\mvi\pi,\Svi\pi)$, where $\mvi\pi$, $\Svi\pi$ are the solutions to~\eqref{kl-opt-intro} furnished by Lemma~\ref{lma:intro:exist:V}. We are interested in the difference $\int gd\pi-\int gd\hat\pi$, but we can always change variables via a bijection $T$, writing this difference in the form
\beq\label{gfequiv}\int gd\pi-\int gd\hat\pi = \int fdT_{\#}\pi - \int fdT_{\#}\hat\pi,\qquad f=g\circ T^{-1}.\eeq For convenience, we choose $T$ so that $T_{\#}\hat\pi=\gamma$ is the standard Gaussian distribution. Specifically, we let
\beq\label{Tdef}T(x)=\Svi\pi^{-1/2}(x-\mvi\pi),\eeq and we define $\rho=T_{\#}\pi\propto e^{-\Wbar}$, where
\beq\label{rhodef}\Wbar(x) :=  V(\mvi\pi + \Svi\pi^{1/2}x)=V(T^{-1}(x)).\eeq
Next, we reformulate Theorems~\ref{thm:Vgen} and~\ref{thm:corr}. To do so, define $\A_3 = \E[\nabla^3\Wbar(Z)]$ and
$$p_3(x)=\frac16\lla \A_3, \H_3(x)\rra = \lla \A_3, \;\frac16x^{\otimes3}-\frac12x\otimes I_d\rra.$$ Here, $\H_3(x)$ is the $d\times d\times d$ tensor of third order multivariate Hermite polynomials; see Section~\ref{subsec:outline} and Appendix~\ref{hermite-tail} for more details. 
\begin{theorem}\label{thm:W}
Let $f=g\circ T^{-1}$, where $T$ is as in~\eqref{Tdef}, and let $g$ satisfy~\eqref{gcond}. Also, let $\rho\propto e^{-\Wbar}$ as in~\eqref{rhodef}, and $\Delta_f = \int fd\rho - \int fd\gamma.$ Then under the conditions of Theorem~\ref{thm:Vgen}, it holds
\begin{align}
\l|\Delta_f\r|&\les (1+\|\fbar\|_2)\l(a_3\epsilon +  a_4\epsilon^2\r) ,\label{overallbd1}\\
\l|\Delta_f - \int f(-p_3)d\gamma\r|&\les (1+\|\fbar\|_2)(a_3^2+  a_4)\epsilon^2.\label{overallbd2}
\end{align} If $f$ is orthogonal to all third order Hermite polynomials (in particular, if $f$ is even), then
\beq\label{overallbd3}\l|\Delta_f\r|\les (1+\|\fbar\|_2)(a_3^2+  a_4)\epsilon^2.\eeq If $f$ is linear, then
\beq\label{overallbd4}\l|\Delta_f\r|\les (1+\|\fbar\|_2)\l(\l(a_3^3+ a_3a_4\r)\epsilon^3 +  a_4^2\epsilon^4\r).\eeq In all of the above bounds, the suppressed constant is an increasing function of $q, c_0^{-1}, R_g$. 
  \end{theorem}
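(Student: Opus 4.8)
The plan is to reduce everything to one explicit density ratio and then Taylor‑expand the exponential. Writing $\rho\propto e^{-\Wbar}$ as in~\eqref{rhodef}, the rescaled first‑order conditions~\eqref{kl-opt-intro-rescale}, $\E[\nabla\Wbar(Z)]=0$ and $\E[\nabla^2\Wbar(Z)]=I_d$, say (via Gaussian integration by parts) exactly that the degree‑one Hermite coefficient of $\Wbar$ vanishes and the degree‑two coefficient equals $I_d$. Hence $\Wbar=\mathrm{const}+\tfrac12\|x\|^2+r_3$, where $r_3=P_{\ge 3}\Wbar$ is the Hermite projection of $\Wbar$ onto degrees $\ge 3$ (note $\Wbar$ has at most polynomial growth by Assumption~\ref{assume:glob}, so its Hermite expansion converges in $L^2(\gamma)$); in particular $\gamma(r_3)=0$ and $r_3$ is $L^2(\gamma)$‑orthogonal to every polynomial of degree $\le 2$. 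Since $\rho,\gamma$ are both probability densities, normalization absorbs the constant and the $\tfrac12\|x\|^2$, giving $\tfrac{d\rho}{d\gamma}=e^{-r_3}/\gamma(e^{-r_3})$ and hence $\Delta_f=\gamma(fe^{-r_3})/\gamma(e^{-r_3})-\gamma(f)$.

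Then I would write $e^{-r_3}=1-r_3+R$ with $|R|\le\tfrac12 r_3^2 e^{|r_3|}$; using $\gamma(r_3)=0$ this yields
\[ \Delta_f=\frac{-\gamma(fr_3)+\bigl(\gamma(fR)-\gamma(f)\gamma(R)\bigr)}{1+\gamma(R)}, \]
where the denominator is $\approx 1$ once $|\gamma(R)|$ is small and the bracket is second order in $r_3$. To expose the leading term, split $r_3=p_3+r_4$, with $p_3(x)=\tfrac16\langle\A_3,\H_3(x)\rangle$ the degree‑three Hermite component ($\A_3=\E[\nabla^3\Wbar(Z)]$ by IBP) and $r_4=P_{\ge4}\Wbar$; then $-\gamma(fr_3)=\int f(-p_3)\,d\gamma-\gamma(fr_4)$, which is exactly the decomposition behind~\eqref{overallbd2}. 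For~\eqref{overallbd3}, orthogonality of $f$ to all degree‑three Hermite polynomials kills $\gamma(fp_3)$, leaving only $O(\epsilon^2)$ terms. For~\eqref{overallbd4}, a linear $f$ (which we may take to be a genuine linear form, since $\Delta_f$ is insensitive to adding a constant) has only a degree‑one Hermite component, so $f\perp p_3$ \emph{and} $f\perp r_4$, whence $\gamma(fr_3)=0$ identically; one then expands one order further, $e^{-r_3}=1-r_3+\tfrac12 r_3^2-R_3$, and uses that $p_3$ is odd so that $\gamma(fp_3^2)=0$ — the "cancellation in the next order term" flagged after~\eqref{ferr} — leaving only $\gamma(fp_3r_4)$, $\gamma(fp_3^3)$, $\gamma(fr_4^2)$‑type terms, which produce the $a_3a_4\epsilon^3$, $a_3^3\epsilon^3$, $a_4^2\epsilon^4$ contributions (remaining cross terms being absorbed via $a_3\epsilon\le C(q)$ from~\eqref{cconds}).

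The quantitative core is a handful of Gaussian‑weighted estimates. From Lemma~\ref{lma:intro:exist:V} (the two‑sided bound $\tfrac23 H_V^{-1}\preceq\Svi\pi\preceq 2H_V^{-1}$), the identity $V=nv$, and Assumption~\ref{assume:glob}, one gets $\|\nabla^3\Wbar(x)\|\les a_3 n^{-1/2}(1+\|x\|/\sqrt d)^q$ and $\|\nabla^4\Wbar(x)\|\les a_4 n^{-1}(1+\|x\|/\sqrt d)^q$: conjugating a $k$‑tensor by $\Svi\pi^{1/2}\asymp H_V^{-1/2}$ turns the $H_v$‑weighted norm of $\nabla^k v$ into the ordinary operator norm and the $\sqrt{n/d}$ zoom factor into an $O(d^{-1/2})$ factor. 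Feeding these into Taylor's theorem at orders three and four and projecting onto Hermite subspaces yields $\|p_3\|,\|r_3\|\les a_3\epsilon$, $\|r_4\|\les a_4\epsilon^2$, $\|R\|\les (a_3\epsilon)^2$ in the norms relevant for pairing with $f$. Finally, the tails — where $f$ may grow like $\e(c_0\sqrt d\,\|x-\mvi\pi\|_{\Svi\pi^{-1}}/4)$ under~\eqref{gcond} — are controlled by Assumption~\ref{assume:c0}: because $\Svi\pi\succeq\tfrac23 H_V^{-1}$, the linear growth of $v$ lifts to $\Wbar(x)\gtrsim c_0\sqrt d\,\|x\|$ for large $\|x\|$, so $e^{-\Wbar}$ outpaces $f$ with a factor to spare, which both keeps $\gamma(e^{-r_3})$ and $\gamma(|f|e^{|r_3|})$ between fixed constants and makes the tail contributions negligible.

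I expect the main obstacle to be making the Hermite‑remainder bounds dimension‑sharp. A blunt $|\gamma(fr_3)|\le\|f\|_2\|r_3\|_2$ combined with the pointwise bound on $\nabla^3\Wbar$ loses a factor of $\sqrt d$, so one must exploit cancellations in the Gaussian averages $\A_3=\E[\nabla^3\Wbar(Z)]$ (and the higher $\E[\nabla^k\Wbar(Z)]$) — this is precisely where the $\epsilon=d/\sqrt n$ scaling, rather than $d^{3/2}/\sqrt n$, comes from, and where the Hermite‑series viewpoint (rather than a plain Taylor expansion of $\Wbar$) earns its keep. The second delicate point is controlling the exponential remainders $R,R_3$, which involve $\gamma(r_3^2 e^{|r_3|})$‑type quantities rather than mere polynomial moments, and where the interplay between the polynomial‑growth Assumption~\ref{assume:glob} and the linear‑growth Assumption~\ref{assume:c0} must be handled with care.
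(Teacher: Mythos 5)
Your outline retraces the paper's own proof almost step for step: the stationarity conditions force the Hermite expansion $\Wbar=\mathrm{const}+\tfrac12\|x\|^2+r_3$ and hence $d\rho\propto e^{-r_3}d\gamma$; the Taylor expansion of $e^{-r_3}$ with the quadratic term retained, the split $r_3=p_3+r_4$, the orthogonality of $r_3$ (resp.\ $r_4$) to low-degree polynomials, and the parity argument $\int \fbar\, p_3^2\,d\gamma=0$ for linear $f$ are exactly the content of Lemma~\ref{lma:LmEm}, Lemma~\ref{alldabounds}, Corollary~\ref{corr:Ebd}, and Lemma~\ref{leading-leading} and their assembly. So as a structural plan this is the same approach, and the bookkeeping that produces the four bounds is right.

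However, the two steps you defer as ``expected obstacles'' are precisely where the paper's technical work lives, and as written they are genuine gaps rather than routine verifications. First, the dimension-sharp moment bounds $\|p_3\|_p\les_{p,q} a_3\epsilon$ and $\|r_4\|_p\les_{p,q} a_4\epsilon^2$ do not follow from ``Taylor's theorem plus projection onto Hermite subspaces'': for $p_3$ the paper needs the symmetric-tensor inequality~\eqref{TH3}, $\|\la T,\H_3\ra\|_{2k}\les\|\la T,\H_3\ra\|_{2}\les\|T\|_F\les d\|T\|$ (imported from~\cite{katsTDD}), and for $r_4$ it proves a new multivariate integral form of the Hermite remainder (Proposition~\ref{prop:intro:hermite-tail-ddim}, Lemma~\ref{lma:rk-exact}) together with the representation~\eqref{eH}, $\H_k(x)=\E[(x+iZ)^{\otimes k}]$ (Lemma~\ref{tensor-H-opnorm}), so that $\nabla^4\Wbar$ can be paired in operator norm; a plain series or Frobenius-norm argument loses at least a factor $\sqrt d$, and the coefficients $\A_k$, $k\geq4$, are otherwise hard to sum, which is exactly why the paper derives the explicit remainder formula. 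Second, the exponential remainders are not controlled by Assumption~\ref{assume:c0} alone: the linear-growth lower bound on $\Wbar$ (via~\eqref{barWlow}) only governs the region $\|x\|\gtrsim\sqrt d$; on the bulk $\U(R)=\{\|x\|\leq R\sqrt d\}$ the paper bounds $\|e^{-r_3}\mathbbm{1}_{\U(R)}\|_4\leq\exp(C(q)R^{4+q}\omega)$ via a Lipschitz/log-Sobolev argument applied to $\la\A_3,\H_3\ra$ plus a pointwise bound on $r_4$ (Lemmas~\ref{lma:expr3} and~\ref{lma:r4ptwise}), identifies the additive constant in the Hermite expansion as $-r_4(0)$ so that $e^{-r_3}d\gamma\propto e^{\Wbar(0)-\Wbar(x)}dx$ (equation~\eqref{rtrix2}), and then chooses $R\asymp\max\l(\bar C,\,(c_0d)^{-1}\log\omega^{-3}\r)$ so that the tail integral is at most $\omega^3$ while the bulk factor stays $O(1)$. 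Your assertion that $\gamma(e^{-r_3})$ and $\gamma(|f|e^{|r_3|})$ stay ``between fixed constants'' with ``a factor to spare'' is the conclusion of this two-region optimization, not something that can be read off the assumptions directly; filling in these two ingredients is what turns your outline into the paper's proof.
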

\noindent The bounds~\eqref{overallbd1},~\eqref{overallbd3}, and~\eqref{overallbd4} correspond to ~\eqref{overallbd1V},~\eqref{overallbd3V}, and~\eqref{overallbd4V} of Theorem~\ref{thm:Vgen}, respectively. This immediately follows from~\eqref{gfequiv} and the fact that $\Var_{\hat\pi}(g)=\Var_\gamma(f)=\|\fbar\|_2^2$. The bound~\eqref{overallbd2} corresponds to~\eqref{overallbd2V} of Theorem~\ref{thm:corr}. This is because $\int f(-p_3)d\gamma = \int gQd\hat\pi$, as a short calculation in Appendix~\ref{supp:aux} shows.
\subsection{Proof Outline}\label{subsec:outline}We now outline the proof of the equivalent Theorem~\ref{thm:W}. The proof is based on several key observations. First, note that the optimality equations~\eqref{kl-opt-intro} and the definition~\eqref{rhodef} of $\barW$ imply 
\beq\label{V0-c1c2}
\E[\nabla \Wbar(Z)] = 0,\qquad \E[\nabla^2\Wbar(Z)] = I_d.
\eeq
The proof exploits~\eqref{V0-c1c2} through six further observations.\\

\noindent 1) The optimality conditions~\eqref{V0-c1c2} imply that the Hermite series expansion of $\Wbar $ is of the form $\Wbar (x) = \mathrm{const.}+\frac12\|x\|^2 + r_3(x)$, where \beq\label{r3-def}r_3(x)=\sum_{k\geq3}\frac{1}{k!}\la \A_k, \H_k(x)\ra.\eeq In particular, $d\rho\propto e^{-\Wbar}\propto e^{-r_3}d\gamma$.\\

\noindent 2) Since $r_3$ contains only third and higher order Hermite polynomials, $r_3$ is \emph{orthogonal to all polynomials of degree 2 or lower}, with respect to the Gaussian measure. In other words, $\int fr_3d\gamma$ for all quadratic $f$. As a special case (taking $f=1$), we see that $\int r_3d\gamma=0$.\\

\noindent 3) Let $p_3 = \frac{1}{3!}\la\A_3,\H_3\ra$ be the third order Hermite polynomial contribution to $\barW$, and write $r_3=p_3+r_4$, where $r_4$ is the Hermite series remainder of order 4. The change of variables via the transformation $T$ ensures that $r_3\sim p_3\sim n^{-1/2}$ and $r_4\sim n^{-1}$.\\

\noindent 4) Let $\fbar = f-\gamma(f)$, so that $\Delta_f=\int \fbar d\rho$. Using that $d\rho\propto e^{-r_3}d\gamma$, that $r_3\sim n^{-1/2}$, and that $\int \fbar d\gamma=\int r_3d\gamma=0$, we have 
\beqs\label{EfbarX-explained}
\Delta_f=\int \fbar d\rho= \frac{\int\fbar e^{-r_3}d\gamma}{\int e^{-r_3}d\gamma} &= \frac{\int\fbar\l(1-r_3+r_3^2/2+\mathcal O(n^{-3/2})\r)d\gamma}{\int\l(1-r_3+\mathcal O(n^{-1})\r)d\gamma}\\
&= \frac{\int\fbar\l(-r_3+r_3^2/2+\mathcal O(n^{-3/2})\r)d\gamma}{1+\mathcal O(n^{-1})}
\eeqs
5) For a generic $f$, the order of $\Delta_f$ is that of the first remaining term in the numerator on the second line: $\Delta_f\sim \int\fbar r_3d\gamma\sim n^{-1/2}$. But if $f$ is linear or quadratic, then $\int\fbar r_3d\gamma =0$, since $r_3$ is orthogonal to quadratic polynomials. Therefore, $\Delta_f$ is at most of order $\int \fbar r_3^2d\gamma\sim n^{-1}$. \\

\noindent 6) By writing $r_3 = p_3 + r_4=\mathcal O(n^{-1/2})+\mathcal O(n^{-1})$ and therefore $r_3^2=p_3^2+\mathcal O(n^{-3/2})$, we can get even more refined estimates. Namely, we have
$$\Delta_f\sim \int \fbar\l(-r_3+\frac12r_3^2\r)d\gamma = \underbrace{\int \fbar(-p_3)d\gamma}_\text{$\mathcal O(n^{-1/2})$} + \underbrace{\int \fbar \l(-r_4+\frac12p_3^2\r)d\gamma}_\text{$\mathcal O(n^{-1})$} + \mathcal O(n^{-3/2}).$$ From this decomposition, we see that if $f$ is orthogonal to third order Hermite polynomials then $\int \fbar p_3d\gamma=0$, and hence $\Delta_f\sim\mathcal O(n^{-1})$ in this case. If $f(x)=a^Tx+b$ then $0=\int \fbar p_3d\gamma=\int \fbar r_4d\gamma$ automatically, but we also have $\int\fbar p_3^2d\gamma=0$ since $\fbar(x)=a^Tx$ is odd and $p_3^2$ is even. Hence both the $\mathcal O(n^{-1/2})$ term and the $\mathcal O(n^{-1})$ term vanish, so that $\Delta_f\sim n^{-3/2}$.\\

This is the essence of the proof. Now that we have given this overview, let us go into a few more details about the above points. To explain 1), recall that the Hermite series expansion of $\Wbar  $ is defined as
\beq
\Wbar  (x) = \sum_{k=0}^\infty\frac{1}{k!}\la \A_k(V_0), \H_k(x)\ra,\qquad \A_k(\Wbar):= \E\l[\Wbar(Z)\H_k(Z)\r]
\eeq
Here, the $\A_k(\Wbar)$ and $\H_k(x)$ are tensors in $(\R^d)^{\otimes k}$. Specifically, $\H_k(x)$ is the tensor of all order $k$ Hermite polynomials, enumerated as $H_k^{(\alpha)}$, $\alpha\in[d]^k$ with some entries repeating. For $k=0,1,2$, the Hermite tensors are given by
\beq\label{H012}\H_0(x) = 1, \quad \H_1(x)  = x, \quad \H_2(x) = xx^T - I_d.\eeq See Appendix~\ref{hermite-tail} for further details on Hermite series and polynomials. Distinct Hermite polynomials are orthogonal to each other with respect to the Gaussian weight. 
Using the representation
\beq\label{H-via-e}\H_k(x)e^{-\|x\|^2/2} = (-1)^k\nabla^ke^{-\|x\|^2/2},\eeq we obtain a useful, ``Gaussian integration by parts" identity for a $k$-times differentiable function $f$. Namely,
\beq\label{GIP}
\E[f(Z)\H_k(Z)] = \E[\nabla^kf(Z)].
\eeq This is a generalization of Stein's identity, 
$\E[Z_if(Z)]=\E[\partial_if(Z)]$. Since $\Wbar$ is at four times differentiable, we can use Gaussian integration by parts to write $\A_1,\A_2$ as 
\beqs
\A_1(\Wbar) &:= \E[\Wbar(Z)\H_1(Z)]= \E[\nabla\Wbar  (Z)]=0,\\ \A_2(\Wbar) &:= \E[\Wbar  (Z)\H_2(Z)] = \E[\nabla^2\Wbar  (Z)]=I_d,\eeqs where the last equality in each line comes from the optimality conditions~\eqref{V0-c1c2}. Therefore the Hermite series expansion of $\Wbar$ takes the form
\beqs\label{V0-expand}
\Wbar  (x)& =\la A_ 0(\Wbar), H_0\ra+ \la 0, \H_1(x)\ra + \frac12\la I_d, \H_2(x)\ra+ r_3(x)\\
&=\mathrm{const.}+\frac{\|x\|^2}{2}+ r_3(x),
\eeqs 
where $r_3$ is the third order remainder~\eqref{r3-def}. Here, we recall from~\eqref{H012} that $\H_2(x)=xx^T-I_d$. This explains point 1), and point 2) is a direct application of the orthogonality of distinct Hermite polynomials. 

For a heuristic proof of point 3), recall that $\Wbar(x)= V(\mvi\pi + \Svi\pi^{1/2}x)$, that $V=nv$, and that $\Svi\pi^{1/2}\sim n^{-1/2}$. Therefore by the chain rule, $\nabla^3\Wbar\sim n^{-1/2}$, and $\nabla^4\Wbar\sim n^{-1}$. Hence $$p_3=\la\A_3,\H_3\ra=\la\E[\nabla^3\Wbar(Z)],\H_3\ra\sim n^{-1/2}$$ and for $k\geq4$, we can apply ``partial" Gaussian integration by parts to express $\A_k$ as $$\A_k =\E[\H_k(Z)\Wbar  (Z)]=\E[\H_{k-4}(Z)\otimes \nabla^4\Wbar  (Z)]\sim n^{-1}.$$ Therefore $r_4\sim n^{-1}$. 

\begin{remark}\label{rk:A3fV}Recall that in the statement of Theorem~\ref{thm:W}, we defined $\A_3$ as $\A_3=\E[\nabla^3\Wbar(Z)]$. We now see that $\A_3=\A_3(\Wbar)$ is precisely the third order coefficient tensor in the Hermite series expansion of $\Wbar$. Furthermore, we can now explain the formulation of the leading order term given in Section~\ref{discuss}. Letting $\A_3(f)$ be the third order coefficient tensor in the Hermite series expansion of $f$, we can write
$$\int fp_3d\gamma = \frac{1}{3!}\lla \A_3(\Wbar), \int f\H_3 d\gamma \rra = \frac{1}{3!}\lla \A_3(\Wbar), \A_3(f)\rra.$$
\end{remark}
\subsubsection{Dimension dependence}
So far, we have only discussed the $n$ dependence of our bounds. We now also briefly discuss the $d$ dependence. As point 6) shows, in the typical case the leading order term is $\Delta_f\sim\int \fbar p_3d\gamma$, which by Cauchy-Schwarz is bounded as $\|\fbar\|_2\|p_3\|_2$. In~\cite{katsTDD}, we showed that for a symmetric $d\times d\times d$ tensor $T$, it holds
\beq\E\l[\la T, \H_3(Z)\ra^{p}\r]\leq C_k\E\l[\la T,\H_3(Z)\ra^2\r]^{p/2} \leq C_p\l(d\|T\|\r)^{p}\eeq for even $p$. Therefore, if $\|T\|\sim 1/\sqrt n$, then we get the desired dependence on $\epsilon=d/\sqrt n$. 

Another difficulty that arises is in bounding $\|r_4\|_p$. Intuitively, $r_4(x)$ is dominated by the fourth order polynomial $\la\A_4,\H_4(x)\ra$, so we expect that \beq\label{r4heur}\E[|r_4(Z)|^p]\sim \E\l[|\la\A_4,\H_4(Z)\ra|^p\r] \sim (d^2/n)^p,\eeq since $\A_4\sim n^{-1}$ and $\E[\|Z\|^{4p}]\sim d^{2p}$. However, with only a series representation of $\|r_4\|_p$, it is difficult to translate this intuition into a formal bound. We therefore derive the following explicit formula for the Hermite series remainder:
\begin{prop}\label{prop:intro:hermite-tail-ddim}
Assume $\Wbar  \in C^K$. Let $\Wbar  (x) = \sum_{j=0}^\infty\frac{1}{j!}\la \A_ j, \H_j(x)\ra$ be the Hermite series expansion of $\Wbar  $, and define 
\beq\label{rk-def}r_{k}(x) = \Wbar  (x) - \sum_{j=0}^{k-1}\frac{1}{j!}\la \A_ j, \H_j(x)\ra.\eeq Then for all $k\leq K$, it holds
\beqs\label{intro:rk-exact}
r_k(x) = \int_0^1\frac{(1-t)^{k-1}}{(k-1)!}\E\bigg[\bigg\la\nabla^{k}\Wbar  \l((1-t)Z+tx\r), \;\H_k(x)-Z\otimes\H_{k-1}(x)\bigg\ra\bigg]dt.\eeqs 
\end{prop}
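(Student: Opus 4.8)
The plan is to derive the exact Hermite-remainder formula from the Taylor expansion with integral remainder applied to $\Wbar$ along the segment from $Z$ to $x$, and then take a Gaussian expectation. Concretely, for fixed $x$, write the degree-$(k-1)$ Taylor expansion of the function $t\mapsto$ ``something'' — but the cleanest route is to Taylor-expand $\Wbar(x)$ around the point $Z$ and integrate against the Gaussian weight. Recall the standard fact that the Hermite coefficient tensors satisfy $\A_j = \E[\nabla^j\Wbar(Z)]$ for $j<k$ (by Gaussian integration by parts,~\eqref{GIP}), and that $\H_j(x) = \E[(x+iZ')^{\otimes j}]$-type identities hold; but rather than going through generating functions, I would start from the \emph{multivariate Taylor formula with integral remainder}:
\[
\Wbar(x) = \sum_{j=0}^{k-1}\frac{1}{j!}\la \nabla^j\Wbar(y),\,(x-y)^{\otimes j}\ra + \int_0^1\frac{(1-t)^{k-1}}{(k-1)!}\la\nabla^k\Wbar(y+t(x-y)),\,(x-y)^{\otimes k}\ra\,dt,
\]
valid for any base point $y$. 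I would apply this with $y=Z$, so that $x-y = x-Z$ and $y+t(x-y) = (1-t)Z+tx$, and then take $\E$ over $Z\sim\gamma$.

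The key step is to show that taking the Gaussian expectation of the $j$-th Taylor term produces exactly $\frac{1}{j!}\la\A_j,\H_j(x)\ra$ for $j\le k-1$, and that the integral-remainder term, after expectation, collapses to the claimed expression with $\H_k(x)-Z\otimes\H_{k-1}(x)$ in place of $(x-Z)^{\otimes k}$. For the first part, I would use the binomial-type identity
\[
\E\big[\la\nabla^j\Wbar(Z),(x-Z)^{\otimes j}\ra\big] = \la\E[\nabla^j\Wbar(Z)],\,\E[(x-Z)^{\otimes j}]\ra
\]
— which is \emph{not} literally true as stated since $\nabla^j\Wbar(Z)$ and $Z$ are dependent, so instead I would expand $(x-Z)^{\otimes j}=\sum_\ell \binom{j}{\ell}(-1)^\ell x^{\otimes(j-\ell)}\otimes Z^{\otimes\ell}$ and repeatedly apply Gaussian integration by parts~\eqref{GIP} in the $Z^{\otimes\ell}$ factors, which converts each $Z$ into a derivative hitting $\nabla^j\Wbar$; the combinatorics then reassemble into $\la\A_j,\H_j(x)\ra$ using the probabilist-Hermite identity $\H_j(x)=\E[(x-Z)^{\otimes j}]$ read correctly (i.e. $\H_j(x)=\sum_\ell\binom{j}{\ell}(-1)^\ell\E[Z^{\otimes\ell}]\otimes x^{\otimes(j-\ell)}$, where only even $\ell$ survive and contribute the $I_d$-contractions). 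For the remainder term the same device applies: in $\E[\la\nabla^k\Wbar((1-t)Z+tx),(x-Z)^{\otimes k}\ra]$ I isolate the highest power of $Z$, integrate by parts once to trade one $Z$ for a $t\nabla$ acting on $\nabla^k\Wbar$ — but that raises the derivative order, which we must avoid; the correct bookkeeping is to integrate by parts on all $k$ copies simultaneously, whereby $\E[(x-Z)^{\otimes k} g(Z)]$ for smooth $g$ relates to lower-order Hermite tensors, and the telescoping against the already-subtracted Taylor polynomial of $\nabla^k\Wbar$ around the appropriate point leaves precisely $\H_k(x)-Z\otimes\H_{k-1}(x)$. An alternative, and perhaps cleaner, derivation: use the recursion $\H_k(x) = x\otimes\H_{k-1}(x) - (k-1)\,\mathrm{sym}(I_d\otimes\H_{k-2}(x))$ together with the identity $\E[Z\otimes f(Z)] = \E[\nabla f(Z)]$ to verify directly that the proposed right-hand side $R_k(x)$ satisfies $R_k = \Wbar - \sum_{j<k}\frac{1}{j!}\la\A_j,\H_j\ra$, by checking that $R_{k} - R_{k+1} = \frac{1}{k!}\la\A_k,\H_k\ra$ (an integration-by-parts-in-$t$ plus Gaussian-IBP-in-$Z$ computation) and that $R_K$ has the right form when $\Wbar$ is a polynomial of degree $<K$ (base case), so the two sides agree.

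I expect the \textbf{main obstacle} to be the careful handling of the dependence between $\nabla^k\Wbar((1-t)Z+tx)$ and $Z$ when integrating by parts: one has to integrate by parts in the $(x-Z)^{\otimes k}$ factor while the argument of $\nabla^k\Wbar$ also depends on $Z$, so each integration by parts produces an extra $t$ and an extra derivative, and one must verify that after subtracting the lower-order Taylor data these contributions telescope exactly into the combination $\H_k(x)-Z\otimes\H_{k-1}(x)$ rather than into some messier tensor. Keeping the symmetrization conventions straight for the order-$k$ tensors (so that $\la\nabla^k\Wbar,\,\text{anything}\ra$ only sees the symmetric part) and checking convergence/interchange of $\E$ with $\int_0^1 dt$ (justified by $\Wbar\in C^K$ and the Gaussian moments of $\|\nabla^k\Wbar\|$, which are finite under the polynomial-growth Assumption~\ref{assume:glob}, though the proposition as stated needs only $\Wbar\in C^K$, so one should argue the formula holds whenever both sides are finite) are the remaining bookkeeping points. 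I would present the recursion-based verification as the main proof and relegate the combinatorial Hermite identities to Appendix~\ref{hermite-tail}.
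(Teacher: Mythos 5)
Your central route has a genuine gap, and one of the identities it leans on is false. If you Taylor-expand $\Wbar(x)$ about the random base point $Z$ and take expectations, the $j$-th Taylor term does \emph{not} reproduce the $j$-th Hermite term: already in one dimension with $j=1$, $\E[\Wbar'(Z)(x-Z)]=\A_1x-\E[\Wbar''(Z)]$, not $\A_1 x$. The lower-order Taylor terms only match the truncated Hermite series \emph{in aggregate}, after cross-order cancellations against the remainder term, so the claim that ``the combinatorics reassemble into $\la\A_j,\H_j(x)\ra$'' is not correct term by term, and your proposed fix (repeatedly trading factors of $Z$ for derivatives via Stein) pushes the derivative order up to $2(k-1)$, beyond the assumed $C^K$ regularity when $k=K$. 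Moreover the identity you invoke, $\H_j(x)=\E[(x-Z)^{\otimes j}]$, is wrong: $\E[(x-Z)^{\otimes 2}]=xx^T+I_d$ whereas $\H_2(x)=xx^T-I_d$; the valid representation (used in the paper) is $\H_j(x)=\E[(x+iZ)^{\otimes j}]$, with the imaginary unit supplying the alternating signs. Your alternative telescoping plan --- verify $R_1(x)=\Wbar(x)-\A_0$ by the fundamental theorem of calculus and then show $R_k-R_{k+1}=\frac{1}{k!}\la\A_k,\H_k\ra$ --- is a legitimately different and potentially workable strategy, but the displayed recursion is precisely where all the content lies (it is essentially equivalent to the proposition itself), and you only assert it as ``an integration-by-parts computation'' without carrying it out; as written, nothing is proved.

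For comparison, the paper avoids both difficulties by never expanding about $Z$: it writes $r_k(x)=\E[(\Wbar(x)-\Wbar(Z))K(x,Z)]$ with the truncated reproducing kernel $K(x,y)=\sum_{|\gamma|\le k-1}\frac{1}{\gamma!}H_\gamma(x)H_\gamma(y)$, proves a Christoffel--Darboux-type identity that factors $x_i-y_i$ out of $K$, applies the fundamental theorem of calculus once along the segment $(1-t)Z+tx$ (so only first derivatives of $\Wbar$ appear at that stage), and then performs $k-1$ Gaussian integrations by parts to raise $\nabla\Wbar$ to $\nabla^k\Wbar$. This keeps the regularity requirement at $C^k$ and produces the combination $\H_k(x)-Z\otimes\H_{k-1}(x)$ directly from the two halves of the Christoffel--Darboux identity, rather than from cross-order cancellations. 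If you want to salvage your write-up, the cleanest path is to actually prove your recursion $R_k-R_{k+1}=\frac{1}{k!}\la\A_k,\H_k\ra$ (with base case $k=1$, not the polynomial case), or to adopt the kernel argument.
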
 Note that~\eqref{intro:rk-exact} is analogous to the integral form of the remainder of a Taylor series. See Lemma~\ref{lma:rk-exact} in Appendix~\ref{app:tensor-hermite} for the proof of this proposition. The formula~\eqref{intro:rk-exact} is known in one dimension; see Section 4.15 in~\cite{lebedev}. However, we could not find the multidimensional version in the literature, so we have proved it here. Using this formula, we show that the heuristic argument~\eqref{r4heur} is essentially correct in that it gives the right scaling with $d$ and $n$.

\subsection{Key Lemmas}\label{subsec:key}Now that we have outlined the steps of the proof informally, we turn to the rigorous proof. We formulate a sequence of lemmas from which Theorem~\ref{thm:W} will follow. The omitted proofs can be found in Appendix~\ref{app:sec:proof}.

First, we define a few quantities. 
\begin{defn}\label{LEdef}Let $\rtri$ be as defined in~\eqref{rk-def}, and let $E:=\int  \l(e^{-\rtri}-1+\rtri\r)d\gamma.$ Also, for a function $f\in L^2(\gamma)$, let
\beqsn
 L(f) &= \int \fbar  \l(-\rtri+\frac{\rtri^2}{2}\r)d\gamma,\quad E(f) =\int \fbar  \l(e^{-\rtri}-1+\rtri-\frac{\rtri^2}{2}\r)d\gamma.
\eeqsn 
\end{defn} In the next lemma, we write $\int fd\rho-\int fd\gamma$ suggestively as $L(f)$ plus a remainder term.
\begin{lemma}\label{lma:LmEm}It holds 
\beqs\label{LmEm}
\l|\int fd\rho -\int fd\gamma - L(f)\r| \leq |E|\l|L(f)\r|+ |E(f)|.
\eeqs \end{lemma}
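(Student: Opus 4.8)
The plan is to prove Lemma~\ref{lma:LmEm} by a direct manipulation of the ratio $\int f d\rho = \int \fbar e^{-\rtri}d\gamma \,/\, \int e^{-\rtri}d\gamma$, isolating the term $L(f)$ and controlling everything else. First I would recall that $\rho \propto e^{-\Wbar} \propto e^{-\rtri}d\gamma$, since by point 1) of the proof outline the Hermite expansion gives $\Wbar(x) = \mathrm{const.} + \|x\|^2/2 + \rtri(x)$. Writing $Z_\rho := \int e^{-\rtri}d\gamma$ for the normalizing constant and using $\int \fbar d\gamma = 0$ (as $\fbar = f - \gamma(f)$), we get
\beqsn
\int f d\rho - \int f d\gamma = \int \fbar d\rho = \frac{1}{Z_\rho}\int \fbar e^{-\rtri}d\gamma = \frac{1}{Z_\rho}\int \fbar\l(e^{-\rtri} - 1\r)d\gamma,
\eeqsn
the last equality again by $\int \fbar d\gamma = 0$.

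Next I would split $e^{-\rtri} - 1 = \l(-\rtri + \tfrac{\rtri^2}{2}\r) + \l(e^{-\rtri} - 1 + \rtri - \tfrac{\rtri^2}{2}\r)$, so that $\int \fbar(e^{-\rtri}-1)d\gamma = L(f) + E(f)$ by the very definitions of $L(f)$ and $E(f)$ in Definition~\ref{LEdef}. Similarly, since $\int (-\rtri)d\gamma = 0$ by orthogonality of $\rtri$ to constants (point 2) in the outline), the normalizing constant satisfies $Z_\rho = \int e^{-\rtri}d\gamma = 1 + \int(e^{-\rtri} - 1 + \rtri)d\gamma = 1 + E$, using the definition $E = \int(e^{-\rtri} - 1 + \rtri)d\gamma$. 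Therefore
\beqsn
\int f d\rho - \int f d\gamma = \frac{L(f) + E(f)}{1 + E}.
\eeqsn
To conclude, I would write $\frac{L(f) + E(f)}{1+E} - L(f) = \frac{E(f) - E\cdot L(f)}{1+E}$ and argue that $1 + E \geq 1$, hence the denominator can be dropped when bounding the absolute value from above. This last point is where one must be slightly careful: it requires $E \geq 0$, which holds because $e^{-t} - 1 + t \geq 0$ for all real $t$, so the integrand defining $E$ is pointwise nonnegative and $E \geq 0$. Granting this, $\bigl|\int f d\rho - \int f d\gamma - L(f)\bigr| = \frac{|E(f) - E\cdot L(f)|}{1+E} \leq |E(f)| + |E|\,|L(f)|$ by the triangle inequality and $1+E\geq 1$, which is exactly~\eqref{LmEm}.

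The main (and really only) obstacle is the bookkeeping: one must be sure that all the integrals appearing are finite and that the algebraic identities $\int\fbar d\gamma = 0$, $\int \rtri d\gamma = 0$, and $E\geq 0$ are in force. Finiteness of $\int \fbar e^{-\rtri}d\gamma$ and of the pieces of $E(f)$ presumably follows from the tail conditions on $f$ encoded in~\eqref{gcond} together with the size control on $\rtri$ (point 3), the subject of later lemmas), so at this stage it suffices to carry out the identity manipulations, treating integrability as something established elsewhere. I do not expect the sign fact $E\geq 0$ to cause trouble, but it is worth stating explicitly since it is what lets us discard the $1/(1+E)$ factor cleanly rather than having to lower-bound $E$ quantitatively.
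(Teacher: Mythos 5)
Your proposal is correct and follows essentially the same route as the paper: both write $\int f d\rho-\int f d\gamma=(L(f)+E(f))/(1+E)$ using $\int\fbar d\gamma=\int \rtri d\gamma=0$, then subtract $L(f)$ and drop the denominator since $1+E\geq1$. The only (immaterial) difference is that you justify $E\geq0$ by the pointwise inequality $e^{-t}-1+t\geq0$, whereas the paper invokes Jensen's inequality applied to $\int e^{-\rtri}d\gamma$; both are valid consequences of convexity.
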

\begin{proof}
Recall from the proof outline that $\int fd\rho - \int fd\gamma = \int \fbar e^{-r_3}d\gamma/\int e^{-r_3}d\gamma$, and that $\int \fbar d\gamma=\int r_3d\gamma=0$. Therefore, we see that
\beq\label{LmEminter}\int fd\rho - \int fd\gamma = \frac{L(f)+E(f)}{1+E}.\eeq Furthermore, $1+E=\int e^{-r_3}d\gamma\geq1$ by Jensen's inequality, since $\int(-r_3)d\gamma=0$. Subtracting $L(f)$ from both sides of~\eqref{LmEminter} and bounding the resulting righthand side gives~\eqref{LmEm}.
\end{proof}
Consider the definition of $L(f), E(f)$, and $E$. Since $r_3\sim n^{-1/2}$, we expect that $L(f)\sim n^{-1/2}$, $E\sim n^{-1}$, and $E(f)\sim n^{-3/2}$. This is confirmed in the next lemma. 
\begin{lemma}\label{alldabounds}Let $f=g\circ T^{-1}$, as in Theorem~\ref{thm:W}. Then under the conditions of Theorem~\ref{thm:Vgen}, it holds 
\beqs
|L(f)| &\les \|\fbar\|_2\l(\c3\epsilon+ \c4\epsilon^2\r),\\
|E| &\les  \l(\c3\epsilon+ \c4\epsilon^2\r)^2, \\
|E(f)| &\les  (1+\|\fbar \|_2)\l(\c3\epsilon+ \c4\epsilon^2\r)^3,
\eeqs where the suppressed constant is an increasing function of $q, c_0^{-1}$, and $R_g$.\end{lemma}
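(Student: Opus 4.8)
The three bounds all reduce to one central estimate: controlling Gaussian moments of the Hermite remainders $\rtri$ and $\rfour$, together with a Cauchy--Schwarz / Hölder bookkeeping step. The plan is as follows. First I would record the basic "size" estimates: using Proposition~\ref{prop:intro:hermite-tail-ddim} with $k=3$ and $k=4$ applied to $\Wbar$, together with the chain rule $\nabla^k\Wbar(x) = \Svi\pi^{k/2}\nabla^kV(\mvi\pi+\Svi\pi^{1/2}x) = n^{k/2}\Svi\pi^{k/2}\nabla^kv(\cdots)$ and the polynomial growth bound~\eqref{globnabla} of Assumption~\ref{assume:glob}, together with the two-sided bound~\eqref{Svi-bds} on $\Svi\pi$ (which makes $\Svi\pi^{1/2}$ comparable to $H_V^{-1/2}$ so that $\|\nabla^k\Wbar(x)\|$ is controlled by $a_k(1\vee\|x\|/\sqrt d)^q (d/n)^{\cdots}$ after converting $H_v$-weighted norms), I would show the pointwise/moment bounds
\[
\|\nabla^3\Wbar(x)\|\les \c3\,n^{-1/2}(1\vee \|x\|/\sqrt d)^q, \qquad \|\nabla^4\Wbar(x)\|\les \c4\, n^{-1}(1\vee\|x\|/\sqrt d)^q,
\]
and then feed these into the integral formula~\eqref{intro:rk-exact}. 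The Hermite-tensor factor $\H_k(x)-Z\otimes\H_{k-1}(x)$ contributes polynomial-in-$\|x\|,\|Z\|$ growth, so after taking the $Z$-expectation inside and then the $\gamma$-moment in $x$, Gaussian hypercontractivity / moment bounds for polynomials of Gaussians give $\|\rtri\|_{L^p(\gamma)}\les \c3\, d/\sqrt n = \c3\epsilon$ and $\|\rfour\|_{L^p(\gamma)}\les \c4\, d^2/n = \c4\epsilon^2$ for every fixed $p$, with constants depending on $p$ and $q$ only. (The $q$-dependent constant absorbs the $(1\vee\|x\|/\sqrt d)^q$ factor, using $\E[\|Z\|^{qp}]\les d^{qp/2}$.) I also need the decomposition $\rtri = p_3 + \rfour$ with $\|p_3\|_{L^p(\gamma)}\les \c3\epsilon$; this follows either by subtraction or directly since $p_3 = \frac{1}{3!}\la\A_3,\H_3\ra$ and $\A_3 = \E[\nabla^3\Wbar(Z)]$, giving $\|\A_3\|\les \c3 n^{-1/2}$.

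Next I would handle the three quantities in turn. For $|L(f)| = |\int\fbar(-\rtri+\rtri^2/2)d\gamma|$, Cauchy--Schwarz gives $|L(f)|\le \|\fbar\|_2(\|\rtri\|_2 + \tfrac12\|\rtri^2\|_2) = \|\fbar\|_2(\|\rtri\|_2 + \tfrac12\|\rtri\|_4^2)$, and the moment bounds give $\|\rtri\|_2\les\c3\epsilon+\c4\epsilon^2$ (from $\rtri=p_3+\rfour$) and $\|\rtri\|_4^2\les(\c3\epsilon+\c4\epsilon^2)^2\les \c3\epsilon+\c4\epsilon^2$ using~\eqref{cconds}, which ensures $\c3\epsilon+\c4\epsilon^2\le 1$ so that squaring only helps. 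For $|E| = |\int(e^{-\rtri}-1+\rtri)d\gamma|$, I would use the elementary inequality $|e^{-t}-1+t|\le \tfrac12 t^2 e^{|t|}$, so $|E|\le \tfrac12\int \rtri^2 e^{|\rtri|}d\gamma$; by Cauchy--Schwarz this is $\le\tfrac12\|\rtri^2\|_2\|e^{|\rtri|}\|_2 = \tfrac12\|\rtri\|_4^2\,(\E[e^{2|\rtri|(Z)}])^{1/2}$, and the exponential moment is $O(1)$ because $\rtri$ has all polynomial Gaussian moments controlled and in fact $\|\rtri\|_{L^p}\les p^{?}\epsilon$ with $\epsilon$ small (here I'd invoke that $\rtri$ is a finite-degree-dominated object — more carefully, I would bound $\E[e^{2|\rtri|}]$ by splitting on $\{\|Z\|\le R\sqrt d\}$ where $|\rtri|\les(\c3\epsilon+\c4\epsilon^2)R^{q+3}\les 1$ for $R$ an absolute constant, and on the tail $\{\|Z\|>R\sqrt d\}$ where the Gaussian measure decays fast enough to beat the at-most-polynomial-in-exponent growth of $e^{2|\rtri|}$, using the explicit remainder formula to bound $|\rtri(x)|$ by a polynomial in $\|x\|$). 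That yields $|E|\les\|\rtri\|_4^2\les(\c3\epsilon+\c4\epsilon^2)^2$. For $|E(f)| = |\int\fbar(e^{-\rtri}-1+\rtri-\rtri^2/2)d\gamma|$, I use $|e^{-t}-1+t-t^2/2|\le\tfrac16|t|^3 e^{|t|}$, then Hölder with exponents $(2,6,6,6)$ or simply $|E(f)|\le \tfrac16\||\fbar|\,\rtri^3 e^{|\rtri|}\|_1 \le \tfrac16\|\fbar\|_2\|\rtri^3\|_4\|e^{|\rtri|}\|_4 \les \|\fbar\|_2(\c3\epsilon+\c4\epsilon^2)^3$ (with the extra $1$ in $(1+\|\fbar\|_2)$ only needed if one prefers to also absorb the $\gamma(f)$ subtraction cleanly, or to avoid degenerate $\|\fbar\|_2$); the exponential factor is again $O(1)$ by the same splitting argument. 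Throughout, the hypothesis~\eqref{gcond} on $g$ — equivalently on $\fbar = f - \gamma(f)$ — is what guarantees $\|\fbar\|_2 < \infty$ and, more importantly, enters when I need $\fbar$ against the exponentially-growing factor $e^{|\rtri|}$ on the tail: condition~\eqref{gcond} says $|\fbar(x)|\le\exp(c_0\sqrt d\|x\|/4)$ for $\|x\|\ge R_g\sqrt d$, which together with $|\rtri(x)|$ growing slower than $c_0\sqrt d\|x\|/4$ on that same tail region (this is where $c_0$ and the $(q+3)$-degree polynomial growth of $\rtri$ must be reconciled — I would choose the split radius large enough in terms of $c_0^{-1}, q, R_g$) keeps all the integrals finite and $O(1)$ on the tail. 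This is why the suppressed constant ends up being an increasing function of $q, c_0^{-1}, R_g$.

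The main obstacle is the rigorous control of the exponential-type integrals $\int\rtri^2 e^{|\rtri|}d\gamma$ and $\int|\fbar|\rtri^3 e^{|\rtri|}d\gamma$: one cannot naively hypercontract $e^{|\rtri|}$ since $\rtri$ is an infinite Hermite series, not a fixed-degree polynomial, so the clean argument "Gaussian polynomial moments grow like $p^{\deg/2}$" is not directly available. The resolution is to go back to the \emph{explicit integral remainder formula}~\eqref{intro:rk-exact}, which bounds $|\rtri(x)|$ and $|\rfour(x)|$ pointwise by $(\c3\epsilon+\c4\epsilon^2)$ times an explicit polynomial in $\|x\|/\sqrt d$ of degree $q+3$ (resp.\ $q+4$) — a genuine polynomial pointwise bound, not a spectral one — and then the Gaussian tail handles the rest. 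I would thus state and prove an auxiliary pointwise bound $|\rtri(x)|\les (\c3\epsilon + \c4\epsilon^2)(1\vee\|x\|/\sqrt d)^{q+3}$ (and similarly for $\rfour$) as the first real lemma, isolate the "small on a large ball, Gaussian-tail-dominated outside" splitting as a reusable sublemma, and then the three displayed bounds follow by the Cauchy--Schwarz/Hölder manipulations sketched above.
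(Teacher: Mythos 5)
Your reduction of $|L(f)|$ via Cauchy--Schwarz and the $L^p$ bounds $\|p_3\|_p\lesssim a_3\epsilon$, $\|r_4\|_p\lesssim a_4\epsilon^2$ matches the paper and is fine. The gap is in the two quantitative claims on which your treatment of $E$ and $E(f)$ rests. First, the pointwise bound $|r_3(x)|\lesssim(\c3\epsilon+\c4\epsilon^2)(1\vee\|x\|/\sqrt d)^{q+3}$ is false: it holds for $r_4$, but the cubic Hermite part $p_3=\frac1{3!}\la\A_3,\H_3\ra$ only satisfies $|p_3(x)|\lesssim\|\A_3\|(\|x\|^3+d^{3/2})$, which at $\|x\|\sim R\sqrt d$ is of order $a_3\epsilon\sqrt d\,R^3$, not $a_3\epsilon R^3$ (take $\A_3$ proportional to $u^{\otimes3}$ and $x$ along $u$ to see this is sharp). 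So your claim that $|r_3|\le 1$ on the ball $\{\|x\|\le R\sqrt d\}$ for an absolute constant $R$ secretly requires $a_3 d^{3/2}/\sqrt n\lesssim1$, i.e.\ $d^3\lesssim n$, which forfeits exactly the $d^2\ll n$ scaling the lemma is supposed to give. The paper never uses a pointwise bound on $p_3$: in $L^p$ it uses $\|\la\A_3,\H_3\ra\|_2\lesssim\|\A_3\|_F\lesssim d\|\A_3\|$ together with hypercontractivity, and for the exponential factor on the ball it uses a Lipschitz/log-Sobolev (Herbst) argument, exploiting that $\nabla p_3$ has norm $\lesssim R^2a_3\epsilon$ on $\U(R)$, to get $\|e^{-r_3}\mathbbm{1}_{\U(R)}\|_4\le\exp(C(q)R^{4+q}\omega)$ (Lemma~\ref{lma:expr3}); concentration, not a pointwise estimate, is what saves the dimension dependence.

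Second, the quantity $\E[e^{2|\rtri|}]$ that your inequality $|e^{-t}-1+t|\le\frac12t^2e^{|t|}$ forces you to control can be infinite. Since $\rtri(x)=\Wbar(x)-\Wbar(0)+r_4(0)-\|x\|^2/2$ and the assumptions only give the upper bound $\Wbar(x)-\Wbar(0)\lesssim 1+\|x\|^{3+q}$, the branch $e^{+\rtri}$ can grow like $e^{c\|x\|^{3+q}}$, and a Gaussian weight does not beat an exponent of degree $\ge3$ in $\|x\|$; so "Gaussian decay beats polynomial-in-exponent growth" fails, and Cauchy--Schwarz against $\|e^{|\rtri|}\|_2$ or $\|e^{|\rtri|}\|_4$ is unsalvageable. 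Only the $e^{-\rtri}$ branch is controllable (use $|e^{-t}-1+t-t^2/2|\le|t|^3+|t|^3e^{-t}$ as in the paper), and on the tail $\U(R)^c$ it must be handled not by moments of $r_3$ at all but by rewriting $e^{-\rtri}d\gamma\propto e^{\Wbar(0)-\Wbar(x)}dx$ and invoking the linear-growth Assumption~\ref{assume:c0}, which is what produces the decaying tail integrals $I(R,9+3q,c_0/4)$ against which $|\fbar|\le e^{c_0\sqrt d\|x\|/4}$ from~\eqref{gcond} is absorbed. Finally, with a fixed absolute-constant $R$ the tail term $e^{-Rc_0d/8}$ need not be $\lesssim\omega^3$; the paper takes $R\sim\max(\bar C,\log(\omega^{-3})/(c_0d))$ so the tail is $\le\omega^3$ while the ball exponent $C(q)R^{4+q}\omega$ stays bounded by a constant depending on $q,c_0^{-1},R_g$. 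Your sketch is missing this optimization as well as the two mechanisms above, and as written the proof of the second and third bounds does not go through.
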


Lemmas~\ref{lma:LmEm} and~\ref{alldabounds} immediately give the following corollary, which shows that $\int fd\rho - \int fd\gamma = L(f) + \mathcal O(n^{-3/2})$. 
\begin{corollary}\label{corr:Ebd}In the same setting as Lemma~\ref{alldabounds}, it holds
\beq\label{mainEbd}
\l|\int fd\rho -\int fd\gamma - L(f)\r| \les (1+\|\fbar \|_2)\l(\c3\epsilon + \c4\epsilon^2\r)^3,\eeq where the suppressed constant is an increasing function of $q, c_0^{-1}$, and $R_g$.
\end{corollary}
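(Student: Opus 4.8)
The plan is to obtain the corollary as a direct combination of the two preceding lemmas. Lemma~\ref{lma:LmEm} already isolates the quantity of interest as
\[
\l|\int fd\rho -\int fd\gamma - L(f)\r| \leq |E|\,|L(f)| + |E(f)|,
\]
so it suffices to bound the two terms on the right-hand side using the estimates furnished by Lemma~\ref{alldabounds}.

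For the first term, I would simply multiply the bound $|E| \les (a_3\epsilon + a_4\epsilon^2)^2$ by the bound $|L(f)| \les \|\fbar\|_2(a_3\epsilon + a_4\epsilon^2)$, which gives $|E|\,|L(f)| \les \|\fbar\|_2\,(a_3\epsilon + a_4\epsilon^2)^3$. For the second term, Lemma~\ref{alldabounds} directly provides $|E(f)| \les (1+\|\fbar\|_2)(a_3\epsilon + a_4\epsilon^2)^3$. Adding the two contributions and using $\|\fbar\|_2 \leq 1 + \|\fbar\|_2$ yields the claimed inequality~\eqref{mainEbd}. Since each of the three suppressed constants entering the product and the sum is an increasing function of $q$, $c_0^{-1}$, and $R_g$, so is the resulting constant, which matches the statement.

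I do not expect any genuine obstacle at this step: the corollary is immediate once Lemmas~\ref{lma:LmEm} and~\ref{alldabounds} are available, and the argument above is all that is needed. The real work lies entirely inside Lemma~\ref{alldabounds} --- controlling the moments $\|r_3\|_p$ and $\|r_4\|_p$ (the latter via the Hermite-remainder formula of Proposition~\ref{prop:intro:hermite-tail-ddim}, so as to recover the heuristic scaling $\|r_4\|_p \sim d^2/n$), and handling the tail region where the Gaussian-relative growth condition~\eqref{gcond} on $f$ must be invoked --- but that analysis is external to the present statement.
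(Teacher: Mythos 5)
Your proposal is correct and is exactly the paper's argument: the corollary is stated as an immediate consequence of Lemma~\ref{lma:LmEm} together with the three bounds of Lemma~\ref{alldabounds}, combined precisely as you do (multiply the $|E|$ and $|L(f)|$ bounds, add the $|E(f)|$ bound, absorb $\|\fbar\|_2$ into $1+\|\fbar\|_2$). Nothing further is needed.
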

Thus it remains to study the term $L(f)$. By writing $r_3 = p_3 + r_4$, we break down $L(f)$ into a term of order $\mathcal O(n^{-1/2})$, a term of order $\mathcal O(n^{-1})$, and a remainder of order $\mathcal O(n^{-3/2})$. 
\begin{lemma}\label{leading-leading}Let $p_3(x)=\frac{1}{3!}\la \A_ 3,\H_3(x)\ra$ and $r_4 = r_3 - p_3$. In the same setting as Lemma~\ref{alldabounds}, it holds
$$
L(f) = \int \fbar(-p_3)d\gamma + \int \fbar \l(-r_4+\frac12p_3^2\r)d\gamma + R, 
$$ where
\begin{align}
 \l|\int \fbar(-p_3)d\gamma\r|&\les_q \|\fbar\|_2a_3\epsilon,\label{Lpart2}\\
  \l|\int \fbar\l(-r_4+\frac12p_3^2\r)d\gamma\r|& \les_q \|\fbar\|_2(a_3^2+ a_4)\epsilon^2,\label{Lpart1}\\
|R|&\les_q \|\fbar \|_2\l(a_3a_4\epsilon^3 +  a_4^2\epsilon^4\r).\label{Ldecomp}
 \end{align}
 \end{lemma}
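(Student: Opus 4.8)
The plan is to prove Lemma~\ref{leading-leading} by decomposing $L(f) = \int \fbar(-r_3 + \tfrac12 r_3^2)d\gamma$ according to the splitting $r_3 = p_3 + r_4$, where $p_3 = \tfrac{1}{3!}\la\A_3,\H_3\ra \sim n^{-1/2}$ and $r_4 \sim n^{-1}$. Expanding,
\[
L(f) = \int \fbar(-p_3)d\gamma + \int \fbar\Bigl(-r_4 + \tfrac12 p_3^2\Bigr)d\gamma + \underbrace{\int \fbar\bigl(p_3 r_4 + \tfrac12 r_4^2\bigr)d\gamma}_{=:R}.
\]
So the three claimed bounds~\eqref{Lpart2},~\eqref{Lpart1},~\eqref{Ldecomp} reduce to estimating $\|p_3\|_2$, $\|r_4\|_2$, and the relevant $L^4$ norms, then applying Cauchy--Schwarz against $\|\fbar\|_2$ (or $\|\fbar\|_4$, controlled by $\|\fbar\|_2$ up to the tail condition~\eqref{gcond}, as in the analysis behind Lemma~\ref{alldabounds}).

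The key inputs I would invoke are: (i) the tensor bound $\|p_3\|_2 \les d\|\A_3\|$ together with $\|\A_3\| = \|\E[\nabla^3\Wbar(Z)]\| \les a_3/\sqrt n$, which comes from the chain rule applied to $\Wbar(x) = V(\mvi\pi + \Svi\pi^{1/2}x)$, the bound $\Svi\pi \preceq 2H_V^{-1}$ from Lemma~\ref{lma:intro:exist:V}, and the growth Assumption~\ref{assume:glob}; this gives $\|p_3\|_2 \les a_3\epsilon$. (ii) The analogous bound $\|r_4\|_2 \les a_4 \epsilon^2 / d \cdot d^2$... more precisely the heuristic~\eqref{r4heur} made rigorous via the exact remainder formula of Proposition~\ref{prop:intro:hermite-tail-ddim}: writing $r_4(x)$ as the integral against $\nabla^4\Wbar((1-t)Z + tx)$ paired with $\H_4(x) - Z\otimes\H_3(x)$, bounding $\|\nabla^4\Wbar\|$ pointwise by $a_4/n$ times a polynomial in $\|(1-t)Z+tx\|$ (from Assumption~\ref{assume:glob}), and using Gaussian moment bounds to get $\|r_4\|_2 \les a_4\epsilon^2$ and likewise $\|r_4\|_4 \les a_4\epsilon^2$. (iii) For $\tfrac12 p_3^2$: $\|p_3^2\|_2 = \|p_3\|_4^2 \les (a_3\epsilon)^2$ using the hypercontractivity-type bound $\E[\la T,\H_3(Z)\ra^4] \les (\E[\la T,\H_3(Z)\ra^2])^2 \les (d\|T\|)^4$ cited in the excerpt. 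Combining: $|\int\fbar(-p_3)d\gamma| \les \|\fbar\|_2 a_3\epsilon$; $|\int\fbar(-r_4 + \tfrac12 p_3^2)d\gamma| \les \|\fbar\|_2(a_4\epsilon^2 + a_3^2\epsilon^2)$; and for $R$, Cauchy--Schwarz gives $|R| \les \|\fbar\|_4(\|p_3\|_4\|r_4\|_4 + \|r_4\|_4^2) \les \|\fbar\|_4(a_3\epsilon \cdot a_4\epsilon^2 + a_4^2\epsilon^4) = \|\fbar\|_4(a_3 a_4\epsilon^3 + a_4^2\epsilon^4)$. Note that the $p_3^2$ term in the $\mathcal O(n^{-1})$ piece is \emph{not} discarded here (unlike in point 6 of the outline, where it drops only when $f$ is linear by an odd/even parity argument); it is simply kept and bounded, contributing the $a_3^2$ in~\eqref{Lpart1}.

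The main obstacle is establishing the sharp-in-$d$ bound on $\|r_4\|_2$ and $\|r_4\|_4$, i.e.\ making the heuristic~\eqref{r4heur} rigorous. The difficulty is that $r_4$ is defined as an infinite Hermite series, and naively term-by-term bounds lose the correct $d$-dependence; the remedy is to bypass the series entirely and use the integral remainder formula~\eqref{intro:rk-exact} from Proposition~\ref{prop:intro:hermite-tail-ddim}, which expresses $r_4(x)$ directly in terms of $\nabla^4\Wbar$. One then needs to (a) bound $\|\nabla^4\Wbar((1-t)Z+tx)\|$ uniformly using the polynomial-growth Assumption~\ref{assume:glob}, being careful that the argument $(1-t)Z + tx$ stays in a region where the bound is usable, and (b) control the $L^p(\gamma)$ norm of the tensor pairing $\la\nabla^4\Wbar, \H_4(x) - Z\otimes\H_3(x)\ra$, which requires Gaussian moment estimates for $\|Z\|^k$ and for $\|\H_k(Z)\|$. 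A secondary technical point is the passage from $\|\fbar\|_2$ to $\|\fbar\|_4$ in the bound for $R$: this uses the tail condition~\eqref{gcond} on $g$ (equivalently on $f$), exactly as in the proof of Lemma~\ref{alldabounds}, and is why the suppressed constants in~\eqref{Ldecomp} depend on $R_g$ and $c_0^{-1}$. All the remaining steps are routine Cauchy--Schwarz and bookkeeping of powers of $\epsilon = d/\sqrt n$.
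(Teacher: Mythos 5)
Your decomposition and estimates are essentially the paper's proof: the same algebraic splitting $L(f)=\int\fbar(-p_3)d\gamma+\int\fbar(-r_4+\tfrac12p_3^2)d\gamma+R$ with $R=\int\fbar\,(p_3r_4+\tfrac12r_4^2)d\gamma$, and the same key inputs $\|p_3\|_p\les_q a_3\epsilon$ (via $\|\la T,\H_3\ra\|_p\les_p d\|T\|$) and $\|r_4\|_p\les_q a_4\epsilon^2$ (via the exact remainder formula of Proposition~\ref{prop:intro:hermite-tail-ddim} and the pointwise bound on $r_4$), which the paper packages as Lemma~\ref{rUUc}.

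The one step that does not go through as you describe is the treatment of $R$. You bound $|R|\les \|\fbar\|_4\l(\|p_3\|_4\|r_4\|_4+\|r_4\|_4^2\r)$ and then claim to pass from $\|\fbar\|_4$ back to $\|\fbar\|_2$ using the tail condition~\eqref{gcond}. That implication is false in general: \eqref{gcond} only constrains $\fbar$ outside the ball $\{\|x\|\le R_g\sqrt d\}$, and inside that ball a function can be small in $L^2(\gamma)$ but arbitrarily large in $L^4(\gamma)$ (a tall narrow spike), so $\|\fbar\|_4$ is not controlled by $\|\fbar\|_2$ even under~\eqref{gcond}. The fix is to never leave $L^2$ for the $\fbar$ factor: apply the generalized H\"older inequality with exponents $(2,4,4)$ to $\int\fbar\, p_3r_4\,d\gamma$ and Cauchy--Schwarz to $\int\fbar\, r_4^2\,d\gamma$ (noting $\|r_4^2\|_2=\|r_4\|_4^2$), which yields $|R|\le\|\fbar\|_2\l(\|p_3\|_4\|r_4\|_4+\|r_4\|_4^2\r)\les_q\|\fbar\|_2\l(a_3a_4\epsilon^3+a_4^2\epsilon^4\r)$ directly. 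This is what the paper does, and it also explains why the constants in this lemma depend only on $q$ and not on $R_g$ or $c_0^{-1}$: unlike Lemma~\ref{alldabounds}, where the tail condition is genuinely needed to handle the factor $e^{-r_3}$ outside $\U(R)$, here every term is a polynomial-type quantity with all Gaussian moments, so plain H\"older suffices.
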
 
 Combining Lemma~\ref{leading-leading} with Corollary~\ref{corr:Ebd}, we can now prove Theorem~\ref{thm:W}.
\begin{proof}[Proof of Theorem~\ref{thm:W}]
To get~\eqref{overallbd1}, we add the bound~\eqref{mainEbd} to the bound on $|L(f)|$, which is given by the sum of~\eqref{Lpart2},~\eqref{Lpart1}, and~\eqref{Ldecomp}. To get~\eqref{overallbd2}, we add up~\eqref{mainEbd}, ~\eqref{Ldecomp} and~\eqref{Lpart1}, omitting the bound~\eqref{Lpart2} which gets incorporated into the lefthand side. Similarly, if $f$ is orthogonal to all third order Hermite polynomials then $\int \fbar p_3d\gamma=0$, so the bound~\eqref{overallbd3} stems from adding up~\eqref{mainEbd}, ~\eqref{Lpart1} and~\eqref{Ldecomp}, again omitting~\eqref{Lpart2}. Finally, if $f(x)=a^Tx+b$ then $\int \fbar p_3d\gamma=\int \fbar r_4d\gamma=0$. Furthermore, $\fbar(x)=a^Tx$, which is odd, so $\int \fbar p_3^2d\gamma=0$. So we get~\eqref{overallbd4} by adding ~\eqref{mainEbd} and~\eqref{Ldecomp}, omitting both~\eqref{Lpart2} and~\eqref{Lpart1}.
\end{proof}
\section{Existence of unique solution to stationarity conditions }\label{sec:m-sig-exist} As in the proof of Theorem~\ref{thm:Vgen}, our first step in proving Lemma~\ref{lma:intro:exist:V} will be to reformulate the lemma in a scale-free way. We will use the notation
\beqs\label{BrS-notation}
B_r(0,0) &= \{(m,\sigma)\in \R^{d}\times\R^{d\times d} \; : \; \|\sigma\|^2+\|m\|^2\leq r^2\},\\
B_r &= \{\sigma\in\R^{d\times d}\; : \; \|\sigma\|\leq r\},\\
S_{c_1, c_2} &= \{\sigma\in\S^{d}_{+}\; : \; c_1I_d\preceq \sigma\preceq c_2I_d\}.\eeqs
In particular, note that $S_{0, r}\subset B_r$. 
\subsection{Change of coordinates}Recall that $\mhat$ is the unique minimizer of $V=nv$, and that $H_V=\nabla^2V(\mhat)$. Define \beq\label{Wdef}W(x) = V(\mhat + H_V^{-1/2}x).\eeq 
\begin{remark}
Note that this change of variables is different from the one in Section~\ref{sec:proof}. The change of variables~\eqref{rhodef} in that section already presumes existence and uniqueness of $\mvi\pi$ and $\Svi\pi$, which is precisely what we will prove in the present section. The transformation $V\mapsto W$ in~\eqref{Wdef} simply recenters the minimum to be at zero, and rescales the Hessian at the minimum to be the identity matrix.
\end{remark}

\begin{lemma}\label{lma:m-sig-soln}
Let $W$ be as in~\eqref{Wdef} for $V=nv$, where $v$ satisfies Assumptions~\ref{assume:1} and~\ref{assume:glob}. Then there exists a unique pair $(m,\sigma)\in B_{2\sqrt2}(0, 0)\cap \R^d\times S_{0,\sqrt2}$ satisfying 
\begin{align}
\E[\nabla W(m+\sigma Z)] &= 0,\label{kl-opt-1}\\
\E[\nabla^2 W(m+\sigma Z)] &= (\sigma\sigma^T)^{-1}\label{kl-opt-2}.
\end{align}and this pair is such that $\sigma\in S_{\sqrt{2/3},\sqrt2}$.
\end{lemma}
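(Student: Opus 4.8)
\textbf{Proof strategy for Lemma~\ref{lma:m-sig-soln}.} The plan is to recast the system~\eqref{kl-opt-1}--\eqref{kl-opt-2} as a fixed-point equation $(m,\sigma) = \Phi(m,\sigma)$ on the compact convex set $\mathcal{K} := B_{2\sqrt2}(0,0) \cap (\R^d \times S_{0,\sqrt2})$, show that $\Phi$ maps $\mathcal{K}$ into itself, and then prove $\Phi$ is a contraction (or at least has a unique fixed point in $\mathcal{K}$). The natural choice of $\Phi$ comes from reading~\eqref{kl-opt-2} as $\sigma\sigma^T = \big(\E[\nabla^2 W(m+\sigma Z)]\big)^{-1}$ and~\eqref{kl-opt-1} as a defining relation for $m$. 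Concretely, I would set $\sigma = \Phi_2(m,\sigma)$ to be the positive-definite symmetric square root of $\big(\E[\nabla^2 W(m+\sigma Z)]\big)^{-1}$, and define $\Phi_1(m,\sigma)$ by a Newton-type or averaging step that drives $\E[\nabla W(m+\sigma Z)]$ to zero; because $W$ is built to have $\nabla^2 W(0) = I_d$ and derivatives controlled by Assumption~\ref{assume:glob}, the map $m \mapsto \E[\nabla W(m+\sigma Z)]$ is a small perturbation of the identity on the relevant ball.

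\textbf{Key steps in order.} First, I would record the scaling consequences of Assumptions~\ref{assume:1} and~\ref{assume:glob} for $W$: namely $W(0)$ is the minimum, $\nabla W(0) = 0$, $\nabla^2 W(0) = I_d$, and $\|\nabla^k W(x)\| \lesssim a_k (d/\sqrt n)^{k/2 - 1}(1 \vee \|x\|/\sqrt d)^q$ for $k=3,4$, so that the third and fourth derivatives are small (of order $a_3\sqrt{d/n} \cdot \epsilon^{?}$) on the ball of radius $O(\sqrt d)$ where the Gaussian $m+\sigma Z$ concentrates. Second, I would verify the \emph{self-map} property: for $(m,\sigma)\in\mathcal{K}$, Taylor-expand $\nabla^2 W(m+\sigma Z)$ around $0$ to get $\E[\nabla^2 W(m+\sigma Z)] = I_d + O(a_3 \epsilon \cdot \text{something} + a_4 \epsilon^2)$, which by~\eqref{cconds} is within, say, $1/4$ of $I_d$ in operator norm; inverting and taking square roots then gives $\sigma \in S_{\sqrt{2/3},\sqrt2}$, in particular $\|\sigma\|\le\sqrt2$, and a parallel expansion of $\E[\nabla W(m+\sigma Z)]$ shows the $m$-update stays in the ball $\|m\|^2 + \|\sigma\|^2 \le 8$. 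Third, I would prove \emph{uniqueness} by a contraction estimate: differentiating (or taking differences of) the two defining relations and again using that the third/fourth derivatives of $W$ are bounded by quantities that are small by~\eqref{cconds}, one shows $\Phi$ is Lipschitz with constant $<1$ on $\mathcal{K}$; Banach's fixed-point theorem then yields the unique $(m,\sigma)$. Finally, the concluding sentence $\sigma\in S_{\sqrt{2/3},\sqrt2}$ is just a restatement of the bound obtained in the self-map step, which in turn is the source of~\eqref{Svi-bds} after undoing the change of variables $W \leftrightarrow V$.

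\textbf{Main obstacle.} The delicate point is making the perturbative estimates on $\E[\nabla^2 W(m+\sigma Z)] - I_d$ (and the difference of two such expressions at nearby $(m,\sigma)$) genuinely \emph{dimension-free in the right way} — i.e., controlled by $a_3 d/\sqrt n + a_4 d^2/n$ rather than a worse power of $d$. This requires carefully exploiting the $H_v$-weighted operator norm bounds in~\eqref{globnabla} together with Gaussian moment bounds of the form $\E\|Z\|^{2q} \lesssim_q d^q$, and handling the polynomial-growth tails $(1\vee\sqrt{n/d}\|x - \mhat\|_{H_v})^q$ without losing extra factors of $d$; this is exactly the kind of estimate that the techniques of~\cite{katsTDD} are designed to deliver, and I expect the bulk of the work (to be deferred to the appendix) to be these tail and moment computations. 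A secondary subtlety is the coupling between $m$ and $\sigma$ in the fixed-point map: one must choose the order of updating (or a joint contraction norm weighting $m$ and $\sigma$ appropriately) so that the off-diagonal coupling terms in the Lipschitz bound do not spoil the contraction; this is handled by noting that the coupling is itself mediated by third derivatives of $W$, hence small.
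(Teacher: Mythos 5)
Your strategy is sound, but it reaches the lemma by a genuinely different decomposition than the paper. The paper argues in two stages: it first applies a quantitative implicit function theorem (Lemma~\ref{IVT}, built on Lang's quantitative inverse function theorem) to $f(m,\sigma)=\E[\nabla W(m+\sigma Z)]$, producing for each $\|\sigma\|\le\sqrt2$ a unique $m(\sigma)$ with $(m(\sigma),\sigma)\in B_{2\sqrt2}(0,0)$ and $f(m(\sigma),\sigma)=0$, together with the reusable bounds $\tfrac12 I_d\preceq\nabla_m f\preceq\tfrac32 I_d$ and $\|\nabla_\sigma m(\sigma)\|_\op\le1$; it then contracts in $\sigma$ alone, showing $F(\sigma)=\E[\nabla^2W(m(\sigma)+\sigma Z)]^{-1/2}$ is a strict contraction mapping $S_{0,\sqrt2}$ into $S_{\sqrt{2/3},\sqrt2}$ (Lemmas~\ref{m-of-sig} and~\ref{lma:contract}, with Lemma~\ref{psd-sq-rt} supplying the Lipschitz passage from $G$ to $G^{-1/2}$). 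Your one-shot joint Banach iteration on $\mathcal{K}=B_{2\sqrt2}(0,0)\cap(\R^d\times S_{0,\sqrt2})$, with $\Phi_2(m,\sigma)=\bigl(\E[\nabla^2W(m+\sigma Z)]\bigr)^{-1/2}$ and an explicit choice such as $\Phi_1(m,\sigma)=m-\E[\nabla W(m+\sigma Z)]$ (you should pin this down: its fixed points are exactly the zeros of $f$, and $\nabla_m\Phi_1=I_d-\E[\nabla^2W(m+\sigma Z)]$ is small precisely because the Hessian expectation stays near $I_d$ on $\mathcal K$), avoids the implicit-function machinery at the cost of controlling all four blocks of partial Lipschitz constants of $\Phi$, including those of the matrix inverse square root in both variables. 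The estimates required are the same third-derivative bounds the paper uses (essentially \eqref{Enabla3W} plus Lemma~\ref{psd-sq-rt}), fixed points of $\Phi$ in $\mathcal K$ coincide with solutions of \eqref{kl-opt-1}--\eqref{kl-opt-2} there because $\sigma$ is restricted to symmetric PSD matrices, and Banach on $\mathcal K$ gives uniqueness in exactly the advertised region; only $a_3$, not $a_4$, actually enters these bounds.

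One quantitative point must be corrected for your perturbative step to close. The scaling you state, $\|\nabla^kW(x)\|\lesssim a_k(d/\sqrt n)^{k/2-1}(1\vee\|x\|/\sqrt d)^q$, is not what the change of variables \eqref{Wdef} gives: the correct bounds are $\|\nabla^3W(x)\|\le \tfrac{a_3}{\sqrt n}(1+\|x/\sqrt d\|^q)$ and $\|\nabla^4W(x)\|\le\tfrac{a_4}{n}(1+\|x/\sqrt d\|^q)$ (Lemma~\ref{lma:Wprops}), the dimension entering only through Gaussian moments such as $\E\|m+\sigma Z\|\lesssim\sqrt d$. With your stated scaling, the key self-map estimate $\|\E[\nabla^2W(m+\sigma Z)]-I_d\|\le 1/4$ would come out of order $a_3 d/n^{1/4}$, which is \emph{not} controlled by \eqref{cconds}; with the correct scaling it is $C(q)\,a_3 d/\sqrt n\le 1/4$, exactly as in the paper's Lemma~\ref{m-of-sig}, and then your self-map conclusion ($\Phi_2$ lands in $S_{\sqrt{2/3},\sqrt2}$, $\Phi_1$ lands near the origin) and the contraction constant $<1$ both follow, completing your argument.
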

Using this lemma, a rescaling argument easily proves Lemma~\ref{lma:intro:exist:V}; see Appendix~\ref{supp:aux}.
\begin{remark}\label{rk:mVW}
The relationship between $(m,\sigma)$ and $(\mvi\pi,\Svi\pi)$ is as follows:
\beq\label{mVW}\mvi\pi = \mhat + H_V^{-1/2}m,\qquad \Svi\pi=H_V^{-1/2}\sigma\sigma^TH_V^{-1/2}.\eeq
\end{remark}
\subsection{Proof of Lemma~\ref{lma:m-sig-soln}}\label{sec:proof-exist}
We first sketch the proof of Lemma~\ref{lma:m-sig-soln}, and then formally state the lemmas from which the result will follow. The proofs of the lemmas can be found in Appendix~\ref{app:sec:m-sig-exist}.

Let $f:\R^d\times\R^{d\times d}\to \R^d$ be given by $f(m,\sigma)= \E[\nabla W(m+\sigma Z)].$ Note that $f(0, 0)=0$, so by the Implicit Function Theorem, there exists a map $m(\sigma)$ defined in a neighborhood of $\sigma =0$ such that $f(m(\sigma),\sigma)=0$. In Lemma~\ref{m-of-sig}, we make this statement quantitative, showing that for $r=2\sqrt2$ we have the following result: for every $\sigma\in B_{r/2}$ there is a unique $m=m(\sigma)$ such that $(m,\sigma)\in B_r(0, 0)$ and $f(m,\sigma)=0$. Since $S_{0,r/2}\subset B_{r/2}$, we have in particular that any solution $(m,\sigma)$ to~\eqref{kl-opt-1} in the region $B_r(0, 0)\cap \R^d\times S_{0,r/2}$ is of the form $(m(\sigma),\sigma)$. Thus it remains to prove there exists a unique solution $\sigma\in S_{0, r/2}$ to the equation $\E[\nabla^2 W(m(\sigma)+\sigma Z)] = (\sigma\sigma^T)^{-1}$.

To do so, we rewrite this equation as $F(\sigma)=\sigma$, where $$F(\sigma) = \E[\nabla^2W(m(\sigma)+\sigma Z)]^{-1/2}.$$ We show in Lemma~\ref{lma:contract} that $F$ is well-defined on $S_{0,r/2}$, a contraction, and satisfies $F(S_{0,r/2})\subset S_{c_1,c_2}\subset S_{0, r/2}$. Thus by the Contraction Mapping Theorem, there is a unique $\sigma\in S_{0,r/2}$ satisfying $F(\sigma)=\sigma$. But since $F$ maps $S_{0,r/2}$ to $S_{c_1,c_2}$, the fixed point $\sigma$ necessarily lies in $S_{c_1,c_2}$.
This finishes the proof.\\

\noindent Using a quantitative statement of the Inverse Function Theorem given in~\cite{langanalysis}, the following lemma determines the size of the neighborhood in which the map $m(\sigma)$ is defined. 

\begin{lemma}\label{IVT}
Let $f = (f_1,\dots, f_d):\R^d\times \R^{d\times d} \to\R^d$ be $C^3$, where $\R^{d\times d}$ is the set of $d\times d$ matrices, endowed with the standard matrix operator norm. Suppose $f(0, 0)=0$, $\nabla_\sigma f(0, 0)=0$, $\nabla_mf(m,\sigma)$ is symmetric for all $m,\sigma$, and $\nabla_mf(0, 0)=I_d$. Let $r>0$ be such that
\beq\label{supsigmam}
\sup_{(m,\sigma)\in B_r(0, 0)}\|\nabla f(m,\sigma) - \nabla f(0,m_*)\|_\op \leq \frac{1}4.\eeq Then for each $\sigma\in \R^{d\times d}$ such that $\|\sigma\|\leq r/2$ 
there exists a unique $m=m(\sigma)\in\R^d$ such that $f(m(\sigma),\sigma)=0$ and $(m(\sigma),\sigma) \in B_r(0, 0)$. Furthermore, the map $\sigma\mapsto m(\sigma)$ is $C^2$, with
 \beqs\label{IVT-props}
\frac{1}{2}I_d\preceq\nabla_mf(m,\sigma)\big\vert_{m=m(\sigma)}&\preceq \frac{3}{2} I_d,\\ 
\|\nabla_\sigma m(\sigma)\|_\op &\leq 1.\eeqs
\end{lemma}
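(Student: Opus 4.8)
\textbf{Proof proposal for Lemma~\ref{IVT}.}

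The plan is to apply a quantitative version of the Inverse/Implicit Function Theorem to the map $\sigma\mapsto m(\sigma)$ by treating $\sigma$ as a parameter and $m$ as the variable. First I would recall the quantitative statement from~\cite{langanalysis}: if $g:\bar B_\rho(x_0)\to\R^d$ is $C^1$ with $\|Dg(x_0)^{-1}\|_\op\le\beta$ and $\|Dg(x)-Dg(x_0)\|_\op\le\frac{1}{2\beta}$ for all $x$ in the ball, then $g$ is a diffeomorphism onto its image, and the image contains a ball of radius $\rho/(2\beta)$ around $g(x_0)$. I would apply this with $g=g_\sigma:=f(\cdot,\sigma)$ for each fixed $\sigma$ with $\|\sigma\|\le r/2$, on the ball $B_{r/2}(m=0)$ in the $m$-variable.

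The key steps, in order. (1) Fix $\sigma$ with $\|\sigma\|\le r/2$. For $\|m\|\le r/2$ we have $(m,\sigma)\in B_r(0,0)$, so the hypothesis~\eqref{supsigmam} gives $\|\nabla_m f(m,\sigma)-\nabla_m f(0,0)\|_\op\le\|\nabla f(m,\sigma)-\nabla f(0,0)\|_\op\le\frac14$; since $\nabla_m f(0,0)=I_d$, this yields $\frac34 I_d\preceq\nabla_m f(m,\sigma)\preceq\frac54 I_d$ on that ball (in particular the first displayed conclusion~\eqref{IVT-props}, with the slightly weaker constants $\tfrac12,\tfrac32$ to spare). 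So $g_\sigma$ has invertible derivative with $\|Dg_\sigma(m)^{-1}\|_\op\le 4/3$ everywhere on $B_{r/2}(0)$, and by the quantitative IFT $g_\sigma$ is a $C^1$ diffeomorphism from $B_{r/2}(0)$ onto an open set containing a ball of radius comparable to $r/2$ around $g_\sigma(0)=f(0,\sigma)$. (2) Estimate $\|f(0,\sigma)\|$: since $\nabla_\sigma f(0,0)=0$ and $f(0,0)=0$, a first-order Taylor expansion in $\sigma$ together with the bound~\eqref{supsigmam} on $\nabla_\sigma f$ over $B_r(0,0)$ gives $\|f(0,\sigma)\|\le\frac14\|\sigma\|\le r/8$, which lies inside the image ball. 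Hence there is a unique $m(\sigma)\in B_{r/2}(0)$ with $f(m(\sigma),\sigma)=0$; uniqueness in the larger region $(m,\sigma)\in B_r(0,0)$ follows because $\|m(\sigma)\|^2\le r^2/4$ forces $\|m(\sigma)\|\le r/2$ whenever $\|\sigma\|\le r/2$, and we have already established injectivity of $g_\sigma$ on $B_{r/2}(0)$. (3) Regularity and the derivative bound: $f$ is $C^3$, so by the implicit function theorem $\sigma\mapsto m(\sigma)$ is $C^2$, and differentiating $f(m(\sigma),\sigma)\equiv 0$ gives $\nabla_\sigma m(\sigma)=-(\nabla_m f)^{-1}\nabla_\sigma f$ evaluated at $(m(\sigma),\sigma)$; taking operator norms and using $\|(\nabla_m f)^{-1}\|_\op\le 4/3$ from step (1) and $\|\nabla_\sigma f\|_\op\le\frac14$ from~\eqref{supsigmam} yields $\|\nabla_\sigma m(\sigma)\|_\op\le\frac13\le 1$.

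The main obstacle I anticipate is bookkeeping the two-variable structure carefully: one must make sure that the quantitative IFT is invoked in the $m$-variable alone (with $\sigma$ frozen), that the ball in which one inverts ($\|m\|\le r/2$) sits inside the region $B_r(0,0)$ where the derivative estimate~\eqref{supsigmam} is assumed, and that the target point $f(0,\sigma)$ genuinely lands in the guaranteed image ball — this is exactly why the radius is taken to be $r/2$ rather than $r$, leaving room for the $O(\|\sigma\|)$ displacement of $f(0,\sigma)$ away from the origin. A secondary point is that~\cite{langanalysis}'s statement may be phrased for inverse functions rather than implicit ones; converting between the two (e.g. by applying the inverse function theorem to $(m,\sigma)\mapsto(f(m,\sigma),\sigma)$) is routine but should be done explicitly so the constants propagate correctly. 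Everything else — the Taylor estimate for $\|f(0,\sigma)\|$, the eigenvalue sandwich for $\nabla_m f$, and the formula for $\nabla_\sigma m$ — is a short computation.
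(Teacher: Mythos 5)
Your overall strategy (freeze $\sigma$ and invert $m\mapsto f(m,\sigma)$ via a quantitative inverse function theorem) is close in spirit to the paper, which instead applies Lang's lemma (Lemma~\ref{lma:lang}) once to the extended map $\phi(m,\sigma)=(f(m,\sigma),\sigma)$, obtaining a bijection of $B_r(0,0)$ onto $B_{r/2}(0,0)$ and reading off $m(\sigma)$ from it. However, as written your argument has a genuine gap in the uniqueness claim. The lemma asserts uniqueness of $m$ among \emph{all} $m$ with $(m,\sigma)\in B_r(0,0)$, i.e.\ $\|m\|\le\sqrt{r^2-\|\sigma\|^2}$, which for small $\|\sigma\|$ allows $\|m\|$ up to nearly $r$. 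You only establish injectivity of $g_\sigma=f(\cdot,\sigma)$ on the ball $\|m\|\le r/2$, and your sentence that uniqueness in the larger region ``follows because $\|m(\sigma)\|^2\le r^2/4$'' misreads the definition of $B_r(0,0)$ (the constraint is $\|m\|^2+\|\sigma\|^2\le r^2$, not $\|m\|\le r/2$): nothing you prove excludes a second zero with $r/2<\|m\|\le\sqrt{r^2-\|\sigma\|^2}$. This matters downstream, since the proof of Lemma~\ref{lma:m-sig-soln} needs that \emph{any} solution of~\eqref{kl-opt-1} with $(m,\sigma)\in B_r(0,0)$ and $\|\sigma\|\le r/2$ is of the form $(m(\sigma),\sigma)$. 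The gap is easy to close: by~\eqref{supsigmam} and symmetry of $\nabla_m f$, we have $\nabla_m f(m,\sigma)\succeq\frac34 I_d$ on the whole convex slice $\{m:(m,\sigma)\in B_r(0,0)\}$, so $m\mapsto f(m,\sigma)$ is strongly monotone and hence injective on that entire slice; alternatively, run your quantitative IFT on the slice ball of radius $\sqrt{r^2-\|\sigma\|^2}$ rather than $r/2$. The paper's route avoids the issue automatically because injectivity is obtained on all of $B_r(0,0)$ at once.

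A secondary, quantitative point: the constants in the IFT statement you quote are not actually verified. With $\beta=4/3$ you would need $\|Dg_\sigma(m)-Dg_\sigma(0)\|_\op\le\frac{1}{2\beta}=\frac38$ on the ball, whereas~\eqref{supsigmam} only yields $\|\nabla_m f(m,\sigma)-\nabla_m f(0,\sigma)\|_\op\le\frac12$ (triangle inequality through $(0,0)$); and the ``image ball of radius comparable to $r/2$'' must be compared against the displacement $\|f(0,\sigma)\|\le\frac14\|\sigma\|\le r/8$, which, if one normalizes $Dg_\sigma(0)$ to the identity (costing a factor $\|Dg_\sigma(0)^{-1}\|\le 4/3$ in the oscillation bound) and applies Lang's lemma on the radius-$r/2$ ball, comes out exactly borderline. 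Working on the full slice ball as above gives comfortable room and repairs both issues simultaneously. The remaining steps — the Taylor bound $\|f(0,\sigma)\|\le\frac14\|\sigma\|$, the eigenvalue sandwich implying~\eqref{IVT-props}, the $C^2$ regularity, and $\|\nabla_\sigma m(\sigma)\|_\op\le\frac43\cdot\frac14\le1$ — are correct and in fact slightly sharper than the paper's.
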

See Appendix~\ref{app:sec:m-sig-exist} for careful definitions of the norms appearing above, as well as the proof of the lemma. 
\begin{lemma}\label{m-of-sig} Let $f:\R^d\times\R^{d\times d}\to\R^d$ be given by $f(m,\sigma) = \E[\nabla W(\sigma Z+m)]$. Then all the conditions of Lemma~\ref{IVT} are satisfied; in particular,~\eqref{supsigmam} is satisfied with $r=2\sqrt 2$.
Thus the conclusions of Lemma~\ref{IVT} hold with this choice of $r$.
\end{lemma}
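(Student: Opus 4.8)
The plan is to verify, one by one, the hypotheses of Lemma~\ref{IVT} for the map $f(m,\sigma)=\E[\nabla W(\sigma Z+m)]$, and then to establish the quantitative bound~\eqref{supsigmam} with the specific choice $r=2\sqrt2$. First I would check the algebraic/structural hypotheses: $f(0,0)=\E[\nabla W(0)]=\nabla W(0)=0$ because $W$ has its minimum at $0$ (recall $W(x)=V(m_*+H_V^{-1/2}x)$, so $\nabla W(0)=H_V^{-1/2}\nabla V(m_*)=0$); differentiating under the expectation, $\nabla_m f(m,\sigma)=\E[\nabla^2 W(\sigma Z+m)]$, which is manifestly symmetric, and at $(0,0)$ equals $\nabla^2 W(0)=H_V^{-1/2}\nabla^2 V(m_*)H_V^{-1/2}=I_d$; finally $\nabla_\sigma f(0,0)=0$ follows from Gaussian integration by parts / the oddness of $Z$ — differentiating $\E[\nabla W(\sigma Z+m)]$ in $\sigma$ at $\sigma=0$ produces a term linear in $\E[Z]=0$ (more precisely $\partial_{\sigma_{jk}}f_i(0,0)=\E[\partial_{jk}\partial_i W(0)\,Z_k]=0$ since $\nabla^3 W(0)$ is deterministic and $\E[Z_k]=0$). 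The smoothness hypothesis ($f\in C^3$ jointly) follows from $W\in C^4$ together with the polynomial growth of $\nabla^3 W,\nabla^4 W$ guaranteed by Assumption~\ref{assume:glob}, which legitimizes differentiating under the integral sign; I would cite standard dominated-convergence arguments here rather than belabor them.

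The substantive step is the perturbation bound~\eqref{supsigmam}: I must show that for $r=2\sqrt2$,
\[
\sup_{(m,\sigma)\in B_r(0,0)}\bigl\|\nabla f(m,\sigma)-\nabla f(0,0)\bigr\|_\op\le\tfrac14.
\]
Since $\nabla f$ collects $\nabla_m f=\E[\nabla^2 W(\sigma Z+m)]$ and $\nabla_\sigma f$ (whose entries are $\E[\partial_i\partial_j W(\sigma Z+m)Z_k]$, again by differentiating under the integral), both pieces are controlled by the third derivative of $W$: a first-order Taylor expansion gives $\nabla^2 W(\sigma Z+m)-\nabla^2 W(0)=\int_0^1\nabla^3 W(t(\sigma Z+m))[\sigma Z+m]\,dt$, and similarly $\E[\nabla^2 W(\sigma Z+m)Z_k]$ is $O(\|\nabla^3 W\|\cdot(\|\sigma\|+\|m\|)\cdot\text{Gaussian moments})$. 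Translating the assumption $\|\nabla^3 v\|_{H_v}\le a_3(1\vee\sqrt{n/d}\,\|x-m_*\|_{H_v})^q$ into a statement about $W$: by the chain rule and $V=nv$, one has $\|\nabla^3 W(x)\|=\|\nabla^3 V(m_*+H_V^{-1/2}x)\|_{H_V}\cdot$(scaling) $\lesssim n^{-1/2}a_3(1\vee\|x\|/\sqrt d)^q$, so on the relevant region $\|\nabla^3 W\|$ is of order $a_3/\sqrt n$ up to a polynomial-in-$\|x\|/\sqrt d$ factor. Combining this with Gaussian concentration for $\|\sigma Z+m\|$ when $\|\sigma\|,\|m\|\le 2\sqrt2$ and using $\|\sigma Z+m\|\lesssim \sqrt d+\|Z\|$-type bounds, the whole supremum is bounded by $C(q)\,a_3 d/\sqrt n=C(q)\,a_3\epsilon$, which is $\le 1/4$ once $C(q)$ in Assumption~\ref{assume:glob} is chosen small enough. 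This is exactly the role of the first inequality in~\eqref{cconds}.

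I expect the main obstacle to be this last estimate — specifically, carefully tracking how the $H_v$-weighted norm in Assumption~\ref{assume:glob} interacts with the $H_V^{-1/2}$ rescaling in the definition of $W$, and obtaining the polynomial-growth-with-$q$ bound on $\|\nabla^3 W(x)\|$ uniformly over $x=\sigma Z+m$ with the Gaussian $Z$ integrated out. The delicate point is that the growth factor $(1\vee\|x\|/\sqrt d)^q$ is only tame after integration against $\gamma$ because $\E[(\|Z\|/\sqrt d)^{qp}]$ stays bounded; one needs to combine this with the boundedness of $\|m\|,\|\sigma\|$ on $B_r(0,0)$ to conclude that the $q$-dependence is absorbed into the constant $C(q)$ and the $a_3\epsilon$ scaling survives. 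Once~\eqref{supsigmam} holds with $r=2\sqrt2$, Lemma~\ref{IVT} applies verbatim and gives the stated conclusions, finishing the proof.
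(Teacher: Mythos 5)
Your proposal is correct and follows essentially the same route as the paper: verify $f(0,0)=0$, symmetry and identity value of $\nabla_m f$, and $\nabla_\sigma f(0,0)=0$ by differentiating under the expectation, then reduce~\eqref{supsigmam} to Gaussian-averaged bounds on $\|\nabla^3 W\|$ (via the chain-rule rescaling of Assumption~\ref{assume:glob}) and invoke the first inequality of~\eqref{cconds}. The only cosmetic difference is that the paper controls $\nabla f(m,\sigma)-\nabla f(0,0)$ by bounding $\sup_{B_r}\|\nabla^2 f\|_\op$ (whose entries pair $\nabla^3W$ with $Z$-moments, yielding the factor $\lesssim d$), whereas you Taylor-expand $\nabla^2W$ in the $x$-variable; both land on the same estimate $C(q)\,a_3 d/\sqrt n\le 1/4$.
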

\begin{lemma}\label{lma:contract}
Let $r=2\sqrt2$ and $\sigma\in S_{0,r/2}\mapsto m(\sigma)\in\R^d$ be the restriction of the map furnished by Lemmas~\ref{IVT} and~\ref{m-of-sig} to symmetric nonnegative matrices. Then the function $F$ given by
$$F(\sigma) =\E[\nabla^2W(m(\sigma)+\sigma Z)]^{-1/2}$$
is well-defined and a strict contraction on $S_{0,r/2}$. Moreover, 
$$F(S_{0,r/2})\subseteq S_{c_1,c_2}\subseteq S_{0,r/2},\qquad \text{where }\;c_1 = \sqrt{2/3}, \; \; c_2 = \sqrt{2}=r/2.$$ \end{lemma}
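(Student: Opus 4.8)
The plan is to recast the lemma as three quantitative claims about the map $F(\sigma)=M(\sigma)^{-1/2}$, where $M(\sigma):=\E[\nabla^2 W(m(\sigma)+\sigma Z)]$ and $\sigma\mapsto m(\sigma)$ denotes the restriction to symmetric nonnegative matrices of the map supplied by Lemmas~\ref{IVT}--\ref{m-of-sig}: first, that $M(\sigma)$ is a well-defined symmetric positive definite matrix for every $\sigma\in S_{0,r/2}$; second, that $M(\sigma)\in S_{1/2,3/2}$ on $S_{0,r/2}$, from which $F(\sigma)=M(\sigma)^{-1/2}\in S_{\sqrt{2/3},\sqrt2}=S_{c_1,c_2}\subseteq S_{0,\sqrt2}=S_{0,r/2}$ follows by passing to inverse square roots; and third, that $F$ is Lipschitz on $S_{0,r/2}$ with constant strictly below $1$. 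Two ingredients will feed every estimate. From Lemma~\ref{IVT} we have $\|\nabla_\sigma m(\sigma)\|_\op\le 1$, and $m(0)=0$ since $\nabla W(0)=0$; integrating along segments inside the (convex) set $S_{0,r/2}$ gives $\|m(\sigma_1)-m(\sigma_2)\|\le\|\sigma_1-\sigma_2\|$, hence in particular $\|m(\sigma)\|\le\|\sigma\|\le r/2=\sqrt2$. Separately, the change of variables $W(x)=V(\mhat+H_V^{-1/2}x)$ with $H_V=nH_v$ converts Assumption~\ref{assume:glob} into the pointwise facts $\nabla^2W(0)=I_d$ and $\|\nabla^3W(x)\|_\op\le n^{-1/2}a_3(1\vee\|x\|/\sqrt d)^q$ for all $x$, using $\|\nabla^3W(x)\|_\op=n^{-1/2}\|\nabla^3v(\mhat+H_V^{-1/2}x)\|_{H_v}$ and $\sqrt{n/d}\,\|H_V^{-1/2}x\|_{H_v}=\|x\|/\sqrt d$; in particular $\nabla^2W$ has at most polynomial growth.

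For the first two claims I would Taylor-expand $\nabla^2W$ about the origin. Polynomial growth of $\nabla^2W$ already makes $M(\sigma)$ well-defined, so the real content is the bound $\|M(\sigma)-I_d\|_\op\le\E\l[\|Y\|\sup_{t\in[0,1]}\|\nabla^3W(tY)\|_\op\r]$ with $Y:=m(\sigma)+\sigma Z$. Since $\|Y\|\le\sqrt2(1+\|Z\|)$, inserting the bound on $\|\nabla^3W\|_\op$ and carrying out the Gaussian moment bookkeeping --- splitting according to whether $\|Y\|\le\sqrt d$ and using $\E[(1+\|Z\|)^{q+1}]\les_q d^{(q+1)/2}$ --- yields $\|M(\sigma)-I_d\|_\op\les_q a_3\sqrt{d/n}\le a_3 d/\sqrt n\le C(q)$ by~\eqref{cconds}. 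Choosing the threshold $C(q)$ small enough forces $\|M(\sigma)-I_d\|_\op\le1/2$, hence $\tfrac12 I_d\preceq M(\sigma)\preceq\tfrac32 I_d$, which gives positive definiteness and, on taking inverse square roots, $\sqrt{2/3}\,I_d\preceq F(\sigma)\preceq\sqrt2\,I_d$, i.e.\ $F(\sigma)\in S_{c_1,c_2}$.

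For the contraction I would first record that $A\mapsto A^{-1/2}$ is Lipschitz on $S_{1/2,3/2}$ with an absolute constant: from the resolvent integral $A^{-1/2}=\tfrac1\pi\int_0^\infty(A+tI_d)^{-1}t^{-1/2}\,dt$, the resolvent identity, and $\|(A+tI_d)^{-1}\|_\op\le(\tfrac12+t)^{-1}$ on $S_{1/2,3/2}$, the Lipschitz constant is $\tfrac1\pi\int_0^\infty(\tfrac12+t)^{-2}t^{-1/2}\,dt=\sqrt2$. Since the second claim places both $M(\sigma_1)$ and $M(\sigma_2)$ in $S_{1/2,3/2}$, it remains to bound $\|M(\sigma_1)-M(\sigma_2)\|_\op$. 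Writing $Y_i:=m(\sigma_i)+\sigma_iZ$ and Taylor-expanding $\nabla^2W$ along the segment from $Y_2$ to $Y_1$, and using $\|Y_1-Y_2\|\le\|\sigma_1-\sigma_2\|(1+\|Z\|)$ together with $\|Y_2+t(Y_1-Y_2)\|\les(1+\|Z\|)$, the same moment computation gives $\|M(\sigma_1)-M(\sigma_2)\|_\op\les_q a_3\sqrt{d/n}\,\|\sigma_1-\sigma_2\|$. Hence $\|F(\sigma_1)-F(\sigma_2)\|_\op\les_q a_3\sqrt{d/n}\,\|\sigma_1-\sigma_2\|\le C'(q)C(q)\,\|\sigma_1-\sigma_2\|$ for a $q$-dependent constant $C'(q)$, and taking $C(q)$ small enough that $C'(q)C(q)<1$ completes the proof.

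The whole argument is a quantitative perturbation of the identity and no step is deep. The one place requiring genuine care is the bookkeeping of constants: the implicit constants in the Gaussian moment bounds depend on $q$, so the key inequalities $\|M(\sigma)-I_d\|_\op\le1/2$ and the contraction factor $<1$ hold only because the threshold $C(q)$ in~\eqref{cconds} may be chosen after --- and as a function of --- those $q$-dependent constants; this is precisely the role of the clause ``$C(q)<1$ depending only on $q$'' in Assumption~\ref{assume:glob}. The remaining routine pieces are the change-of-variables computation translating the $H_v$-weighted growth bound into the operator-norm bound on $\nabla^3W$ with the correct powers of $n$ and $d$, and the operator-Lipschitz estimate for $A\mapsto A^{-1/2}$.
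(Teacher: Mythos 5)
Your proposal is correct and follows essentially the same route as the paper: establish $\tfrac12 I_d\preceq \E[\nabla^2W(m(\sigma)+\sigma Z)]\preceq\tfrac32 I_d$ on $S_{0,r/2}$, deduce the range statement by taking inverse square roots, and obtain the contraction by composing a $\les_q a_3\sqrt{d/n}$ Lipschitz bound on $\sigma\mapsto M(\sigma)$ with the $\sqrt2$-Lipschitz estimate for $A\mapsto A^{-1/2}$ on $S_{1/2,3/2}$. The only differences are cosmetic: the paper reads the eigenvalue bounds off of~\eqref{IVT-props} (since $M(\sigma)=\nabla_mf(m(\sigma),\sigma)$) rather than re-deriving them by Taylor expansion, bounds $\|M(\sigma_1)-M(\sigma_2)\|$ by differentiating $M$ in $\sigma$ rather than by a segment expansion, and proves the operator-Lipschitz bound for the inverse square root via a Sylvester-equation argument (Lemma~\ref{psd-sq-rt}) rather than the resolvent integral --- all yielding the same constants.
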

This lemma concludes the proof of Lemma~\ref{lma:m-sig-soln} since by the Contraction Mapping Theorem there is a unique fixed point $\sigma\in S_{0,r/2}$ of $F$, and $F(\sigma)=\sigma$ is simply a reformulation of the second optimality equation~\eqref{kl-opt-2}. We know $\sigma$ must lie in $S_{c_1,c_2}$ since $F$ maps $S_{0,r/2}$ to this set. 

\section*{Acknowledgments}
A.~Katsevich is supported by NSF grant DMS-2202963. P.~Rigollet is supported by NSF grants IIS-1838071, DMS-2022448, and CCF-2106377.

\appendix

We review some notation used throughout the appendix. For a function $f:\R^d\to\R$ such that $\int |f|^pd\gamma<\infty$, we define
$$\|f\|_{p} =\l( \int |f|^pd\gamma\r)^{\frac1p}.$$ For any other measure $\lambda$, we indicate the dependence of the $p$-norm on $\lambda$ explicitly. In particular, we let
$$\|f\|_{L^p(\lambda)} =\l( \int |f|^pd\lambda\r)^{\frac1p},\quad \Var_{\lambda}(f)=\int (f-\lambda(f))^2d\lambda.$$

\section{Proofs from Section~\ref{sec:main}}\label{app:sec:main}
\begin{proof}[Proof of Corollary~\ref{corr:TV}]
 Let $g=\mathbbm{1}_A$. Note that $|g(x)-\hat\pi(g)|\leq1$ for all $x$, so $g$ satisfies~\eqref{gcond} with $R_g=0$. Also, $\Var_{\hat\pi}(g)\leq1$. We now apply~\eqref{overallbd1V} and~\eqref{overallbd3V} of Theorem~\ref{thm:Vgen} to conclude.
 \end{proof}
 \begin{proof}[Proof of Corollary~\ref{corr:meancov}]
Fix $\|u\|=1$, and let $g_u(x)=u^T\Svi\pi^{-1/2}(x-\mvi\pi)$. We have
\beqsn
\|\Svi\pi^{-1/2}(\mpi - \mvi\pi)\| & = \sup_{\|u\|=1}\int g_ud\pi - \int g_ud\hat\pi.\eeqsn Now, $g_u(x)=f_u(\Svi\pi^{-1/2}(x-\mvi\pi))$, where $f_u(y)=u^Ty$. We have
$$|f_u(y)-f(0)|\leq \|y\|\leq e^{2\|y\|^{1/2}}\quad\forall y\in\R^d,$$ so $f$ satisfies~\eqref{fcondeasy} with $\alpha=1/2$ and $C_{f_u}=2$. Therefore by Lemma~\ref{lma:gsuff}, we have that $g_u$ satisfies~\eqref{gcond} with $R_g=C(c_0^{-1})$ (note $C_{f_u}$ and $\alpha$ are absolute constants, so we don't include them). Finally, note that $\Var_{\hat\pi}(g_u) = \Var(u^TZ)=1$ for all $\|u\|=1$.  An application of~\eqref{overallbd4V} from Theorem~\ref{thm:Vgen} concludes the proof of the bound on the mean error. Since $R_g$ depends only on $c_0^{-1}$, the suppressed constant depends only on $c_0^{-1}$ and $q$.

To bound the covariance error, we consider the function $g_u^2$, which is even about $\mvi\pi$. Now, 
\beqs
\|\Svi\pi^{-1/2}&(\Sigpi -\Svi\pi)\Svi\pi^{-1/2}\|= \sup_{\|u\|=1}\l|\Var_{X\sim\pi}(g_u(X))-1\r|\\
&\leq\sup_{\|u\|=1}\l|\E_{X\sim\pi}\l[g_u(X)^2\r]-1\r| +  \sup_{\|u\|=1}\E[g_u(X)]^2\\
&=\sup_{\|u\|=1}\l|\int  g_u^2d\pi - \int  g_u^2d\hat\pi\r| +  \|\Svi\pi^{-1/2}(\mpi - \mvi\pi )\|^2
\eeqs For the bound on $\|\Svi\pi^{-1/2}(\mpi - \mvi\pi)\|^2$, we use the mean error we have just proved. For the bound on $\int g_u^2d\pi - \int g_u^2d\hat\pi$, we use that $g_u^2(x) = f_u(\Svi\pi^{-1/2}(x-\mvi\pi))$, where $f_u(y)=(u^Ty)^2$. We have
$$|f_u(y)-f(0)|\leq \|y\|^2\leq e^{4\|y\|^{1/4}}\quad\forall y\in\R^d,$$ so $f$ satisfies~\eqref{fcondeasy} with $\alpha=1/4$ and $C_{f_u}=4$. Therefore by Lemma~\ref{lma:gsuff}, we have that $g_u^2$ satisfies~\eqref{gcond} with $C_{g_u^2}=C(c_0^{-1})$ (note $C_{f_u}$ and $\alpha$ are absolute constants, so we don't include them). 
Finally, note that $\Var_{\hat\pi}(g_u^2)=\Var((u^TZ)^2)\leq 3$ for all $\|u\|=1$. The bound on $\int g_u^2d\pi - \int g_u^2d\hat\pi$ now follows from~\eqref{overallbd3V} of Theorem~\ref{thm:Vgen}. Since $C_{g_u^2}$ depends only on $c_0^{-1}$, the suppressed constant depends only on $c_0^{-1}$ and $q$.
\end{proof}
\begin{proof}[Proof of Lemma~\ref{lma:c0convex}]
Let $x$ be such that $\|x-\mhat\|_{H_v}=(1/2)\sqrt{d/n}$ for some $r>0$. A Taylor expansion of $v$ around $\mhat$ gives
\beqs
v(x)-v(\mhat) &= \frac12\|x-\mhat\|_{H_v}^2 + \frac{1}{3!}\la\nabla^3v(\xi),(x-\mhat)^{\otimes3}\ra \\
&\geq \frac12\|x-\mhat\|_{H_v}^2\l(1-\frac13\|\nabla^3v(\xi)\|_{H_v}\|x-\mhat\|_{H_v}\r)\\
&\geq \frac12\|x-\mhat\|_{H_v}^2\l(1 - \frac{a_3}{6}\sqrt{d/n}(1+(1/2)^q)\r)\geq\frac14\|x-\mhat\|_{H_v}^2
\eeqs using the assumption $a_3(1+(1/2)^q)\sqrt{d/n}\leq3$ to get the last inequality. Therefore,
\beq
\inf_{\|x-\mhat\|_{H_v}=(1/2)\sqrt{d/n}}\frac{v(x)-v(\mhat)}{\|x-\mhat\|_{H_v}} \geq\frac18\sqrt{d/n}
\eeq
Now, since $v$ is convex, we have
$$
\frac{v(y)-v(\mhat)}{\|y-\mhat\|_{H_v}}\geq \inf_{\|x-\mhat\|_{H_v}=(1/2)\sqrt{d/n}}\frac{v(y=x)-v(\mhat)}{\|x-\mhat\|_{H_v}}\geq\frac18\sqrt{d/n}$$ for all $\|y-\mhat\|_{H_v}\geq(1/2)\sqrt{d/n}$.
\end{proof}
For the proof of Lemma~\ref{lma:gsuff}, see Section~\ref{supp:aux}.

\section{Logistic Regression Example}\label{app:log}
\subsubsection*{Details of Numerical Simulation}
For the numerical simulation displayed in Figure~\ref{fig:demo}, we take $d=2$ and $n=100,200,\dots, 1000$. For each $n$, we draw ten sets of covariates $x_i, i=1,\dots, n$ from $\mathcal N(0, \lambda^2I_d)$ with $\lambda=\sqrt5$, yielding ten posterior distributions $\pi_n(\cdot\mid x_{1:n})$. For each $\pi_n$ we compute the ground truth mean and covariance by directly evaluating the integrals, using a regularly spaced grid (this is feasible in two dimensions). The mode $m_*$ of $\pi_n$ is found by a standard optimization procedure, and the Gaussian VI estimates $\hat m,\hat S$ are computed using the procedure described in~\cite{gauss-VI}. We used the authors' implementation of this algorithm, found at \url{https://github.com/marc-h-lambert/W-VI}. We then compute the Laplace and VI mean and covariance approximation errors for each $n$ and each of the ten posteriors at a given $n$. The solid lines in Figure~\ref{fig:demo} depict the average approximation errors over the ten distributions at each $n$. The shaded regions depict the spread of the middle eight out of ten approximation errors.

\subsubsection*{Verifying the Assumptions}
\begin{proof}[Proof of Lemma~\ref{lma:grad}]
Let $S=\frac1n\sum_{i=1}^n(Y_i-\E[Y_i\mid X_i])$ and $\mathcal N$ be a $1/2$-net of the sphere in $\R^d$, so that $\inf_{u\in\mathcal N}\|u-w\|\leq 1/2$ for all $\|w\|=1$. Standard arguments show we can take $\mathcal N$ to have at most $5^d$ elements. For some $s\geq0$, we have
\beq\label{P1}\mathbb P(\|S\| \geq 2s\sqrt{d/n})\leq \mathbb P(\sup_{u\in\mathcal N}u^TS\geq s\sqrt{d/n})\leq 5^d\sup_{\|u\|=1}\mathbb P(u^TS\geq s\sqrt{d/n})\eeq using a union bound and the fact that $|\mathcal N|\leq 5^d$. For a fixed unit vector $u$, we have
\beqs\label{P2}
\mathbb P(u^TS\geq s\sqrt{d/n}) &= \E\l[\mathbb P\l(u^TS\geq s\sqrt{d/n}\mid \{X_i\}_{i=1}^n\r)\r]\\
&=\E\l[\mathbb P\l(\sum_{i=1}^n(Y_i-\E[Y_i\mid X_i])u^TX_i\geq s\sqrt{nd}\mid \{X_i\}_{i=1}^n\r)\r]\\
&\leq \E\l[\e\l(-\frac{2s^2nd}{\sum_{i=1}^n(u^TX_i)^2}\r)\r] = \E\l[\e\l(-\frac{2s^2nd}{\|Z\|^2}\r)\r].\eeqs
In the last line, $Z\sim\mathcal N(0, I_n)$. To get the third line, we used Hoeffding's inequality. Now, we have
\beqs\label{P3}
\E\l[\e\l(-\frac{2s^2nd}{\|Z\|^2}\r)\r]&\leq \mathbb P(\|Z\|\geq 2\sqrt n) + \e\l(-s^2d/2\r)\leq e^{-n/2} + e^{-s^2d/2}.
\eeqs 
Combining~\eqref{P1},~\eqref{P2},~\eqref{P3} gives
\beqs
\mathbb P(\|S\|\geq 2s\sqrt{d/n})&\leq e^{d\log 5 - n/2} + e^{d(\log 5 -s^2/2)} \eeqs
Taking $s= \log n$ gives 
\beqsn
\mathbb P(\|S\|\geq 2\log n\sqrt{d/n})&\leq e^{d\log 5 - n/2} + e^{d(\log 5 -\frac12\log^2n)} \\
&\leq e^{-n/4} + e^{-d(\log n)^2/4}\leq e^{- n/4} + n^{-d/4}\eeqsn using that $\log 5 -\frac12\log^2 n\leq -\frac14\log^2n$ when $n\geq13$, and that $d\log 5-n/2<-n/4$ when $d/n\leq0.1$. Finally, note that
$$2\log n\sqrt{d/n} = \frac{2\log n}{n^{1/4}}\l(\frac d{\sqrt n}\r)^{\frac12}= \frac{8\log (n^{1/4})}{n^{1/4}}\l(\frac d{\sqrt n}\r)^{\frac12}\leq 8\l(\frac d{\sqrt n}\r)^{\frac12}.$$
\end{proof}
\begin{proof}[Proof of Lemma~\ref{lma:34deriv}]
First, note that $\nabla^kv(\theta)=\frac1n\sum_{i=1}^n\psi^{(k)}(X_i^T\theta)X_i^{\otimes k}$ for $k=3,4$. Therefore, 
\beq\label{nablakv2}
\|\nabla^kv(\theta)\|\les 1 + \sup_{\|u\|=1}\frac1n\sum_{i=1}^n|u^TX_i|^4,\quad k=3,4.
\eeq using also that $\|\psi^{(k)}\|_\infty\les1$. Now, we apply the following result.
\begin{theorem}[Adapted from Theorem 4.2 in~\cite{adamczak2010quantitative}]\label{adam}
Let $X_1,\dots, X_n$ be i.i.d. standard Gaussian. There exist absolute constants $C, C'$ such that if $Ct^8[\log(2t^2)]^6\leq n/d^2$ then
\beqs\label{c34logreg}
\sup_{\|u\|=1}\frac1n\sum_{i=1}^n|u^TX_i|^4 \leq 4
\eeqs with probability at least $1-e^{-C't\sqrt d}$. 
\end{theorem}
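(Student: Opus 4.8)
The statement is essentially a special case of Theorem~4.2 of~\cite{adamczak2010quantitative}, and the plan is to recover it by a direct net-plus-concentration argument that also makes transparent where the peculiar hypothesis comes from. First I would rewrite the quantity of interest in tensor form: since $|u^TX_i|^4=\la X_i^{\otimes4},u^{\otimes4}\ra$, we have $\sup_{\|u\|=1}\frac1n\sum_{i=1}^n|u^TX_i|^4=\sup_{\|u\|=1}\la T,u^{\otimes4}\ra$ with $T=\frac1n\sum_{i=1}^nX_i^{\otimes4}$ a symmetric $4$-tensor; since $\la T,u^{\otimes4}\ra=\frac1n\sum_i(u^TX_i)^4\ge0$, this supremum is exactly the operator norm $\|T\|$ of~\eqref{TH}. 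Because $u^TX_i\sim\mathcal N(0,1)$, we have $\la \E T,u^{\otimes4}\ra=\E[(u^TX_1)^4]=3$ for every unit $u$, so it suffices to bound the centered supremum $\sup_{\|u\|=1}|\la T-\E T,u^{\otimes4}\ra|$ by a small absolute constant on the stated event; the weak constant $4$ in the conclusion is there precisely to absorb this slack together with the multiplicative loss in the discretization step below.

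Next I would discretize: fix a $\delta$-net $\mathcal N$ of the unit sphere with $\delta=1/20$ and $|\mathcal N|\le41^d$. Using $\|u^{\otimes4}-v^{\otimes4}\|\le4\|u-v\|$ for unit vectors and the standard net lemma for operator norms, $\|T\|\le(1-4\delta)^{-1}\max_{u\in\mathcal N}\la T,u^{\otimes4}\ra=\tfrac54\max_{u\in\mathcal N}\la T,u^{\otimes4}\ra$, so it is enough to show $\frac1n\sum_{i=1}^n|u^TX_i|^4\le3+\tfrac15$ simultaneously over $u\in\mathcal N$. For a fixed $u$, the variables $\xi_i:=|u^TX_i|^4-3$ are i.i.d., centered, with variance an absolute constant and with $\|\xi_i\|_{\psi_{1/2}}$ bounded by an absolute constant $K$ — this is just the fact that $\E[\exp(c(u^TX_i)^2)]$ is finite for a small absolute $c>0$, which is exactly the $\psi_{1/2}$-integrability of $(u^TX_i)^4$. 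The generalized Bernstein inequality for sums of independent centered $\psi_{1/2}$ variables (which is the content of Theorem~4.2 of~\cite{adamczak2010quantitative} specialized to a single test function) then gives $\mathbb P(|\sum_{i=1}^n\xi_i|\ge s)\le2\exp(-c\min(s^2/(nK^2),(s/K)^{1/2}))$; taking $s=n/5$ yields $\mathbb P(\frac1n\sum_i|u^TX_i|^4>3+\tfrac15)\le2\exp(-c'\sqrt n)$, and a union bound gives overall failure probability at most $2\cdot41^d\exp(-c'\sqrt n)$. Finally, the hypothesis $Ct^8[\log(2t^2)]^6\le n/d^2$ forces $n\gtrsim d^2t^8$, hence $c'\sqrt n\gtrsim dt^4\gg d\log41+t\sqrt d$, so the failure probability is at most $\exp(-C't\sqrt d)$ once $C$ is a large enough absolute constant. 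The inflated powers $t^8$ and $[\log(2t^2)]^6$ are an artifact of invoking the general empirical-process theorem of~\cite{adamczak2010quantitative} verbatim; the argument just sketched needs only $n\gtrsim d^2+t^2d$.

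The main obstacle is the heavy tail of $|u^TX|^4$: it is $\psi_{1/2}$-integrable but \emph{not} sub-exponential, so a plain Bernstein bound is unavailable and one must retain the $(s/K)^{1/2}$ tail term; this term is the smaller of the two exactly when $s$ is as large as the scale $s\asymp n$ we care about and $n$ is large, which is what produces the $\sqrt n$ (rather than $n$) in the exponent and hence the $n\gtrsim d^2$ (rather than $n\gtrsim d$) sample-size requirement. Everything else is routine — the tensor reformulation, the net lemma, and the union bound all go through without surprises — the only care needed is bookkeeping the absolute constants so that the two constant-factor losses (the gap between $3$ and the target, and the $\tfrac54$ from the net) together still leave the bound below $4$.
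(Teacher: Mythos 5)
The paper itself offers no proof of this statement: it is imported verbatim (``adapted'') from Theorem 4.2 of \cite{adamczak2010quantitative} and used as a black box in the proof of Lemma~\ref{lma:34deriv}. Your proposal is therefore necessarily a different route --- a self-contained proof --- and its skeleton is sound: identifying the supremum with the operator norm of the symmetric tensor $T=\frac1n\sum_i X_i^{\otimes4}$ (legitimate here because $\langle T,u^{\otimes4}\rangle\ge0$ and the symmetric and multilinear operator norms coincide, a fact the paper already cites), reducing to a $1/20$-net at the cost of the factor $5/4$, controlling a fixed direction via a Bernstein-type tail for i.i.d.\ centered $\psi_{1/2}$ variables, and union bounding over the $41^d$ net points. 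The scalar tail bound $\exp\bigl(-c\min(s^2/(nK^2),(s/K)^{1/2})\bigr)$ you invoke is indeed a standard sub-Weibull inequality (it follows, e.g., from Hitczenko--Montgomery-Smith--Oleszkiewicz moment bounds plus Markov), but attributing it to ``Theorem 4.2 specialized to a single test function'' is inaccurate and sounds circular, since that theorem is precisely the uniform statement being proved; cite a genuinely one-dimensional result instead. What your elementary argument buys is transparency (it shows $n\gtrsim d^2$ and the $\sqrt n$-type exponent come from the heavy $\psi_{1/2}$ tail of $(u^TX)^4$, and that the $t^8[\log(2t^2)]^6$ factor is an artifact of the general empirical-process machinery); what the citation buys the paper is a result valid for a much broader class of distributions with no net/chaining bookkeeping.

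The one step that does not hold as written is the final comparison: from $Ct^8[\log(2t^2)]^6\le n/d^2$ you conclude $c'\sqrt n\gtrsim dt^4\gg d\log 41+t\sqrt d$, which requires $t$ bounded below by an absolute constant. For small $t$ --- and in fact for any $t$ near $1/\sqrt2$, where $\log(2t^2)=0$ --- the hypothesis is vacuous and gives no lower bound on $n$ at all, so the union bound cannot be closed; worse, the statement as transcribed is then simply false (take $t=1/\sqrt2$, $n=1$, $d$ large: the event $\sup_{\|u\|=1}(u^TX_1)^4=\|X_1\|^4\le4$ has probability far below $1-e^{-C'\sqrt{d/2}}$). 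So your argument proves the theorem only under an additional restriction such as $t\ge1$ (under which $\log(2t^2)\ge\log 2$ and your constants close as sketched, with $C$ large and $C'$ small); this restriction is evidently implicit in the adapted statement and presumably present in the original theorem, but it must be stated, both in your proof and, ideally, in the statement itself --- note the paper later applies the theorem with $t=c(n/d^2)^{1/9}$ for a small constant $c$, where this point actually matters.
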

We apply the theorem with $t=c(n/d^2)^{1/9}$. Let $M=n/d^2$ and suppose $M\geq1$. The condition $Ct^8[\log(2t^2)]^6\leq n/d^2$ can then be written as $Cc^8\log(2c^2M^{2/9})\leq M^{1/9}$ and this can be satisfied by taking $c$ a small enough absolute constant. Thus the probability that~\eqref{c34logreg} holds is $1-\e(-C''(nd)^{1/9})$ for some other absolute constant $C''>0$. Combining~\eqref{c34logreg} with~\eqref{nablakv2} concludes the proof.
\end{proof}
\begin{proof}[Proof of Corollary~\ref{corr:log}]
We claim the statements hold on the event $E=E_1\cap E_2\cap E_3$, where $E_1, E_2, E_3$ are defined in Lemmas~\ref{lma:grad},~\ref{lma:hess},~\ref{lma:34deriv}, respectively.  Note that $\mathbb P(E)$ satisfies the desired lower bound. Now, on $E$, we have $\|\nabla\ell(\theta_0)\|\leq 8(d/\sqrt n)^{1/2}$ and $\nabla^2\ell(\theta_0)\succeq \tau(\|\theta_0\|)I_d$. Furthermore, if $\|\theta'-\theta_0\|\leq s$, then $\|\nabla^2\ell(\theta') - \nabla^2\ell(\theta_0)\|\leq C_3s.$ Let $\lambda=\tau(\|\theta_0\|)$ and $s=4(d/\sqrt n)^{1/2}/\lambda$. If $(d/\sqrt n)^{1/2}\leq \lambda^2/(16C_3) = \tau(\|\theta_0\|)^2/(16C_3)$, then the inequalities~\eqref{freq} are all satisfied for $f=\ell$ and $\theta=\theta_0$. We therefore conclude that there exists a unique global minimizer $\theta_{\ell}^*$ of $\ell$, which satisfies $$\|\theta_{\ell}^*-\theta_0\|\leq (4/\tau(\|\theta_0\|))(d/\sqrt n)^{1/2}\leq C(d/\sqrt n)^{1/2}$$ Next, note that on $E$, we have 
$$\|\nabla v(\theta_{\ell}^*)\|= \|\nabla\ell(\theta_{\ell}^*) + n^{-1}\Sigma^{-1}\theta_{\ell}^*\| =n^{-1}\|\Sigma^{-1}\theta_{\ell}^*\|\les n^{-1}\|\theta_{\ell}^*\|$$ and 
$$\nabla^2v(\theta_{\ell}^*)\succeq \nabla^2\ell(\theta_{\ell}^*) \succeq \tau(C)I_d,$$ since $\|\theta_{\ell}^*\|\leq C(d/\sqrt n)^{1/2}+\|\theta_0\|\leq C$. Furthermore, $\sup_\theta\|\nabla^3v(\theta)\|\leq C_3$, since $\nabla^3v=\nabla^3\ell$. Therefore, since $n^{-1}$ is assumed to be sufficiently small, we can use Lemma~\ref{IVT-LL} to conclude $v$ has a critical point (and therefore a unique global minimizer) $\theta^*$, and $\|\theta^* - \theta_{\ell}^*\|\les n^{-1}$.  That $\|\theta^*\|\leq C$ is bounded follows from the bounds on $\|\theta_0\|,\|\theta_{\ell}^*-\theta_0\|,\|\theta^* - \theta_{\ell}^*\|$. Furthermore, we have $\lambda_{\min}(H_v)\geq\lambda_{\min}(\nabla^2\ell(\theta^*))\geq \tau(C)$ using the definition of event $E_2$. To prove point 3, we use the lower bound on $\lambda_{\min}(H_v)$ from point 2, combined with Lemma~\ref{lma:34deriv} and the fact that $\nabla^kv=\nabla^k\ell$, $k\geq3$, to get
\beqs
\|\nabla^kv(\theta)\|_{H_v}\leq\lambda_{\min}(H_v)^{-k/2}\|\nabla^kv(\theta)\|\leq C
\eeqs on event $E$. Finally, point 4 is satisfied by convexity of $v$, using Lemma~\ref{lma:c0convex} and the boundedness of $a_3$ (needed to verify the assumption of the lemma).
\end{proof}

\section{Properties of $W$ and $\Wbar$}In this section, we prove several key properties of the rescaled functions $W(x) = V(\mhat + H_V^{-1/2}x)$ and $\Wbar(x)= V(\mvi\pi + \Svi\pi^{1/2}x)=V(T^{-1}(x))$. These properties are used in the proofs in Sections~\ref{sec:proof} and~\ref{sec:m-sig-exist}. First, note that for a measure $\lambda$ on $\R^d$, we have
$$\l\|\|\nabla^kW\|\r\|_{L^p(\lambda)} = \l(\int\|\nabla^kW(x)\|^pd\lambda(x)\r)^{1/p},$$ where the norm inside the integral is the standard tensor operator norm
\begin{lemma}\label{lma:Wprops}
If $v$ satisfies Assumptions~\ref{assume:1},~\ref{assume:glob},~\ref{assume:c0} then 0 is the unique global minimizer of $W$, with $\nabla^2W(0)=I_d$, and
\beqs\label{W34glob}
\|\nabla^3W(x)\|&\leq \frac{a_3}{\sqrt n}(1+\|x/\sqrt d\|^q),\quad\forall x\in\R^d\\
\|\nabla^4W(x)\|&\leq \frac{a_4}{n}(1+\|x/\sqrt d\|^q),\quad\forall x\in\R^d.
\eeqs
Furthermore, 
\begin{align}
W(x)-W(0)&\les  1+\|x\|^{3+q}\quad\forall x\in\R^d,\label{Wup}\\
W(x) - W(0)&\geq c_0\sqrt d\|x\| \quad\forall \|x\|\geq \sqrt{\frac23}\sqrt {d}, \label{Wlow}
\end{align}
Finally, for $m\in\R^d$, $\sigma\in\R^{d\times d}$, we have
\beqs\label{Enabla3W}
\l\|\|\nabla^3W\|\r\|_{L^p(\mathcal N(m,\sigma\sigma^T))}\les_{p,q}  &\frac{a_3}{\sqrt n}\l(1+\|m/\sqrt d\|^{q} + \|\sigma\|^{q}\r), \\
\l \| \|\nabla^4W\| \r\|_{L^p(\mathcal N(m,\sigma\sigma^T))}\les_{p,q} &\frac{a_4}{n}\l(1+\|m/\sqrt d\|^{q} + \|\sigma\|^{q}\r).
\eeqs 
\end{lemma}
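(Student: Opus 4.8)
The plan is to derive every assertion from a single change of variables: $\phi(x) = \mhat + H_V^{-1/2}x$ is an affine bijection of $\R^d$, $W = V\circ\phi$, and the key algebraic fact is $H_V = \nabla^2V(\mhat) = n\nabla^2v(\mhat) = nH_v$. Note $W\in C^4$ since $v\in C^4$ by Assumption~\ref{assume:1}. First I would dispatch the minimizer and Hessian claims: since $\phi$ is a bijection with $\phi(0) = \mhat$ and $V = nv$ has the same unique global minimizer $\mhat$ as $v$, the point $0$ is the unique global minimizer of $W$, so $\nabla W(0) = 0$; and the chain rule for $\phi$, whose linear part is the symmetric matrix $H_V^{-1/2}$, gives $\nabla^2W(0) = H_V^{-1/2}\nabla^2V(\mhat)H_V^{-1/2} = H_V^{-1/2}H_VH_V^{-1/2} = I_d$.

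For the pointwise bounds~\eqref{W34glob}, the chain rule expresses $\nabla^kW(x)$ as the tensor $\nabla^kV(\phi(x))$ with each of its $k$ slots contracted against $H_V^{-1/2}$; substituting $w = H_V^{-1/2}u$, a bijection of the Euclidean unit sphere onto the $H_V$-unit sphere, turns the operator norm into the $H_V$-weighted one, $\|\nabla^kW(x)\| = \|\nabla^kV(\phi(x))\|_{H_V}$, and the rescalings $H_V = nH_v$, $V = nv$ convert this to $\|\nabla^kW(x)\| = n^{1-k/2}\|\nabla^kv(\phi(x))\|_{H_v}$. The same identity gives $\|\phi(x)-\mhat\|_{H_v} = \|H_V^{-1/2}x\|_{H_v} = \|x\|/\sqrt n$, hence $\sqrt{n/d}\,\|\phi(x)-\mhat\|_{H_v} = \|x/\sqrt d\|$; plugging this into~\eqref{globnabla} of Assumption~\ref{assume:glob} and using $(1\vee t)^q = 1\vee t^q \le 1 + t^q$ yields~\eqref{W34glob} for $k = 3,4$.

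For the growth bounds I would Taylor-expand $t\mapsto W(tx)$ to second order with Lagrange remainder: since $\nabla W(0) = 0$ and $\nabla^2W(0) = I_d$, one gets $W(x)-W(0) = \tfrac12\|x\|^2 + \tfrac16\langle\nabla^3W(\xi),x^{\otimes3}\rangle$ for some $\xi$ on $[0,x]$, and bounding the cubic term via~\eqref{W34glob}, together with $a_3/\sqrt n \le C(q)/d \le 1$ from~\eqref{cconds} and $\|x/\sqrt d\|^q\le\|x\|^q$, gives~\eqref{Wup} with an absolute constant. For~\eqref{Wlow} I would set $y = \phi(x)$, note $\|y-\mhat\|_{H_v} = \|x\|/\sqrt n\ge\tfrac12\sqrt{d/n}$ whenever $\|x\|\ge\tfrac12\sqrt d$ (in particular when $\|x\|\ge\sqrt{2/3}\,\sqrt d$), invoke~\eqref{c0cond} of Assumption~\ref{assume:c0}, and multiply by $n$: $W(x)-W(0) = n(v(y)-v(\mhat))\ge nc_0\sqrt{d/n}\,\|x\|/\sqrt n = c_0\sqrt d\,\|x\|$.

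Finally, for the $L^p$ estimates~\eqref{Enabla3W} I would integrate~\eqref{W34glob}. Writing $X = m+\sigma Z$ with $Z\sim\gamma$, Minkowski's inequality bounds $\|\,\|\nabla^3W\|\,\|_{L^p(\mathcal N(m,\sigma\sigma^T))}$ by $\tfrac{a_3}{\sqrt n}(1+\|\,\|X/\sqrt d\|^q\,\|_{L^p})$; then $\|X\|\le\|m\|+\|\sigma\|\|Z\|$, the elementary inequality $(a+b)^q\les_q a^q+b^q$, another application of Minkowski, and the standard Gaussian moment bound $\E[\|Z\|^{qp}]^{1/p}\les_{q,p}d^{q/2}$ (valid since $\|Z\|^2\sim\chi^2_d$ and $d\ge1$) reduce this to $\les_{p,q}\tfrac{a_3}{\sqrt n}(1+\|m/\sqrt d\|^q+\|\sigma\|^q)$; the $\nabla^4W$ bound is identical with $a_4/n$ in place of $a_3/\sqrt n$. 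I do not expect a genuine obstacle here: the lemma is essentially a careful change-of-variables computation. The one point demanding real care is tracking the two independent powers of $n$ (one from $H_v\to H_V$ in the weighted norm, one from $v\to V$) and checking that~\eqref{cconds} is precisely what makes the constant in~\eqref{Wup} absolute rather than $(v,d,n)$-dependent; a minor technicality is treating $0\le q<1$ separately from $q\ge1$ when expanding $(a+b)^q$.
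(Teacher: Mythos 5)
Your proposal is correct and follows essentially the same route as the paper's proof: the scaling identity $\|\nabla^kW(x)\| = n^{1-k/2}\|\nabla^kv(\mhat+H_v^{-1/2}x/\sqrt n)\|_{H_v}$ combined with Assumption~\ref{assume:glob} for~\eqref{W34glob}, a second-order Taylor expansion with the cubic remainder controlled via~\eqref{cconds} for~\eqref{Wup}, rescaling Assumption~\ref{assume:c0} for~\eqref{Wlow}, and integrating the pointwise bound with $\|m+\sigma Z\|\le\|m\|+\|\sigma\|\|Z\|$ and Gaussian moments for~\eqref{Enabla3W}. You simply spell out a few steps the paper leaves implicit (the minimizer/Hessian normalization and the derivation of~\eqref{Wlow}), and these details are accurate.
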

See the end of the section for the proof.  Next, we collect properties of the function $\Wbar(x)= V(\mvi\pi + \Svi\pi^{1/2}x)=V(T^{-1}(x))$ used in the proof of Theorem~\ref{thm:W}. Here, $\mvi\pi,\Svi\pi$ are the solutions to the first-order optimality conditions from Lemma~\ref{lma:intro:exist:V}. We first rewrite $\Wbar$ in terms of $W$. To do so, let $\hat m_0=m, \hat \sigma_0=\sigma$, where $(m,\sigma)$ is the unique solution to~\eqref{kl-opt-1},~\eqref{kl-opt-2} furnished by Lemma~\ref{lma:m-sig-soln} (these are just the rescaled optimality conditions). Using the relationship~\eqref{mVW} between $(m,\sigma)=(\hat m_0,\hat\sigma_0)$ and $(\mvi\pi,\Svi\pi^{1/2})$ given in Remark~\ref{rk:mVW}, it is straightforward to show that
\beq\label{barWviaW}\Wbar (x) = V(\mvi\pi + \Svi\pi^{1/2}x)=W(\hat m_0 + \hat \sigma_0 x).\eeq
Also, recall from Lemma~\ref{lma:m-sig-soln} that
\beq\label{hatmsig}
\|\hat m_0\|\leq 2\sqrt2,\qquad \sqrt{\frac23}I_d\preceq\hat \sigma_0\preceq \sqrt 2I_d.
\eeq
We now have the following properties of $\Wbar$.
\begin{lemma}\label{lma:WtobarW} It holds 
\begin{align}
|\Wbar (x)-\Wbar (0)|\les_q &1+\|x\|^{3+q}\quad\forall x\in\R^d,\label{barWup}\\
\Wbar (x)-\Wbar (0)\geq &\frac{c_0\sqrt d}{2}\|x\| -C(q) \quad\forall \|x\|\geq \sqrt d + 8\sqrt2.\label{barWlow}
\end{align} Furthermore, if $0<s\leq1$, then
\begin{align}
\l \| \|\nabla^3\Wbar\| \r\|_{L^p(\mathcal N(\mu, s^2I_d))}\les_q  &\frac{a_3}{\sqrt n}\l(1+\|\mu/\sqrt d\|^q\r),\label{barW3}\\
\l \| \|\nabla^4\Wbar\| \r\|_{L^p(\mathcal N(\mu, s^2I_d))}\les_q  & \frac{a_4}{n}\l(1+\|\mu/\sqrt d\|^q\r).\label{barW4}
\end{align} In particular,
\begin{align}
d\l\|\|\nabla^3\Wbar\|\r\|_{L^p(\mathcal N(0, s^2I_d))}&\les_q  a_3\epsilon,\label{barW3spec}\\
d^2\l \| \|\nabla^4\Wbar\| \r\|_{L^p(\mathcal N(0, s^2I_d))}&\les_q a_4\epsilon^2,\label{barW4spec}
\end{align} where $\epsilon=d/\sqrt n$.\end{lemma}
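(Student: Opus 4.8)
The plan is to derive every display in Lemma~\ref{lma:WtobarW} by transporting the corresponding property of $W$ through the affine change of variables~\eqref{barWviaW}, namely $\Wbar(x) = W(\hat m_0 + \hat\sigma_0 x)$, where by~\eqref{hatmsig} the translation obeys $\|\hat m_0\| \le 2\sqrt2$ and the linear part obeys $\sqrt{2/3}\, I_d \preceq \hat\sigma_0 \preceq \sqrt2\, I_d$. Thus $\Wbar$ is $W$ precomposed with a bijection whose Jacobian has operator norm in $[\sqrt{2/3},\sqrt2]$, so no scale is distorted, and the whole argument reduces to bookkeeping plus the elementary inequality $(a+b)^p \les_p a^p + b^p$ valid for $a,b\ge 0$, $p>0$. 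All bounds of Lemma~\ref{lma:Wprops} are available since Assumptions~\ref{assume:1},~\ref{assume:glob},~\ref{assume:c0} are in force throughout this section.

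First, the derivative estimates~\eqref{barW3}--\eqref{barW4spec}. Differentiating the composition $k$ times gives $\nabla^k\Wbar(x) = \nabla^k W(\hat m_0 + \hat\sigma_0 x)[\hat\sigma_0\,\cdot,\ldots,\hat\sigma_0\,\cdot]$, hence $\|\nabla^k\Wbar(x)\| \le \|\hat\sigma_0\|^k\,\|\nabla^k W(\hat m_0 + \hat\sigma_0 x)\| \le 2^{k/2}\,\|\nabla^k W(\hat m_0 + \hat\sigma_0 x)\|$. If $X\sim\mathcal N(\mu, s^2 I_d)$ then $\hat m_0 + \hat\sigma_0 X \sim \mathcal N(\hat m_0 + \hat\sigma_0\mu,\, s^2\hat\sigma_0^2)$ (using that $\hat\sigma_0$ is symmetric), so taking $L^p$-norms and applying the bound~\eqref{Enabla3W} of Lemma~\ref{lma:Wprops} with Gaussian parameters $m = \hat m_0 + \hat\sigma_0\mu$ and $\sigma = s\hat\sigma_0$ yields $\l\|\,\|\nabla^3\Wbar\|\,\r\|_{L^p(\mathcal N(\mu,s^2 I_d))} \les_{p,q} \frac{a_3}{\sqrt n}\bigl(1 + \|(\hat m_0 + \hat\sigma_0\mu)/\sqrt d\|^q + \|s\hat\sigma_0\|^q\bigr)$, and likewise for $\nabla^4\Wbar$ with $a_4/n$ in place of $a_3/\sqrt n$. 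Since $d\ge1$ and $0<s\le1$ we have $\|\hat m_0/\sqrt d\| \le 2\sqrt2$, $\|s\hat\sigma_0\| \le \sqrt2$, and $\|\hat\sigma_0\mu/\sqrt d\| \le \sqrt2\,\|\mu/\sqrt d\|$, so the elementary inequality gives $\|(\hat m_0 + \hat\sigma_0\mu)/\sqrt d\|^q \les_q 1 + \|\mu/\sqrt d\|^q$ and $\|s\hat\sigma_0\|^q \les_q 1$; this establishes~\eqref{barW3}--\eqref{barW4}. Setting $\mu = 0$ and multiplying through by $d$, respectively $d^2$, and using $d/\sqrt n = \epsilon$, gives~\eqref{barW3spec}--\eqref{barW4spec}.

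For the growth bounds, decompose $\Wbar(x)-\Wbar(0) = \bigl(W(\hat m_0 + \hat\sigma_0 x) - W(0)\bigr) - \bigl(W(\hat m_0) - W(0)\bigr)$. Since $0$ is the global minimizer of $W$ (Lemma~\ref{lma:Wprops}), the second parenthesis is nonnegative and, by~\eqref{Wup} with $\|\hat m_0\| \le 2\sqrt2$, bounded by a constant $C(q)$. To obtain~\eqref{barWup}, note the first parenthesis is nonnegative and, by~\eqref{Wup} with $\|\hat m_0 + \hat\sigma_0 x\| \le 2\sqrt2 + \sqrt2\|x\|$, at most $\les_q 1 + \|x\|^{3+q}$; combining, $|\Wbar(x)-\Wbar(0)| \les_q 1 + \|x\|^{3+q}$. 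To obtain~\eqref{barWlow}, observe that when $\|x\| \ge \sqrt d + 8\sqrt2$ one has $\|\hat m_0 + \hat\sigma_0 x\| \ge \sqrt{2/3}\,\|x\| - 2\sqrt2 \ge \sqrt{2/3}\,\sqrt d$, the last inequality because $8\sqrt2\cdot\sqrt{2/3} = 16/\sqrt3 > 2\sqrt2$; hence~\eqref{Wlow} applies and gives $W(\hat m_0 + \hat\sigma_0 x) - W(0) \ge c_0\sqrt d\,(\sqrt{2/3}\,\|x\| - 2\sqrt2)$. Since $\|x\|\ge 8\sqrt2$ and $4\sqrt{2/3} \ge 3$, one checks $\sqrt{2/3}\,\|x\| - 2\sqrt2 \ge \frac12\|x\|$, so after subtracting the $\le C(q)$ term we arrive at $\Wbar(x)-\Wbar(0) \ge \frac{c_0\sqrt d}{2}\|x\| - C(q)$.

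There is no serious obstacle, since Lemma~\ref{lma:Wprops} already packages the hard estimates — the pointwise and $L^p$ control of $\nabla^3 W$, $\nabla^4 W$ and the one-sided growth of $W$. The only step demanding a little care is the short chain of numerical inequalities ($\sqrt{2/3} > 1/2$, $16/\sqrt3 > 2\sqrt2$, $4\sqrt{2/3}\ge 3$, i.e.\ $32 \ge 27$) that makes the threshold $\sqrt d + 8\sqrt2$ simultaneously activate~\eqref{Wlow} and leave the linear term with the clean coefficient $c_0\sqrt d/2$.
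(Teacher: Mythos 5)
Your proof is correct and follows essentially the same route as the paper: transport the estimates of Lemma~\ref{lma:Wprops} through the affine map $x\mapsto \hat m_0+\hat\sigma_0 x$ using the bounds~\eqref{hatmsig}, apply~\eqref{Enabla3W} to the pushed-forward Gaussian for~\eqref{barW3}--\eqref{barW4spec}, and use~\eqref{Wup},~\eqref{Wlow} together with the $C(q)$ bound on $W(\hat m_0)-W(0)$ for the growth bounds. The only (harmless) deviations are that you bound $W(\hat m_0)-W(0)$ via~\eqref{Wup} instead of a fresh Taylor expansion, and your numerical verification of the threshold $\sqrt d+8\sqrt2$ is in fact slightly more careful than the paper's.
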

 We now prove Lemmas~\ref{lma:Wprops} and~\ref{lma:WtobarW}.
\begin{proof}[Proof of Lemma~\ref{lma:Wprops}]
We first note that \beq\label{WVderivconv}\|\nabla^kW(x)\| = n^{1-k/2}\|\nabla^kv(\mhat + H_v^{-1/2}x/\sqrt n)\|_{H_v}.\eeq Now, using Assumption~\ref{assume:glob}, we have
\beqs
\|\nabla^kW(x)\| &= n^{1-k/2}\|\nabla^kv(\mhat + H_v^{-1/2}x)\|_{H_v} \\
&\leq \frac{a_k}{n^{\frac k2-1}}\l[1+\l(\sqrt{n/d}\|\mhat + H_v^{-1/2}x/\sqrt n-\mhat\|_{H_v}\r)^q\r]\\
&= \frac{a_k}{n^{\frac k2-1}}\l[1+\|x/\sqrt d\|^q\r],
\eeqs as desired. To prove~\eqref{Wup} we Taylor expand $W$ about 0 and apply~\eqref{W34glob}. We get
\beqs
W(x)-W(0)&=\frac{\|x\|^2}{2} +\frac{1}{3!}\la\nabla^3W(tx),x^{\otimes 3}\ra \\
&\leq \frac{\|x\|^2}{2} +\frac{\|x\|^3}{6}(a_3/\sqrt n)(1+\|x/\sqrt d\|^q)\\
& \les 1+\|x\|^{3+q},
\eeqs recalling $a_3/\sqrt n\leq1$ by~\eqref{cconds}. The inequality~\eqref{Wlow} follows from the definition of $W$ and~\eqref{WVderivconv}.
To prove~\eqref{Enabla3W}, we use~\eqref{W34glob} to get
\beqs\label{E3Wm}
\E[\|\nabla^3W(m+\sigma Z)\|^{p}]&\leq \l(a_3/\sqrt n\r)^{p}\E\l[ 1+ (\|m+\sigma Z\|/\sqrt d)^{pq}\r]\\
&\les_{p,q}  \l(a_3/\sqrt n\r)^{p}\l(1+\|m/\sqrt d\|^{pq} + \|\sigma\|^{pq}\r).\eeqs We take the $p$th root to conclude. The bound on $\E[\|\nabla^4W(m+\sigma Z)\|^p]$ is shown analogously.
\end{proof}
\begin{proof}[Proof Lemma~\ref{lma:WtobarW}] Recall the expression~\eqref{barWviaW} for $\Wbar $ in terms of $W$. We start by bounding $W(\hat m_0)-W(0)$. A Taylor expansion, the fact that $\|\hat m_0\|\leq2\sqrt2$, and~\eqref{cconds}, gives
$$0\leq W(\hat m_0)-W(0)\leq \frac{\|\hat m_0\|^2}{2} + \frac{\|\hat m_0\|^3}{3!}\frac{a_3(1+\|\hat m_0/\sqrt d\|^q)}{\sqrt n}\leq C(q).$$ Now we prove~\eqref{barWup}. Using~\eqref{Wup} and~\eqref{hatmsig}, we have
\beqs
|\Wbar (x)-\Wbar (0)|&= |W(\hat m_0+\hat \sigma_0 x)-W(\hat m_0)|\leq |W(\hat m_0+\hat \sigma_0 x)-W(0)| + C(q)\\
&\les 1+\|\hat m_0+\hat \sigma_0 x\|^{3+q}+C(q)\\
&\les_q  1+\|x\|^{3+q}.
\eeqs To prove~\eqref{barWlow}, fix $\|x\|\geq \sqrt d+9$. Then 
$$\|\hat m_0+\hat \sigma_0 x\|\geq \|\hat \sigma_0 x\|-\|\hat m_0\|\geq\lambda_{\min}(\hat \sigma_0)\|x\|-\|\hat m_0\|\geq \sqrt{\frac23}\|x\|-2\sqrt2\geq \frac12\sqrt d.$$
Hence
\beqs
\Wbar (x)-\Wbar (0) &= W(\hat m_0+\hat \sigma_0 x) - W(\hat m) \\
&= W(\hat m_0+\hat \sigma_0 x)-W(0)+W(0)-W(\hat m_0)\\
&\geq c_0\sqrt d\|\hat m_0+\hat \sigma_0 x\| + W(0)-W(\hat m_0)\\
&\geq c_0\sqrt d\l(\sqrt{\frac23}\|x\|-2\sqrt 2\r)-C(q)\\
&\geq \frac{c_0}{2}\sqrt d\|x\|-C(q).\eeqs  To get the last line, we used that $\sqrt{2/3}\|x\|-2\sqrt2\geq \|x\|/2$ when $\|x\|\geq9$. This concludes the proof of~\eqref{barWlow}. To prove~\eqref{barW3}, we use the boundedness of $\|\hat\sigma_0\|$ to get that
\beqs
\|\nabla^k\Wbar (y)\|\les\|\nabla^kW(\hat m_0+\hat \sigma_0 y)\|,\quad k=3,4.
\eeqs
Therefore,
\beq
\E\l[\|\nabla^3\Wbar (Y)\|^p\r]\les\E\l[\|\nabla^3W(\hat m_0+\hat \sigma _0Y)\|^p\r].\eeq 
Now, $\hat m_0+\hat\sigma_0 Y= \hat m_0+\hat\sigma_0(\mu+S^{1/2} Z) = m+\sigma Z$, where $m=\hat m_0+\hat\sigma_0 \mu$ and $\sigma=\hat\sigma_0 S^{1/2}$. Note that $\|m\|\leq 2\sqrt2+\sqrt2\|\mu\|$ and $\|\sigma\|\leq\sqrt2s\leq\sqrt 2$. Therefore, applying~\eqref{Enabla3W} from Lemma~\ref{lma:Wprops}, we get
\beqsn
\l \| \|\nabla^3\Wbar\| \r\|_{L^p(\mathcal N(\mu, s^2I_d))}&\les_q \frac{a_3}{\sqrt n}\l(1+\|\mu/\sqrt d\|^{q}\r).
\eeqsn The bound~\eqref{barW4} on $\l \| \|\nabla^4\Wbar\| \r\|_{L^p(\mathcal N(\mu, S))}$ is analogous. The bounds~\eqref{barW3spec},~\eqref{barW4spec} follow immediately from~\eqref{barW3} and~\eqref{barW4}. 
\end{proof}

\section{Proofs from Section~\ref{sec:proof}}\label{app:sec:proof}For the equivalence of~\eqref{overallbd2} and~\eqref{overallbd2V}, see Section~\ref{supp:aux}. Here, we prove the key lemmas from Section~\ref{subsec:key}. We start by making two observations: first, if $g$ satisfies~\eqref{gcond} and $f(x)=g(T^{-1}(x))=g(\mvi\pi +\Svi\pi^{1/2}x)$, then $f$ satisfies
\beq
|\fbar(x)|=|f(x)-\gamma(f)|\leq \e\l(\frac{c_0}{4}\sqrt d\|x\|\r)\quad\forall\|x\|\geq R_g\sqrt d.
\eeq
Second, using~\eqref{V0-expand} from the proof outline, note that
$$\Wbar(x)-\Wbar(0)=\delta + r_3(x)-\|x\|^2/2$$ for some constant $\delta$. Taking $x=0$ in this equation, we get that $0=\delta+r_3(0)$, i.e. $\delta=-r_3(0)$. But $r_3(0)=\frac1{3!}\la\A_3,\H_3(0)\ra+r_4(0)=r_4(0)$. Therefore, $\delta=-r_4(0)$, i.e. we have
\beq\label{rtrix2}\Wbar(x) - \Wbar(0)=-r_4(0) + r_3(x) +\frac{\|x\|^2}{2}.\eeq \\

Now, to prove Lemma~\ref{alldabounds}, we prove a number of supplementary lemmas. The overall proof structure is similar to that of~\cite{katsTDD}. The outline given in Section 4.3 of that work may be useful to follow the below proofs.
\begin{lemma}\label{lma:epsilon}Let $g, R_g$ be as in Theorem~\ref{thm:Vgen}, and $f=g\circ T$. 
Let $R\geq \max(R_g, 1+8\sqrt{2/d})$ and $\mathcal U(R)=\{\|x\|\leq R\sqrt d\}$ Then
 \beqs\label{totalRbd}
|E(f)| \les_q   &\l(1+\l\|e^{-\rtri}\mathbbm{1}_{\U(R)}\r\|_4\r)\|\fbar \|_2\|\rtri\|_{12}^3 +I(R, 9+3q,c_0/4),\\
|E| \les_q   &\l(1+\l\|e^{-\rtri}\mathbbm{1}_{\U(R)}\r\|_2\r)\|\rtri\|_{4}^2+I(R, 6+2q, c_0/2),
\eeqs where 
\beq\label{IRpb}
I(R, p, b) = \frac{1}{(2\pi)^{d/2}}\int_{\|x\|\geq R\sqrt d}^\infty \|x\|^{p}e^{-b\sqrt d\|x\|}dx.
\eeq
\end{lemma}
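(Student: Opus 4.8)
The plan is to split every integral at the sphere $\|x\|=R\sqrt d$, writing $\int(\cdots)\,d\gamma=\int_{\U(R)}(\cdots)\,d\gamma+\int_{\U(R)^c}(\cdots)\,d\gamma$, and to treat the two regions by completely different means: on $\U(R)$ by Taylor-expanding the exponential, and on $\U(R)^c$ by brute force using the growth bounds~\eqref{barWup}--\eqref{barWlow} on $\Wbar$, the pointwise identity~\eqref{rtrix2}, and the bound $|\fbar(x)|\le\e(\tfrac{c_0}{4}\sqrt d\,\|x\|)$ for $\|x\|\ge R_g\sqrt d$ that follows from~\eqref{gcond}. Everywhere I use the elementary remainder inequalities, valid for all $t\in\R$, that $|e^{-t}-1+t-\tfrac{t^2}{2}|\le\tfrac{|t|^3}{6}(1+e^{-t})$ and $|e^{-t}-1+t|\le\tfrac{t^2}{2}(1+e^{-t})$, applied with $t=\rtri(x)$.

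On $\U(R)$ the bound on $|E(f)|$ comes from majorizing the integrand by $\tfrac16|\fbar|\,|\rtri|^3(1+e^{-\rtri})\mathbbm 1_{\U(R)}$ and applying H\"older's inequality with exponents $2,12,12,12,4$ on the five factors, followed by $\|(1+e^{-\rtri})\mathbbm 1_{\U(R)}\|_4\le 1+\|e^{-\rtri}\mathbbm 1_{\U(R)}\|_4$, $\|\rtri\mathbbm 1_{\U(R)}\|_{12}\le\|\rtri\|_{12}$, and $\|\fbar\mathbbm 1_{\U(R)}\|_2\le\|\fbar\|_2$. The bound on $|E|$ is the same computation with the second-order remainder inequality and H\"older exponents $4,4,2$.

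On $\U(R)^c$ I again majorize the integrand by $\tfrac16|\fbar|\,|\rtri|^3(1+e^{-\rtri})$ (resp.\ $\tfrac12\rtri^2(1+e^{-\rtri})$ for $E$) and split off the factor $e^{-\rtri}$. For the polynomial piece, identity~\eqref{rtrix2}, the upper bound~\eqref{barWup}, the inequality $\|x\|\ge R\sqrt d\ge\sqrt d\ge1$, and $|r_4(0)|\lesssim_q a_4/n\le1$ --- which follows from Proposition~\ref{prop:intro:hermite-tail-ddim} evaluated at $x=0$ (using $\H_3(0)=0$ and $\|\H_4(0)\|\lesssim1$) together with~\eqref{barW4} of Lemma~\ref{lma:WtobarW} --- give $|\rtri(x)|\lesssim_q 1+\|x\|^{3+q}$, hence $|\rtri(x)|^3\lesssim_q 1+\|x\|^{9+3q}$; multiplying by $|\fbar(x)|\le\e(\tfrac{c_0}4\sqrt d\,\|x\|)$ and the Gaussian weight, and absorbing $\tfrac{c_0}4\sqrt d\,\|x\|$ into $\tfrac{\|x\|^2}{2}$ once $\|x\|\gtrsim c_0\sqrt d$, bounds it by $\lesssim_q I(R,9+3q,c_0/4)$. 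For the $e^{-\rtri}$ piece, identity~\eqref{rtrix2} gives $e^{-\rtri(x)}\gamma(x)=(2\pi)^{-d/2}e^{-r_4(0)}e^{-(\Wbar(x)-\Wbar(0))}$, and~\eqref{barWlow} --- which applies on $\U(R)^c$ precisely because $R\ge1+8\sqrt{2/d}$ --- gives $e^{-(\Wbar(x)-\Wbar(0))}\le e^{C(q)}e^{-\tfrac{c_0}2\sqrt d\,\|x\|}$; combined with $|\rtri(x)|^3\lesssim_q 1+\|x\|^{9+3q}$ and $|\fbar(x)|\le\e(\tfrac{c_0}4\sqrt d\,\|x\|)$ this is $\lesssim_q I(R,0,c_0/4)\le I(R,9+3q,c_0/4)$. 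The identical argument with $\rtri^2$ in place of $|\rtri|^3$, power $6+2q$ instead of $9+3q$, rate $c_0/2$ instead of $c_0/4$, and no $\fbar$ factor handles $|E|$ on $\U(R)^c$. Summing the $\U(R)$ and $\U(R)^c$ contributions gives the two claimed inequalities.

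The main obstacle is the bookkeeping on $\U(R)^c$: after extracting the sub-exponential growth $|\fbar(x)|\le\e(\tfrac{c_0}4\sqrt d\,\|x\|)$, one must check that what remains still decays at a strictly positive rate --- from the Gaussian weight for the polynomial piece, and from the linear lower bound~\eqref{barWlow} on $\Wbar$ for the exponential piece --- which is exactly why the rates $c_0/2$ and $c_0/4$ appear and why $R$ must be taken larger than both $R_g$ and $1+8\sqrt{2/d}$ (with the mild requirement $c_0\lesssim1$, satisfied in our applications, absorbed into the suppressed constants). The additive constants $C(q)$ from~\eqref{barWup},~\eqref{barWlow} and from the $r_4(0)$ estimate are harmless, entering only through factors $e^{C(q)}$ that $\lesssim_q$ absorbs.
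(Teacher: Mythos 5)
Your overall strategy is the paper's: use the elementary Taylor-remainder bound for $e^{-t}$ with $t=\rtri$, handle the bounded region by H\"older (producing the factor $\|\fbar\|_2\|\rtri\|_{12}^3\,(1+\|e^{-\rtri}\mathbbm{1}_{\U(R)}\|_4)$), and on $\U(R)^c$ use \eqref{rtrix2} together with \eqref{barWup}--\eqref{barWlow} and the envelope $|\fbar(x)|\le \e(c_0\sqrt d\|x\|/4)$ from \eqref{gcond} to land on the tail integral $I(R,\cdot,\cdot)$; the choice $R\ge\max(R_g,1+8\sqrt{2/d})$ is used for exactly the same two purposes as in the paper. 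The one place where you deviate is that you also split the purely polynomial term $\int|\fbar||\rtri|^3d\gamma$ (resp.\ $\int\rtri^2d\gamma$) at the sphere, and this is where your argument has a genuine soft spot: on the tail you must absorb the growth $\e(c_0\sqrt d\|x\|/4)$ into the Gaussian weight, which requires $\|x\|\gtrsim c_0\sqrt d$, i.e.\ $c_0\les R$. The hypotheses do not give this, and the ``WLOG $c_0\les 1$'' escape is not free: lowering $c_0$ weakens Assumption~\ref{assume:c0} but \emph{strengthens} the requirement \eqref{gcond} on $g$, so a $g$ admissible for the original $c_0$ need not be admissible for the smaller one, and the suppressed constant here is only allowed to depend on $q$. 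The repair is immediate and is what the paper does: do not split the polynomial term at all — bound $\int|\fbar||\rtri|^3d\gamma\le\|\fbar\|_2\|\rtri\|_{12}^3$ over all of $\R^d$ by (generalized) H\"older; this is precisely what the ``$1$'' in $(1+\|e^{-\rtri}\mathbbm{1}_{\U(R)}\|_4)$ is there to absorb, so your claimed bound already has room for it. Only the term carrying $e^{-\rtri}$ needs the tail treatment, and there the decay $e^{-\frac{c_0}{2}\sqrt d\|x\|}$ from \eqref{barWlow} beats the growth $e^{\frac{c_0}{4}\sqrt d\|x\|}$ for every $c_0>0$, with no size restriction.

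Two smaller points. First, your justification of $|r_4(0)|\les_q a_4/n$ is not correct as stated: $\|\H_4(0)\|\les 1$ does not let you bound the tensor inner product $\la\nabla^4\Wbar,\H_4(0)\ra$ by a product of operator norms (that inequality fails for tensors); arguing via Lemma~\ref{tensor-H-opnorm} (or simply quoting Lemma~\ref{lma:r4ptwise} at $x=0$) gives $|r_4(0)|\les_q a_4 d^2/n=a_4\epsilon^2$, which is the bound the paper uses and which still yields the only fact you need, $|r_4(0)|\les_q 1$, by \eqref{cconds}. Second, in the exponential tail piece the intermediate claim ``$\les_q I(R,0,c_0/4)$'' is a slip — the factor $\|x\|^{9+3q}$ cannot be discarded — but the ingredients you list do give $\les_q I(R,9+3q,c_0/4)$ directly, which is what the lemma asserts. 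With the polynomial-term fix above, your proof coincides with the paper's.
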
 See the end of this section for the proof. We see that we need to bound $I(R,p,b)$, $\l\|e^{-\rtri}\mathbbm{1}_{\U(R)}\r\|_4$, and $\|\rtri\|_p$. The bounds on the first two quantities are stated in the next two lemmas. The bound on $\|\rtri\|_p$ is given in Lemma~\ref{rUUc} in Section~\ref{app:subsec:Erkm}.
\begin{lemma}[Bound on tail integral]\label{lma:tail}There is some $C_p$ depending only on $p$ such that if $R\geq C_p(1\vee b^{-1})^2$ then $I(R,p,b)\les_p e^{-Rbd/2}.$ In particular, 
$$I(R, 9+3q, c_0/4)\vee I(R, 6+2q, c_0/2) \les_q e^{-Rc_0d/8}\quad \text{if}\quad R\geq C(q)(1\vee c_0^{-1})^2$$ for some $C(q)$ depending only on $q$.
\end{lemma}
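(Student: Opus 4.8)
The plan is to pass to polar coordinates, reduce $I(R,p,b)$ to a one‑dimensional tail integral, and then carefully balance the polynomial growth $\|x\|^p$, the exponential decay $e^{-b\sqrt d\|x\|}$, and the Gaussian normalization $(2\pi)^{-d/2}$ (equivalently, the volume growth of $\R^d$). Using that the unit sphere $S^{d-1}$ has surface area $2\pi^{d/2}/\Gamma(d/2)$, one gets
$$I(R,p,b)=\frac{2}{2^{d/2}\Gamma(d/2)}\int_{R\sqrt d}^\infty r^{p+d-1}e^{-b\sqrt d\, r}\,dr.$$
First I would estimate the radial integral: writing $e^{-b\sqrt d r}=e^{-b\sqrt d r/2}\cdot e^{-b\sqrt d r/2}$ and noting that $r\mapsto r^{p+d-1}e^{-b\sqrt d r/2}$ is nonincreasing on $[R\sqrt d,\infty)$ as soon as $R\ge \tfrac{2(p+d-1)}{bd}$ (which is implied by the hypothesis, since $\tfrac{2(p+d-1)}{bd}\le 2(p+1)(1\vee b^{-1})^2$), one bounds this factor by its value at $r=R\sqrt d$ and integrates the remaining exponential, obtaining $\int_{R\sqrt d}^\infty r^{p+d-1}e^{-b\sqrt d r}\,dr\le \tfrac{2(R\sqrt d)^{p+d-1}}{b\sqrt d}e^{-bdR}$.

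Plugging this in and applying the Stirling lower bound $\Gamma(d/2)\ge\sqrt{2\pi}\,(d/2)^{(d-1)/2}e^{-d/2}$ (valid for every $d\ge1$) to cancel $2^{d/2}$ and the bulk of the powers of $d$, a short computation — the exponent of $d$ collapses to $(p-1)/2$ — yields
$$I(R,p,b)\le \frac{C\,d^{(p-1)/2}R^{p+d-1}}{b}\,e^{\,d/2-bdR}$$
for an absolute constant $C$. The heart of the proof is then an elementary but delicate endgame. I would rewrite the right‑hand side as $\frac{C\,d^{(p-1)/2}R^{p-1}}{b}\bigl(Re^{(1-bR)/2}\bigr)^{d}e^{-bdR/2}$. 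For $C_p$ a large enough absolute constant, $R\ge C_p(1\vee b^{-1})^2$ already forces $Re^{(1-bR)/2}\le e^{-1}$ (equivalently $bR\ge 2\ln R+3$), so the $d$‑th power decays geometrically; peeling off one factor of the base leaves $d^{(p-1)/2}e^{-(d-1)}\cdot\frac{R^{p}}{b}e^{(1-bR)/2}$, where $\sup_{d\ge1}d^{(p-1)/2}e^{-(d-1)}$ is a $p$‑dependent constant. Finally $\frac{R^{p}}{b}e^{-bR/2}$ is bounded by a $p$‑dependent constant provided $C_p\ge c\,p$: this is precisely where the \emph{square} in $(1\vee b^{-1})^2$ is needed, since it makes the decay $e^{-bR/2}\sim e^{-C_p(1\vee b^{-1})/2}$ beat the $b^{-O(p)}$ blow‑up coming from the sphere volume and from $\|x\|^p$ (one uses $\ln(1/b)\le 1/b$ for $b\in(0,1]$). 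Collecting, $I(R,p,b)\les_p e^{-bdR/2}$.

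For the ``in particular'' claim I would specialize to $(p,b)=(9+3q,\,c_0/4)$ and $(p,b)=(6+2q,\,c_0/2)$, use $(1\vee(c_0/4)^{-1})^2\le 16(1\vee c_0^{-1})^2$ and $(1\vee(c_0/2)^{-1})^2\le 4(1\vee c_0^{-1})^2$ so that a single threshold $R\ge C(q)(1\vee c_0^{-1})^2$ covers both cases, and observe $e^{-(c_0/4)dR/2}=e^{-c_0dR/8}$ while $e^{-(c_0/2)dR/2}=e^{-c_0dR/4}\le e^{-c_0dR/8}$, so the maximum is $\les_q e^{-c_0dR/8}$. The main obstacle I anticipate is the bookkeeping in the last two steps: one must retain enough of the exponential $e^{-bdR}$ intact — rather than spending it to tame the polynomial — so that the leftover decay simultaneously dominates the $d$‑dependent factors $d^{O(p)}$ and $e^{d/2}$ \emph{and} the $b$‑dependent blow‑up $b^{-O(p)}$; the Gaussian normalization $(2\pi)^{-d/2}$ (i.e.\ the $\Gamma(d/2)^{-1}$ produced by polar coordinates) is what makes the $d$‑dependence come out right, and the squared dependence $(1\vee b^{-1})^2$ in the hypothesis is exactly what the $b$‑dependence forces.
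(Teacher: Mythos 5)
Your proposal is correct, but it takes a more self-contained route than the paper. The paper disposes of the whole geometric part of the estimate by quoting Lemma E.3 of~\cite{katsTDD}, which already packages the polar-coordinate/volume computation into the bound $I(R,p,b)\leq (eR)^p d^{p/2+1}\exp\l(\l[\tfrac32+\log R-Rb\r]d\r)$ valid for $Rb>1+p/d$; the remaining work there is exactly your ``endgame'': absorb $(p+1)\log R$ into $Rb/4$ once $R\gtrsim (p+1)^2b^{-2}$, and absorb the $d^{p/2+1}$ prefactor using the leftover $e^{-d}$, which is where the threshold $R\geq C_p(1\vee b^{-1})^2$ comes from. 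You instead rederive the external ingredient from scratch: polar coordinates, monotonicity of $r\mapsto r^{p+d-1}e^{-b\sqrt d\,r/2}$ past the threshold, and Stirling's lower bound on $\Gamma(d/2)$ to cancel the $(2\pi)^{-d/2}$ against the sphere volume so that the dimension dependence collapses to $d^{(p-1)/2}e^{d/2}$ times the geometric factor $(Re^{(1-bR)/2})^d$; your balancing of $bR$ against $2\log R+3$ and of $R^p b^{-1}e^{-bR/2}$ against the $b^{-O(p)}$ blow-up is a correct, if differently organized, version of the same bookkeeping (and your monotonicity checks ensuring the suprema over admissible $R$ and $d$ sit at the thresholds are the right way to make the uniform statement rigorous). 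What each approach buys: the paper's proof is two lines but leans on prior work; yours is longer but makes the lemma independent of~\cite{katsTDD} and exposes explicitly why the quadratic dependence $(1\vee b^{-1})^2$ in the hypothesis is needed. The ``in particular'' specialization is handled identically in both.
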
 See Section~\ref{supp:aux} for the proof.
\begin{lemma}[Bound on exponential integral in $\U(R)$]\label{lma:expr3}
Let $R\geq 1$. Then 
$$\|e^{-\rtri}\mathbbm{1}_{\mathcal U(R)}\|_4\leq \e\l(C(q)R^{4+q}\omega\r),$$ where $\omega=a_3\epsilon+a_4\epsilon^2$.
\end{lemma}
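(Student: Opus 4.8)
The plan is to estimate $\|e^{-\rtri}\mathbbm 1_{\U(R)}\|_4^4=\int_{\U(R)}e^{-4\rtri}\,d\gamma$ by splitting $\rtri$ into a piece that is small \emph{pointwise} on $\U(R)$ and a piece that is small only in $L^4(\gamma)$, and handling them separately via H\"older. Write $\rtri=p_3+\rfour$, where $p_3(x)=\tfrac1{3!}\la\A_3,\H_3(x)\ra$ is the cubic Hermite contribution (recall $\A_3=\E[\nabla^3\Wbar(Z)]$, with $\|\A_3\|\les_q a_3/\sqrt n$ by Lemma~\ref{lma:WtobarW}) and $\rfour$ is the fourth-order Hermite remainder. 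By H\"older, $\int_{\U(R)}e^{-4\rtri}\,d\gamma\le\big(\int_{\U(R)}e^{-8\rfour}\,d\gamma\big)^{1/2}\big(\int_{\U(R)}e^{-8p_3}\,d\gamma\big)^{1/2}$. For the remainder factor I would use the integral form of the Hermite remainder (Proposition~\ref{prop:intro:hermite-tail-ddim} with $k=4$), together with the pointwise bound $\|\nabla^4\Wbar(x)\|\le(a_4/n)(1+\|x/\sqrt d\|^q)$ and the elementary estimates $|\la T,\H_4(x)\ra|\les\|T\|(\|x\|^4+d\|x\|^2+d^2)$, $|\la T,Z\otimes\H_3(x)\ra|\les\|T\|\,\|Z\|(\|x\|^3+d\|x\|)$, to get $|\rfour(x)|\les_q a_4\epsilon^2R^{4+q}$ for every $x\in\U(R)$; hence $\int_{\U(R)}e^{-8\rfour}\,d\gamma\le\e(C(q)\,a_4\epsilon^2R^{4+q})$, which is of the required form.

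The $p_3$ factor is the crux. I would further split $p_3(x)=P(x)+\la b,x\ra$, where $P(x)=\tfrac1{3!}\la\A_3,x^{\otimes3}\ra$ is a homogeneous cubic form --- in particular an \emph{odd} function of $x$ --- and $b$ is the vector obtained by contracting $\A_3$ against $I_d$, with $\|b\|\les d\|\A_3\|\les_q a_3\epsilon$. A last H\"older split peels off the linear part, whose Gaussian integral is explicit: $\int e^{-16\la b,x\ra}\,d\gamma=\e(128\|b\|^2)\le\e(C(q)\omega)$. It remains to bound $\int_{\U(R)}e^{-16P}\,d\gamma$. Here the pointwise bound $|P(x)|\le\tfrac1{3!}\|\A_3\|\|x\|^3$ only gives $\sup_{\U(R)}|P|\les_q a_3\epsilon R^3\sqrt d$, which overshoots the target by a factor $\sim\sqrt d$, so one cannot argue through $L^\infty$ and must exploit the $L^4$ structure. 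The argument I have in mind: (i) since $\U(R)$ and $\gamma$ are symmetric under $x\mapsto-x$ and $P$ is odd, $\int_{\U(R)}e^{-16P}\,d\gamma=\int_{\U(R)}\cosh(16P)\,d\gamma\le\E[\cosh(16P(Z))\,\mathbbm 1_{|P(Z)|\le M}]$, where $M:=\tfrac1{3!}\|\A_3\|(R\sqrt d)^3\les_q a_3\epsilon R^3\sqrt d$ bounds $|P|$ on $\U(R)$; (ii) on the bulk $\{|P(Z)|\le1\}$ one Taylor-expands $\cosh(16u)\le1+Cu^2$ to bound the contribution by $1+C\|P\|_2^2\les_q1+(a_3\epsilon)^2\le\e(C(q)\omega)$, using $\|P\|_2\les d\|\A_3\|\les_q a_3\epsilon$ (the cubic-form analogue of the Hermite moment bound recalled in Section~\ref{subsec:outline}); (iii) on the complement one uses $\cosh(16P)\le e^{16|P|}$, the layer-cake formula, and the sub-Weibull$(2/3)$ tail $\mathbb P(|P(Z)|>s)\les_q\e(-c_q(s/(a_3\epsilon))^{2/3})$ (Gaussian hypercontractivity for the degree-$3$ polynomial $P$, again from $\|P\|_2\les_q a_3\epsilon$), the resulting integral being convergent precisely because of the deterministic cutoff at $M$ supplied by the restriction to $\U(R)$.

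The main obstacle is step (iii): one must show that this truncated tail integral $\int_1^{M}e^{16s}\,\mathbb P(|P(Z)|>s)\,ds$ --- which, without the cutoff, diverges because a sub-Weibull$(2/3)$ variable has no exponential moment --- is bounded by $\e(C(q)R^{4+q}\omega)$ despite the apparently damaging $\sqrt d$ in $M$. This calls for a case analysis according to whether $a_3\epsilon R$ is small or large relative to an appropriate power of $d$. The point is that the restriction to $\U(R)$, combined with the constraints $a_3\epsilon\le C(q)$ and $a_3\epsilon+a_4\epsilon^2\le1$ from Assumption~\ref{assume:glob}, forces compatible relations among $R$, $d$ and $\omega$ in the "large" regime (one gets both $\sqrt d\les_q R$ and a matching lower bound on $\omega$), so that the stray $\sqrt d$ is always absorbed into $R^{4+q}\omega$. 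Finally, collecting the three pieces through the two H\"older inequalities yields $\|e^{-\rtri}\mathbbm 1_{\U(R)}\|_4\le\e(C(q)R^{4+q}\omega)$.
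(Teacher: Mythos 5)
Your treatment of $\rfour$ (pointwise bound on $\U(R)$ via the integral remainder formula, giving $\sup_{\U(R)}|\rfour|\les_q a_4\epsilon^2R^{4+q}$) and your H\"older splittings are fine and mirror the paper, which also uses the pointwise bound on $\rfour$ (Lemma~\ref{lma:r4ptwise}) and then handles the cubic Hermite part separately. The gap is in your step (iii) for the pure cubic $P$. There you use the restriction to $\U(R)$ \emph{only} through the deterministic cutoff $M\sim a_3\epsilon R^3\sqrt d$, while the tail probability $\mathbb P(|P(Z)|>s)\les \e(-c_q(s/(a_3\epsilon))^{2/3})$ is the unrestricted hypercontractive bound based solely on $\|P\|_2\les_q a_3\epsilon$. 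These two ingredients cannot yield the target. Take $R=1$, $q=0$, and let $\beta:=a_3\epsilon$ be a fixed small constant (say $C(q)/2$, which is admissible under~\eqref{cconds}) with $d\to\infty$: then $M\sim\beta\sqrt d\gg\beta^{-2}$, and your layer-cake integral $\int_1^M e^{16s}\,\mathbb P(|P|>s)\,ds$ is bounded below near $s=M$ by $\e\l(16\beta\sqrt d-c_qd^{1/3}-O(1)\r)\to\infty$, whereas the required bound $\e(C(q)R^{4+q}\omega)\leq \e(C(q))$ stays bounded (recall $\omega\le1$ always). Your proposed rescue --- that the ``large'' regime forces $\sqrt d\les_q R$ and a matching lower bound on $\omega$ --- is false in exactly this regime: $R=1$ is allowed for any $d$, and Assumption~\ref{assume:glob} places no lower bound on $R$ or $\omega$ in terms of $d$. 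The loss is intrinsic to the method: the event $\{|P|>s\}$ for $s$ between $\beta^{-2}$ and $M$ lives almost entirely outside $\U(R)$, and the $L^\infty$ cutoff does not see that.

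What is needed (and what the paper does) is to exploit the restriction through the \emph{gradient} of the cubic rather than its sup: on $\U(R)$ one has $\|\nabla p_3(x)\|=\frac12\|\la\A_3,xx^T-I_d\ra\|\les_q R^2a_3\epsilon$, i.e.\ $p_3$ restricted to $\U(R)$ is Lipschitz with constant $L\sim_q R^2a_3\epsilon$ (no stray $\sqrt d$, since $d\|\A_3\|\les_q a_3\epsilon$), and then the log-Sobolev/Herbst exponential-moment bound for functions Lipschitz on a subset (Proposition 5.4.1 of~\cite{bakry2014analysis}) gives $\int_{\U(R)}e^{-cp_3}d\gamma\leq\e\l(CL^2+L\|p_3\|_1\r)\leq\e\l(C(q)R^4(a_3\epsilon)^2\r)$, which is then absorbed into $\e(C(q)R^{4+q}\omega)$. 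Alternatively, a restricted-concentration or Latała-type tail bound that keeps track of the operator norm $\|\A_3\|\les_q a_3\epsilon/d$ (not just $\|P\|_2$) in the far tail could close the gap, but the plain $\|P\|_2$-based sub-Weibull$(2/3)$ tail plus cutoff does not. Your steps (i)--(ii) and the linear-part computation are correct but do not salvage (iii).
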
 See the end of this section for the proof. With these lemmas in hand, as well as a bound on $\|\rtri\|_p$ in Lemma~\ref{rUUc} below, we can now prove Lemma~\ref{alldabounds}.
  \begin{proof}[Proof of Lemma~\ref{alldabounds}]
We have 
\beqs\label{A1A2}
 |L(f)|\leq\|\fbar \|_2(\|\rtri\|_2 + \|\rtri\|_4^2)\les_q\|\fbar\|_2\omega,
\eeqs
using Lemma~\ref{rUUc} and the fact that $\omega\leq1$ by~\eqref{cconds}. Next, let $R\geq \bar C:=\max(R_g, 1+8\sqrt{2/d}, C(q)(1\vee c_0^{-1})^2)$. We can then use~\eqref{totalRbd} from Lemma~\ref{lma:epsilon} to bound $|E|$ and $|E(f)|$. Furthermore, for this choice of $R$ we can use Lemma~\ref{lma:tail} to get that the tail integrals $I(R, 9+3q, c_0/4)$ and $I(R,6+2q,c_0/2)$ are both bounded by $e^{-Rc_0d/8}$. Finally, Lemma~\ref{lma:expr3} bounds $\|e^{-\rtri}\mathbbm{1}_{\mathcal U(R)}\|_4$, and Lemma~\ref{rUUc} bounds $\|r_3\|_p$. Combining all of these bounds,~\eqref{totalRbd} reduces to 
\beqs\label{EEf}
&|E| \les_q \omega^2\e\l(C(q)R^{4+q}\omega\r) + e^{-\frac{Rc_0}{8}d}\\
&|E(f)|\les_q  \|\fbar \|_2\omega^3\e\l(C(q)R^{4+q}\omega\r)+  e^{-\frac{Rc_0}{8}d}.\eeqs  Now, if $\omega=0$ then we get $|E|,|E(f)|\les e^{-\frac{Rc_0}{8}d}$, and we can take $R\to\infty$ to conclude $|E|,|E(f)|=0$. If $\omega>0$ then by~\eqref{cconds}, we know $0<\omega\leq1$. Let \beq\label{RCbar}R= \max\l(\bar C, \frac{8}{c_0d}\log(\omega^{-3})\r),\eeq so that $\e(-(Rc_0/8)d)\leq\omega^3$. Furthermore, using that $\log(1/\omega)^{4+q}\omega\leq C(q)$ since $\omega\leq1$, we have that for $R$ as in~\eqref{RCbar}, the exponent $C(q)R^{4+q}\omega$ in~\eqref{EEf} is bounded above by some fixed function $C(R_g, c_0^{-1},q)$. Using these bounds, we get
\beq
|E|\leq C(q, c_0^{-1}, R_g)\omega^2, \qquad |E(f)| \leq C(q,c_0^{-1}, R_g)(1+ \|\fbar \|_2)\omega^3
\eeq 
Note that this bound is still valid if $\omega=0$. Finally, recalling the definition of $\omega$ concludes the proof.
\end{proof}
Next, we prove Lemma~\ref{leading-leading}, which follows immediately from Lemma~\ref{rUUc} and a few algebraic manipulations.
\begin{proof}[Proof of Lemma~\ref{leading-leading}]
We have
\beqsn
L(f) &+  \int \fbar p_3d\gamma +\int \fbar\l(r_4- \frac12 p_3^2\r)d\gamma =\int\fbar \l(\l[-\rtri + \frac12\rtri^2\r]+\rtri -\frac12p_3^2\r)d\gamma \\
&=\frac12\int \fbar ((p_3+r_4)^2-p_3^2)d\gamma=\int\fbar p_3r_4d\gamma + \frac12\int \fbar r_4^2d\gamma.
\eeqsn
Next,
\beqsn
\bigg|\int\fbar p_3r_4d\gamma &+ \frac12\int \fbar r_4^2d\gamma\bigg|\leq \|\fbar \|_2\l(\|p_3\|_4\|r_4\|_4 + \|r_4\|_4^2\r)\\
&\les \|\fbar \|_2\l(a_3\epsilon a_4\epsilon^2 + (a_4\epsilon^2)^2\r)\\
&\les  \|\fbar \|_2\l(a_3a_4\epsilon^3+a_4^2\epsilon^4\r).
\eeqsn
Next, we have
\beqsn
\l|\int \fbar \l(r_4- \frac12p_3^2\r)d\gamma\r|\leq \|\fbar\|_2\l(\|r_4\|_2+\|p_3\|_4^2\r)\les \|\fbar\|_2(a_3^2+ a_4)\epsilon^2.\eeqsn Finally,
$$\l|\int fp_3d\gamma\r|\leq \|\fbar \|_2\|p_3\|_2\les \|\fbar\|_2a_3\epsilon.$$ 
\end{proof}

 \begin{proof}[Proof of Lemma~\ref{lma:epsilon}]
A Taylor remainder argument shows that
\beqsn
\l|e^{-\rtri}-1+\rtri-\rtri^2/2\r|&\leq |\rtri|^{3}+|\rtri|^{3}e^{-\rtri}.
\eeqsn Therefore,
 \beqs\label{epsgbd}
|E(f)|&\leq  \int |\fbar ||\rtri| ^{3}d\gamma + \int |\fbar ||\rtri|^3e^{-\rtri }d\gamma\\
& =  \int |\fbar ||\rtri| ^{3}d\gamma + \int |\fbar |\rtri ^{3}(e^{-\rtri }\mathbbm{1}_{\U(R)})d\gamma+ \int_{\U(R)^c} |\fbar ||\rtri |^{3}e^{-\rtri }d\gamma\\
&\leq  \|\fbar \|_2\|\rtri \|_{12}^{3}(1+\|e^{-\rtri }\mathbbm{1}_{\U(R)}\|_4) + \int_{\U(R)^c} |\fbar ||\rtri| ^{3}e^{-\rtri }d\gamma.\eeqs In the third line we used generalized H{\"o}lder with powers $2,4,4$. Next, by~\eqref{rtrix2} we have 
$$e^{-r_3(x)}d\gamma(x) = e^{-r_4(0)}(2\pi)^{-d/2}e^{\Wbar (0)-\Wbar (x)}dx.$$ We now apply the bound~\eqref{barWlow} on $\Wbar(0)-\Wbar(x)$ in the region $\|x\|\geq R\sqrt d\geq \sqrt d+8\sqrt 2$, to get the following bound on the tail integral:
\beqs\label{intURc}\int_{\U(R)^c}|\fbar | |\rtri| ^{3}e^{-\rtri}d\gamma(x)& = \frac{e^{-r_4(0)}}{(2\pi)^{\frac d2}}\int_{\U(R)^c}|\fbar | |\rtri| ^{3}e^{\Wbar(0)-\Wbar(x)}dx \\
&\les_q \frac{e^{-r_4(0)}}{(2\pi)^{\frac d2}}\int_{\U(R)^c}|\fbar | |\rtri| ^{3}e^{-\frac{c_0}{2}\sqrt d\|x\|}dx.\eeqs Next, we have by~\eqref{rtrix2} that $|\rtri(x)|\leq|\Wbar (x)-\Wbar (0)| +|r_4(0)| +\|x\|^2$. Therefore, by the first bound in Lemma~\ref{lma:WtobarW} we have
\beqs\label{rtrikles}
|\rtri(x)|^3&\les |\Wbar (x)-\Wbar (0)|^3 +|r_4(0)|^3 +\|x\|^{6} \\
&\les_q  1+\|x\|^{9+3q} + \|x\|^6+|r_4(0)|^3\\
&\les_q e^{|r_4(0)|}\|x\|^{9+3q}.
\eeqs We have used that $\|x\|\geq R\sqrt d\geq1$. Finally, note that $|\fbar(x) |\leq \e(c_0\sqrt d\|x\|/4)$ for $x\in\U(R)$, since $\|x\|\geq R\sqrt d\geq R_g\sqrt d$. Substituting this bound and~\eqref{rtrikles} into~\eqref{intURc}, and then substituting the resulting bound into~\eqref{epsgbd} gives
\beq\label{Eflesq}|E(f)|\les_q \|\fbar \|_2\|\rtri \|_{12}^{3}(1+\|e^{-\rtri }\mathbbm{1}_{\U(R)}\|_4) + e^{2|r_4(0)|}(2\pi)^{-\frac d2}\int_{\U(R)^c}\|x\|^{9+3q}e^{-\frac{c_0}{4}\sqrt d\|x\|}dx.\eeq Next, note that using Lemma~\ref{lma:r4ptwise} and~\eqref{cconds}, we have $|r_4(0)|\leq C(q)a_4\epsilon^2\leq C(q)$. Using this bound in~\eqref{Eflesq}, as well as the definition of $I(R, p, b)$ with $p=9+3q$ and $b=c_0/4$ finishes the proof. The bound on $|E|$ is shown analogously.
\end{proof}
\begin{proof}[Proof of Lemma~\ref{lma:expr3}]
Recall that $\A_3=\E[\nabla^3\Wbar (Z)]$, and that $r_3=\frac{1}{3!}\la \A_3,\H_3(x)\ra +r_4(x)$. Hence
\beqs\label{ertrinew}
\|e^{-\rtri}\mathbbm{1}_{\mathcal U(R)}\|_4 &\leq\sup_{\|x\|\leq R\sqrt d}e^{|r_4(x)|}\|e^{-\frac{1}{3!}\la \A_3,\H_3\ra}\mathbbm{1}_{\mathcal U(R)}\|_4\\
&\leq e^{C(q)R^{4+q}a_4\epsilon^2}\|e^{-\frac{1}{3!}\la \A_3,\H_3\ra}\mathbbm{1}_{\mathcal U(R)}\|_4,
\eeqs
using the pointwise bound on $|r_4|$ from Lemma~\ref{lma:r4ptwise} to get the second line. To bound $\|e^{-\frac{1}{3!}\la \A_3,\H_3\ra}\mathbbm{1}_{\mathcal U(R)}\|_4$, we use that if the restriction of $f$ to an open set $\U$ is Lipschitz has Lipschitz constant $L$, then
$$\l|\int_{\U}e^fd\gamma\r| \leq \e\l(CL^2+L\|f\|_1\r)$$ for some absolute constant $C$. This follows from Proposition 5.4.1 in~\cite{bakry2014analysis} and the fact that $\gamma$ satisfies a log Sobolev inequality with an absolute constant. We apply this result with $f=-\frac{1}{3!}\la \A_3,\H_3\ra$. We have
$$\|\nabla f(x)\|= \frac{1}{2}\|\la \A_3, xx^T-I_d\ra\|\les R^2d\|\A_3\| \les_q R^2a_3\epsilon,\quad\|x\|\leq R\sqrt d.$$ The bound on $d\|\A_3\|$ is by~\eqref{barW3spec} of Lemma~\ref{lma:WtobarW}. Therefore, $f$ has Lipschitz constant $L=C(q)R^2a_3\epsilon$ when restricted to $\U(R)$. Furthermore, $\|\la A_3,\H_3\ra\|_1\les d\|A_3\|\les a_3\epsilon$ by~\eqref{TH3}. Therefore,
$$\|e^{-\frac{1}{3!}\la \A_3,\H_3\ra}\mathbbm{1}_{\mathcal U(R)}\|_4\leq \e\l(C(q)R^4(a_3\epsilon)^2\r).$$ Substituting this bound into~\eqref{ertrinew}, we get
\beqs
\|e^{-\rtri}\mathbbm{1}_{\mathcal U(R)}\|_4\leq \e\l(C(q)\l[R^{4+q}a_4\epsilon^2 + R^4(a_3\epsilon)^2\r]\r) \leq  \e\l(C(q)R^{4+q}\omega\r),
\eeqs where $\omega=a_3\epsilon+a_4\epsilon^2$. Here we used that $R^4\leq R^{4+q}$ and $(a_3\epsilon)^2\leq a_3\epsilon$ by~\eqref{cconds}.
\end{proof}

\section{Hermite Series Remainder}\label{hermite-tail} 
\subsection{Brief Primer}\label{subsec:primer}
Here is a brief primer on Hermite polynomials, multinomials, and series expansions. We let $H_k:\R\to\R$, $k=0,1,2,\dots$ be the $k$th order probabilist's Hermite polynomial. We have $H_0(x)=1, H_1(x)=x, H_2(x)=x^2-1, H_3(x) = x^3-3x$. For all $k\geq 1$, we can generate $H_{k+1}$ from the recurrence relation
\beq\label{eq:recur}
H_{k+1}(x) = xH_{k}(x) - kH_{k-1}(x),\quad k\geq 1.
\eeq
In particular, $H_k(x)$ is an order $k$ polynomial given by a sum of monomials of the same parity as $k$. The $H_k$ are orthogonal with respect to the Gaussian measure; namely, we have $\E[H_k(Z)H_j(Z)] = k!\delta_{jk}$. We also note for future reference that
\beq\label{EZHk}\E[ZH_k(Z)H_{k+1}(Z)] = \E\l[\l(H_{k+1}(Z)+kH_{k-1}(Z)\r)H_{k+1}(Z)\r] = (k+1)!,\eeq using the recurrence relation~\eqref{eq:recur}.

The Hermite multinomials are given by products of Hermite polynomials, and are indexed by $\gamma\in\{0,1,2,\dots\}^d$. Let $\gamma=(\gamma_1,\dots,\gamma_d)$, with $\gamma_j\in \{0,1,2,\dots\}$.  Then
$$H_\gamma(x_1,\dots, x_d)=\prod_{j=1}^dH_{\gamma_j}(x_j),$$ which has order $|\gamma|:=\sum_{j=1}^d\gamma_j$. Note that if $|\gamma|=k$ then $H_\gamma(x)$ is given by a sum of monomials of the same parity as $k$. Indeed, each $H_{\gamma_j}(x_j)$ is a linear combination of $x_j^{\gamma_j-2p}$, $p\leq\lfloor\gamma_j/2\rfloor$. Thus $H_\gamma(x)$ is a linear combination of monomials of the form $\prod_{j=1}^dx_j^{\gamma_j-2p_j}$, which has total order $k - 2\sum_jp_j$. Using the independence of the entries of $Z= (Z_1,\dots, Z_d)$, we have
$$\E[H_\gamma(Z)H_{\gamma'}(Z)] = \gamma!\prod_{j=1}^d\delta_{\gamma_j,\gamma_j'},$$ where $\gamma!:=\prod_{j=1}^d\gamma_j!$.  The $H_\gamma$ can also be defined explicitly as follows:
\beq
e^{-\|x\|^2/2}H_\gamma(x) = (-1)^{|\gamma|}\partial^\gamma\l(e^{-\|x\|^2/2}\r),\eeq where 
$\partial^\gamma f(x) = \partial_{x_1}^{\gamma_1}\dots\partial_{x_d}^{\gamma_d}f(x).$ This leads to the useful Gaussian integration by parts identity,
$$\E[f(Z)H_\gamma(Z)] = \E[\partial^\gamma f(Z)],\quad\text{if }f\in C^{|\gamma|}(\R^d).$$

The Hermite polynomials $H_\gamma$, $\gamma\in\{0,1,\dots\}^d$, form a complete orthogonal basis of the Hilbert space of functions $f:\R^d\to\R$ with inner product $\la f,g\ra = \E[f(Z)g(Z)]$. In particular, if $f:\R^d\to\R$ satisfies $\E[f(Z)^2]<\infty$, then $f$ has the following Hermite expansion:
\beq\label{termwise-series}
f(x) =  \sum_{\gamma\in\{0,1,\dots\}^d}\frac{1}{\gamma!}a_\gamma(f)H_\gamma(x),\qquad a_\gamma(f) := \E[f(Z)H_\gamma(Z)].
\eeq

Let \beq\label{rmp1}r_k(x) = f(x) - \sum_{|\gamma|\leq k-1}\frac{1}{\gamma!}a_\gamma(f)H_\gamma(x)\eeq be the remainder of the Hermite series expansion of $f$ after taking out the order $\leq k-1$ polynomials. We can write $r_k$ as an integral of $f$ against a kernel. Namely,
define \beq\label{kernel-def}K(x, y)= \sum_{|\gamma|\leq k-1}\frac{1}{\gamma!}H_\gamma(x)H_\gamma(y).\eeq Note that $$\E[f(Z)K(x,Z)]=\sum_{|\gamma|\leq k-1}\frac{1}{\gamma!}a_\gamma(f)H_\gamma(x)$$ is the truncated Hermite series expansion of $f$. Therefore, the remainder $r_k$ can be written as $$r_k(x) = f(x) - \E[f(Z)K(x,Z)]  = \E[(f(x)-f(Z))K(x,Z)],$$ using that $\E[K(x,Z)]=1$. 

\subsection{Exact Expression for the Remainder}
\begin{lemma}\label{lma:rk-entrywise}
Let $k\geq1$ and $r_{k}$, $K$, be as in~\eqref{rmp1},~\eqref{kernel-def}, respectively. Assume that $f\in C^{1}$, and that $\|\nabla f(x)\|\lesssim e^{c\|x\|^2}$ for some $0\leq c<1/2$. Then 
\beqs\label{rk-entrywise}
r_{k}(x)&=\E[(f(x)-f(Z))K(x, Z)] \\
&=\int_0^1\sum_{i=1}^d\sum_{|\gamma|=k-1}\frac{1}{\gamma!}\E[\partial_if((1-t)Z+tx)\l(H_{\gamma+e_i}(x)H_\gamma(Z) -H_{\gamma+e_i}(Z)H_\gamma(x)\r)]dt.\eeqs
\end{lemma}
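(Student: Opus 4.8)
The first equality in~\eqref{rk-entrywise} is already recorded in Subsection~\ref{subsec:primer} (it is just the reproducing property $\E[K(x,Z)]=1$ together with the definition of the truncated Hermite expansion), so the task is to prove the second one. The plan is to combine three ingredients: the integral form of the chain rule for $f(x)-f(Z)$, a Fubini argument powered by the growth hypothesis on $\nabla f$, and a purely algebraic ``telescoping'' identity for the reproducing kernel $K$ that comes out of the Hermite recurrence~\eqref{eq:recur}.

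First I would write, for fixed $x$ and any $z$, $f(x)-f(z)=\int_0^1\frac{d}{dt}f((1-t)z+tx)\,dt=\int_0^1\sum_{i=1}^d\partial_if((1-t)z+tx)(x_i-z_i)\,dt$, multiply through by $K(x,z)$, and set $z=Z$ with $Z\sim\gamma$. To pass from $r_k(x)=\E\big[\int_0^1\sum_i\partial_if((1-t)Z+tx)(x_i-Z_i)K(x,Z)\,dt\big]$ to $r_k(x)=\int_0^1\sum_{i=1}^d\E[\partial_if((1-t)Z+tx)(x_i-Z_i)K(x,Z)]\,dt$ I need to swap $\E$ with the $t$-integral and the finite $i$-sum. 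This is where the hypothesis $c<1/2$ is used: $|x_i-Z_i|\,|K(x,Z)|$ is a polynomial in $Z$ of fixed degree and hence has all Gaussian moments, while $\|\nabla f((1-t)Z+tx)\|\lesssim e^{c\|(1-t)Z+tx\|^2}$ can be bounded, via $\|(1-t)Z+tx\|^2\le(1+\eta)(1-t)^2\|Z\|^2+C_\eta\|x\|^2$ with $\eta$ chosen small enough that $c(1+\eta)<1/2$, by a constant depending on $x$ times $e^{c'\|Z\|^2}$ with $c'<1/2$. Since $\E[P(Z)e^{c'\|Z\|^2}]<\infty$ for every polynomial $P$ when $c'<1/2$, the integrand is absolutely integrable and Fubini applies.

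The heart of the matter is the pointwise identity, valid for all $x,y\in\R^d$ and every $i$,
\[
(x_i-y_i)K(x,y)=\sum_{|\gamma|=k-1}\frac{1}{\gamma!}\big(H_{\gamma+e_i}(x)H_\gamma(y)-H_{\gamma+e_i}(y)H_\gamma(x)\big).
\]
I would prove this from the multi-index recurrence $y_iH_\gamma(y)=H_{\gamma+e_i}(y)+\gamma_iH_{\gamma-e_i}(y)$, which is~\eqref{eq:recur} applied in coordinate $i$ (with the convention $H_{\gamma-e_i}\equiv 0$ when $\gamma_i=0$), together with the analogous expansion of $x_iH_\gamma(x)$. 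Substituting both into $(x_i-y_i)K(x,y)=\sum_{|\gamma|\le k-1}\frac1{\gamma!}\big(x_iH_\gamma(x)H_\gamma(y)-H_\gamma(x)y_iH_\gamma(y)\big)$ splits each term into a ``raising'' piece carrying index $\gamma+e_i$ and a ``lowering'' piece carrying index $\gamma-e_i$; using $\gamma_i/\gamma!=1/(\gamma-e_i)!$ and reindexing $\beta=\gamma-e_i$ turns the lowering piece into a sum over $|\beta|\le k-2$ that cancels exactly the $|\gamma|\le k-2$ part of the raising piece, so that only the top layer $|\gamma|=k-1$ survives. Finally, substituting $y=Z$ into this identity, inserting it into the formula obtained in the previous step, and pulling the (finite) $\gamma$-sum out of the expectation yields~\eqref{rk-entrywise}.

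I expect the main obstacle to be the combinatorial bookkeeping in the telescoping step: correctly handling the boundary terms where $\gamma_i=0$ (so that $H_{\gamma-e_i}$ drops out) and verifying that $\gamma\mapsto\gamma-e_i$ is a bijection from $\{\,|\gamma|\le k-1,\ \gamma_i\ge 1\,\}$ onto $\{\,|\beta|\le k-2\,\}$. The Fubini estimate in the second step is routine, but it is the one place where the restriction $c<1/2$ is genuinely needed; everything else is mechanical.
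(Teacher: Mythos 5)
Your proposal is correct and follows essentially the same route as the paper: the line-integral representation of $f(x)-f(Z)$, a domination/Fubini step exploiting $c<1/2$, and the telescoping kernel identity $(x_i-y_i)K(x,y)=\sum_{|\gamma|=k-1}\frac{1}{\gamma!}\bigl(H_{\gamma+e_i}(x)H_\gamma(y)-H_{\gamma+e_i}(y)H_\gamma(x)\bigr)$, which is exactly the paper's Lemma~\ref{lma:Kxy-identity}. Your derivation of that identity by expanding both $x_iH_\gamma(x)$ and $y_iH_\gamma(y)$ with the coordinate recurrence and cancelling via the reindexing $\beta=\gamma-e_i$ is just a clean, general-$d$ write-up of the paper's iterated-recursion argument (which is presented for $d=2$), so the two proofs coincide in substance.
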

The proof relies on the following identity:
\begin{lemma}\label{lma:Kxy-identity}
For each $i=1,\dots, d$, it holds that 
\beqs\label{Kxy-identity}
K(x, y) = \frac{1}{x_i-y_i}\sum_{|\gamma|=k-1}\frac{1}{\gamma!}\l(H_{\gamma+e_i}(x)H_\gamma(y) -H_{\gamma+e_i}(y)H_\gamma(x)\r).\eeqs
\end{lemma}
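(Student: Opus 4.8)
The plan is to recognize~\eqref{Kxy-identity} as a multivariate Christoffel--Darboux formula and prove it by induction on $k$, using only the coordinatewise three-term recurrence~\eqref{eq:recur}. Fix the coordinate $i$. For $k\ge1$ I would write $K^{(k)}(x,y)=\sum_{|\gamma|\le k-1}\frac{1}{\gamma!}H_\gamma(x)H_\gamma(y)$, so that the $K$ in the statement equals $K^{(k)}$, and denote by $S_k(x,y)=\sum_{|\gamma|=k-1}\frac{1}{\gamma!}\bigl(H_{\gamma+e_i}(x)H_\gamma(y)-H_{\gamma+e_i}(y)H_\gamma(x)\bigr)$ the sum on the right-hand side of~\eqref{Kxy-identity}. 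Since both sides are polynomials, it suffices to prove the polynomial identity $(x_i-y_i)K^{(k)}(x,y)=S_k(x,y)$ for all $k\ge1$; dividing by $x_i-y_i$ then recovers~\eqref{Kxy-identity}. The base case $k=1$ is immediate: $K^{(1)}\equiv1$, and the only index with $|\gamma|=0$ is $\gamma=0$, so $S_1(x,y)=H_{e_i}(x)-H_{e_i}(y)=x_i-y_i$.

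For the inductive step I would assume $(x_i-y_i)K^{(k)}=S_k$ and, using $K^{(k+1)}=K^{(k)}+\sum_{|\gamma|=k}\frac{1}{\gamma!}H_\gamma(x)H_\gamma(y)$, reduce the claim to the identity $S_{k+1}-S_k=(x_i-y_i)\sum_{|\gamma|=k}\frac{1}{\gamma!}H_\gamma(x)H_\gamma(y)$. This I would verify by applying~\eqref{eq:recur} in the $i$th variable, i.e.\ $H_{\gamma+e_i}(z)=z_iH_\gamma(z)-\gamma_iH_{\gamma-e_i}(z)$ (the last term being absent when $\gamma_i=0$), to each summand of $S_{k+1}$. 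A one-line computation gives, for $|\gamma|=k$,
\[
H_{\gamma+e_i}(x)H_\gamma(y)-H_{\gamma+e_i}(y)H_\gamma(x)=(x_i-y_i)H_\gamma(x)H_\gamma(y)-\gamma_i\bigl(H_{\gamma-e_i}(x)H_\gamma(y)-H_{\gamma-e_i}(y)H_\gamma(x)\bigr).
\]
Weighting by $\frac{1}{\gamma!}$ and summing over $|\gamma|=k$, the first term yields $(x_i-y_i)\sum_{|\gamma|=k}\frac{1}{\gamma!}H_\gamma(x)H_\gamma(y)$, while in the second term $\frac{\gamma_i}{\gamma!}=\frac{1}{(\gamma-e_i)!}$ for $\gamma_i\ge1$, so the substitution $\beta=\gamma-e_i$ (which ranges over $|\beta|=k-1$) identifies the second sum with $-S_k$. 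Hence $S_{k+1}=(x_i-y_i)\sum_{|\gamma|=k}\frac{1}{\gamma!}H_\gamma(x)H_\gamma(y)+S_k$, which is what was needed, and the induction closes.

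I expect no serious obstacle here: the statement is a purely algebraic identity between polynomials, and the argument reduces entirely to the scalar recurrence~\eqref{eq:recur}. The only points requiring care are bookkeeping ones --- handling the $\gamma_i=0$ boundary terms when invoking the recurrence, and performing the reindexing $\beta=\gamma-e_i$ that turns the ``lowered'' sum back into $-S_k$ --- together with the observation that, because the relation is polynomial, one need not worry about the locus $x_i=y_i$.
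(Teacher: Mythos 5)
Your proof is correct. The engine is the same as in the paper's argument --- both rest on the single consequence of the coordinatewise recurrence~\eqref{eq:recur},
$H_{\gamma+e_i}(x)H_\gamma(y)-H_{\gamma+e_i}(y)H_\gamma(x)=(x_i-y_i)H_\gamma(x)H_\gamma(y)+\gamma_i\bigl(H_\gamma(x)H_{\gamma-e_i}(y)-H_\gamma(y)H_{\gamma-e_i}(x)\bigr)$ --- but the bookkeeping is organized differently. The paper fixes $i=1$, restricts the exposition to $d=2$, iterates the recursion downward in the $i$th index $\gamma_1$ for each fixed value of the remaining indices, and only then sums over $|\gamma|=k-1$ and reindexes to reassemble $K(x,y)$; the general-$d$ case is left to the reader. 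You instead telescope over the total degree: an induction on $k$ in which the reindexing $\beta=\gamma-e_i$ (with the $\gamma_i=0$ terms dropping out because of the factor $\gamma_i$, and $\gamma_i/\gamma!=1/(\gamma-e_i)!$) identifies the lowered sum with $-S_k$, so that $S_{k+1}-S_k=(x_i-y_i)\sum_{|\gamma|=k}\frac{1}{\gamma!}H_\gamma(x)H_\gamma(y)$ closes the induction from the trivial base case $S_1=x_i-y_i$. Your version works in arbitrary dimension from the start with no extra effort and is stated as a polynomial identity $(x_i-y_i)K=S_k$, which also disposes cleanly of the locus $x_i=y_i$; the paper's version makes the Christoffel--Darboux-type telescoping within one coordinate more visible but needs the reader to supply the general-$d$ bookkeeping. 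Either argument would serve.
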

The proof of this identity is given at the end of the section. 
\begin{proof}[Proof of Lemma~\ref{lma:rk-entrywise}]
Write
\beqs
f(x) - f(Z) &= \int_0^1(x-Z)^T\nabla f((1-t)Z+tx)dt \\
&= \sum_{i=1}^d\int_0^1(x_i-Z_i)\partial_if((1-t)Z+tx)dt,\eeqs so that, using~\eqref{Kxy-identity}, we have
\beqs\label{EfxZKm-0}\E[(&f(x)-f(Z))K(x, Z)]\\
&= \sum_{i=1}^d\sum_{|\gamma|=k-1}\frac{1}{\gamma!}\E\l[\int_0^1\partial_if((1-t)Z+tx)\l(H_{\gamma+e_i}(x)H_\gamma(Z) -H_{\gamma+e_i}(Z)H_\gamma(x)\r)dt\r].\eeqs By assumption, 
\beqs
\sup_{t\in[0,1]}|\partial_i&f((1-t)Z+tx)|\l(|H_\gamma(Z)| + |H_{\gamma+e_i}(Z)|\r)\\&\lesssim \e\l(c\|Z\|^2 + 2c\|Z\|\|x\|\r)\l(|H_\gamma(Z)| + |H_{\gamma+e_i}(Z)|\r)\eeqs for some $0\leq c<1/2$. The right-hand side is integrable with respect to the Gaussian measure, and therefore we can interchange the expectation and the integral in~\eqref{EfxZKm-0}. Therefore,
\beqs\E[&(f(x)-f(Z))K(x, Z)] \\
&= \int_0^1\sum_{i=1}^d\sum_{|\gamma|=k-1}\frac{1}{\gamma!}\E[\partial_if((1-t)Z+tx)\l(H_{\gamma+e_i}(x)H_\gamma(Z) -H_{\gamma+e_i}(Z)H_\gamma(x)\r)]dt.\eeqs
\end{proof}
\begin{proof}[Proof of Lemma~\ref{lma:Kxy-identity}]
Without loss of generality, assume $i=1$. To simplify the proof, we will also assume $d=2$. The reader can check that the proof goes through in the same way for general $d$. By the recursion relation~\eqref{eq:recur} for 1d Hermite polynomials, we get that 
$$H_{\gamma_1+1,\gamma_2}(x) =x_1H_{\gamma_1,\gamma_2}(x) - \gamma_1H_{\gamma_1-1,\gamma_2}(x),$$ where $x=(x_1,x_2)$. Multiply this equation by $H_\gamma(y)$ (where $y=(y_1,y_2)$) and swap $x$ and $y$, to get the two equations
\beqs\label{two-Hgamma-eqs}
H_{\gamma_1+1,\gamma_2}(x)H_{\gamma_1,\gamma_2}(y) &=x_1H_{\gamma_1,\gamma_2}(x)H_{\gamma_1,\gamma_2}(y) - \gamma_1H_{\gamma_1-1,\gamma_2}(x)H_{\gamma_1,\gamma_2}(y),\\
H_{\gamma_1+1,\gamma_2}(y)H_{\gamma_1,\gamma_2}(x) &=y_1H_{\gamma_1,\gamma_2}(x)H_{\gamma_1,\gamma_2}(y) - \gamma_1H_{\gamma_1-1,\gamma_2}(y)H_{\gamma_1,\gamma_2}(x).
\eeqs 
Let 
$$S_{\gamma_1,\gamma_2} =H_{\gamma_1+1,\gamma_2}(x)H_{\gamma_1,\gamma_2}(y)-H_{\gamma_1+1,\gamma_2}(y)H_{\gamma_1,\gamma_2}(x).$$ Subtracting the second equation of~\eqref{two-Hgamma-eqs} from the first one, and using the $S_{\gamma_1,\gamma_2}$ notation, gives
\beq
S_{\gamma_1,\gamma_2}= (x_1-y_1)H_{\gamma_1,\gamma_2}(x)H_{\gamma_1,\gamma_2}(y) + \gamma_1S_{\gamma_1-1,\gamma_2}\eeq and hence
$$\frac{S_{\gamma_1,\gamma_2}}{\gamma_1!\gamma_2!} = (x_1-y_1)\frac{H_{\gamma_1,\gamma_2}(x)H_{\gamma_1,\gamma_2}(y)}{\gamma_1!\gamma_2} +\frac{S_{\gamma_1-1,\gamma_2}}{(\gamma_1-1)!\gamma_2!}.$$ Iterating this recursive relationship $\gamma_1-1$ times, we get
\beq\label{Sgamma-sum}\frac{S_{\gamma_1,\gamma_2}}{\gamma_1!\gamma_2!} = (x_1-y_1)\sum_{j=0}^{\gamma_1-1}\frac{H_{\gamma_1-j,\gamma_2}(x)H_{\gamma_1-j,\gamma_2}(y)}{(\gamma_1-j)!\gamma_2!} +\frac{S_{0,\gamma_2}}{0!\gamma_2!}.\eeq
Now, we have
\beqs
S_{0,\gamma_2} &= H_{1, \gamma_2}(x)H_{0,\gamma_2}(y)-H_{1,\gamma_2}(y)H_{0,\gamma_2}(x)\\
&= H_1(x_1)H_{\gamma_2}(x_2)H_{\gamma_2}(y_2) - H_{1}(y_1)H_{\gamma_2}(y_2)H_{\gamma_2}(x_2)\\
&= (x_1-y_1)H_{\gamma_2}(x_2)H_{\gamma_2}(y_2)\\
&=(x_1-y_1)H_{0,\gamma_2}(x)H_{0,\gamma_2}(y).
\eeqs
Therefore, 
$$\frac{S_{0,\gamma_2}}{0!\gamma_2!} =  (x_1-y_1)\frac{H_{\gamma_1-j,\gamma_2}(x)H_{\gamma_1-j,\gamma_2}(y)}{(\gamma_1-j)!\gamma_2!},\qquad j=\gamma_1$$ so~\eqref{Sgamma-sum} can be written as
$$\frac{S_{\gamma_1,\gamma_2}}{\gamma_1!\gamma_2!} = (x_1-y_1)\sum_{j=0}^{\gamma_1}\frac{H_{\gamma_1-j,\gamma_2}(x)H_{\gamma_1-j,\gamma_2}(y)}{(\gamma_1-j)!\gamma_2!} $$ and hence
\beqs
\frac{1}{x_1-y_1}\sum_{\gamma_1+\gamma_2= k-1}\frac{S_{\gamma_1,\gamma_2}}{\gamma_1!\gamma_2!}  &= \sum_{\gamma_1+\gamma_2=k-1}\sum_{j=0}^{\gamma_1}\frac{H_{\gamma_1-j,\gamma_2}(x)H_{\gamma_1-j,\gamma_2}(y)}{(\gamma_1-j)!\gamma_2!}\\
&=\sum_{\gamma_1+\gamma_2\leq k-1}\frac{H_{\gamma_1,\gamma_2}(x)H_{\gamma_1,\gamma_2}(y)}{\gamma_1!\gamma_2!} = K(x, y),
\eeqs
using the observation that
\beqs
\{(\gamma_1-j,\gamma_2)\; : &\;\gamma_1+\gamma_2=k-1, \; 0\leq j\leq\gamma_1\} \\
= \{(\tilde\gamma_1, \gamma_2)\; : &\;\tilde\gamma_1+\gamma_2\leq k-1\}.\eeqs
Substituting back in the definition of $S_{\gamma_1,\gamma_2}$ gives the desired result.
\end{proof}

\subsection{Hermite Series Remainder in Tensor Form}\label{app:tensor-hermite}
Using~\eqref{rk-entrywise}, it is difficult to obtain an upper bound on $|r_k(x)|$, since we need to sum over all $\gamma$ of order $k-1$. In this section, we obtain a more compact representation of $r_k$ in terms of a scalar product of $k$-tensors. We then take advantage of a very useful representation of the tensor of order-$k$ Hermite polynomials, as an expectation of a vector outer product. This allows us to bound the scalar product in the $r_k$ formula in terms of an operator norm rather than a Frobenius norm (the latter would incur larger $d$ dependence).  

 First, let us put all the unique $k$th order Hermite polynomials into a tensor of $d^k$ entries, some of which are repeating, enumerated by multi-indices $\alpha = (\alpha_1,\dots,\alpha_k)\in [d]^k$. Here, $[d]=\{1,\dots,d\}$. We do so as follows: given $\alpha\in[d]^k$, define $\gamma(\alpha) = (\gamma_1(\alpha),\dots, \gamma_d(\alpha))$ by $$\gamma_j(\alpha) = \sum_{\ell=1}^k\mathbbm 1\{\alpha_\ell = j\},$$ i.e. $\gamma_j(\alpha)$ counts how many times index $j$ appears in $\alpha$. For this reason, we use the term \emph{counting index} to denote indices of the form $\gamma = (\gamma_1,\dots, \gamma_d) \in \{0,1,2,\dots\}^d$, whereas we use the standard term ``multi-index" to refer to the $\alpha$'s. Note that we automatically have $|\gamma(\alpha)|=k$ if $\alpha\in[d]^k$. Now, for $x\in\R^d$, define $\H_0(x)=1$ and $\H_k(x)$, $k\geq1$ as the tensor $$\H_k(x) = \{H_{\gamma(\alpha)}(x)\}_{\alpha\in[d]^k},\quad x\in\R^d.$$ When enumerating the entries of $\H_k$, we write $H_k^{(\alpha)}$ to denote $H_{\gamma(\alpha)}$. Note that for each $\gamma$ with $|\gamma|=k$, there are ${k\choose\gamma}$ $\alpha$'s such that $\gamma(\alpha)=\gamma$. 
\begin{example}
Consider the $\alpha=(i, j, j, k, k, k)$ entry of the tensor $\H_6(x)$, where $i,j,k\in[d]$ are all distinct. We count that $i$ occurs once, $j$ occurs twice, and $k$ occurs thrice. 
Thus
$$H_6^{(i,j,j, k, k, k)}(x) = H_1(x_i)H_2(x_j)H_3(x_k) = x_i(x_j^2-1)(x_k^3-3x_k).$$ 
The first two tensors $\H_1, \H_2$ can be written down explicitly. For the entries of $\H_1$, we simply have $H_1^{(i)}(x) = H_1(x_i) = x_i$, i.e. $\H_1(x) = x$. For the entries of $\H_2$, we have $H_2^{(i,i)}(x) = H_2(x_i) = x_i^2-1$ and $H_2^{(i,j)}(x) = H_1(x_i)H_1(x_j) = x_ix_j$, $i\neq j$. Thus $\H_2(x)=xx^T-I_d$.
\end{example}

We now group the terms in the Hermite series expansion~\eqref{termwise-series} based on the order $|\gamma|$. Consider all $\gamma$ in the sum such that $|\gamma|=k$. We claim that
\beqs\label{gam-2-alph}
\sum_{|\gamma|=k}\frac{1}{\gamma!}a_\gamma(f)H_\gamma(x) = \frac{1}{k!}\sum_{\alpha\in[d]^k}a_{\gamma(\alpha)}(f)H_k^{(\alpha)}(x).
\eeqs
Indeed, for a fixed $\gamma$ such that $|\gamma|=k$, there are ${k\choose\gamma}$ $\alpha$'s in $[d]^k$ for which $\gamma(\alpha)= \gamma$, and the summands in the right-hand sum corresponding to these $\alpha$'s are all identical, equalling $a_\gamma(f)H_\gamma(x)$. Thus we obtain ${k\choose\gamma}$ copies of $a_\gamma(f)H_\gamma(x)$, and it remains to note that ${k\choose\gamma}/k! = 1/\gamma!$. 

Analogously to $\H_k(x)$, define the tensor $\A_k\in(\R^d)^{\otimes k}$, whose $\alpha$'th entry is 
$$\A_k^{(\alpha)} = a_{\gamma(\alpha)} = \E[f(Z)H_{\gamma(\alpha)}(Z)] = \E[f(Z)H_k^{(\alpha)}(Z)].$$We then see that the sum~\eqref{gam-2-alph} can be written as $\frac{1}{k!}\la \A_k, \;\H_k(x)\ra$, and hence the series expansion of $f$ can be written as
\beq
f(x) = \sum_{k=0}^\infty\frac{1}{k!}\la \A_k(f), \;\H_k(x)\ra,\qquad \A_k(f) := \E[f(Z)\H_k(Z)].
\eeq

\begin{lemma}\label{lma:rk-exact}
Let $f$ satisfy the assumptions of Lemma~\ref{lma:rk-entrywise}, and additionally, assume $f\in C^k$. Then the remainder $r_k$, given as~\eqref{rk-entrywise} in Lemma~\ref{lma:rk-entrywise}, can also be written in the form
\beqs\label{rk-exact-2}
r_{k}(x)=\int_0^1\frac{(1-t)^{k-1}}{(k-1)!}\E\l[\lla\nabla^{k}f\l((1-t)Z+tx\r),\; \H_{k}(x)-Z\otimes\H_{k-1}(x)\rra\r]
\eeqs\end{lemma}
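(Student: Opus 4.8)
The plan is to obtain the tensor formula~\eqref{rk-exact-2} directly from the entrywise representation~\eqref{rk-entrywise} of Lemma~\ref{lma:rk-entrywise} (valid already under its $C^1$-plus-growth hypotheses), using the extra assumption $f\in C^k$ to transfer the $Z$-derivatives hidden in the Hermite factors $H_\gamma(Z)$, $H_{\gamma+e_i}(Z)$ onto $f$ by Gaussian integration by parts, and then repackaging the resulting sums over counting indices $\gamma\in\{0,1,2,\dots\}^d$ as contractions of the symmetric tensors $\nabla^kf$, $\H_k$, and $Z\otimes\H_{k-1}$. Writing $y_t=(1-t)Z+tx$, I would split the integrand of~\eqref{rk-entrywise} as $T_1(t)-T_2(t)$, where $T_1(t)=\sum_{i=1}^d\sum_{|\gamma|=k-1}\frac1{\gamma!}H_{\gamma+e_i}(x)\,\E[\partial_i f(y_t)H_\gamma(Z)]$ collects the terms carrying $H_\gamma(Z)$, and $T_2(t)$ collects the terms carrying $H_{\gamma+e_i}(Z)$ (with $H_\gamma(x)$ in place of $H_{\gamma+e_i}(x)$).

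For $T_1$: since $|\gamma|=k-1$ and $f\in C^k$, the Gaussian integration by parts identity $\E[g(Z)H_\gamma(Z)]=\E[(\partial^\gamma g)(Z)]$ applied to $g(\cdot)=\partial_i f((1-t)\cdot+tx)$, together with the chain rule $\partial^\gamma_Z[\partial_if(y_t)]=(1-t)^{k-1}(\partial^{\gamma+e_i}f)(y_t)$, gives $\E[\partial_if(y_t)H_\gamma(Z)]=(1-t)^{k-1}\E[(\partial^{\gamma+e_i}f)(y_t)]$. Re-indexing by $\gamma'=\gamma+e_i$ (each $\gamma'$ with $|\gamma'|=k$ arises once for every $i$ with $\gamma'_i\ge1$, and $1/\gamma!=\gamma'_i/\gamma'!$), using $\sum_i\gamma'_i=k$, and invoking the standard identity $\sum_{|\gamma'|=k}\frac1{\gamma'!}H_{\gamma'}(x)\,\partial^{\gamma'}f(y_t)=\frac1{k!}\lla\nabla^k f(y_t),\H_k(x)\rra$ (obtained by grouping the multi-indices $\alpha\in[d]^k$ by their counting index), one gets $T_1(t)=\frac{(1-t)^{k-1}}{(k-1)!}\E[\lla\nabla^k f(y_t),\H_k(x)\rra]$.

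For $T_2$: here $|\gamma+e_i|=k$, so a full integration by parts on $H_{\gamma+e_i}(Z)$ would produce a $(k+1)$-st derivative of $f$, which is not available; one must keep a single factor $Z_i$ un-integrated. Using the one-dimensional recurrence~\eqref{eq:recur} in coordinate $i$, $H_{\gamma+e_i}(Z)=Z_iH_\gamma(Z)-\gamma_iH_{\gamma-e_i}(Z)$, and applying Gaussian integration by parts to the two resulting order-$(k-1)$ Hermite factors (treating $Z_i\partial_if(y_t)$, respectively $\partial_if(y_t)$, as the smooth factor), the product-rule correction terms proportional to $\gamma_i$ cancel the $\gamma_iH_{\gamma-e_i}(Z)$ contribution, leaving $\E[\partial_if(y_t)H_{\gamma+e_i}(Z)]=(1-t)^{k-1}\E[Z_i(\partial^{\gamma+e_i}f)(y_t)]$, in which every $f$-derivative is of order $\le k$. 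The same re-indexing $\gamma'=\gamma+e_i$, now carrying the residual $Z_i$, identifies the double sum with $\frac1{(k-1)!}\E[\lla\nabla^k f(y_t),Z\otimes\H_{k-1}(x)\rra]$, so $T_2(t)=\frac{(1-t)^{k-1}}{(k-1)!}\E[\lla\nabla^k f(y_t),Z\otimes\H_{k-1}(x)\rra]$. Subtracting $T_2$ from $T_1$ and integrating over $t$ gives~\eqref{rk-exact-2}.

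The main obstacle is the $T_2$ step: one must carry out the Gaussian integration by parts in exactly the right order so as to stop one derivative short of requiring $C^{k+1}$, and verify that the $\gamma_i$-correction terms coming from the product rule cancel against the correction term in the Hermite recurrence. A secondary, routine point is that the exponential-of-quadratic growth bound on $\nabla f$ inherited from Lemma~\ref{lma:rk-entrywise}, applied to $\partial_if(y_t)$ and its low-order $Z$-derivatives, is what legitimizes every Gaussian integration by parts and every interchange of expectation and $t$-integration used above.
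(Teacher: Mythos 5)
Your proposal is correct and follows essentially the same route as the paper: starting from the entrywise remainder formula~\eqref{rk-entrywise}, integrating by parts against the Gaussian so that only $k$ derivatives of $f$ are ever needed, and regrouping the counting indices into the contractions with $\H_k(x)$ and $Z\otimes\H_{k-1}(x)$ with the same combinatorial factors. The only cosmetic difference is in the order-$k$ Hermite term: the paper performs a partial integration by parts directly (moving $\gamma$ of the $\gamma+e_i$ derivatives off the Gaussian and leaving the factor $Z_i$), while you reach the same identity $\E[\partial_i f(y_t)H_{\gamma+e_i}(Z)]=(1-t)^{k-1}\E[Z_i\,\partial^{\gamma+e_i}f(y_t)]$ (with $y_t=(1-t)Z+tx$) via the recurrence~\eqref{eq:recur} and a product-rule cancellation, which indeed works.
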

\begin{proof}
Recall that $\partial^\gamma := \partial_1^{\gamma_1}\dots\partial_d^{\gamma_d}$, and that 
$$H_\gamma(z)e^{-\|z\|^2/2} = (-1)^{|\gamma|}\partial^\gamma(e^{-\|z\|^2/2}).$$ We then have for $|\gamma|=k-1$, 
\beqs
\E[\partial_if((1-t)Z+tx)H_\gamma(Z)]&= (1-t)^{k-1}\E[\partial^{\gamma+e_i}f],\\
\E[\partial_if((1-t)Z+tx)H_{\gamma+e_i}(Z)]&= (1-t)^{k-1}\E[\partial^{\gamma+e_i}fZ_i],\eeqs using the fact that $f\in C^k$. 
We omitted the argument $(1-t)Z+tx$ from the right-hand side for brevity. To get the second equation, we moved only $\gamma$ of the $\gamma+e_i$ derivatives from $e^{-\|z\|^2/2}$ onto $\partial_if$, leaving $-\partial_i(e^{-\|z\|^2/2})=z_i$. Substituting these two equations into~\eqref{rk-entrywise}, we get
\beqs\label{EfxZKm-3}
\E[&(f(x)-f(Z))K(x, Z)] \\
&=\int_0^1(1-t)^{k-1} \sum_{i=1}^d\sum_{|\gamma|=k-1}\frac{1}{\gamma!}\E[(\partial^{\gamma+e_i}f)\l(H_{\gamma+e_i}(x) -Z_iH_\gamma(x)\r)]dt\\
&= \frac{1}{(k-1)!}\int_0^1(1-t)^{k-1}\sum_{i=1}^d\sum_{|\gamma|=k-1}{k-1\choose\gamma}\E[(\partial^{\gamma+e_i}f)\l(H_{\gamma+e_i}(x) -Z_iH_\gamma(x)\r)]dt\eeqs 
Now, define the sets
\beqs
A&=\{(i, \gamma+e_i)\; :\; i=1,\dots, d,\;\gamma\in\{0,1,\dots\}^d,\;|\gamma|=k-1\} ,\\
B&= \{(i,\tilde\gamma)\; :\; \tilde\gamma\in\{0,1,\dots\}^d,\;|\tilde\gamma|=k,\;\tilde\gamma_i\geq1\}.\eeqs
It is straightforward to see that $A=B$. Therefore,
\beqs\label{EfxZKm-term1}
\sum_{i=1}^d\sum_{|\gamma|=k-1}&{k-1\choose\gamma}\E[\partial^{\gamma+e_i}f]H_{\gamma+e_i}(x) \\
&= \sum_{|\tilde\gamma|=k}\sum_{i: \,\tilde\gamma_i\geq1}{k-1\choose{\tilde\gamma-e_i}}\E[\partial^{\tilde\gamma}f]H_{\tilde\gamma}(x) \\
&= \sum_{|\tilde\gamma|=k}\sum_{i: \,\tilde\gamma_i\geq1}{k\choose\tilde\gamma}\frac{\tilde\gamma_i}{k} \E[\partial^{\tilde\gamma}f]H_{\tilde\gamma}(x)\\
&=\sum_{|\tilde\gamma|=k}{k\choose\tilde\gamma}\E[\partial^{\tilde\gamma}f]H_{\tilde\gamma}(x) = \la \E[\nabla^kf], \, \H_k(x)\ra
\eeqs
Next, note that 
$$\sum_{|\gamma|=k-1}{k-1\choose\gamma}\partial^{\gamma}\partial_ifH_\gamma(x) = \la \nabla^{k-1}\partial_if,\, \H_{k-1}(x)\ra,$$ and therefore
\beq\label{EfxZKm-term2}\sum_{i=1}^d\sum_{|\gamma|=k-1}{k-1\choose\gamma}\E[(\partial^{\gamma+e_i}f)Z_i]H_\gamma(x) = \E[\la\nabla^kf, Z\otimes\H_{k-1}(x)\ra].\eeq Substituting~\eqref{EfxZKm-term1} and~\eqref{EfxZKm-term2} into~\eqref{EfxZKm-3} gives
\beqs
r_k(x)&=\E[(f(x)-f(Z))K(x, Z)] \\
&=\int_0^1\frac{(1-t)^{k-1}}{(k-1)!}\E\l[\lla\nabla^kf\l((1-t)Z+tx\r),\; \H_k(x)-Z\otimes\H_{k-1}(x)\rra\r]
\eeqs
\end{proof}
In the next section, we obtain a pointwise upper bound on $|r_k(x)|$ in the case $f=\Wbar $. In order for this bound to be tight in its dependence on $d$, we need a supplementary result on inner products with Hermite tensors. 
To motivate this supplementary result, consider bounding the inner product in~\eqref{rk-exact-2} by the product of the Frobenius norms of the tensors on either side. As a rough heuristic, $\|\nabla^kf\|_F\sim d^{k/2}\|\nabla^kf\|$, where recall that $\|\nabla^kf\|$ is the \emph{operator} norm of $\nabla^kf$. Therefore, we would prefer to bound the inner product in terms of $\|\nabla^kf\|$ to get a tighter dependence on $d$. Apriori, however, this seems impossible, since $\H_k(x)$ is not given by an outer product of $k$ vectors. But the following representation of the order $k$ Hermite polynomials will make this possible. 
\beq\label{eH}
\H_k(x)=\E[(x+iZ)^{\otimes k}],\eeq  where $Z\sim\mathcal N(0, I_d)$.
Using~\eqref{eH}, we can bound scalar products of the form $\la\nabla^kf, \H_k(x)\ra$ and $\la\nabla^kf, Z\otimes\H_{k-1}(x)\ra$ in terms of the operator norm of $\nabla^kf$. More generally, we have the following lemma.
 \begin{lemma}\label{tensor-H-opnorm}
Let $T\in(\R^d)^{\otimes k}$ be a $k$-tensor, and $v\in\R^d$. Then for all $0\leq \ell\leq k$, we have
$$|\la T, \; v^{\otimes \ell}\otimes\H_{k-\ell}(x)\ra| \les_k \|T\|\|v\|^\ell(\|x\|^{k-\ell}+d^{\frac{k-\ell}2}).$$
\end{lemma}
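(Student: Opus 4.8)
The plan is to exploit the integral representation \eqref{eH}, namely $\H_m(x) = \E[(x+iZ)^{\otimes m}]$ with $Z\sim\mathcal N(0,I_d)$, in order to replace the contraction against the Hermite tensor by an expectation of an ordinary multilinear form of $T$, which can then be bounded by the operator norm of $T$ times the norms of its arguments. Writing $m := k-\ell$, the first step is to substitute \eqref{eH} and pass to absolute values with Jensen's inequality:
\[
\bigl|\lla T,\; v^{\otimes\ell}\otimes\H_m(x)\rra\bigr| \;=\; \bigl|\E\bigl[\lla T,\; v^{\otimes\ell}\otimes(x+iZ)^{\otimes m}\rra\bigr]\bigr| \;\le\; \E\bigl[\bigl|\lla T,\; v^{\otimes\ell}\otimes(x+iZ)^{\otimes m}\rra\bigr|\bigr],
\]
where the contraction on the right is understood as extended to complex arguments by linearity (recall $T$ is a fixed real tensor).

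The core is then a deterministic bound on the integrand for a fixed value of $Z$. Expanding $(x+iZ)^{\otimes m} = \sum_{S\subseteq[m]} i^{|S|}\, w^S_1\otimes\cdots\otimes w^S_m$, where $w^S_j = Z$ if $j\in S$ and $w^S_j = x$ otherwise, each of the $2^m$ summands contracts with $T$ (and the $v^{\otimes\ell}$ block) to a quantity of modulus at most $\|T\|\,\|v\|^\ell\,\|x\|^{m-|S|}\|Z\|^{|S|}$, by definition of the operator norm --- which, for the symmetric tensors to which the lemma is eventually applied, coincides with $\sup_{\|u\|=1}\la T, u^{\otimes k}\ra$ by Theorem 2.1 of \cite{symmtens}. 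Summing over $S$ and using the binomial theorem gives $\bigl|\lla T, v^{\otimes\ell}\otimes(x+iZ)^{\otimes m}\rra\bigr| \le \|T\|\,\|v\|^\ell\,(\|x\|+\|Z\|)^m$. Taking expectations, bounding $(\|x\|+\|Z\|)^m \le 2^{m-1}(\|x\|^m + \|Z\|^m)$ by convexity, and invoking the Gaussian moment bound $\E\|Z\|^m \les_m d^{m/2}$ (which follows from $\E(\chi^2_d)^p = \prod_{j<p}(d+2j) \le (1+2p)^p d^p$ for even $m=2p$, and from Jensen's inequality for odd $m$) yields
\[
\bigl|\lla T,\; v^{\otimes\ell}\otimes\H_m(x)\rra\bigr| \;\les_k\; \|T\|\,\|v\|^\ell\bigl(\|x\|^m + d^{m/2}\bigr),
\]
which is the claim; the boundary case $\ell=k$ (so $m=0$) is trivial, since the right-hand side is then $\ge\|T\|\|v\|^k$.

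I do not anticipate a real obstacle here: once \eqref{eH} is available the computation is routine. The only points that need attention are doing the complex expansion cleanly so that the binomial collapse $\sum_j\binom mj\|x\|^{m-j}\|Z\|^j = (\|x\|+\|Z\|)^m$ comes out exactly, and being explicit that the inequality $|\la T, u_1\otimes\cdots\otimes u_k\ra|\le\|T\|\prod_i\|u_i\|$ is the multilinear form of the operator-norm bound (valid for general $T$ by definition, and consistent with the paper's convention for symmetric $T$).
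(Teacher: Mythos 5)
Your proposal is correct and follows essentially the same route as the paper: both use the representation $\H_{k-\ell}(x)=\E[(x+iZ)^{\otimes(k-\ell)}]$ from \eqref{eH}, bound the contraction by $\|T\|\|v\|^\ell$ times a power of the argument, and finish with the Gaussian moment bound $\E\|Z\|^{m}\les_m d^{m/2}$. The only cosmetic difference is that the paper bounds the integrand directly by $\|T\|\|v\|^\ell\,\E\|x+iZ\|^{k-\ell}$ (extending the operator-norm inequality to complex arguments), whereas you expand $(x+iZ)^{\otimes m}$ into real pieces and recombine via the binomial theorem, which is just a more explicit way of justifying the same estimate.
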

\begin{proof}Using~\eqref{eH}, we have 
$$\la T, \; v^{\otimes \ell}\otimes\H_{k-\ell}(x)\ra =\E[\la T, \; v^{\otimes \ell}\otimes (x+iZ)^{\otimes (k-\ell)}\ra]$$ and hence
\beqs
|\la T, \; v^{\otimes \ell}\otimes\H_{k-\ell}(x)\ra| &\leq \E|\la T, \;  v^{\otimes \ell}\otimes (x+iZ)^{\otimes k-\ell}\ra| \\&\leq\|T\|\|v\|^\ell\E\l[\|x+iZ\|^{k-\ell}\r]\\
&\les_k \|T\|\|v\|^\ell(\|x\|^{k-\ell}+\sqrt d^{k-\ell}).
\eeqs
\end{proof}

\subsection{Hermite Remainder Formula Applied to $\Wbar $}\label{app:subsec:Erkm}
We now consider the case $f=\Wbar$, where $\Wbar$ is defined in~\eqref{barWviaW} and satisfies the properties stated in Lemma~\ref{lma:WtobarW}. Recall that
\beq\label{rk-def-again}r_k(x) = \Wbar (x) - \sum_{j=0}^{k-1}\frac{1}{j!}\lla \A_j, \H_j(x)\rra,\eeq where $\A_j=\E[\Wbar (Z)\H_j(Z)]=\E[\nabla^j\Wbar(Z)]$, and the second equality holds provided $\Wbar\in C^j$. We first prove a pointwise bound on $r_4$, and then bound the $L^p(\gamma)$ norm of $r_3$ and $r_4$.
\begin{lemma}\label{lma:r4ptwise}
It holds
$$
|r_4(x)|\les_q\frac{a_4}{n}\l(\|x\|^4 + d^2 + \sqrt d\|x\|^3\r)(1+\|x/\sqrt d\|^q).$$
\end{lemma}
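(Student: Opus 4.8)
The plan is to apply the exact Hermite remainder formula of Lemma~\ref{lma:rk-exact} (the tensor form of Proposition~\ref{prop:intro:hermite-tail-ddim}) with $k=4$ and $f=\Wbar$. Since $v\in C^4$ and $V=nv$, we have $\Wbar\in C^4$; moreover $\nabla^3\Wbar$ and $\nabla^4\Wbar$ grow at most polynomially (by~\eqref{barWviaW} together with~\eqref{W34glob}), hence so does $\nabla\Wbar$, so the growth hypothesis of Lemma~\ref{lma:rk-entrywise} is met. Thus
\[
r_4(x)=\int_0^1\frac{(1-t)^{3}}{3!}\,\E\l[\lla\nabla^{4}\Wbar\l((1-t)Z+tx\r),\;\H_4(x)-Z\otimes\H_3(x)\rra\r]dt .
\]

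First I would control the inner product inside the expectation using Lemma~\ref{tensor-H-opnorm}: for any symmetric $4$-tensor $T$ and any $w\in\R^d$ one has $|\la T,\H_4(x)\ra|\les\|T\|(\|x\|^4+d^2)$ and $|\la T,w\otimes\H_3(x)\ra|\les\|T\|\,\|w\|(\|x\|^3+d^{3/2})$, so with $T=\nabla^4\Wbar((1-t)Z+tx)$ and $w=Z$ the integrand is bounded by $\|\nabla^4\Wbar((1-t)Z+tx)\|\,(\|x\|^4+d^2+\|Z\|(\|x\|^3+d^{3/2}))$. Next I would transfer the pointwise bound~\eqref{W34glob} from $W$ to $\Wbar$: since $\Wbar(y)=W(\hat m_0+\hat\sigma_0 y)$ with $\|\hat m_0\|\le2\sqrt2$ and $\|\hat\sigma_0\|\le\sqrt2$ (see~\eqref{hatmsig}) and $d\ge1$,
\[
\|\nabla^4\Wbar(y)\|\les\|\nabla^4 W(\hat m_0+\hat\sigma_0 y)\|\les_q\frac{a_4}{n}\l(1+\|y/\sqrt d\|^q\r).
\]
Evaluating this at $y=(1-t)Z+tx$ and using $\|((1-t)Z+tx)/\sqrt d\|^q\les_q\|Z/\sqrt d\|^q+\|x/\sqrt d\|^q$ for $t\in[0,1]$ gives, uniformly in $t$, $\|\nabla^4\Wbar((1-t)Z+tx)\|\les_q\frac{a_4}{n}(1+\|Z/\sqrt d\|^q+\|x/\sqrt d\|^q)$.

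It then remains to take the Gaussian expectation. Substituting the two bounds above, expanding the product, and invoking the elementary moment estimates $\E[\|Z\|]\le\sqrt d$, $\E[\|Z/\sqrt d\|^q]\les_q1$, $\E[\|Z/\sqrt d\|^q\|Z\|]\les_q\sqrt d$, every resulting term is of the form $\frac{a_4}{n}\cdot(\text{power of }\|x\|)\cdot d^{c}\cdot(1\text{ or }\|x/\sqrt d\|^q)$, and a short term-by-term check shows the total is $\les_q\frac{a_4}{n}(\|x\|^4+d^2+\sqrt d\|x\|^3)(1+\|x/\sqrt d\|^q)$. Integrating in $t$ over $[0,1]$ costs only the constant $\int_0^1(1-t)^3/3!\,dt$, which yields the claim.

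The calculation is routine; the only step that needs care is the last one — the bookkeeping that pairs each moment of $\|Z\|$ with the correct power of $\|x\|$, so that precisely the three monomials $\|x\|^4$, $d^2$, $\sqrt d\,\|x\|^3$ appear (with the right $d$-exponents) and the $Z$-dependence of $\|\nabla^4\Wbar\|$ contributes nothing beyond the overall $(1+\|x/\sqrt d\|^q)$ factor.
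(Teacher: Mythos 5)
Your proposal is correct and follows essentially the same route as the paper: the exact tensor remainder formula of Lemma~\ref{lma:rk-exact} with $k=4$, the operator-norm bound of Lemma~\ref{tensor-H-opnorm} on $\la \nabla^4\Wbar,\H_4(x)-Z\otimes\H_3(x)\ra$, and the transfer of the derivative bound from $W$ to $\Wbar$ via $\|\hat m_0\|\le 2\sqrt2$, $\|\hat\sigma_0\|\le\sqrt2$. The only cosmetic difference is that you take the Gaussian expectation using the pointwise bound on $\|\nabla^4\Wbar\|$ plus elementary moment estimates, whereas the paper applies Cauchy--Schwarz and invokes the $L^2(\mathcal N(x,(1-t)^2I_d))$ bound~\eqref{barW4} of Lemma~\ref{lma:WtobarW}; both yield the same estimate.
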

\begin{proof}
Let $\nabla^{k}\Wbar $ be shorthand for $\nabla^{k} \Wbar ((1-t)Z+tx)$. Using~\eqref{rk-exact-2} for $f= \Wbar$, we have
\beqs
\label{rk-bd-2}
|r_4(x)| &\leq \int_0^1\E\l[\l|\lla \nabla^4 \Wbar , \,\; \H_4(x)-Z\otimes\H_3(x) \rra \r|\r]dt\\
&\les\int_0^1\E\l[\|\nabla^4\Wbar\|\l(\|x\|^4+d^2 + \|Z\|(\|x\|^3+d^{3/2})\r)\r]dt\\
&\les \l(\|x\|^4 + d^2 + \sqrt d\|x\|^3\r)\int_0^1\sqrt{\E\l[\|\nabla^4\Wbar((1-t)Z+tx)\|^2\r]}dt\\
&= \l(\|x\|^4 + d^2 + \sqrt d\|x\|^3\r)\int_0^1\l \| \|\nabla^4\Wbar\| \r\|_{L^2(\mathcal N(x, (1-t)^2I_d))}dt\\
&\les_q \frac{a_4}{n}\l(\|x\|^4 + d^2 + \sqrt d\|x\|^3\r)(1+\|x/\sqrt d\|^q)
\eeqs In the second line we used Lemma~\ref{tensor-H-opnorm}, and in the third line we used Cauchy-Schwarz and the fact that $\E[\|Z\|^k]\les_k d^{k/2}$. To get the fifth line, we used~\eqref{barW4} of Lemma~\ref{lma:WtobarW}
\end{proof}
The $d$-dependence in the bound on $\|r_3\|_p$ in the next lemma relies on Lemmas 2.1, C.2 and C.1 of~\cite{katsTDD}. Combined, the three lemmas give that for a symmetric $d\times d\times d$ tensor $T$, it holds
\beq\label{TH3}
\|\la T, \H_3\ra\|_{2k} \les_k \|\la T, \H_3\ra\|_{2} \les \|T\|_F \les d\|T\|
\eeq
\begin{lemma}\label{rUUc}Let $p_3(x)=\frac{1}{3!}\la \A_3,\H_3\ra(x)$. It holds
\beqsn
\|r_4\|_p &\les_{p,q}a_4\epsilon^2\\
\|p_3\|_p&\les_{p,q} a_3\epsilon,\\
\|r_3\|_p &\les_{p,q} a_3\epsilon + a_4\epsilon^2=:\omega,
\eeqsn  where $\epsilon=d/\sqrt n$.\end{lemma}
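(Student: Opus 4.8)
The plan is to obtain all three bounds from the pointwise estimate of Lemma~\ref{lma:r4ptwise}, the tensor-norm inequality~\eqref{TH3}, and the moment bound~\eqref{barW3spec} of Lemma~\ref{lma:WtobarW}. I would start with $p_3$. Since $\A_3 = \E[\nabla^3\Wbar(Z)]$ is an expectation of symmetric third-order tensors, it is itself symmetric, so~\eqref{TH3} applies (after passing to an even exponent $\ge p$, using monotonicity of $L^p(\gamma)$-norms) and gives $\|p_3\|_p = \tfrac{1}{3!}\|\la \A_3,\H_3\ra\|_p \les_p d\|\A_3\|$. By convexity of the operator norm and Jensen's inequality, $\|\A_3\| \le \E[\|\nabla^3\Wbar(Z)\|] = \bigl\|\,\|\nabla^3\Wbar\|\,\bigr\|_{L^1(\gamma)}$, and since $\gamma = \mathcal N(0, I_d)$, applying~\eqref{barW3spec} with $\mu = 0$ and $s = 1$ yields $d\|\A_3\| \les_q a_3\epsilon$. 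Altogether $\|p_3\|_p \les_{p,q} a_3\epsilon$.

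Next I would bound $\|r_4\|_p$ using the pointwise estimate
\[
|r_4(x)| \les_q \frac{a_4}{n}\bigl(\|x\|^4 + d^2 + \sqrt d\,\|x\|^3\bigr)\bigl(1+\|x/\sqrt d\|^q\bigr)
\]
from Lemma~\ref{lma:r4ptwise}. Setting $x = Z \sim \gamma$ and expanding the right-hand side produces a sum of six monomials in $\|Z\|$, each of the form $d^{\alpha}\|Z\|^{\beta}$ with $\alpha + \beta/2 = 2$ (for instance the cross term $d^{-q/2}\|Z\|^{4+q}$ has $\alpha + \beta/2 = -q/2 + (4+q)/2 = 2$, and similarly for the other five). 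The Gaussian moment bound $\bigl\|\,\|Z\|^{\beta}\,\bigr\|_p = \E[\|Z\|^{\beta p}]^{1/p}\les_{\beta,p} d^{\beta/2}$, valid for all $d\ge1$ and all real $\beta\ge0$, then makes each monomial contribute $\les_{p,q} d^{\alpha + \beta/2} = d^2$, so $\|r_4\|_p \les_{p,q} (a_4/n)\,d^2 = a_4\epsilon^2$. Finally, since $r_4 = r_3 - p_3$, the triangle inequality in $L^p(\gamma)$ gives $\|r_3\|_p \le \|p_3\|_p + \|r_4\|_p \les_{p,q} a_3\epsilon + a_4\epsilon^2 = \omega$.

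The step requiring the most care is the $d$-dependence. For $p_3$ one must invoke~\eqref{TH3}, which combines Gaussian hypercontractivity with the Frobenius-to-operator-norm comparison for symmetric tensors, rather than a crude Cauchy--Schwarz bound that would lose a factor of $\sqrt d$. For $r_4$ one is relying on the fact that the three monomials appearing in Lemma~\ref{lma:r4ptwise}, together with their products against $\|x/\sqrt d\|^q$, are all ``balanced'' in the sense that $\alpha + \beta/2$ is constant, so that every term contributes exactly $d^2$ after integrating against $\gamma$; no term dominates and there is no slack to exploit. Everything else is a routine application of Gaussian moment estimates and the triangle inequality.
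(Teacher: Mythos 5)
Your proposal is correct and follows essentially the same route as the paper: the $\|r_4\|_p$ bound comes from integrating the pointwise estimate of Lemma~\ref{lma:r4ptwise} against Gaussian moments, the $\|p_3\|_p$ bound from~\eqref{TH3} together with $\|\A_3\|\le\E[\|\nabla^3\Wbar(Z)\|]$ and~\eqref{barW3spec}, and $\|r_3\|_p$ from the triangle inequality applied to $r_3=p_3+r_4$. Your extra bookkeeping (symmetry of $\A_3$, passing to an even exponent, and checking that every monomial is balanced with $\alpha+\beta/2=2$) just makes explicit what the paper leaves implicit.
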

\begin{proof}
For the bound on $\|r_4\|_p$, we use Lemma~\ref{lma:r4ptwise} which gives
$$
\E\l[|r_4(Z)|^p\r]\les_{p,q}\l(\frac{a_4}{n}\r)^p\E\l[\l(\|Z\|^4 + d^2 + \sqrt d\|Z\|^3\r)^p(1+\|Z/\sqrt d\|^q)^p\r] \les_{p,q}\l(\frac{a_4}{n}\r)^pd^{2p}.
$$Taking the $p$th root gives $\|r_4\|_p\les_{p,q}a_4d^2/n=a_4\epsilon^2$. To bound $\|p_3\|_p$, we use~\eqref{TH3} and~\eqref{barW3spec} of Lemma~\ref{lma:WtobarW}:
\beq
\|p_3\|_p\leq \|\la \A_3, \H_3\ra\|_p \les_p d\|\A_3\| \leq d\E[\|\nabla^3\Wbar (Z)\|]\les_q  a_3\epsilon.\eeq Finally, note that $r_3=p_3+r_4$, and the bound on $\|r_3\|_p$ follows.
\end{proof}

\section{Proofs from Section~\ref{sec:m-sig-exist}: existence and uniqueness of solutions to stationarity conditions}\label{app:sec:m-sig-exist}
For the proof of Lemma~\ref{lma:intro:exist:V} using Lemma~\ref{lma:m-sig-soln}, see Section~\ref{supp:aux}. In this section, we prove Lemma~\ref{lma:m-sig-soln} via the auxiliary lemmas outlined in Section~\ref{sec:proof-exist}. The proofs rely on tensor-matrix and tensor-vector scalar products. Let us review the rules of such scalar products, and how to bound the operator norms of these quantities. Let $v\in\R^d$, $A\in\R^{d\times d}$, and $T\in\R^{d\times d\times d}$. We define the vector $\la T, A\ra\in\R^d$ and the matrix $\la T, v\ra\in\R^{d\times d}$ by
\beqs
\la T, A\ra_i &= \sum_{j,k=1}^d T_{ijk}A_{jk}, \quad i=1,\dots, d,\\
\la T, v\ra_{ij} &= \sum_{k=1}^dT_{ijk}v_k,  \quad i,j=1,\dots, d.\eeqs We will always sum over the last two or last one indices of the tensor. 
Note that the norm of the matrix $\la T, v\ra$ is given by
$\|\la T, v\ra\| = \sup_{\|u\|=\|w\|=1}u^T\la T, v\ra w$, and we have
$$u^T\la T, v\ra w = \sum_{i,j=1}^du_iw_j\sum_{k=1}^dT_{ijk}v_k= \la T, u\otimes w \otimes v\ra\leq \|T\|\|v\|.$$ Therefore, $\|\la T, v\ra\| \leq \|T\|\| v\|$.

We also review the notion of operator norm for derivatives of a function, and note the distinction between this kind of operator norm and the standard tensor operator norm.  Specifically, consider a $C^2$ function $f=(f_1,\dots, f_d):\R^d\times\R^{d\times d}\to\R^d$, where $\R^{d\times d}$ is endowed with the standard matrix norm. 
 Then $\nabla_\sigma f(m, \sigma)$ is a linear functional from $\R^{d\times d}$ to $\R^d$, and we let $\la\nabla_\sigma f(m, \sigma),\; A\ra\in\R^d$ denote the application of $\nabla_\sigma f(m, \sigma)$ to $A$. Note that we can represent $\nabla_\sigma f$ by the $d\times d\times d$ tensor $(\nabla_{\sigma_{jk}}f_i)_{i,j,k=1}^d$, so that $\la\nabla_\sigma f(m, \sigma),\; A\ra$ coincides with the definition given above of tensor-matrix scalar products. However, $\|\nabla_\sigma f\|_\op$ is \emph{not} the standard tensor operator norm. Rather, 
$$\|\nabla_\sigma f\|_\op = \sup_{A\in\R^{d\times d}, \|A\|=1}\|\la \nabla_\sigma f, A\ra\| = \sup_{\substack{A\in\R^{d\times d},\|A\|=1,\\ u\in\R^d, \|u\|=1}}\la \nabla_\sigma f, A\otimes u\ra.$$ We continue to write $\|\nabla_\sigma f\|$ to denote the \emph{standard} tensor operator norm, i.e.
$$\|\nabla_\sigma f\| = \sup_{\substack{u,v,w\in\R^{d},\\\|u\|=\|v\|=\|w\|=1}}\la \nabla_\sigma f, u\otimes v\otimes w\ra.$$
Note also that $\nabla_mf\in\R^{d\times d}$ is a matrix, and that
\beqs
\max\bigg(\l\|\nabla_\sigma f(m, \sigma)\r\|_\op, \; &\l\|\nabla_m f(m, \sigma)\r\|_\op\bigg) \\
\leq \|\nabla f&(m, \sigma)\|_\op \leq\|\nabla_\sigma f(m, \sigma)\|_\op+\|\nabla_m f(m, \sigma)\|_\op.\eeqs
Finally, recall the notation
\beqs
B_r(0,0) &= \{(m,\sigma)\in \R^d\times\R^{d\times d} \; : \; \|\sigma\|^2+\|m\|^2\leq r^2\},\\
B_r &= \{\sigma\in\R^{d\times d}\; : \; \|\sigma\|\leq r\},\\
S_{c_1, c_2} &= \{\sigma\in\S^{d}_{+}\; : \; c_1I_d\preceq \sigma\preceq c_2I_d\}.\eeqs
The proof of Lemma~\ref{IVT} uses the following lemma.
\begin{lemma}[Lemma 1.3 in Chapter XIV of~\cite{langanalysis}]\label{lma:lang} Let $U$ be open in a Banach space $E$, and let $f:U\to E$ be of class $C^1$. Assume that $f(0)=0$ and $f'(0)=I$. Let $r>0$ be such that $\bar B_r(0)\subset U$. If 
$$|f'(z)-f'(x)|\leq s,\qquad\forall z,x\in\bar B_r(0)$$ for some $s\in(0,1)$, then $f$ maps $\bar B_r(0)$ bijectively onto $\bar B_{(1-s)r}(0)$.\end{lemma}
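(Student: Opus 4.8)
The plan is to run the classical contraction-mapping proof of the quantitative inverse mapping theorem, organized around the auxiliary map $g := \mathrm{id}_E - f$. First I would record that $g$ is a contraction on $\bar B_r(0)$. Since $f$ is $C^1$ we have $g'(z) = I - f'(z)$, and since $f'(0) = I$ the hypothesis applied with $x=0$ gives $\|g'(z)\| = \|f'(z) - f'(0)\| \le s$ for every $z \in \bar B_r(0)$. As $\bar B_r(0)$ is convex, the mean value inequality for $C^1$ maps between Banach spaces yields
$$\|g(z) - g(x)\| \le s\,\|z - x\| \qquad \forall\, z,x \in \bar B_r(0),$$
and in particular, using $g(0) = 0 - f(0) = 0$, we get $\|g(z)\| \le sr$ on the ball.

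Next I would deduce injectivity of $f$ on $\bar B_r(0)$ together with the matching lower bound: for $x,z \in \bar B_r(0)$,
$$\|f(z) - f(x)\| \ge \|z - x\| - \|g(z) - g(x)\| \ge (1-s)\,\|z - x\|,$$
so $f$ is injective there (and $f^{-1}$ is $(1-s)^{-1}$-Lipschitz on the image, which is what the downstream application of Lemma~\ref{IVT} will use).

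The remaining point is that $\bar B_{(1-s)r}(0) \subseteq f(\bar B_r(0))$. Fix $y$ with $\|y\| \le (1-s)r$ and set $\varphi_y(x) := y + g(x)$; a point $x$ satisfies $f(x) = y$ precisely when it is a fixed point of $\varphi_y$. One checks that $\varphi_y$ maps $\bar B_r(0)$ into itself, since $\|\varphi_y(x)\| \le \|y\| + \|g(x)\| \le (1-s)r + sr = r$, and that it is a contraction with constant $s < 1$ by the bound in the first step. Because $\bar B_r(0)$ is a closed subset of a Banach space it is complete, so the Banach fixed point theorem produces a unique $x \in \bar B_r(0)$ with $f(x) = y$. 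Combining this with the injectivity step, every $y \in \bar B_{(1-s)r}(0)$ has exactly one preimage in $\bar B_r(0)$, which is the asserted bijective correspondence.

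I do not expect a genuine obstacle: the only steps needing any care are the use of the mean value inequality in the Banach (rather than finite-dimensional) setting and the completeness of the closed ball needed for the contraction mapping theorem, both routine. It is worth noting only that the conclusion should be read as ``$\bar B_{(1-s)r}(0) \subseteq f(\bar B_r(0))$ and $f$ is injective on $\bar B_r(0)$'', since $f(\bar B_r(0))$ may be strictly larger than $\bar B_{(1-s)r}(0)$ (e.g.\ already when $f = \mathrm{id}_E$).
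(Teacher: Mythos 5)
Your proof is correct. The paper does not prove this lemma at all --- it is imported verbatim from Lang's text --- and your contraction-mapping argument (setting $g=\mathrm{id}-f$, getting the $s$-Lipschitz bound from the mean value inequality, and solving $f(x)=y$ via the Banach fixed point theorem for $\varphi_y=y+g$) is precisely the standard proof given in that reference. Your closing caveat about how ``bijectively onto'' should be read is also consistent with how the lemma is actually used in the proof of Lemma~\ref{IVT}: all that is invoked there is injectivity on $\bar B_r(0)$ together with existence of a (unique) preimage in $\bar B_r(0)$ for each point of $\bar B_{(1-s)r}(0)$.
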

\begin{proof}[Proof of Lemma~\ref{IVT}]
Let $\phi:\R^d\times\R^{d\times d}\to \R^d\times\R^{d\times d}$ be given by $\phi(m,\sigma) = (f(m,\sigma),\sigma)$, so that $\phi(0, 0) = (0,0)$, and 
\beqs
\nabla\phi(m, \sigma) = \begin{pmatrix} \nabla_mf(m,\sigma) & \nabla_\sigma f(m, \sigma)\\ 0 & I_d\end{pmatrix}.
\eeqs For each $(m, \sigma), (m',\sigma')\in B_r(0,0)$, we have
\beqs
\|\nabla&\phi(m,\sigma) - \nabla\phi(m',\sigma')\|_\op = \|\nabla f(m,\sigma) - \nabla f(m',\sigma')\|_\op\\
&\leq 2\sup_{(m, \sigma)\in B_r(0,0)}\|\nabla f(m,\sigma) - \nabla f(0,0)\|_\op \leq  \frac12. \eeqs 
Note also that $\nabla\phi(0,0)$ is the identity. Thus by Lemma~\ref{lma:lang}, we have that $\phi$ is a bijection from $B_r(0, 0)$ to $B_{r/2}(\phi(0,0)) = B_{r/2}(0, 0)$. Now, fix any $\sigma\in \R^{d\times d}$ such that $\|\sigma\|\leq r/2$. Then $(0,\sigma)\in B_{r/2}(0,0)$, and hence there exists a unique $(m,\sigma')\in B_r(0, 0)$ such that $(0,\sigma) = \phi(m,\sigma') = (f(m,\sigma'),\sigma')$. Thus $\sigma=\sigma'$ and $f(m, \sigma)=0$. In other words, for each $\sigma$ such that $\|\sigma\|\leq r/2$ there exists a unique $m=m(\sigma)$ such that $(m(\sigma), \sigma)\in B_r(0, 0)$ and such that $0=f(m, \sigma)$. 

The map $\sigma\mapsto m(\sigma)$ is $C^2$ by standard Implicit Function Theorem arguments. To show that the first inequality of~\eqref{IVT-props} holds, note that we have $$\|\nabla_mf(m(\sigma), \sigma) - \nabla_mf(0, 0)\|_\op \leq \|\nabla f(m(\sigma), \sigma) - \nabla f(0, 0)\|_\op \leq 1/4 \leq 1/2$$ by~\eqref{supsigmam} since we know that $(m(\sigma), \sigma) \in B_r(0, 0)$. Thus, \beqs
 I_d= \nabla^2 W(0)&=\nabla_mf(0,0)\\&\implies\frac{1}2I_d\preceq\nabla_mf(m(\sigma), \sigma)\preceq \frac32I_d.\eeqs 

To show the second inequality of~\eqref{IVT-props}, we first need the supplementary bound 
\beq\label{app:supp}\|\nabla_\sigma f(m(\sigma), \sigma)\|_\op=\|\nabla_\sigma f(m(\sigma), \sigma) - \nabla_\sigma f(0, 0)\|_\op \leq 1/2\eeq which holds by the same reasoning as above. Now, $$\partial_{\sigma_{jk}}m = -\nabla_mf(m, \sigma)^{-1}\partial_{\sigma_{jk}}f(m, \sigma)\in\R^d$$ by standard Implicit Function Theorem arguments, where $\nabla_mf(m, \sigma)$ is a matrix, $\partial_{\sigma_{jk}}f(m, \sigma)$ is a vector, and $\nabla_\sigma m$, $\nabla_\sigma f$ are linear maps from $\R^{d\times d}$ to $\R^d$. Hence by the first inequality in~\eqref{IVT-props} combined with~\eqref{app:supp} we have
\beqs
\|\nabla_\sigma m(\sigma)\|_\op &=\sup_{\|A\|=1}\|\la\nabla_\sigma m(\sigma), A\ra\|\\
& = \sup_{\|A\|=1}\|\nabla_mf(m, \sigma)^{-1}\sum_{j,k=1}^d\partial_{\sigma_{jk}}f(m, \sigma)A_{jk}\|\\
& = \sup_{\|A\|=1}\|\nabla_mf(m, \sigma)^{-1}\la\nabla_\sigma f, A\ra\|\\
& \leq \|\nabla_mf(m, \sigma)^{-1}\|\|\nabla_\sigma f\|_\op \leq 2\times\frac12 = 1. \eeqs
\end{proof}
\begin{proof}[Proof of Lemma~\ref{m-of-sig}]
Note that $f$ is $C^2$ thanks to the fact that $W$ is $C^3$ and $\nabla W$ grows polynomially by Assumption~\ref{assume:glob} (since the third derivative tensor grows polynomially). We then immediately have $f(0,0) = \nabla W(0) = 0$, $\nabla_mf(m, \sigma) =\E[\nabla^2 W(m+\sigma Z)]$ is symmetric for all $m,\sigma$, and $\nabla_mf(0, 0)= \nabla^2W(0)=I_d$. To show $\nabla_\sigma f(0, 0) = 0$, we compute the $i,j,k$ term of this tensor:
$$\partial_{\sigma_{jk}}f_i = \partial_{\sigma_{jk}}\E[\partial_iW(m+\sigma Z)] = \E[\partial^2_{ij}W(m+\sigma Z)Z_k],$$ so that
$\partial_{\sigma_{jk}}f_i(0, 0) = \E[\partial^2_{i,j}W(0)Z_k] = 0$. It remains to show that for $r=2\sqrt2$ we have
\beq\label{sup-sigma}\sup_{(m,\sigma)\in B_r(0, 0)}\|\nabla f(m,\sigma) - \nabla f(0,0)\|_\op \leq\frac{1}{4}.\eeq First, note that \beq\label{app:supsigmam}\sup_{(m,\sigma)\in B_r(0,0)}\|\nabla f(m,\sigma) - \nabla f(0,0)\|_\op  \leq r\sup_{(m,\sigma)\in B_r(0,0)}\|\nabla^2f(m,\sigma)\|_\op,\eeq where $\nabla^2f(m, \sigma)$ is a bilinear form on $(\R^d\times\R^{d\times d})^2$, and we have
\beq\label{nabla2f}\|\nabla^2f(m,\sigma)\|_\op \leq \|\nabla_\sigma^2f(m, \sigma)\|_\op + 2\|\nabla_{\sigma}\nabla_mf(m, \sigma)\|_\op + \|\nabla_m^2f(m, \sigma)\|_\op.\eeq
For $f(m, \sigma) = \E[\nabla W(\sigma Z+m)]$, these second order derivatives are given by
\beqs
\partial_{m_i,m_j}^2f(m, \sigma) &= \E[\partial_{i,j}^2\nabla W(m+\sigma Z)],\\
 \partial_{m_i}\partial_{\sigma_{jk}} f(m, \sigma) &= \E[\partial^2_{i,j}\nabla W(m+\sigma Z)Z_k],\\
\partial^2_{\sigma_{jk},\sigma_{\ell p}}f(m, \sigma) &= \E[\partial^2_{j,\ell}\nabla W(m+\sigma Z)Z_kZ_p],
\eeqs each a vector in $\R^d$.
From the first line, we get that 
\beq\label{bd1IFT}
\|\nabla_m^2f(m, \sigma)\|_\op \leq \E\|\nabla^3W(m+\sigma Z)\|=\l\|\|\nabla^3W\|\r\|_{L^1(\mathcal N(m,\sigma\sigma^T))}\eeq where $\|\nabla^3W\|$ is the standard tensor norm. From the second line, we get
\beqs\label{bd2IFT}
\|\nabla_m&\nabla_\sigma f(m, \sigma)\|_\op \\
&= \sup_{\|A\|=1, \|x\|=1}\bigg \|\E\bigg[\sum_{i,j,k=1}^d\partial^2_{i,j}\nabla W(m+\sigma Z)Z_kx_iA_{jk}\bigg]\bigg\| \\
&= \sup_{\|A\|=1, \|x\|=1}\bigg\|\E\bigg[\sum_{i,j=1}^d\partial_{i,j}^2\nabla W(m+\sigma Z)x_i(AZ)_j\bigg]\bigg\|\\
&= \sup_{\|A\|=1, \|x\|=1}\bigg \|\E\bigg[\bigg\la \nabla^3 W(m+\sigma Z), x\otimes AZ\bigg\ra\bigg]\bigg\|\\
& \leq \sup_{\|A\|=1, \|x\|=1}\E\bigg[\|x\|\|AZ\| \|\nabla^3 W(m+\sigma Z)\|\bigg]\\
& \leq  \sqrt d\l\|\|\nabla^3W\|\r\|_{L^2(\mathcal N(m,\sigma\sigma^T))}
\eeqs
A similar computation gives
\beqs\label{bd3IFT}
\|\nabla^2_\sigma f(m, \sigma)\|_\op &\leq \sup_{\|A\|=1, \|B\|=1}\E[\|AZ\|\|BZ\| \|\nabla^3 W(m+\sigma Z)\|]\\
& \leq 2d\l\|\|\nabla^3W\|\r\|_{L^2(\mathcal N(m,\sigma\sigma^T))}
\eeqs
Using~\eqref{bd1IFT},~\eqref{bd2IFT}, and~\eqref{bd3IFT} in~\eqref{nabla2f} gives 
\beqs\label{nabla2fs}
\|\nabla^2 f(m, \sigma)\|_\op &\leq (2d+2\sqrt d + 1)\l\|\|\nabla^3W\|\r\|_{L^2(\mathcal N(m,\sigma\sigma^T))} \\
&\leq 5d\sup_{(m,\sigma)\in B_r(0,0)}\l\|\|\nabla^3W\|\r\|_{L^2(\mathcal N(m,\sigma\sigma^T))}.\eeqs Substituting~\eqref{nabla2fs} into~\eqref{app:supsigmam} with $r=2\sqrt2$ and then using~\eqref{Enabla3W} of Lemma~\ref{lma:Wprops}, we get
\beqs\label{rd}
\sup_{(m,\sigma)\in B_{2\sqrt2}(0,0)}&\|\nabla f(m,\sigma) - \nabla f(0,0)\|_\op \\& \leq 10\sqrt 2d\sup_{(m,\sigma)\in B_{2\sqrt2}(0,0)}\l\|\|\nabla^3W\|\r\|_{L^2(\mathcal N(m,\sigma\sigma^T))}\\
&\leq C(q)a_3\frac{d}{\sqrt n} \leq \frac14.
\eeqs The last line holds provided $a_3d/\sqrt n \leq 1/4C(q)$, which is ensured by~\eqref{cconds}. \end{proof}
\begin{proof}[Proof of Lemma~\ref{lma:contract}]
First, let $G(\sigma) = \E[\nabla^2W(m(\sigma) + \sigma Z)]$ 
and $f(m, \sigma) = \E[\nabla W(m+\sigma Z)]$ as in Lemma~\ref{m-of-sig}. Note that $\nabla_mf(m, \sigma) = \E[\nabla^2W(\sigma Z+m)]$, so that $G(\sigma) = \nabla_mf(m, \sigma)\vert_{m=m(\sigma)}$ and hence by~\eqref{IVT-props} of Lemma~\ref{IVT} we have \beq\label{G-range}\frac{1}{2}I_d\preceq G(\sigma)\preceq\frac{3}{2}I_d,\qquad \forall \sigma\in S_{0,r/2}.\eeq But then $G(\sigma)$ has a unique invertible symmetric positive definite square root, and we define $F(\sigma) = G(\sigma)^{-1/2}$ to be the inverse of this square root. Moreover, using~\eqref{G-range}, it follows that 
$$c_1I_d \preceq F(\sigma) \preceq c_2I_d, \qquad \forall \sigma\in S_{0, r/2},$$ where $c_1=\sqrt{2/3}$ and $c_2=\sqrt2=r/2.$ In other words, $F(S_{0,r/2})\subseteq S_{c_1,c_2}\subseteq S_{0,r/2}$. It remains to show $F$ is a contraction on $S_{0,r/2}$. Let $\sigma_1,\sigma_2\in S_{0, \sqrt2}$. We will first bound $\|G(\sigma_1)-G(\sigma_2)\|$. We have
\beqs\label{Gsig-diff}
\|G(\sigma_1)-G(\sigma_2)\|\leq \|\sigma_1 - \sigma_2\|\sup_{\sigma\in S_{0,\sqrt2}}\|\nabla_\sigma G(\sigma)\|_\op,\eeqs and
\beqs
\|\nabla_\sigma G(\sigma)\|_\op&=\sup_{\|A\|=1}\big\|\big\la\nabla_\sigma G(\sigma), A\big\ra\big\|\\&= \sup_{\|A\|=1}\bigg\|\E\bigg\la\nabla^3 W, \;\big\la A, \;\nabla_\sigma\l(m(\sigma)+\sigma Z\r)\big\ra\bigg\ra\bigg\|.
\eeqs
Here, the quantities inside of the $\|\cdot\|$ on the right are matrices. Indeed, $\la \nabla_\sigma G, A\ra$ denotes the application of $\nabla_\sigma G$ to $A$. Since $G$ sends matrices to matrices, $\nabla_\sigma G$ is a linear functional which also sends matrices to matrices. In the third line, $\nabla_\sigma(m(\sigma)+\sigma Z)$ should be interpreted as a linear functional from $\R^{d\times d}$ to $\R^d$, so $\la A, \nabla_\sigma(m(\sigma)+\sigma Z)\ra$ is a vector in $\R^d$, and the inner product of this vector with the $d\times d\times d$ tensor $\nabla^3W$ is a matrix.  Using that $\|\la T, x\ra\|\leq \|T\|\|x\|$, as explained at the beginning of this section, we have
\beqs\label{nabla3}
\bigg\|\bigg\la\nabla^3 W, \;&\big\la A,\; \nabla_\sigma\l(m(\sigma)+\sigma Z\r)\big\ra\bigg\ra\bigg\|\leq \l \|\nabla^3W\r\| \l \| \lla A, \nabla_\sigma\l(m(\sigma)+\sigma Z\r)\rra\r\|\\
& \leq \|\nabla^3W\|\|\nabla_\sigma(m(\sigma)+\sigma Z)\|_\op \\
&= \|\nabla^3W\|\|\nabla_\sigma m(\sigma) + Z\otimes I_d\|_\op \leq \|\nabla^3W\|(1 + \|Z\|).
\eeqs
To get the last bound, we used that $\|\nabla_\sigma m(\sigma)\|_\op\leq 1$, shown in Lemma~\ref{m-of-sig}. We also use the fact that $\|Z\otimes I_d\|_\op = \sup_{\|A\|=1}\|\lla A, \, Z\otimes I_d\rra\| = \sup_{\|A\|=1}\|AZ\| = \|Z\|$. (Recall that since $Z\otimes I_d$ is part of $\nabla_\sigma m$, we are considering $Z\otimes I_d$ as an operator on matrices rather than as a $d\times d\times d$ tensor, and this is why we take the supremum over matrices $A$.)

Substituting~\eqref{nabla3} back into~\eqref{Gsig-diff}, we get
\beqs\label{Gsigdiff}
\|G(\sigma_1)-G(\sigma_2)\| &\leq \|\sigma_1 - \sigma_2\|\sup_{\sigma\in S_{0,\sqrt2}}\E\l[\l \|\nabla^3W\l(m(\sigma)+\sigma Z\r)\r\|(1+\|Z\|)\r] \\
&\leq \|\sigma_1 - \sigma_2\|C(q)a_3\sqrt{\frac dn} \leq \frac{1}{2\sqrt 2}\|\sigma_1-\sigma_2\|
\eeqs
The second inequality is by Cauchy-Schwarz and~\eqref{Enabla3W} of Lemma~\ref{lma:Wprops}. The third inequality holds if $a_3\sqrt{d/n} \leq 1/(2\sqrt2C(q))$, which is ensured by~\eqref{cconds} of Assumption~\ref{assume:glob}. Now, note that thanks to Lemma~\ref{m-of-sig}, both $\lambda_{\min}(G(\sigma_1))$ and $\lambda_{\min}(G(\sigma_2))$ are bounded below by $1/2$. Using Lemma~\ref{psd-sq-rt} and~\eqref{Gsigdiff}, we therefore have
\beqs\label{Fsig}
\|F(\sigma_1) - F(\sigma_2)\| &\leq \sqrt2\|G(\sigma_1)-G(\sigma_2)\|\\
&\leq  \frac{1}{2}\|\sigma_1 - \sigma_2\|.
\eeqs Hence $F$ is a strict contraction.\end{proof}
 \begin{lemma}\label{psd-sq-rt}
 Let $A_0$ and $A_1$ be psd, and $A_0^{1/2}$, $A_1^{1/2}$ their unique psd square roots. Assume without loss of generality that $\lambda_{\min}(A_0)\leq \lambda_{\min}(A_1)$. Then 
 $$\|A_1^{-1/2}-A_0^{-1/2}\|\leq \frac{\|A_1- A_0\|}{2\lambda_{\min}(A_0)^{3/2}}.$$
 \end{lemma}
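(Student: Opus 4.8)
The plan is to reduce the matrix estimate to a scalar integral via the standard integral representation of the inverse square root. Assume $A_0$ and $A_1$ are positive definite (otherwise $\lambda_{\min}(A_0)=0$ and the bound is vacuous). Recall that for every $\lambda>0$,
\[
\lambda^{-1/2} = \frac1\pi\int_0^\infty \frac{t^{-1/2}}{\lambda+t}\,dt ,
\]
which follows from the substitution $t=\lambda s$ together with $\int_0^\infty \frac{s^{-1/2}}{1+s}\,ds=\pi$. Diagonalizing, this upgrades to the operator identity $A^{-1/2}=\frac1\pi\int_0^\infty t^{-1/2}(A+tI)^{-1}\,dt$ valid for any positive definite $A$. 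I would first subtract the two representations and apply the resolvent identity $(A_1+tI)^{-1}-(A_0+tI)^{-1}=-(A_1+tI)^{-1}(A_1-A_0)(A_0+tI)^{-1}$, giving
\[
A_1^{-1/2}-A_0^{-1/2} = -\frac1\pi\int_0^\infty t^{-1/2}\,(A_1+tI)^{-1}(A_1-A_0)(A_0+tI)^{-1}\,dt .
\]

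Next, I would take operator norms under the integral, using submultiplicativity and the fact that for a positive definite matrix $B$ one has $\|(B+tI)^{-1}\|=(\lambda_{\min}(B)+t)^{-1}$. Writing $\mu:=\lambda_{\min}(A_0)$ and using the hypothesis $\lambda_{\min}(A_1)\geq\mu$, both resolvent norms are at most $(\mu+t)^{-1}$, so
\[
\|A_1^{-1/2}-A_0^{-1/2}\| \;\leq\; \frac{\|A_1-A_0\|}{\pi}\int_0^\infty \frac{t^{-1/2}}{(\mu+t)^2}\,dt .
\]
The last step is to evaluate the scalar integral: the substitutions $t=\mu s$ and then $s=u^2$ turn it into $\mu^{-3/2}\cdot 2\int_0^\infty (1+u^2)^{-2}\,du=\mu^{-3/2}\cdot\tfrac\pi2$. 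Substituting back yields $\|A_1^{-1/2}-A_0^{-1/2}\|\leq \|A_1-A_0\|/(2\mu^{3/2})$, which is the claim. The ``without loss of generality'' clause in the statement is a genuine relabeling: the argument only ever uses the smaller of the two minimal eigenvalues, so nothing is lost by naming it $\lambda_{\min}(A_0)$.

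I do not expect a real obstacle here — the only points requiring a line of care are justifying that the scalar integral representation lifts to the operator level (immediate by applying it to each eigenvalue in a spectral decomposition) and checking convergence of the integrals near $t=0$ and $t=\infty$ (the factor $t^{-1/2}$ is integrable at $0$, and the $(\mu+t)^{-2}$ decay handles $\infty$). As an alternative one could avoid integrals altogether by writing $A_1^{-1/2}-A_0^{-1/2}=A_1^{-1/2}(A_0^{1/2}-A_1^{1/2})A_0^{-1/2}$ and invoking the Lipschitz bound $\|A^{1/2}-B^{1/2}\|\leq\|A-B\|/(\lambda_{\min}(A)^{1/2}+\lambda_{\min}(B)^{1/2})$, which after using $\lambda_{\min}(A_1)\geq\mu$ gives the same constant $1/(2\mu^{3/2})$; but that auxiliary inequality is itself proved by the same integral-representation technique, so the direct route above is the most self-contained.
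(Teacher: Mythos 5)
Your proof is correct, and it takes a genuinely different route from the paper. You work directly with the integral representation $A^{-1/2}=\frac1\pi\int_0^\infty t^{-1/2}(A+tI)^{-1}\,dt$, subtract the two representations, apply the resolvent identity, and bound the resulting scalar integral $\frac1\pi\int_0^\infty t^{-1/2}(\mu+t)^{-2}\,dt=\frac1{2\mu^{3/2}}$; all steps (the lift of the scalar identity to operators via the spectral theorem, the use of $\lambda_{\min}(A_1)\ge\lambda_{\min}(A_0)$ to bound both resolvents by $(\mu+t)^{-1}$, and the evaluation of the integral) check out. The paper instead factors $A_1^{-1/2}-A_0^{-1/2}=A_1^{-1/2}\bigl(A_0^{1/2}-A_1^{1/2}\bigr)A_0^{-1/2}$, which costs a factor $\lambda_{\min}(A_0)^{-1}$, and then bounds $\|A_1^{1/2}-A_0^{1/2}\|$ by differentiating $B_t=(A_0+t(A_1-A_0))^{1/2}$ along the segment, solving the Sylvester equation $B_t\dot B_t+\dot B_tB_t=A_1-A_0$ via $\dot B_t=\int_0^\infty e^{-sB_t}(A_1-A_0)e^{-sB_t}\,ds$, and using $\lambda_{\min}(A_t)\ge\lambda_{\min}(A_0)$ for the convex combination; this is essentially the alternative you sketch in your last sentence, with the square-root Lipschitz bound proved by a semigroup rather than a resolvent integral. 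Your route is a one-shot computation reducing everything to a single scalar integral, while the paper's route additionally produces the intermediate Lipschitz estimate $\|A_1^{1/2}-A_0^{1/2}\|\le\|A_1-A_0\|/\bigl(2\lambda_{\min}(A_0)^{1/2}\bigr)$ as a usable byproduct; both yield the same constant $1/\bigl(2\lambda_{\min}(A_0)^{3/2}\bigr)$.
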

 \begin{proof}
 First note that
 $$A_1^{-1/2}-A_0^{-1/2} = A_1^{-1/2}(A_0^{1/2} - A_1^{1/2})A_0^{-1/2}$$ and hence
 $$\|A_1^{-1/2}-A_0^{-1/2}\| \leq \|A_1^{-1/2}\|\|A_1^{-1/2}\|\|A_1^{1/2} - A_0^{1/2}\| \leq \frac{\|A_1^{1/2} - A_0^{1/2}\|}{\lambda_{\min}(A_0)}.$$
 Now, define $A_t = A_0 + t(A_1-A_0)$ and let $B_t = A_t^{1/2}$, where $B_t$ is the unique psd square root of $A_t$. We then have $\|A_1^{1/2} - A_0^{1/2}\| \leq \sup_{t\in[0,1]}\|\dot B_t\|$. We will now express $\dot B_t$ in terms of $\dot A_t$ and $B_t$. Differentiating $B_t^2=A_t$, we  get
 \beq
 B_t\dot B_t + \dot B_tB_t = \dot A_t = A_1 - A_0.
 \eeq
 Now, one can check that the solution $\dot B_t$ to this equation is given by
 $$\dot B_t = \int_0^\infty e^{-sB_t}(A_1-A_0)e^{-sB_t}ds$$ and hence
 $$\|\dot B_t\| \leq \|A_1-A_0\|\int_0^\infty \|e^{-sB_t}\|^2dt = \frac{\|A_1-A_0\|}{2\lambda_{\min}(B_t)} = \frac{\|A_1-A_0\|}{2\sqrt{\lambda_{\min}(A_t)}}.$$ Now note that $\lambda_{\min}(A_t)\geq \lambda_{\min}(A_0)$, since $A_t$ is just a convex combination of $A_0$ and $A_1$. Hence $\|\dot B_t\|\leq \|A_1-A_0\|/2\sqrt{\lambda_{\min}(A_0)}$ for all $t\in[0,1]$. Combining all of the above estimates gives
 $$\|A_1^{-1/2}-A_0^{-1/2}\| \leq \frac{\|A_1- A_0\|}{2\lambda_{\min}(A_0)^{3/2}}.$$
 \end{proof}
 \section{Auxiliary proofs}\label{supp:aux}We first prove the two change-of-variables results from the paper and then two other postponed lemmas, the proofs of which involve some technical calculations which are not particularly inspiring.\\
 
First, we show the equivalence of~\eqref{overallbd2} and~\eqref{overallbd2V}. It suffices to show $\int f(-p_3)d\gamma=\int gQd\hat\pi$. Recall that $\hat\pi=T_{\#}\gamma$ and $g=f\circ T$. Therefore, $\int f(-p_3)d\gamma = -\int gp_3\circ Td\hat\pi$, so it suffices to show $p_3\circ T=-Q$. Now,
\beqs
p_3(x)& =\lla\E\l[\nabla^3\Wbar(Z)\r],\;\; \frac16x^{\otimes 3} - \frac12x\otimes I_d\rra \\
&=\lla\E\l[\nabla^3V(\mvi\pi+\Svi\pi^{1/2}Z)\r], \;\;\frac16(\Svi\pi^{1/2}x)^{\otimes 3} - \frac12(\Svi\pi^{1/2}x)\otimes \Svi\pi\rra\\
&=\lla\E_{X\sim\hat\pi}\l[\nabla^3V(X)\r], \;\;\frac16(\Svi\pi^{1/2}x)^{\otimes 3} - \frac12(\Svi\pi^{1/2}x)\otimes \Svi\pi\rra.\eeqs
Therefore,
\beqsn
(p_3\circ T)(x)&=p_3(\Svi\pi^{-1/2}(x-\mvi\pi)) \\
&= \lla \E_{X\sim\hat\pi}\l[\nabla^3V(X)\r],\; \; \frac16(x-\mvi\pi)^{\otimes 3} - \frac12(x-\mvi\pi)\otimes \Svi\pi\rra = -Q(x),\eeqsn as desired.\\

Next, we assume the conclusion of Lemma~\ref{lma:m-sig-soln} to prove Lemma~\ref{lma:intro:exist:V}.
\begin{proof}[Proof of Lemma~\ref{lma:intro:exist:V}]
Let $m,\sigma$ be the solution from Lemma~\ref{lma:m-sig-soln}. Then
\beqsn
0&=\E[\nabla W(m+\sigma Z)] = H_V^{-1/2}\E\l[\nabla V\l(\mhat + H_V^{-1/2}(m+\sigma Z)\r)\r] ,\\
(\sigma\sigma^T)^{-1}&=\E[\nabla^2W(m+\sigma Z)] = H_V^{-1/2}\E\l[\nabla V\l(\mhat + H_V^{-1/2}(m+\sigma Z)\r)\r]H_V^{-1/2}
\eeqsn Let
$$\mvi\pi = \mhat + H_V^{-1/2}m,\qquad \Svi\pi=H_V^{-1/2}\sigma\sigma^TH_V^{-1/2}.$$ We conclude that
\beqs
0=\E[\nabla V(\mvi\pi + H_V^{-1/2}\sigma Z)],\\
\Svi\pi^{-1}=\E[\nabla^2V(\mvi\pi + H_V^{-1/2}\sigma Z)]
\eeqs Now, we can replace $H_V^{-1/2}\sigma$ (an asymmetric square root of $\Svi\pi$) in the above equations with $\Svi\pi^{1/2}$ (the symmetric square root of $\Svi\pi$) and the equations will still hold. Thus $\mvi\pi,\Svi\pi$ satisfies $\klopt{V}$. Furthermore, since $(m, \sigma)\in B_{2\sqrt2}(0,0)$ and $\sigma\in S_{\sqrt{2/3},\sqrt2}$, we get that
\beqs
\|H_V^{1/2}\Svi\pi H_V^{1/2}\| + \|H_V^{1/2}(\mvi\pi-\mhat)\|^2 = \|\sigma\sigma^T\| + \|m\|^2 = \|\sigma\|^2+\|m\|^2\leq 8,\eeqs and
$$
\frac23H^{-1}\preceq \Svi\pi=H_V^{-1/2}\sigma\sigma^TH_V^{-1/2}\preceq 2H^{-1}.$$
It remains to show that $\mvi\pi,\Svi\pi$ is the unique solution satisfying $\klopt{V}$ in the region $\mathcal R_V$. To show this is the case, note that if $(m, S)$ is a solution to $\klopt{V}$ in $\mathcal R_V$, then $\l(H^{1/2}( m-\mhat), (H^{1/2} SH^{1/2})^{1/2}\r)\in  B_{2\sqrt2}(0, 0)\cap\R^d \times S_{0,\sqrt2}$ is a solution to~\eqref{kl-opt-1},~\eqref{kl-opt-2}. Furthermore, the map
$$(m,S)\mapsto \l(H^{1/2}( m-\mhat), (H^{1/2} SH^{1/2})^{1/2}\r)$$ is injective. Since we know solutions to~\eqref{kl-opt-1},~\eqref{kl-opt-2} in $B_{2\sqrt2}(0, 0)\cap\R^d \times S_{0,\sqrt2}$ are unique by Lemma~\ref{lma:m-sig-soln}, it follows that solutions to $\klopt{V}$ in $\mathcal R_V$ must also be unique.
\end{proof}

We now turn to the technical calculations.

\begin{proof}[Proof of Lemma~\ref{lma:gsuff}]
If $g(x)=f(\Svi\pi^{-1/2}(x-\mvi\pi))$ then~\eqref{gcond} is satisfied provided $|f(y)-\gamma(f)|\leq e^{\frac{c_0}{4}\sqrt d\|y\|}$ for all $\|y\|\geq R_g\sqrt d$. We now find $R_g$ to ensure this inequality on $f$ is satisfied. Suppose WLOG that $f(0)=0$, and $|f(y)|\leq \e(C\sqrt d\|y\|^\alpha)$ for all $y\in\R^d$. We first bound $\gamma(|f|)$. We have
\beqs\label{gammabs}
\gamma(|f|)&\leq \max_{\|y\|\leq (4C_f+2\sqrt2)\sqrt d}|f(y)| + (2\pi)^{-d/2}\int_{\|y\|\geq(4C_f+2\sqrt2)\sqrt d}\e\l(C_f\sqrt d\|y\|-\|y\|^2/2\r)dy.\eeqs Now, we bound the two summands separately. For the first summand, we have
\beqs\label{gammabs1}  K_1&:=\max_{\|y\|\leq (4C_f+2\sqrt2)\sqrt d}|f(y)| \\
&\leq \e(C_f(4C_f+3)^\alpha(\sqrt d)^{1+\alpha}) \\
&\leq  \e(C_f(4C_f+3)^\alpha d)=:e^{\bar Cd},\eeqs where $$\bar C:=C_f(4C_f+3)^\alpha.$$
For the second summand in~\eqref{gammabs}, we have
\beqs\label{gammabs2}K_2&:= (2\pi)^{-d/2}\int_{\|y\|\geq(4C_f+2\sqrt2)\sqrt d}e^{-\|y\|^2/4}dy\\
&\leq  2^{\frac d2}(2\pi)^{-d/2}\int_{\|y\|\geq  2\sqrt d}e^{-\|y\|^2/2}dy\\
&\leq 2^{\frac d2}e^{-\frac d2}\leq1.\eeqs In the second line, we used that $C_f\sqrt d\|y\|-\|y\|^2/2 \leq \|y\|^2/4 - \|y\|^2/2= -\|y\|^2/4$ when $\|y\|\geq(4C_f+2\sqrt2)\sqrt d$. Using~\eqref{gammabs1} and~\eqref{gammabs2} in~\eqref{gammabs}, we get
\beq
\gamma(|f|)\leq K_1 + K_2\leq 2e^{\bar Cd}.\eeq Now, if $R_g\geq\frac{4}{c_0}\l(\bar C+\log 4\r)$, then
$$|\gamma(f)|\leq 2e^{\bar Cd}\leq \frac12e^{\frac{c_0}{4}R_gd}\leq \frac12e^{\frac{c_0}{4}\sqrt d\|x\|}\quad\forall \|x\|\geq R_g\sqrt d.$$
Next, we bound $|f(y)|$. Note that if $\|y\|\geq R_g\sqrt d$ and $R_g\geq1$, then
\beqs
C_f\sqrt d\|y\|^\alpha + \log 2 &= \l(\frac{C_f\sqrt d}{\|y\|^{1-\alpha}} + \frac{\log 2}{\|y\|}\r)\|y\|\\
&\leq \l(\frac{C_f\sqrt d}{R_g^{1-\alpha}\sqrt d^{1-\alpha}} + \frac{\log 2}{R_g\sqrt d}\r)\|y\|\\
&\leq \frac{(C_f+\log 2)\sqrt d}{R_g^{1-\alpha}}\|y\| \leq \frac{c_0}{4}\sqrt d\|y\|
\eeqs as long as $R_g\geq \l[\frac{4}{c_0}(C_f+\log 2)\r]^{\frac{1}{1-\alpha}}$. We then have
$$|f(y)|\leq \e(C_f\sqrt d\|y\|^\alpha) \leq \frac12\e\l(\frac{c_0}{4}\sqrt d\|y\|\r)\quad\forall \|y\|\geq R_g\sqrt d.$$ Combining all of the above estimates, we get that
$$
|f(y)-\gamma(f)|\leq |f(y)|+\gamma(|f|) \leq \e\l(\frac{c_0}{4}\sqrt d\|y\|\r)\quad\forall \|y\|\geq R_g\sqrt d
$$ as long as
$$
R_g\geq\max\l( \l[1\vee\frac{4}{c_0}(C_f+\log 2)\r]^{\frac{1}{1-\alpha}}, \frac{4}{c_0}\l(C_f(4C_f+3)^\alpha+\log 4\r)\r).$$ This is a function of $C_f,\alpha,c_0$ which is increasing with $C_f, \alpha,$ and $c_0^{-1}$, as desired.
\end{proof}
\begin{proof}[Proof of Lemma~\ref{lma:tail}]
Lemma E.3 of~\cite{katsTDD} shows that if $Rb> 1+p/d$, then
\beqs
I(R,p,b)\leq (eR)^pd^{\frac p2+1}\e\l(\l[\frac32+\log R-Rb\r]d\r)\eeqs
Massaging this upper bound, we get
$$
I(R,p,b)\les_p R^{p}d^{\frac p2+1}\e\l(\l[\frac32+\log R-Rb\r]d\r)\leq d^{\frac p2+1}\e\l(\l[\frac32+(p+1)\log R-Rb\r]d\r).
$$
Now, we have
$$(p+1)\log R=(2p+2)\log\sqrt R\leq (2p+2)\sqrt R \leq \frac{Rb}{4}$$ provided $R\geq (8p+8)^2b^{-2}$.  Therefore,
$$\frac32+(p+1)\log R-Rb\leq \frac32-\frac34Rb \leq -1-\frac12Rb$$ for $R\geq10b^{-1}$. Thus $I(R,p,b)\les_p d^{\frac p2 +1}e^{-d-Rbd/2}\les_p e^{-Rbd/2}$. The lower bounds on $R$ are met provided
$$R\geq\max\l((1+p)b^{-1}, (8p+8)^2b^{-2},10b^{-1}\r),$$ which can be achieved by taking $R=C_p(1\vee b^{-1})^2$ for some $C_p$ depending only on $p$. \end{proof}

\bibliographystyle{plain}
\bibliography{bibliogr_VI} 
\end{document}